\theoremstyle{plain}
\newtheorem{thm}{Theorem}[section]
\newtheorem{prop}[thm]{Proposition}
\newtheorem{cor}[thm]{Corollary}
\newtheorem{lemma}[thm]{Lemma}
\theoremstyle{remark}
\newtheorem{defi}{Definition}
\newtheorem{ex}{Example}
\newtheorem{remark}[thm]{Remark}
\renewcommand{\S}{\mathbb{S}}
\newcommand{\MTPtwo}{$ \text{MTP}_2 $ }
\newcommand{\EMTPtwo}{$ \text{EMTP}_2 $ }
\newcommand{\llangle}{\langle\!\langle}
\newcommand{\rrangle}{\rangle\!\rangle}
\newcommand{\bigllangle}{\big\langle\hspace*{-0.5pt}\big\langle}
\newcommand{\bigrrangle}{\big\rangle\hspace*{-0.5pt}\big\rangle}
\newcommand{\perpp}{\perp \!\!\! \perp}
\newcommand{\rrvert}{\vert}
\newcommand{\rrVert}{\Vert}
\newcommand{\llvert}{\vert}
\newcommand{\llVert}{\Vert}
\DeclareMathOperator*{\argmax}{\operatorname{arg max}}
\DeclareFontFamily{U}{mathx}{\hyphenchar\font45}
\DeclareFontShape{U}{mathx}{m}{n}{
	<5> <6> <7> <8> <9> <10>
	<10.95> <12> <14.4> <17.28> <20.74> <24.88>
	mathx10
}{}
\DeclareSymbolFont{mathx}{U}{mathx}{m}{n}
\DeclareMathAccent{\widecheck}{0}{mathx}{"71}
\begin{document}

\title{Total positivity in multivariate extremes}
\author[F.~R\"ottger]{Frank R\"ottger$^1$}
\email{frank.roettger@unige.ch}
\author[S.~Engelke]{Sebastian Engelke$^1$}
\email{sebastian.engelke@unige.ch}
\address{$^1$Universit\'e de Gen\`eve, Switzerland}
\author[P.~Zwiernik]{Piotr Zwiernik$^2$}
\email{piotr.zwiernik@utoronto.ca}
\address{$^2$University of Toronto, ON, Canada}
\date{\today}
\keywords{convex optimization, extreme value theory, covariance mapping, graph Laplacians, total positivity}
\begin{abstract}
Positive dependence is present in many real world data sets and has appealing
stochastic properties that can be exploited in statistical modeling and
in estimation. In particular, the notion of multivariate total positivity
of order 2 ($ \mathrm{MTP}_{2} $) is a convex constraint and acts as
an implicit regularizer in the Gaussian case. We study positive dependence
in multivariate extremes and introduce $ \mathrm{EMTP}_{2} $, an extremal
version of $ \mathrm{MTP}_{2} $. This notion turns out to appear prominently
in extremes, and in fact, it is satisfied by many classical models. For
a H\"usler--Reiss distribution, the analogue of a Gaussian distribution
in extremes, we show that it is $ \mathrm{EMTP}_{2} $ if and only if its precision
matrix is a Laplacian of a connected graph. We propose an estimator for
the parameters of the H\"usler--Reiss distribution under
$ \mathrm{EMTP}_{2} $ as the solution of a convex optimization problem with
Laplacian constraint. We prove that this estimator is consistent and typically
yields a sparse model with possibly nondecomposable extremal graphical
structure. Applying our methods to a data set of Danube River flows, we
illustrate this regularization and the superior performance compared to
existing methods.
\end{abstract}
\maketitle

\section{Introduction}
\label{sec1}

Multivariate dependence modeling for complex data relies on parsimonious
models to avoid overfitting, allows for interpretation and enables inference
in high dimensions. One approach to regularize models is the framework
of conditional independence and sparsity
(e.g., \citet{Lauritzen96, wainwright2008graphical}). While the sparsity
assumption is often justified, fitting typically requires the choice of
tuning parameters, and it may lead to suboptimal models. An alternative
to this approach is the notion of positive dependence, which can also be
seen as an implicit regularizer through a distributional constraint. Positive
dependence has been extensively studied with connections to probability
theory and statistical physics
(\citet{FKG71,newman1983general,newman1984asymptotic}). In applications positive
dependence arises naturally when the variables in the system are driven
by common factors. Such situations occur, for example, in multivariate
financial data, where the common factor can represent the intrinsic market
component (\citet{ARU20}). Another appearance is in evolutionary processes,
where the observed variables evolve from a common ancestor
(\citet{steel2016phylogeny,zwiernik2018latent}).

Various mathematical definitions of positive dependence exist, including
positive association (\citet{esary1967association}) and multivariate total
positivity of order 2 ($\mathrm{MTP}_{2}$) (\citet{KR1980,FLSUWZ2017}). In particular,
the latter notion is attracting a surging interest. The reason is that,
for Gaussian models, it has the intuitive characterization that all correlations
and partial correlations are nonnegative and that its analytical constraints
on the distribution can be implemented elegantly in the estimation of statistical
models (\citet{SH2015,LUZ2019}). In addition, $\mathrm{MTP}_{2}$ models outperform
state-of-the-art methods in finance (\citet{WRU2020,RZ20}), psychometrics
\citep{LUZ2019,LUZ2020}, machine learning
(\citet{ying2021minimax,EPO17}), medical statistics and phylogenetics
\cite{FLSUWZ2017}. There is also a fundamental link between the assumption
of sparsity and the $ \mathrm{MTP}_{2} $ constraint (\citet{LUZ2019}).

When interest is in extreme events, then intuitively one may expect even
stronger positive dependence, as it can be conceived that multivariate
extreme events arise from a common latent factor. For instance, during
a financial crisis a shock may affect many stock prices simultaneously.
Similarly, flooding at different locations is often caused by the same
large-scale precipitation field.

Multivariate extreme value theory provides asymptotically motivated models
for extremal dependence. Traditionally, the focus was on the analysis of
max-stable distributions, which indeed can be shown to be always positively
associated (\citet{MO1983}). Max-stable models arise as the componentwise
maxima of independent copies of a random vector in its domain of attraction
(\citet{deh1977}). This means that the latter can have any dependence structure,
but the most extreme observations in each component eventually become positively
associated. While this illustrates how positive dependence naturally emerges
in multivariate extremes, max-stable distributions may be too rigid for
modeling higher dimensional data. One reason is that their densities cannot
factorize in a nontrivial way on graphs (\citet{pap2016}).

The interest has, therefore, shifted to multivariate Pareto distributions,
a different type of models in multivariate extremes, which are the only
possible limits for multivariate threshold exceedances
(\citet{roo2006}). For this distribution class, extremal graphical models
can be defined (\citet{EH2020}) that allow for sparse statistical models.
In this paper we propose a new notion of positive dependence for multivariate
Pareto distributions that we call extremal $ \mathrm{MTP}_{2} $ ($
\mathrm{EMTP}_{2} $).

As intuition from practice and the max-stable case suggest,
$ \mathrm{EMTP}_{2} $ arises naturally in existing extreme value models. Indeed,
we show in Section~\ref{sec:emtp2} that many classical models, such as
the extremal logistic (\citet{taw1990}) and extremal Dirichlet distributions
(\citet{CT1991}), are $ \mathrm{EMTP}_{2} $ across the whole range of their
parameter values and in any dimension. Within multivariate Pareto distributions,
the class of H\"usler--Reiss models (\citet{HR1989}), parameterized by a
variogram matrix $\Gamma $, can be seen as the counterpart of Gaussian
models in multivariate extremes. An alternative parameterization is given
in terms of the H\"usler--Reiss precision matrix $\Theta $
(\citet{Hentschel2021}). Inside this class we show that a model is
$ \mathrm{EMTP}_{2} $ if its precision matrix is a Laplacian matrix of a connected
graph with positive edge weights, that is, $\Theta _{ij}\leq 0$ for all
$i\neq j$. This implies that any bivariate H\"usler--Reiss distribution
is $ \mathrm{EMTP}_{2} $.

In Section~\ref{sec:graphicalEx} we formalize the connection between
$ \mathrm{EMTP}_{2} $ distributions and graphical models for extremes. The
case of H\"{u}sler--Reiss distributions closely parallels Gaussian graphical
models (\citet{LUZ2019}) but often allows for stronger results. For instance,
all H\"usler--Reiss tree models are $ \mathrm{EMTP}_{2} $, and this even continues
to hold for any latent tree structure. Finally, we study the axiomatization
of extremal conditional independence in the spirit of
\cite{FLSUWZ2017} and \cite{LS18} and show that $ \mathrm{EMTP}_{2} $ graphical
models satisfy an extremal notion of faithfulness.

The methodological part of our paper focuses on the H\"usler--Reiss distribution.
In Section~\ref{s:estimation} we propose an estimator of the H\"usler--Reiss
precision matrix $\Theta $ that takes the empirical version of the variogram
$\overline{\Gamma}$ as input and optimizes the convex problem
\begin{align}
	\label{eq:logdet} \log\operatorname{Det} \Theta +\frac{1}{2}\operatorname{tr}
	(\overline{\Gamma}\Theta ) 
\end{align}
over all positive semidefinite precision matrices and under the
$ \mathrm{EMTP}_{2} $ constraint that $\Theta $ is a Laplacian matrix of a
connected graph with positive edge weights. Here $ \operatorname{Det} $ denotes the
pseudo-determinant since $\Theta $ has one zero eigenvalue. We prove the
consistency of this estimator, and based on the dual formulation, in Section~\ref{s:algorithms}
we design a block coordinate-descent algorithm that efficiently solves
the constrained optimization problem. The $ \mathrm{EMTP}_{2} $ constraint
acts as an implicit regularizer and the estimator can also be applied in
high-dimensional settings. Moreover, since the solution satisfies KKT conditions
for optimality, the estimator $\widehat{\Theta}$ under
$ \mathrm{EMTP}_{2} $ typically contains zeros, which implies that the corresponding
H\"usler--Reiss model is an extremal graphical model. We formalize this
observation and show that the estimated $ \mathrm{EMTP}_{2} $ graph asymptotically
is a super-graph of the true underlying graph. This allows for interpretation,
in particular, when the estimated graph is sparse, as in our application
to river networks in Section~\ref{s:application}. We note that our estimator
is the first method for extremal graphical models that goes beyond trees
or block graphs (\citet{EV2020}).

An important part of our theoretical contribution is the study of strong
$ \mathrm{MTP}_{2} $, also known as LLC in the literature
(\citet{murota2009recent, RSTU20}). In order to characterize
$ \mathrm{EMTP}_{2} $, we establish new additive relations of positive dependence,
which are of independent interest. For a random variable $X_{0}$ that is
independent of a random vector $\mathbf{X}$, we link the probabilistic
dependence properties of
\begin{align}
	\label{Ydef} \mathbf{Z}=(X_{0}, \mathbf{X}+X_{0}
	\boldsymbol 1)
\end{align}
with those of $\mathbf{X}$, where $\boldsymbol 1$ denotes the vector of
ones. We will show that $\mathbf{Z}$ is $ \mathrm{MTP}_{2} $ if and only if
$\mathbf{X}$ is strongly $\mathrm{MTP}_{2}$. Models of the above form have
been proposed under the name of random location mixtures
(e.g., \citet{Hashorva2012}). As we show later in the paper, this formulation
also links to factor models
(\citet{lawley1962factor,krijnen2004positive}). In the context of multivariate
extreme value theory, these constructions produce versatile tail dependence
structures. In the case of multivariate Pareto distributions, the so-called
extremal functions are of the above form with a standard exponential variable
$X_{0}$ and a lighter tailed random vector $\mathbf{X}$. Many models that
bridge between asymptotic dependence and independence have a representation
as in \eqref{Ydef}; see \cite{eng2018a} for a review. In this paper we
focus on asymptotic dependence and $ \mathrm{EMTP}_{2} $, but in Section~\ref{sec:discussion}
we discuss further implications of our theoretical results to asymptotically
independent extreme value models.

To assess the statistical performance of our $ \mathrm{EMTP}_{2} $ estimator,
in Section~\ref{s:application} we apply it to a data set of river discharges
and compare it to methods from spatial statistics (\citet{ADE2015}) and graphical
modeling (\citet{EH2020}).

\section{Background}
\label{s:back}

Our paper is at the intersection of positive dependence modeling and multivariate
extreme value theory. In this section we introduce the basic types of positive
dependence constraints, and we review existing results on multivariate
Pareto distributions and their connections to extremal graphical models.

\subsection{Notions of positive dependence}
\label{s:posdep}

We begin by recalling two notions of positive dependence. The first one
is multivariate total positivity of order 2 ($\mathrm{MTP}_{2}$) treated in
detail in \cite{KR1980}. The second is a stronger notion, which we call
strong $ \mathrm{MTP}_{2} $ and whose study is motivated by the extremal
$ \mathrm{MTP}_{2} $ property.

Let $ \mathbf{x}\vee \mathbf{y}$ and $ \mathbf{x}\wedge \mathbf{y}$ denote
the componentwise maximum and minimum of
$ \mathbf{x},\mathbf{y}\in \mathbb{R}^{d} $, respectively. A~function
$f:\mathbb{R}^{d}\to \mathbb{R}$ is
\textit{multivariate totally positive of order 2} ($ \mathrm{MTP}_{2} $) if
\begin{align}
	f(\mathbf{x}\vee \mathbf{y}) f(\mathbf{x}\wedge \mathbf{y})\ge f( \mathbf{x}) f(
	\mathbf{y})\quad \text{for all }\mathbf{x},\mathbf{y} \in \mathbb{R}^{d}.
	\label{eq:log-supermodular} 
\end{align}
We say that $f$ is strongly $ \mathrm{MTP}_{2} $ if
\begin{align}
	f\bigl(\mathbf{x}\vee (\mathbf{y}-\alpha \boldsymbol 1) \bigr) f\bigl((
	\mathbf{x}+ \alpha \boldsymbol 1) \wedge \mathbf{y}\bigr)\ge f(\mathbf{x}) f(
	\mathbf{y})\quad \text{for all }\mathbf{x},\mathbf{y}\in \mathbb{R}^{d},
	\alpha \geq 0, \label{eq:LLC} 
\end{align}
where $\boldsymbol 1$ denotes the vector of ones. A~multivariate random
vector $ \mathbf{X}$ with density $ f_{\mathbf{X}}$ is
$ \mathrm{MTP}_{2} $ or strongly $ \mathrm{MTP}_{2} $ if the corresponding property
holds for $f_{\mathbf{X}}$.

The concept of strong $ \mathrm{MTP}_{2} $ distributions is relatively new
and not well studied. In the statistical context, \citet{RSTU20} used
\eqref{eq:LLC} under the name of log-$L^{\#}$-concave (LLC) in reference
to work on discrete optimization (e.g., \citet{murota2009recent}), where
$f$ is, in addition, assumed to be log-concave. For a list of further references
for the appearance of strong $ \mathrm{MTP}_{2} $ in applications, see
\citet[pp.~3--4]{RSTU20}. The following important example discusses both
notions of positive dependence for Gaussian distributions.
\begin{ex}
	\label{ex:gauss}
	If $\mathbf{X}$ is Gaussian with mean vector $\mu $ and invertible covariance
	matrix $\Sigma $, then $\mathbf{X}$ is $ \mathrm{MTP}_{2} $ if and only if
	the inverse covariance matrix $K$ is an \textit{M-matrix}, that is, a positive
	definite matrix such that $K_{ij}\leq 0$ for all $ i\neq j $
	(e.g., \citet{LUZ2019}). Moreover, $\mathbf{X}$ is strongly
	$ \mathrm{MTP}_{2} $ if, in addition, $K$ is a diagonally dominant matrix,
	that is, all row sums are nonnegative ($K\boldsymbol 1\geq
	\boldsymbol 0$) (\citet{RSTU20}).
\end{ex}

Importantly, the $ \mathrm{MTP}_{2} $ property is closed under taking margins,
under conditioning and under coordinatewise increasing transformations;
see Corollary~3.3 and Proposition~3.4 in \cite{FLSUWZ2017}. Moreover, univariate
distributions are always $ \mathrm{MTP}_{2} $.

The situation is more complicated for strongly $ \mathrm{MTP}_{2} $ distributions.
In this paper we develop several new results for strong
$ \mathrm{MTP}_{2} $ distributions that are central to the analysis of extremal
$ \mathrm{MTP}_{2} $. First, we derive an alternative characterization of
strong $ \mathrm{MTP}_{2} $ in Lemma~\ref{lem:LLC2}, which we employ in the
proof of Theorem~\ref{t:LLCetal}. This additionally implies that strong
$ \mathrm{MTP}_{2} $ is also closed under taking margins (Proposition~\ref{prop:LLCmargin}).
Furthermore, we show that univariate distributions are strongly
$ \mathrm{MTP}_{2} $ if and only if the underlying density function is log-concave.
This also shows that strong $ \mathrm{MTP}_{2} $ cannot be closed under arbitrary
increasing transformations. Since these results are fairly technical, we
postpone proofs and auxiliary results to Appendix~\ref{s:smtp2}.

\subsection{Extremal dependence and multivariate Pareto distributions}
\label{s:MPD_prop}

Multivariate extreme value theory studies the tail properties of a random
vector $\mathbf{X}=(X_{1},\ldots ,X_{d})$. The dependence between the largest
values of each component, also called extremal dependence, quantifies to
what degree rare events happen simultaneously in several variables. The
full extremal dependence structure can be complex, and summary statistics
are employed to condense this information into easily interpretable numbers. A~popular bivariate summary statistic is the extremal correlation, defined
for $i,j \in [d]:=\{1,\dots , d\}$ as
\begin{align}
	\label{EC} \chi _{ij} := \lim_{q\to 0} \mathbb P
	\bigl\{F_{i}(X_{i}) > 1- q | F_{j}(X_{j})
	>1-q \bigr\} \in [0,1], 
\end{align}
whenever the limit exists and where $F_{j}$ is the distribution function
of $X_{j}$ (e.g., \citet{col1999}). We speak of asymptotic dependence
and independence if $\chi _{ij}>0$ and $\chi _{ij}=0$, respectively. The
theoretical analysis for asymptotic independence is more nuanced, and a
whole line of research exists
(e.g., \citet{HeffernanTawn2004, wadsworth2012dependence}); we discuss
this further in Section~\ref{sec:discussion}. Extremal correlations can
also be extended to higher dimension (\citet{sch2003}).

Since the univariate theory is well studied
(e.g., \citet{emb1997, deh2006a}), it is common to normalize the margins
to concentrate on the extremal dependence. Throughout this paper we assume
that each component of $\mathbf X$ has been normalized to have standard
exponential distribution with distribution function $1 - \exp (-x)$,
$x\geq 0$; we discuss this normalization in the preprocessing steps of
the application in Section~\ref{sec:doa}.

In this paper we focus on the case of asymptotic dependence. To describe
the extremal dependence structure in this setting, the assumption of multivariate
regular variation is widely used (\citet{res2008}). More precisely, we assume
that the distribution of the exceedances of $\mathbf X$ over a high threshold
converges to a so-called multivariate (generalized) Pareto distribution
$\mathbf Y$ (\citet{roo2006}), that is,
\begin{align}
	\label{MPD_conv} \mathbb{P}(\mathbf{Y}\le \mathbf{z})&=\lim_{u\to \infty}
	\mathbb{P} \bigl(\mathbf{X} - u \boldsymbol{1}\le \mathbf{z} | \llVert \mathbf{X}
	\rrVert _{\infty}>u \bigr), \quad \mathbf{z}\in \mathcal L. 
\end{align}
The distribution of $\mathbf{Y}$ is supported on the space
$\mathcal{L}=\{\mathbf{x}\in \mathbb R^{d}: \| \mathbf{x}\|_{\infty}> 0
\}$, and it satisfies the homogeneity
$\mathbb P( \mathbf{Y}\in t + A) = t^{-1} \mathbb P( \mathbf{Y}\in A)$
for any $t>0$ and Borel set $A\subset \mathcal L$. We say that the vector
$\mathbf{X}$ is in the domain of attraction of the multivariate Pareto
distribution $\mathbf{Y}$. Note that assumption \eqref{MPD_conv} is equivalent
to multivariate regular variation of the random vector
$\exp (\mathbf{X})$ (\citet{res2008}, Proposition~5.15); see Remark~\ref{rem_margins}
and Appendix~\ref{exp_measure} for details.

Replacing the vector $\mathbf{X}$ in \eqref{MPD_conv} by the $I$th margin
$\mathbf{X}_{I} = (X_{i} : i\in I)$, $I\subset [d]$, we denote the corresponding
limit by $\mathbf{Y}_{I}$, which is a multivariate Pareto distribution
on the space
$\mathcal{L}_{I}=\{\mathbf{x}\in \mathbb R^{|I|}: \| \mathbf{x}\|_{
	\infty}> 0 \}$. This is a slight abuse of notation since the so-defined
random vector is not equal to the $I$th margin $(Y_{i}: i \in I)$ of
$\mathbf{Y}$ defined on $\mathbb R^{|I|}$. The difference between the two
is only the support and restricting $(Y_{i}: i \in I)$ to
$\mathcal L_{I}$ results in $\mathbf{Y}_{I}$.

Multivariate Pareto distributions $\mathbf{Y}$ are defined on the nonproduct
space $\mathcal L$. In order to define stochastic properties for
$\mathbf{Y}$, it is convenient to work with the conditional random vectors
\begin{equation*}
	\mathbf{Y}^{k} :=\mathbf{Y}| \{Y_{k}>0\},
\end{equation*}
where $k \in [d]$. If $\mathbf{Y}$ admits a Lebesgue density
$f_{\mathbf{Y}}$, then $ \mathbf{Y}^{k} $ has a density proportional to
$f_{\mathbf{Y}}$ supported on the product space
$ \mathcal{L}^{k}=\{\mathbf{x}\in \mathcal{L}:x_{k}>0\} $. Thanks to the
homogeneity of $\mathbf{Y}$, we have the stochastic representation
\begin{align}
	\label{Yk} \mathbf{Y}^{k} \stackrel{d} {=} \mathbf{W}^{k}
	+ E \boldsymbol 1,
\end{align}
where $ E $ has a standard exponential distribution that is independent
of a $d$-variate random vector $\mathbf{W}^{k}$. The latter is called the
$k$th extremal function, and it satisfies $W^{k}_{k}=0$ almost surely.
We refer to \cite{dom2013a} and \cite{EH2020} for more background on extremal
functions.

\begin{remark}
	\label{rem_margins}
	Multivariate Pareto distributions are often introduced by normalizing the
	random vector $\mathbf{X}$ to standard Pareto margins (\citet{EH2020}). As
	in copulas, this changes only the marginal distributions of
	$\mathbf{Y}$ but not the extremal dependence structure.
	\cite{roo2006} denote multivariate Pareto distributions with general margins
	as multivariate generalized Pareto distributions, in analogy to the generalized
	Pareto distributions in the univariate case, which also include exponential
	distributions. In our paper we opt for the exponential scale since it makes
	the results more concise. Nevertheless, we say that $\mathbf{Y}$ follows
	a multivariate Pareto distribution and drop the ``generalized'' for simplicity.
\end{remark}

Many alternative coefficients for extremal dependence have been studied.
This includes the madogram (\citet{coo2006}) and a coefficient by
\cite{lar2012} used for dimension reduction in \cite{coo2018},
\cite{for2020} and \cite{dre2021}; see the review \cite{eng2021} for details.
Another summary statistic introduced in \cite{EV2020} is the extremal variogram
rooted in $ k \in [d]$, which for a multivariate Pareto distribution
$\mathbf{Y}$ is
\begin{align}
	\label{evario} \Gamma ^{(k)}_{ij}=\operatorname{Var} \bigl(
	Y_{i}^{k}- Y_{j}^{k} \bigr), \quad i,j
	\in [d], 
\end{align}
given that the variance exists.

While summary statistics provide a first idea of the strength of dependence,
they are mainly used for exploratory analysis and model assessment. Approaches
that study probabilistic properties of the whole distribution are more
powerful to improve statistical inference. Examples are the notions of
conditional independence or positive dependence. In Section~\ref{sec:emtp2}
we, therefore, discuss how positive dependence and, in particular, can
be exploited for multivariate Pareto distributions.

\subsection{H\"usler--Reiss distributions}
\label{sec:huesler_reiss}

An important example of a multivariate Pareto distribution is the H\"{u}sler--Reiss
distribution, which can be seen as the analogue of the Gaussian distribution
inside the class of multivariate Pareto distributions.

For a fixed $d\in \mathbb{N}$, let $\mathbb{S}^{d}_{0}$ be the set of symmetric
$d\times d$-matrices with zero diagonal. We say that
$\Gamma \in \mathbb{S}^{d}_{0}$ is a conditionally negative definite matrix
if $\boldsymbol x^{T}\Gamma \boldsymbol x\leq 0$ for all
$\boldsymbol x\in \mathbb{R}^{d}$ such that
$\boldsymbol x^{T} \boldsymbol 1=0$. Moreover, $\Gamma $ is strictly conditionally
negative definite if the inequality is always strict, unless
$\boldsymbol x=\boldsymbol 0$. We denote the cone of such matrices by
$\mathcal C^{d}\subset \mathbb{S}^{d}_{0}$. In Appendix~\ref{app:Gamma}
we collect various results on such matrices, which will be useful in the
next sections. Note that, from here on, we will abbreviate singleton set
$ \{k\} $ by $k$ and index sets $ [d]\setminus k $ by
$ \setminus k $.

The $ d $-variate H\"usler--Reiss distribution is a multivariate Pareto
distribution parametrized by $\Gamma \in \mathcal C^{d}$
(\citet{HR1989}). In this case the random vector
$\mathbf{W}^{k}_{\setminus k}$ in \eqref{Yk} has a $(d-1)$-dimensional
normal distribution with mean vector
$- \operatorname{diag}(\Sigma ^{(k)})/2$ and covariance $\Sigma ^{(k)}$ obtained
from $\Gamma $ via the covariance mapping
\begin{align}
	\Sigma _{ij}^{(k)}=\frac{1}{2} (\Gamma _{ik}+
	\Gamma _{jk}- \Gamma _{ij} ), \quad i, j\neq k; \label{eq:gamma2sigma}
\end{align}
see \cite{eng2014} for details and \cite{deza1997geometry} for the importance
of this mapping in the more general context of distance geometry. Note
that \eqref{eq:gamma2sigma} is a linear isomorphism from
$\mathbb{S}^{d}_{0}$ to the space $\mathbb{S}^{d-1}$ of all symmetric
$(d-1)\times (d-1)$ matrices and its inverse is given by
\begin{align}
	\begin{cases} \Gamma _{ik}=\Sigma ^{(k)}_{ii},
		&i\neq k,
		\\[3pt]
		\Gamma _{ij}=\Sigma ^{(k)}_{ii}+\Sigma
		^{(k)}_{jj}-2\Sigma ^{(k)}_{ij}, &i, j\neq k.
	\end{cases} \label{eq:sigma2gamma} 
\end{align}
The image of the cone $\mathcal C^{d}$ under the linear mapping
\eqref{eq:gamma2sigma} is the set of all positive definite matrices in
$\mathbb{S}^{d-1}$; for a proof, see Lemma~3 in \cite{EH2020} or Lemma~\ref{lem:pdC}
in Appendix~\ref{app:Gamma}. Therefore, $\Sigma ^{(k)}$ is positive definite.

Using the standard terminology of exponential families, in the multivariate
Gaussian distribution the covariance matrix is the mean parameter, and
its inverse is the canonical parameter. Working with the inverse is useful,
as the log-likelihood function is a strictly concave function. Analogously,
a useful parameterization for the H\"{u}sler--Reiss distribution is discussed
in \cite{Hentschel2021}. Let $ \Theta ^{(k)} $ denote the inverse of
$ \Sigma ^{(k)} $. Define the matrix $ \Theta \in \mathbb{S}^{d}$ as
\begin{align}
	\label{eq:Theta} \Theta _{ij}&:= \Theta _{ij}^{(k)}
	\quad \text{for some }k\neq i,j.
\end{align}
Note that $ \Theta _{ij}^{(k)}=\Theta _{ij}^{(k')} $ for
$ i,j\neq k,k' $ by \citet[Lemma~1]{EH2020}. We call $\Theta $ the H\"{u}sler--Reiss
precision matrix. An important alternative characterization of
$\Theta $ is obtained as follows. Define the projection matrix
\begin{equation}
	\label{eq:P1} \boldsymbol P:=I_{d}-\frac{1}{d}\boldsymbol 1
	\boldsymbol 1^{T}, 
\end{equation}
and let $\Sigma := \boldsymbol P(-\frac{\Gamma}{2})\boldsymbol P$. By Lemma~\ref{rem:edm}
if $\Gamma \in \mathcal{C}^{d}$, then $\Sigma $ is positive semidefinite.
Moreover, $\operatorname{rank}(\Sigma )=d-1$ and
$\Sigma \boldsymbol 1=\boldsymbol 0$. Denote by $A^{+}$ the Moore--Penrose
pseudoinverse of $A$.
\begin{prop}[\cite{Hentschel2021}]
	\label{prop:manu}
	Consider $\Theta $ defined in \eqref{eq:Theta} and $\Sigma $ as above.
	We have $\Theta =\Sigma ^{+}$. It follows that
	$\operatorname{rank}(\Theta )=d-1$ and $\Theta \boldsymbol 1=\boldsymbol 0$.
\end{prop}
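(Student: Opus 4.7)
My plan is to prove the stronger identity $\Theta = \Sigma^+$, from which both conclusions follow immediately: since $\Sigma$ has rank $d-1$ with $\ker\Sigma = \operatorname{span}(\boldsymbol 1)$ (as recalled just before the statement), its Moore--Penrose pseudoinverse inherits these properties. The approach is to introduce the ``centering'' linear map $T_k : \mathbb{R}^d \to \mathbb{R}^{d-1}$ defined by $(T_k\boldsymbol x)_i = x_i - x_k$ for $i \neq k$, and then verify that the $d\times d$ matrix $M := T_k^T \Theta^{(k)} T_k$ coincides with both $\Sigma^+$ and the matrix $\Theta$ from \eqref{eq:Theta}. The key properties of $T_k$ that I would exploit are $\ker T_k = \operatorname{span}(\boldsymbol 1)$, that $T_k$ restricts to a bijection $\boldsymbol 1^\perp \to \mathbb{R}^{d-1}$, and that $T_k^+T_k$ is the orthogonal projection onto $(\ker T_k)^\perp=\boldsymbol 1^\perp$, that is, $T_k^+ T_k = \boldsymbol P$.

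The core algebraic step is the identity $T_k \Sigma T_k^T = \Sigma^{(k)}$. This is a direct entry-wise check: for $i, j \neq k$, one has $(T_k\Sigma T_k^T)_{ij} = \Sigma_{ij} - \Sigma_{ik} - \Sigma_{jk} + \Sigma_{kk}$, and combined with the characterization $\Sigma_{ii} + \Sigma_{jj} - 2\Sigma_{ij} = \Gamma_{ij}$ coming from $\Sigma = -\boldsymbol P\Gamma\boldsymbol P/2$ and $\Sigma\boldsymbol 1 = \boldsymbol 0$, this simplifies to $\tfrac{1}{2}(\Gamma_{ik} + \Gamma_{jk} - \Gamma_{ij}) = \Sigma^{(k)}_{ij}$ by \eqref{eq:gamma2sigma}. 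As an immediate corollary, $\Sigma T_k^T = T_k^+\Sigma^{(k)}$: both matrices solve $T_k X = \Sigma^{(k)}$, and both have columns in $\boldsymbol 1^\perp$ (the left-hand side because $\operatorname{image}\Sigma = \boldsymbol 1^\perp$, the right-hand side because columns of $T_k^+$ lie in $\operatorname{image}(T_k^T)=\boldsymbol 1^\perp$), so they coincide by the uniqueness of solutions to $T_kX=\Sigma^{(k)}$ with columns in $\boldsymbol 1^\perp$.

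With these identities in hand, $M$ is visibly symmetric and satisfies $M\boldsymbol 1 = T_k^T\Theta^{(k)}T_k\boldsymbol 1 = 0$, and
\[
\Sigma M \;=\; (\Sigma T_k^T)\Theta^{(k)} T_k \;=\; T_k^+ \Sigma^{(k)}\Theta^{(k)} T_k \;=\; T_k^+ T_k \;=\; \boldsymbol P.
\]
These three properties (symmetry, kernel $\operatorname{span}(\boldsymbol 1)$, and $\Sigma M = \boldsymbol P$) characterize the Moore--Penrose pseudoinverse of a symmetric positive semidefinite matrix whose kernel is $\operatorname{span}(\boldsymbol 1)$, so $M = \Sigma^+$. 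To identify $M$ with $\Theta$, compare entries: for any $(i, j)$, choose any $k \neq i, j$ (possible whenever $d \geq 3$; the case $d = 2$ reduces to the row-sum constraint and is checked directly), and then $(\Sigma^+)_{ij} = M_{ij} = \Theta^{(k)}_{ij} = \Theta_{ij}$ by \eqref{eq:Theta}. As a byproduct, well-definedness of \eqref{eq:Theta}, i.e., independence of the choice of $k$, is recovered automatically since $\Sigma^+$ is one fixed matrix. The main technical step is the bridge $T_k\Sigma T_k^T = \Sigma^{(k)}$ between the centered parameterization $\Sigma$ and the root-specific parameterization $\Sigma^{(k)}$; once this link is available, everything else is routine linear algebra about pseudoinverses of factorized matrices.
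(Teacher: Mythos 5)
Your proof is correct. It is also, in substance, the same argument the paper develops in Appendix~\ref{app:Gamma}: your map $T_k$ is exactly the matrix $\overline{\boldsymbol P}_k$ (the $(d-1)\times d$ matrix obtained from $\boldsymbol P_k = I_d - \boldsymbol 1\boldsymbol e_k^T$ by deleting the $k$th row), and your central identities $T_k\Sigma T_k^T=\Sigma^{(k)}$ and $\Theta = T_k^T\Theta^{(k)}T_k$ are precisely what the commutative diagram in Proposition~\ref{prop:cd} and the explicit formula in Lemma~\ref{lem:Theta} encode. The difference is one of packaging: the paper sets up the full algebra of the maps $\sigma_{\boldsymbol b}$, $\gamma_{\boldsymbol b}$, $f_k$, $\pi_k$ and their adjoints and derives the identity $\Theta=\Sigma^+$ as a consequence of diagram commutativity, whereas you go straight to a bare-hands verification of the Moore--Penrose axioms for $M=T_k^T\Theta^{(k)}T_k$. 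Your route is more self-contained for this single proposition; the paper's route pays off because the same diagram is reused elsewhere (e.g.\ Lemma~\ref{lem:tolapl}, Proposition~\ref{prop:Gamma2Q}). One small point worth making explicit in your step~6: the three conditions (symmetry, $M\boldsymbol 1=\boldsymbol 0$, $\Sigma M=\boldsymbol P$) do pin down $\Sigma^+$, and the cleanest way to see this is to check the four Penrose axioms directly (which all follow as you noted), or to observe that if $N$ is symmetric with $\Sigma N=0$ then every column of $N$ lies in $\ker\Sigma=\operatorname{span}(\boldsymbol 1)$, so $N=c\,\boldsymbol 1\boldsymbol 1^T$, and $N\boldsymbol 1=\boldsymbol 0$ forces $c=0$.
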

The matrix $\Theta $ plays a particularly important role in connection
with positive dependence. This will be discussed in the Section~\ref{sec:HR}.

\subsection{Graphical models for multivariate Pareto distributions}
\label{sec:ext_CI}

Let $ \mathbf{Y}$ be a multivariate Pareto random vector with support on
the space $ \mathcal{L} $. As mentioned above, the vector
$ \mathbf{Y}^{k} $ is supported on a product space
$ \mathcal{L}^{k} $. The construction of $ \mathbf{Y}^{k} $ allows to define
extremal conditional independence for multivariate Pareto distributions
as follows.
\begin{defi}[{\citet[Definition~5]{EH2020}}]
	\label{def:extremal_indep}
	Let $ A$, $B$, $C $ be disjoint subsets of $ [d] $. $ \mathbf{Y}_{A} $ is extremal
	conditionally independent of $ \mathbf{Y}_{B} $, given
	$ \mathbf{Y}_{C} $ (abbreviated as
	$ \mathbf{Y}_{A} \perp _{e} \mathbf{Y}_{B}| \mathbf{Y}_{C} $), if for
	all $ k\in [d] $, it holds that
	\begin{align}
		\label{CI_k} \mathbf{Y}_{A}^{k}\perpp
		\mathbf{Y}_{B}^{k}| \mathbf{Y}_{C}^{k}.
	\end{align}
\end{defi}
It was shown that the condition in Definition~\ref{def:extremal_indep}
can be weakened \citep[ Proposition~1]{EH2020}, and in fact, extremal conditional
independence
$ \mathbf{Y}_{A} \perp _{e} \mathbf{Y}_{B}| \mathbf{Y}_{C} $ already
holds if there exists a $k \in C$ in the conditioning set such that \eqref{CI_k}
is satisfied.

Probabilistic graphical models encode conditional independence in graph
structures. Let $ G=(V,E) $ be an undirected graph with vertex set
$ V=[d] $ and edge set $ E $. A~random vector $ \mathbf{X}$ satisfies the
pairwise Markov property with respect to $ G $, when
\begin{equation*}
	X_{i}\perpp X_{j}| \mathbf{X}_{\setminus ij},\quad
	\text{if }(i,j)\notin E.
\end{equation*}
In this case we call $ \mathbf{X}$ a
\textit{probabilistic graphical model}.
\begin{ex}
	\label{ex_gaussian_GM}
	For a multivariate Gaussian random vector $\mathbf{X}$ with invertible
	covariance $ \Sigma $ and concentration matrix $ K=\Sigma ^{-1}$, it holds
	that $ X_{i}\perpp  X_{j}| \mathbf{X}_{\setminus ij}$ if
	and only if $K_{ij}=0 $. This means that for Gaussian graphical models,
	the concentration matrix contains the graph structure.
\end{ex}

Definition~\ref{def:extremal_indep} allows us to define graphical models
that encode extremal conditional independence. Let $ G=(V,E) $ be an undirected
graph with vertex set $ V $ and edge set $ E $. A~multivariate Pareto vector
$ \mathbf{Y}$ satisfies the pairwise Markov property on
$ \mathcal{L} $ with respect to $ G $ when
\begin{equation*}
	Y_{i} \perp _{e} Y_{j}| \mathbf{Y}_{\setminus ij}
	\quad \text{if }(i,j)\notin E.
\end{equation*}
This means that $ Y_{i} $ and $ Y_{j} $ are extremal conditionally independent,
given all other variables, if there is no edge between $ i $ and
$ j $ in $ G $. In fact, this resembles probabilistic graphical models,
only with extremal conditional independence instead of standard conditional
independence. In this case $ \mathbf{Y}$ is called an extremal graphical
model on $ G $. For a decomposable graph $G$ and if $\mathbf{Y}$ has a
positive and continuous density $f_{\mathbf{Y}}$,
\citet[Theorem~1]{EH2020} proves a Hammersley--Clifford theorem, showing
the equivalence of the pairwise and global Markov properties, as well as
a factorization of the density $ f_{\mathbf{Y}}$ with respect to
$ G $. Note that if $\mathbf{Y}$ has a density, then extremal graphical
models are only defined for connected graphs \citep[Remark~1]{EH2020},
since marginal independence
$\mathbf{Y}_{A} \perp _{e} \mathbf{Y}_{B}$, $A,B\subset V$, would contradict
the existence of the density. This can be relaxed by dropping the assumption
on existence of densities; see Kirstin Strokorb's discussion of
\cite{EH2020}.

\begin{ex}
	\label{ex:HRgraph}
	It was shown in \citet[Proposition~3]{EH2020} that extremal conditional
	independence for H\"usler--Reiss distributions can be read off from the
	inverse covariance matrix
	$ \Theta ^{(k)}:=  (\Sigma ^{(k)}  )^{-1} $. By construction the
	extremal conditional independence does not depend on $ k $, and this is
	also reflected by the relation between the $\Theta ^{(k)}$ for different
	$k\in V$ \citep[Lemma~1]{EH2020}. \cite{Hentschel2021} uses this to rephrase
	extremal conditional independence in terms of the H\"usler--Reiss precision
	matrix $ \Theta $ in \eqref{eq:Theta} such that
	\begin{align}
		Y_{i} \perp _{e} Y_{j}| \mathbf{Y}_{\setminus ij}
		\quad & \Longleftrightarrow \quad \Theta _{ij}=0. \label{eq:indep_from_theta}
	\end{align}
	This equivalence shows the strong similarity of H\"usler--Reiss distributions
	with multivariate Gaussians, where conditional independences can be read
	off from the precision matrix.
	
	We have seen that the H\"usler--Reiss distribution has many similar properties
	with respect to extremal conditional independence as the Gaussian distribution
	with respect to standard conditional independence. It can, therefore, be
	considered as an analogue of a Gaussian graphical model among extremal
	graphical models.
\end{ex}

\section{Extremal \texorpdfstring{\MTPtwo}{MTP2}~distributions}
\label{sec:emtp2}

In this section we introduce a notion of total positivity in extremes.
For some of the most popular parametric families, such as H\"usler--Reiss
and logistic distributions, we show how this property is characterized.

\subsection{Extremal positivity for multivariate Pareto distributions}
\label{sec3.1}

Total positivity in \eqref{eq:log-supermodular} is defined from an inequality
where a probability density is evaluated at two points and their corresponding
componentwise minimum and maximum. The space $ \mathcal{L} $ in the definition
of multivariate Pareto distributions is not closed under these componentwise
operations. Definition \eqref{eq:log-supermodular} is thus not directly
applicable to multivariate Pareto distributions. Similar to extremal conditional
independence (see Definition~\ref{def:extremal_indep}), we define the extremal
version of $ \mathrm{MTP}_{2} $.
\begin{defi}
	\label{def:emtp}
	Let $ \mathbf{Y}$ be a multivariate Pareto random vector. We say that
	$ \mathbf{Y}$ is extremal multivariate totally positive of order 2 ($
	\mathrm{EMTP}_{2} $) if and only if $ \mathbf{Y}^{k} $ is
	$ \mathrm{MTP}_{2} $ for all $ k\in [d] $.
\end{defi}

Using the stochastic representation \eqref{Yk}, we will rewrite this definition
as an explicit condition on the extremal function $\mathbf{W}^{k}$. This
uses the notion of strong $ \mathrm{MTP}_{2} $ distributions given in
\eqref{eq:LLC} and the following result. Recall that the support of a density
function $f$ is the smallest closed set over which the density integrates
to 1.
\begin{thm}
	\label{t:LLCetal}
	Let $X_{0}$ be a random variable whose density is supported on
	$[c,\infty )$ for some $c\in \mathbb R$, and let
	$\mathbf{X}=(X_{1},\ldots ,X_{d})$ be a random vector such that
	$X_{0}\perpp  \mathbf{X}$. Let
	$\mathbf{Z}=(X_{0}, \mathbf{X}+X_{0}\boldsymbol 1)$. Then:
	\begin{itemize}
		\item[1.] $\mathbf{Z}$ is $ \mathrm{MTP}_{2} $~$\Longleftrightarrow $
		$\mathbf{X}$ is strongly $ \mathrm{MTP}_{2} $.
		\item[2.] $\mathbf{Z}$ is strongly $ \mathrm{MTP}_{2} $~$\Longleftrightarrow $ $X_{0}$ and $\mathbf{X}$ are strongly
		$ \mathrm{MTP}_{2} $.
	\end{itemize}
\end{thm}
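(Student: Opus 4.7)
The plan is to write out $f_{\mathbf{Z}}$ explicitly and reduce every (strong) $\mathrm{MTP}_{2}$ inequality for $\mathbf{Z}$ to the product of an inequality in the $0$th coordinate and an inequality in the remaining $d$ coordinates. By independence,
\begin{equation*}
f_{\mathbf{Z}}(z_{0},\mathbf{z}')\;=\;f_{X_{0}}(z_{0})\,f_{\mathbf{X}}(\mathbf{z}'-z_{0}\boldsymbol{1}),
\end{equation*}
so each density value splits cleanly; the support assumption on $X_{0}$ serves only to discard degenerate evaluations where $f_{X_{0}}$ vanishes on one side and the relevant inequality becomes trivial. For \textbf{part 1}, take $\mathbf{u}=(u_{0},\mathbf{u}')$ and $\mathbf{v}=(v_{0},\mathbf{v}')$ with, without loss of generality, $u_{0}\le v_{0}$. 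The $0$th coordinates of $\mathbf{u}\vee\mathbf{v}$ and $\mathbf{u}\wedge\mathbf{v}$ are $v_{0}$ and $u_{0}$ respectively, so the $f_{X_{0}}$-factors on the two sides of the $\mathrm{MTP}_{2}$ inequality for $\mathbf{Z}$ coincide and cancel. Writing $\mathbf{a}:=\mathbf{u}'-u_{0}\boldsymbol{1}$, $\mathbf{b}:=\mathbf{v}'-v_{0}\boldsymbol{1}$ and $\alpha:=v_{0}-u_{0}\ge 0$, the identities
\begin{equation*}
(\mathbf{u}'\vee\mathbf{v}')-v_{0}\boldsymbol{1}=\mathbf{b}\vee(\mathbf{a}-\alpha\boldsymbol{1}),\qquad (\mathbf{u}'\wedge\mathbf{v}')-u_{0}\boldsymbol{1}=(\mathbf{b}+\alpha\boldsymbol{1})\wedge\mathbf{a}
\end{equation*}
reduce the residual inequality exactly to \eqref{eq:LLC} evaluated at $\mathbf{x}=\mathbf{b},\mathbf{y}=\mathbf{a}$ with shift $\alpha$. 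As $u_{0},v_{0}$ vary so that $\alpha$ ranges over all of $[0,\infty)$, this yields the equivalence.

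For \textbf{part 2}, the strong $\mathrm{MTP}_{2}$ inequality for $\mathbf{Z}$ involves a further shift along $\boldsymbol{1}_{d+1}$ by some $\beta\ge 0$, so now the $0$th coordinates of the modified max/min are $p:=\max(u_{0},v_{0}-\beta)$ and $q:=\min(u_{0}+\beta,v_{0})$. The analysis bifurcates according to whether $v_{0}-u_{0}\le\beta$ (Case~A) or $v_{0}-u_{0}>\beta$ (Case~B). In Case~A, $(p,q)=(u_{0},v_{0})$, the $f_{X_{0}}$-factors cancel as in part 1, and a computation analogous to the one above reduces what remains to strong $\mathrm{MTP}_{2}$ of $\mathbf{X}$ with shift $\gamma:=\beta-(v_{0}-u_{0})\ge 0$. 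In Case~B, $(p,q)=(v_{0}-\beta,u_{0}+\beta)$, so the $f_{X_{0}}$-factors must supply
\begin{equation*}
f_{X_{0}}(v_{0}-\beta)\,f_{X_{0}}(u_{0}+\beta)\;\ge\;f_{X_{0}}(u_{0})\,f_{X_{0}}(v_{0}),
\end{equation*}
which is precisely univariate strong $\mathrm{MTP}_{2}$ of $X_{0}$, while the residual reduces to strong $\mathrm{MTP}_{2}$ of $\mathbf{X}$ with shift $\delta:=(v_{0}-u_{0})-\beta>0$. This establishes ``$\Leftarrow$''. For ``$\Rightarrow$'', specialising to $u_{0}=v_{0}$ puts us in Case~A with $\gamma=\beta$ arbitrary, giving strong $\mathrm{MTP}_{2}$ of $\mathbf{X}$, and invoking Proposition~\ref{prop:LLCmargin} (closure of strong $\mathrm{MTP}_{2}$ under margins) on the $0$th coordinate of $\mathbf{Z}$ yields strong $\mathrm{MTP}_{2}$ of $X_{0}$.

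The main obstacle is the bookkeeping in Case~B: one has to verify that the \emph{same} bifurcation $v_{0}-u_{0}\le\beta$ versus $v_{0}-u_{0}>\beta$ that governs the behaviour of the $f_{X_{0}}$-factors also matches the shift $\delta$ (respectively $\gamma$) appearing inside the residual $f_{\mathbf{X}}$-inequality, so that the two factorised inequalities interlock precisely and, in both directions, neither the $X_{0}$ nor the $\mathbf{X}$ contribution can be traded off against the other. Once this algebraic identity is in place the rest is routine.
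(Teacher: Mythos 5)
Your proof is correct and structurally parallels the paper's, but with one notable presentational difference: the paper first isolates the relevant change of variables as a standalone lemma (Lemma~\ref{lem:LLC2}), which states that $f$ is strongly $\mathrm{MTP}_2$ iff $f(\mathbf{x}-s\boldsymbol 1)f(\mathbf{y}-t\boldsymbol 1)\le f(\mathbf{x}\vee\mathbf{y}-(s\vee t)\boldsymbol 1)f(\mathbf{x}\wedge\mathbf{y}-(s\wedge t)\boldsymbol 1)$ for all $s,t$. Because the shifts appear \emph{outside} the lattice operations in that formulation, the $0$th coordinate of the max/min is always simply $x_0\vee y_0$ or $x_0\wedge y_0$ regardless of $s,t$, and the paper's Part~2 argument therefore never needs your Case~A/Case~B bifurcation: the $f_{X_0}$-contribution and the $f_{\mathbf X}$-contribution separate cleanly in a single stroke, and each is handled by one application of Lemma~\ref{lem:LLC2}. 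You instead carry out the change of variables by hand inside the theorem proof, which forces the split on whether $v_0-u_0\le\beta$; this is in effect a re-derivation of Lemma~\ref{lem:LLC2} in place. Your bookkeeping checks out (I verified that in Case~B the residual shift is $\delta=(v_0-u_0)-\beta$, matching the $X_0$-inequality's shift), so the interlocking you flagged as the ``main obstacle'' does work. The lemma-based route is cleaner and reusable; yours is more self-contained.

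One dependency to be careful about: for the final step of the ``$\Rightarrow$'' direction of Part~2 you cite Proposition~\ref{prop:LLCmargin}, but in the paper that proposition is \emph{established inside} the proof of Theorem~\ref{t:LLCetal} itself, using Part~1 together with the standard closure of $\mathrm{MTP}_2$ under margins. Your invocation is not circular --- the marginalization argument depends only on Part~1, which you have already proved --- but you should either say so explicitly or reproduce the short argument (as the paper does) so that the logical order is clear.
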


The above theorem provides a fundamental result on positive dependence
properties of convolutions of random vectors. We discuss in Section~\ref{sec:discussion}
how it can be used in the analysis of general multivariate extreme value
models.

In the sequel we concentrate on particular application of this theorem
to the representation \eqref{Yk}. This gives us a simple way of verifying
whether a multivariate Pareto distribution $\mathbf{Y}$ is
$ \mathrm{EMTP}_{2} $ and shows how the strong $ \mathrm{MTP}_{2} $ property
becomes important in our setting.
\begin{thm}
	\label{t:mtp2}
	Let $ \mathbf{Y}$ be a multivariate Pareto distribution and
	$ \mathbf{W}^{k} $ the $ k $th extremal function, as defined in
	\eqref{Yk}. Then $ \mathbf{Y}$ is $ \mathrm{EMTP}_{2} $ if and only if the
	distribution of $\mathbf{W}^{k}_{\setminus k}$ is strongly
	$ \mathrm{MTP}_{2} $ for all $ k\in [d] $. Equivalently,
	$\mathbf{W}^{k}_{\setminus k}$ is strongly $ \mathrm{MTP}_{2} $ for some
	$ k\in [d] $.
\end{thm}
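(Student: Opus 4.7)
The plan is to reduce Theorem~\ref{t:mtp2} to a direct application of Theorem~\ref{t:LLCetal}(1) via the stochastic representation~\eqref{Yk}. By Definition~\ref{def:emtp}, $\mathbf{Y}$ is $\mathrm{EMTP}_2$ if and only if $\mathbf{Y}^k$ is $\mathrm{MTP}_2$ for every $k\in[d]$. Using~\eqref{Yk} together with $W^k_k=0$ almost surely, after reordering the $k$-th coordinate into the first position,
\[
\mathbf{Y}^k \stackrel{d}{=} \bigl(E,\; \mathbf{W}^k_{\setminus k}+E\boldsymbol{1}\bigr),
\]
which is precisely the form $\mathbf{Z}=(X_0,\mathbf{X}+X_0\boldsymbol{1})$ of Theorem~\ref{t:LLCetal} with $X_0=E$ standard exponential (supported on $[0,\infty)$, so $c=0$) and $\mathbf{X}=\mathbf{W}^k_{\setminus k}$. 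Independence $E\perpp\mathbf{W}^k_{\setminus k}$ is part of the extremal-function representation~\eqref{Yk}, and $\mathrm{MTP}_2$ is invariant under permutations of coordinates. Thus Theorem~\ref{t:LLCetal}(1) gives, for each $k$ individually,
\[
\mathbf{Y}^k\ \text{is}\ \mathrm{MTP}_2 \iff \mathbf{W}^k_{\setminus k}\ \text{is strongly}\ \mathrm{MTP}_2,
\]
and taking the conjunction over $k\in[d]$ proves the first equivalence.

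The remaining ``equivalently'' statement amounts to showing that if $\mathbf{W}^k_{\setminus k}$ is strongly $\mathrm{MTP}_2$ for one single index $k$, then the same holds for every $k$. By the equivalence just established, it suffices to show that $\mathbf{Y}^k$ being $\mathrm{MTP}_2$ for some $k$ forces $\mathbf{Y}^j$ to be $\mathrm{MTP}_2$ for every $j$. I would deduce this from the homogeneity of $\mathbf{Y}$, which on exponential margins takes the density form $f_{\mathbf{Y}}(\mathbf{y}+t\boldsymbol{1})=e^{-t}f_{\mathbf{Y}}(\mathbf{y})$ whenever $\mathbf{y},\mathbf{y}+t\boldsymbol{1}\in\mathcal{L}$. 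Because $f_{\mathbf{Y}^k}\propto f_{\mathbf{Y}}$ on $\mathcal{L}^k=\{x_k>0\}$ and this set is closed under componentwise meet and join, the statement ``$\mathbf{Y}^k$ is $\mathrm{MTP}_2$'' is equivalent to inequality~\eqref{eq:log-supermodular} for $f_{\mathbf{Y}}$ restricted to $\mathbf{x},\mathbf{y}\in\mathcal{L}^k$. Given arbitrary $\mathbf{x},\mathbf{y}\in\mathcal{L}^j$, I would choose $t>-\min(x_k,y_k)$ so that all four points $\mathbf{x}+t\boldsymbol{1}$, $\mathbf{y}+t\boldsymbol{1}$, $(\mathbf{x}\vee\mathbf{y})+t\boldsymbol{1}$, $(\mathbf{x}\wedge\mathbf{y})+t\boldsymbol{1}$ lie in $\mathcal{L}^k$, apply $\mathrm{MTP}_2$ of $f_{\mathbf{Y}}$ there, and cancel the common factor $e^{-2t}$ coming from homogeneity on both sides to obtain~\eqref{eq:log-supermodular} at $(\mathbf{x},\mathbf{y})$. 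This yields $\mathbf{Y}^j$ is $\mathrm{MTP}_2$, and a second invocation of Theorem~\ref{t:LLCetal}(1) upgrades this to strong $\mathrm{MTP}_2$ of $\mathbf{W}^j_{\setminus j}$.

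The substantive technical work is already packaged inside Theorem~\ref{t:LLCetal}(1); what remains for Theorem~\ref{t:mtp2} itself is essentially bookkeeping of~\eqref{Yk} plus the homogeneity translation trick above. The minor point to handle with care is that although the ambient support $\mathcal{L}$ is not closed under $\vee,\wedge$, each $\mathcal{L}^k$ is; this is what allows the $\mathrm{MTP}_2$ inequality to be meaningful on $\mathcal{L}^k$ and to be transported across different $k$'s by constant translations.
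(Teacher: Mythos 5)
Your proof is correct and follows essentially the same route as the paper's: Theorem~\ref{t:LLCetal}(1) applied to the representation~\eqref{Yk} gives the per-$k$ equivalence, and homogeneity of the density transfers the $\mathrm{MTP}_2$ property between the different cones $\mathcal{L}^k$. You also handle the quantifiers in the homogeneity step more carefully than the paper's text (choosing $t$ depending on $\mathbf{x},\mathbf{y}$ rather than a single $t$ for all of $\mathcal{L}^{k'}$); either way the constant from homogeneity appears squared on both sides and cancels, so the precise form of the homogeneity relation is immaterial.
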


The next result gives a useful property of $ \mathrm{EMTP}_{2} $ distributions,
in particular, in connection with latent trees models in Section~\ref{s:tree_models}.
Recall the definition of the margins of a multivariate Pareto distribution
in Section~\ref{s:MPD_prop}.

\begin{prop}
	\label{prop:marginex}
	If a multivariate Pareto distribution $\mathbf{Y}$ is
	$ \mathrm{EMTP}_{2} $, then for any $I \subset [d]$ the margin
	$\mathbf{Y}_{I}$ is also $ \mathrm{EMTP}_{2} $.
\end{prop}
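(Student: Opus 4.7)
The plan is to reduce the claim to the fact that $\mathrm{MTP}_2$ is closed under marginalization, as recalled in Section~\ref{s:posdep}. By Definition~\ref{def:emtp}, it suffices to show that $(\mathbf{Y}_I)^k = \mathbf{Y}_I \mid \{(Y_I)_k > 0\}$ is $\mathrm{MTP}_2$ for every $k \in I$.

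First I would identify the two natural compositions of marginalization and conditioning on a single positive coordinate. Recall from Section~\ref{s:MPD_prop} that $\mathbf{Y}_I$ is obtained by restricting the ordinary marginal $(Y_i : i \in I)$ of $\mathbf{Y}$ to the support $\mathcal{L}_I$. For any $k \in I$ the event $\{Y_k > 0\}$ is contained in $\{\|(Y_i)_{i \in I}\|_\infty > 0\}$, so further conditioning on $\{(Y_I)_k > 0\}$ automatically absorbs the restriction to $\mathcal{L}_I$. A short density calculation, integrating $f_{\mathbf{Y}}(y)\,\mathbbm{1}\{y_k > 0\}$ with respect to $y_{\setminus I}$ in either order, then yields the distributional identity
\begin{equation*}
(\mathbf{Y}_I)^k \stackrel{d}{=} (\mathbf{Y}^k)_I, \qquad k \in I.
\end{equation*}

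Second, I would invoke the $\mathrm{EMTP}_2$ assumption on $\mathbf{Y}$: the conditional vector $\mathbf{Y}^k$ is $\mathrm{MTP}_2$ for every $k \in [d]$, and in particular for every $k \in I$. Since $\mathrm{MTP}_2$ is closed under taking margins (Corollary~3.3 in \cite{FLSUWZ2017}), the vector $(\mathbf{Y}^k)_I$ is again $\mathrm{MTP}_2$. Combined with the identity above, $(\mathbf{Y}_I)^k$ is $\mathrm{MTP}_2$ for all $k \in I$, which is exactly the $\mathrm{EMTP}_2$ property of $\mathbf{Y}_I$.

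The only point that really needs care is the density-level commutation in the first step; once one correctly accounts for the fact that $\mathbf{Y}_I$ lives on the restricted support $\mathcal{L}_I$ rather than on all of $\mathbb{R}^{|I|}$, no further obstacle arises. Notably, no passage through the strong $\mathrm{MTP}_2$ characterization of Theorem~\ref{t:mtp2} is required, which is a pleasant feature of working directly with the $\mathbf{Y}^k$-representation.
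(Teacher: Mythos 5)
Your argument is correct and is essentially identical to the paper's proof: both rest on the distributional identity $(\mathbf{Y}_I)^k \stackrel{d}{=} (\mathbf{Y}^k)_I$ for $k\in I$ (which the paper cites to \cite{EH2020} without spelling out the density computation you sketch) together with closure of $\mathrm{MTP}_2$ under marginalization. Your extra care about the event $\{Y_k>0\}$ absorbing the restriction to $\mathcal{L}_I$ is exactly the point underlying that cited identity.
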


\subsection{H\"usler--Reiss distributions}
\label{sec:HR}

For the H\"usler--Reiss distribution, the extremal function is distributed
according to a degenerate Gaussian distribution. Let $\Theta $ be the H\"usler--Reiss
precision matrix defined in \eqref{eq:Theta}. Denote
$ \mathbb{U}^{d}_{+} \subset \mathbb{S}^{d} $ to be the set of all graph
Laplacians for connected graphs with positive weights on each edge. In
other words, $ \mathbb{U}^{d}_{+} $ is the set of $d\times d$ symmetric
matrices with zero row sums and nonpositive off-diagonal entries whose
support correspond to a connected graph.
\begin{thm}
	\label{t:equivalences_precision_matrices_emtp2}
	Suppose $\mathbf{Y}$ has H\"usler--Reiss distribution with variogram matrix
	$\Gamma $. Then $\mathbf{Y}$ is $ \mathrm{EMTP}_{2} $ if and only if
	$ \Theta $ is the Laplacian matrix of a \emph{connected} graph with positive
	edge weights. Other equivalent conditions are:
	\begin{itemize}
		\item [(i)] $\Theta _{ij}\leq 0$   for all $i\neq j$.
		\item [(ii)] For all $k=1,\ldots ,d$,  $\Theta ^{(k)}$ is a diagonally
		dominant M-matrix.
		\item [(iii)] For all $k=1,\ldots ,d$,  $\Theta ^{(k)}$ is an M-matrix.
		\item [(iv)] For some $k\in [d]$,  $\Theta ^{(k)}$ is a diagonally dominant
		M-matrix.
	\end{itemize}
\end{thm}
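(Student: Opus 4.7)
The plan is to reduce the theorem to the Gaussian characterization of strong $ \mathrm{MTP}_{2} $ (Example~\ref{ex:gauss}) via Theorem~\ref{t:mtp2}, after which the equivalences among (i)--(iv) and the Laplacian description become purely linear-algebraic consequences of two facts: the compatibility identity $\Theta^{(k)}_{ij}=\Theta_{ij}$ for $i,j\neq k$, and the kernel property $\Theta\boldsymbol 1=\boldsymbol 0$ from Proposition~\ref{prop:manu}.

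First, for a H\"usler--Reiss distribution the extremal function $\mathbf{W}^{k}_{\setminus k}$ is Gaussian with covariance $\Sigma^{(k)}$ and hence precision matrix $\Theta^{(k)}$. Since translation by a deterministic vector preserves both $ \mathrm{MTP}_{2} $ and strong $ \mathrm{MTP}_{2} $, Example~\ref{ex:gauss} yields that $\mathbf{W}^{k}_{\setminus k}$ is strongly $ \mathrm{MTP}_{2} $ if and only if $\Theta^{(k)}$ is a diagonally dominant M-matrix. Theorem~\ref{t:mtp2} then delivers $ \mathrm{EMTP}_{2}\Leftrightarrow$(ii)$\Leftrightarrow$(iv) in a single stroke, so the remaining task is to connect (i)--(iv) to one another and to the Laplacian statement.

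The algebraic heart is the observation that, using $\Theta\boldsymbol 1=\boldsymbol 0$, the $i$-th row sum of $\Theta^{(k)}$ for $i\neq k$ satisfies
\[
\sum_{j\neq k}\Theta^{(k)}_{ij}\;=\;\sum_{j\neq k}\Theta_{ij}\;=\;-\Theta_{ik},
\]
so diagonal dominance of $\Theta^{(k)}$ translates to $\Theta_{ik}\leq 0$ for every $i\neq k$. Separately, since $\Sigma^{(k)}$ is positive definite (Lemma~\ref{lem:pdC}), so is $\Theta^{(k)}$, and hence ``$\Theta^{(k)}$ is an M-matrix'' is equivalent to $\Theta^{(k)}_{ij}\leq 0$ for $i\neq j$ in $[d]\setminus k$, which by the compatibility identity is $\Theta_{ij}\leq 0$ on that sub-index set. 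Running these reductions across all $k$ gives (ii)$\Leftrightarrow$(i)$\Leftrightarrow$(iii); combining the two conclusions for a single $k$ shows (iv)$\Rightarrow$(i), while (ii)$\Rightarrow$(iv) is immediate.

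Finally, (i) together with $\Theta\boldsymbol 1=\boldsymbol 0$ presents $\Theta$ as a symmetric matrix with nonpositive off-diagonals and zero row sums, that is, a weighted graph Laplacian with positive weights $-\Theta_{ij}$ on the edges $\{(i,j):\Theta_{ij}<0\}$. Because the kernel of any weighted Laplacian has dimension equal to the number of connected components of the underlying graph and $\operatorname{rank}\Theta=d-1$ by Proposition~\ref{prop:manu}, the support graph is connected; conversely, any $\Theta\in\mathbb{U}^{d}_{+}$ trivially satisfies (i). The main obstacle I anticipate is the bookkeeping around (iv): it asserts the diagonally dominant M-matrix property only for one privileged $k$, yet must force $\Theta_{ij}\leq 0$ for every off-diagonal pair, including those involving $k$. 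The row-sum identity above is exactly what promotes the single-$k$ diagonal-dominance hypothesis into the column condition $\Theta_{ik}\leq 0$, which together with the M-matrix-induced sign condition on the remaining entries closes the argument.
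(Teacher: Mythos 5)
Your proof is correct and follows essentially the same route as the paper: reduce to the Gaussian characterization of strong $\mathrm{MTP}_2$ via Theorem~\ref{t:mtp2} and Example~\ref{ex:gauss} to get $\mathrm{EMTP}_2\Leftrightarrow$ (ii) $\Leftrightarrow$ (iv), then shuttle among (i)--(iv) using $\Theta^{(k)}_{ij}=\Theta_{ij}$ and $\Theta\boldsymbol 1=\boldsymbol 0$. Two small remarks. First, for the connectivity of the support graph of $\Theta$ you invoke the standard fact that the kernel of a weighted Laplacian has dimension equal to the number of connected components, together with $\operatorname{rank}\Theta=d-1$; the paper instead argues via the weighted matrix--tree theorem \eqref{eq:det_spanning_trees} that $\det\Theta^{(k)}>0$ forces at least one spanning tree to have all positive weights. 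Both are correct; yours is arguably more self-contained, the paper's ties more directly to the formula it already uses for $\operatorname{Det}\Theta$. Second, there is a genuine (if minor) gap in your deduction $\text{(iii)}\Rightarrow\text{(i)}$: you cover a pair $(i,j)$ by choosing $k\notin\{i,j\}$, which requires $d\geq 3$. For $d=2$ the set $[d]\setminus k$ is a singleton, so (iii) is vacuous and your ``run over all $k$'' argument produces no constraint on $\Theta_{12}$. The implication is still true because $\Theta_{12}=1/\Gamma_{12}<0$ automatically in the bivariate H\"usler--Reiss case (Example~\ref{ex:bivariate_HR_mtp2}); the paper explicitly disposes of $d=2$ this way before assuming $d\geq 3$, and you should do the same.
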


\begin{remark}
	\label{rem3.5}
	Note that this theorem and standard results on graph Laplacians imply that
	every support is possible in $\Theta $ as long as it corresponds to a connected
	graph.
\end{remark}

We now discuss the examples of bivariate and trivariate H\"usler--Reiss
distributions with respect to Theorem~\ref{t:equivalences_precision_matrices_emtp2}.

\begin{ex}
	\label{ex:bivariate_HR_mtp2}
	The bivariate H\"usler--Reiss distribution is generated from a Gaussian
	random variable with mean $ -\Gamma _{12}/2 $ and variance
	$ \Gamma _{12} $. Therefore, as
	$ \Theta ^{(1)}=\Theta ^{(2)}=1/\Gamma _{12} $ is always positive by definition
	of $ \Gamma _{12} $, it is always a diagonally dominant M-matrix, and it
	follows that the bivariate H\"usler--Reiss distribution is
	$ \mathrm{EMTP}_{2} $ for any $ \Gamma _{12} $.
\end{ex}

\begin{ex}
	\label{ex:k=3}
	Let $ d=3 $. Then
	\begin{align*}
		\Theta &=\frac{1}{\det (\Sigma ^{(k)})} 
		\begin{pmatrix} 2\Gamma _{23}&
			\Gamma _{12}-\Gamma _{13}-\Gamma _{23}&-\Gamma
			_{12}+ \Gamma _{13}-\Gamma _{23}
			\\
			\Gamma _{12}-\Gamma _{13}-\Gamma _{23}&2\Gamma
			_{13}&-\Gamma _{12}- \Gamma _{13}+\Gamma
			_{23}
			\\
			-\Gamma _{12}+\Gamma _{13}-\Gamma _{23}&-\Gamma
			_{12}-\Gamma _{13}+ \Gamma _{23}&2\Gamma
			_{12} \end{pmatrix}.
	\end{align*}
	
	The conditions in Theorem~\ref{t:equivalences_precision_matrices_emtp2}(ii)
	translate to the triangle inequalities
	\begin{align}
		\label{eq:triangle}
		\nonumber
		\Gamma _{12}&\leq \Gamma _{13}+
		\Gamma _{23},
		\\
		\Gamma _{13}&\leq \Gamma _{12}+\Gamma _{23},
		\\
		\nonumber
		\Gamma _{23}&\leq \Gamma _{12}+\Gamma
		_{13},
	\end{align}
	where the second and the third inequality come from the row sums. Note
	the symmetry in the inequalities, as $ \mathrm{EMTP}_{2} $ does not depend
	on $ k $. It follows that for trivariate H\"usler--Reiss distributions,
	$ \mathrm{EMTP}_{2} $ is equivalent to $\Gamma $ being a metric.
\end{ex}

As we remarked in Appendix~\ref{sec:povar}, as long as $\Gamma $ is a strictly
conditionally negative matrix, $\sqrt{\Gamma _{ij}}$ are always distances
in the sense that the map $(i, j) \mapsto \sqrt{\Gamma _{ij}}$ is a metric
function (satisfies the triangle inequality). In the special case when
$\Theta $ is a Laplacian matrix, as in Theorem~\ref{t:equivalences_precision_matrices_emtp2},
the map $(i, j) \mapsto \Gamma _{ij}$ is also a metric function by Lemma~\ref{lem:eff_res}.
In the electrical network literature, this corresponds to the statement
that if $\Theta $ is a Laplacian of a connected graph then the corresponding
resistances $\Gamma $ define a metric
(\citet{fiedler98,devriendt2020effective,klein1993resistance}). In Example~\ref{ex:k=3}
we showed that these two conditions are equivalent if $d=3$. If
$d>3$, then $\Gamma $ being a metric is a strictly weaker condition. Here
we present a probabilistic interpretation for the case when
$\Gamma $ is a metric. Recall the classical notion of positive association
(\citet{esary1967association}): A random vector $\mathbf{X}$ is positively
associated if $\operatorname{Cov}(f(\mathbf{X}),g(\mathbf{X}))\geq 0$ for any two
nondecreasing functions $f$, $g$ for which this covariance exists. By
\cite{Pitt1982}, a Gaussian $\mathbf{X}$ is positively associated if and
only if its covariance matrix has only nonnegative entries.
\begin{prop}
	\label{prop:extremal_association}
	The parameter matrix $ \Gamma $ in a H\"usler--Reiss random vector satisfies
	the triangle inequality
	$ \Gamma _{ij}\le \Gamma _{ik}+\Gamma _{jk} $ for all
	$ i,j,k\in [d] $ if and only if all extremal functions
	$\mathbf{W}^{k}$, $k\in [d]$, are positively associated.
\end{prop}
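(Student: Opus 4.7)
The plan is to reduce the statement to Pitt's characterization of positive association for Gaussian vectors, which is cited in the paragraph preceding the proposition. The bridge is the explicit Gaussian representation of the extremal functions of a H\"usler--Reiss distribution, combined with the covariance mapping \eqref{eq:gamma2sigma}.

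First, I would recall from Section~\ref{sec:huesler_reiss} that for a H\"usler--Reiss distribution with parameter matrix $\Gamma$ and each $k\in[d]$, the extremal function satisfies $W^k_k=0$ almost surely, while $\mathbf{W}^k_{\setminus k}$ is $(d-1)$-variate Gaussian with covariance matrix $\Sigma^{(k)}$ given by \eqref{eq:gamma2sigma}. Next, I would observe that $\mathbf{W}^k$ is positively associated if and only if $\mathbf{W}^k_{\setminus k}$ is: since $W^k_k=0$ almost surely, any coordinatewise nondecreasing function of $\mathbf{W}^k$ reduces, after substituting $0$ into the $k$th slot, to a coordinatewise nondecreasing function of $\mathbf{W}^k_{\setminus k}$, and conversely every such function of $\mathbf{W}^k_{\setminus k}$ lifts trivially; thus the defining covariance inequalities coincide on both sides.

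By Pitt's theorem, the (nondegenerate) Gaussian vector $\mathbf{W}^k_{\setminus k}$ is positively associated if and only if $\Sigma^{(k)}_{ij}\geq 0$ for all $i,j\neq k$. Using \eqref{eq:gamma2sigma}, this reads
\[
\tfrac{1}{2}\bigl(\Gamma_{ik}+\Gamma_{jk}-\Gamma_{ij}\bigr)\geq 0,
\]
i.e.\ $\Gamma_{ij}\leq \Gamma_{ik}+\Gamma_{jk}$, for all $i,j\neq k$ (the case $i=j$ is automatic since $\Gamma_{ii}=0$ and variogram entries are nonnegative, and the cases where one of $i,j,k$ coincide are trivial). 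Ranging $k$ over $[d]$ then matches positive association of every extremal function $\mathbf{W}^k$ with the full system of triangle inequalities on $\Gamma$, proving both directions simultaneously.

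The only mildly delicate point is the equivalence in the second step, which handles the degenerate coordinate $W^k_k=0$; this is routine bookkeeping rather than a genuine obstacle, and beyond it the proof rests entirely on Pitt's theorem and the linear covariance mapping.
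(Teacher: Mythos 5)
Your proof is correct and takes essentially the same route as the paper's: both reduce to Pitt's theorem for the Gaussian vector $\mathbf{W}^k_{\setminus k}$ and then translate $\Sigma^{(k)}_{ij}\geq 0$ into the triangle inequality via the covariance mapping, the only cosmetic difference being that you apply \eqref{eq:gamma2sigma} directly while the paper manipulates the inverse mapping \eqref{eq:sigma2gamma}. The extra paragraph you devote to the degenerate coordinate $W^k_k=0$ is a sensible bit of bookkeeping that the paper leaves implicit.
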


\subsection{Other important constructions}
\label{sec:otherconstr}

Another popular construction of multivariate Pareto distributions arises
from extremal functions of the form
\begin{align}
	\mathbf{W}^{k}=(U_{1}-U_{k},\ldots
	,U_{d}-U_{k}), \label{eq:indep_gen} 
\end{align}
for independent $U_{1},\ldots ,U_{d}$. Examples include the extremal logistic
(\citet{taw1990,dom2016}) and the extremal Dirichlet distribution
(\citet{CT1991}), which we will discuss below.

Our next result provides a simple way of checking whether such constructions
are $ \mathrm{EMTP}_{2} $.
\begin{prop}
	\label{prop:indep_gen}
	Consider the multivariate Pareto distribution with stochastic representation
	\eqref{Yk}. Suppose that $ W^{k}_{i}=U_{i}-U_{k}$ for some independent
	$U_{1},\ldots ,U_{d}$ such that $U_{i}$ has a log-concave distribution
	for every $i \in [d]$. Then $\mathbf{Y}$ is $ \mathrm{EMTP}_{2} $.
\end{prop}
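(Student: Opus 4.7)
The plan is to reduce, via Theorem~\ref{t:mtp2}, the $\mathrm{EMTP}_2$ claim about $\mathbf{Y}$ to a strong $\mathrm{MTP}_2$ claim about the differences, and then to derive the latter from Theorem~\ref{t:LLCetal}(2) after a judicious choice of $(X_0,\mathbf{X})$. Fix any $k\in[d]$ and set $\mathbf{V}:=\mathbf{W}^k_{\setminus k}=(U_i-U_k)_{i\neq k}$. By Theorem~\ref{t:mtp2} it suffices to show that $\mathbf{V}$ is strongly $\mathrm{MTP}_2$.

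The central observation is that the augmented vector $(-U_k,\mathbf{V})$ has exactly the product-plus-shift form $(X_0,\mathbf{X}+X_0\boldsymbol 1)$ of Theorem~\ref{t:LLCetal}(2) with $X_0=-U_k$, $\mathbf{X}=(U_i)_{i\neq k}$, and $X_0\perpp\mathbf{X}$ by the independence hypothesis. Theorem~\ref{t:LLCetal}(2) then asserts that $(-U_k,\mathbf{V})$ is strongly $\mathrm{MTP}_2$ as soon as (a)~$X_0=-U_k$ is univariately strongly $\mathrm{MTP}_2$ and (b)~$\mathbf{X}$ is strongly $\mathrm{MTP}_2$. Both hold under our hypotheses: (a) follows from the univariate characterization of strong $\mathrm{MTP}_2$ recalled in Section~\ref{s:posdep} (equivalence with log-concavity of the density), while (b) follows because $\prod_{i\neq k}f_i(x_i)$ factorizes, so the inequality \eqref{eq:LLC} decouples into the scalar log-concave inequality on each coordinate. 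Once $(-U_k,\mathbf{V})$ is strongly $\mathrm{MTP}_2$, Proposition~\ref{prop:LLCmargin} (closure of strong $\mathrm{MTP}_2$ under margins) yields the desired strong $\mathrm{MTP}_2$ of $\mathbf{V}$.

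The main obstacle is the support hypothesis in Theorem~\ref{t:LLCetal}: it requires the density of $X_0=-U_k$ to be supported on a half-line $[c,\infty)$, equivalently $U_k$ to be bounded above, which is not guaranteed for a general log-concave density (e.g.\ Gaussian). To circumvent this I would run a truncation-plus-limit argument. Let $U_k^{(n)}$ have density $f_k^{(n)}\propto f_k\cdot\mathbbm{1}_{(-\infty,n]}$, which remains log-concave as a product of two log-concave functions, and apply the argument above to $U_k^{(n)}$ in place of $U_k$, so that $\mathbf{V}^{(n)}:=(U_i-U_k^{(n)})_{i\neq k}$ is strongly $\mathrm{MTP}_2$ for each $n$. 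Using the integral representation $g^{(n)}(\mathbf{v})=\int f_k^{(n)}(u)\prod_{i\neq k}f_i(v_i+u)\,\d u$ together with monotone/dominated convergence, the densities $g^{(n)}$ converge pointwise to the density $g$ of $\mathbf{V}$ as $n\to\infty$, and since \eqref{eq:LLC} is a pointwise inequality on densities it is preserved in the limit. Hence $\mathbf{V}$ is strongly $\mathrm{MTP}_2$, and Theorem~\ref{t:mtp2} concludes that $\mathbf{Y}$ is $\mathrm{EMTP}_2$.
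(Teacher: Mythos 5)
Your argument follows the paper's proof essentially step for step: reduce via Theorem~\ref{t:mtp2} to strong $\mathrm{MTP}_2$ of $\mathbf{W}^k_{\setminus k}$, set $X_0=-U_k$ and $\mathbf{X}=(U_i)_{i\neq k}$, observe that log-concavity gives strong $\mathrm{MTP}_2$ of $X_0$ and of $\mathbf{X}$ (the latter is exactly Proposition~\ref{p:univ}), invoke Theorem~\ref{t:LLCetal}(2) for the augmented vector, and finish with the marginal-closure Proposition~\ref{prop:LLCmargin}. What you add, and what the paper silently skips over, is the verification of the support hypothesis in Theorem~\ref{t:LLCetal}: the theorem is stated for $X_0$ with density supported on a half-line $[c,\infty)$, which $-U_k$ need not satisfy. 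Your truncation-plus-limit argument is a valid fix: truncated log-concave densities remain log-concave, the inequality \eqref{eq:LLC} is a pointwise condition on densities and is therefore preserved under pointwise convergence, and the convergence of $g^{(n)}$ to $g$ is routine dominated convergence once you handle the normalizing constants $\int_{-\infty}^{n}f_k \to 1$. However, there is a cleaner way out that avoids the limiting argument entirely: examining the proof of Theorem~\ref{t:LLCetal}(2), the backward implication (strong $\mathrm{MTP}_2$ of $X_0$ and $\mathbf{X}$ implies strong $\mathrm{MTP}_2$ of $\mathbf{Z}$) does not make use of the support hypothesis at all --- that hypothesis is only needed for the forward direction, where one must control where $f_0$ is positive to recover properties of $\mathbf{X}$ from those of $\mathbf{Z}$. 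Since Proposition~\ref{prop:indep_gen} only needs the backward direction, the paper's direct application is justified, though the paper would have benefited from saying so explicitly, as you noticed. Either way, your proof is correct.
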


From Proposition~\ref{prop:indep_gen} it follows that both the extremal
logistic and extremal Dirichlet distributions are always
$ \mathrm{EMTP}_{2} $.

\begin{ex}[Extremal logistic distribution]
	\label{exmp6}
	The extremal logistic distribution with parameter
	$ \theta \in (0,1) $ is defined by an extremal function
	$ \mathbf{W}^{k} $, as in \eqref{eq:indep_gen}, with
	$ U_{i}\sim \operatorname{Gumbel}(\text{location}=\theta G(1-\theta ),
	\text{scale} = \theta )$ for $ i\neq k $ and
	$ (G(1-\theta )\exp (U_{k}))^{-1/\theta} := Z \sim \operatorname{Gamma}(
	\text{shape} = 1-\theta ,\text{scale} = 1) $, where $ G(x) $ is the Gamma
	function (\citet{dom2016}). For $ i\neq k $, $ U_{i} $ follows a Gumbel distribution,
	which is log-concave. For $ i=k $, observe
	\begin{align*}
		-U_{k}&=\theta \log (Z)+\log \bigl(G(1-\theta )\bigr),
	\end{align*}
	which means that
	$ - U_{k} \sim \operatorname{ExpGamma}[\text{shape}= 1-\theta ,\text{scale}=
	\theta ,\text{location}=\log (G(1-\theta ))] $ follows an exponential Gamma
	distribution. An
	$ \operatorname{ExpGamma}[\text{shape}= \kappa ,\text{scale}=\theta ,
	\text{location}=\mu ]$ distribution has density
	\begin{equation*}
		\frac{1}{\theta G (\kappa )}\exp \biggl( \frac{\kappa (x-\mu )}{\theta }-\exp \biggl(\frac{x-\mu }{\theta }
		\biggr) \biggr),
	\end{equation*}
	which is log-concave. Hence, $ U_{k} $ is log-concave by symmetry. By Proposition~\ref{prop:indep_gen}
	this implies $ \mathrm{EMTP}_{2} $.
\end{ex}

\begin{ex}[Extremal Dirichlet distribution]
	\label{exmp7}
	The extremal Dirichlet distribution with parameters
	$ \alpha _{1},\ldots , \alpha _{d} $ has an extremal function, as in
	\eqref{eq:indep_gen}, where
	$ \exp (U_{i})\sim  \operatorname{Gamma}(\text{shape}=\alpha _{i},\text{scale}=1/
	\alpha _{i}) $ for $ i\neq k $ and
	$ \exp (U_{k})\sim \operatorname{Gamma}(\text{shape}=\alpha _{k}+1,\text{scale}=1/
	\alpha _{k}) $ (\citet{EV2020}). As the exponential Gamma distribution is
	log-concave, this is $ \mathrm{EMTP}_{2} $ by Proposition~\ref{prop:indep_gen}.
\end{ex}

\subsection{Bivariate Pareto distributions and \texorpdfstring{$\mathrm{EMTP}_{2}$}{EMTP2}}
\label{sec:2variate}

A bivariate Pareto distribution $\mathbf{Y}= (Y_{1}, Y_{2})$ is completely
characterized by a univariate distribution. Indeed, the extremal function
then satisfies $\mathbf{W}^{1} = (0, W^{1}_{2})$ with a real-valued random
variable $W_{2}^{1}$ with $\mathbb{E}(\exp W_{2}^{1})=1$. The second extremal
function $\mathbf{W}^{2} = (W^{2}_{1},0)$ is determined by the first one
through the duality
$\mathbb P(W^{2}_{1} \leq z) = \mathbb E(\mathbbm{1}_{\{W^{1}_{2}\geq -z
\}}\exp W^{1}_{2})$, $z\in \mathbb R$ \citep[Example~3]{EH2020}. Conversely,
any random variable $W_{2}^{1}$ with $\mathbb{E}(\exp W_{2}^{1})=1$ defines
a unique bivariate Pareto distribution through the extremal function and
duality.

These results extend to extremal tree models since they are a composition
of bivariate Pareto distributions (\citet{EV2020}); see Section~\ref{s:tree_models}
below.

By Theorem~\ref{t:mtp2} $ \mathrm{EMTP}_{2} $ is equivalent to the univariate
random variable $ W_{2}^{1} $ being strongly $ \mathrm{MTP}_{2} $. This gives
us the following result.
\begin{thm}
	\label{thm:bivariate_emtp2}
	A bivariate Pareto distribution is $ \mathrm{EMTP}_{2} $ if and only if the
	distribution of $ W^{1}_{2}$ is log-concave.
\end{thm}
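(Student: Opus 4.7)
The plan is to chain together two previously established results: Theorem~\ref{t:mtp2}, which reduces $\mathrm{EMTP}_2$ for multivariate Pareto distributions to strong $\mathrm{MTP}_2$ of the extremal function, together with the univariate characterization of strong $\mathrm{MTP}_2$ announced at the end of Section~\ref{s:posdep}.

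First, I would invoke Theorem~\ref{t:mtp2} with $d=2$ and $k=1$: the bivariate Pareto distribution $\mathbf{Y}$ is $\mathrm{EMTP}_2$ if and only if $\mathbf{W}^1_{\setminus 1}$ is strongly $\mathrm{MTP}_2$. In the bivariate setting, $\mathbf{W}^1_{\setminus 1}$ reduces to the single real-valued random variable $W^1_2$, so the question is whether the scalar variable $W^1_2$ is strongly $\mathrm{MTP}_2$. I would emphasize here that Theorem~\ref{t:mtp2} guarantees it suffices to verify the property for one index $k$; the duality between $W^1_2$ and $W^2_1$ recalled in Section~\ref{sec:2variate} then automatically carries the conclusion across.

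Next I would apply the univariate characterization stated in Section~\ref{s:posdep} (whose proof is in Appendix~\ref{s:smtp2}): a univariate density is strongly $\mathrm{MTP}_2$ if and only if it is log-concave. Unlike ordinary $\mathrm{MTP}_2$, which is automatic in dimension one, the strong version \eqref{eq:LLC} in the scalar case reduces to the inequality $f(x\vee(y-\alpha))f((x+\alpha)\wedge y)\ge f(x)f(y)$ for all $\alpha\geq 0$, which is exactly the standard log-concavity condition on $f$ once one unpacks the $\vee$/$\wedge$. Combining the two equivalences yields the theorem.

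The ``main obstacle'' in this statement is therefore not contained in the present proof at all: it is built into the two ingredients, Theorem~\ref{t:mtp2} and the univariate strong $\mathrm{MTP}_2$/log-concavity equivalence. Given those, the argument is a one-line specialization to $d=2$, and the only subtlety worth flagging is a remark that the bivariate Pareto distribution is completely determined by $W^1_2$ (via the duality in Section~\ref{sec:2variate}), so that both implications in the theorem make sense as statements about the distribution of this single variable.
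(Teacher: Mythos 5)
Your proposal is correct and follows essentially the same two-step argument as the paper: apply Theorem~\ref{t:mtp2} to reduce $\mathrm{EMTP}_2$ of the bivariate Pareto vector to strong $\mathrm{MTP}_2$ of the scalar $W^1_2$, then invoke Proposition~\ref{p:univ} to identify univariate strong $\mathrm{MTP}_2$ with log-concavity. The extra remark on duality and the sufficiency of checking a single $k$ is accurate but already subsumed in Theorem~\ref{t:mtp2}.
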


Log-concave distributions include many known families like Gaussian, exponential,
uniform, beta or Laplace, but also the class of generalized extreme value
distributions such that many bivariate Pareto distributions are indeed
$ \mathrm{EMTP}_{2} $ for any parameter. The construction of an example where
the bivariate Pareto distribution is not $ \mathrm{EMTP}_{2} $ requires a
positive random variable $ W^{1}_{2} $ with
$\mathbb E (\exp W^{1}_{2}) = 1 $ for which $ W_{1}^{2} $ is not log-concave.
One example is when $ \exp (W^{1}_{2})$ is folded Laplace.
\begin{ex}
	\label{exmp8}
	Let $\exp (W^{1}_{2})=|X|$, where $X$ is distributed according to a Laplace
	distribution with mean $\mu $ and scale parameter $\sigma $. The density
	of $\exp (W^{1}_{2})$ equals
	\begin{equation*}
		f(y)=\frac{1}{\sigma} 
		\begin{cases} e^{-\mu /\sigma}\cosh (y/\sigma
			) & \text{for }0\le y< \mu,
			\\
			e^{-y/\sigma}\cosh (\mu /\sigma ) & \text{for }0\le \mu \le y; \end{cases}
	\end{equation*}
	see also \cite{LK2015}. The density of $ W^{1}_{2} $ equals
	$ e^{y}f(e^{y}) $, such that log-concavity of $ W^{1}_{2} $ requires that
	the second derivative of this is nonpositive. We compute for
	$ 0\le y<\mu $
	\begin{align*}
		\frac{\partial ^{2}}{\partial y^{2}} \biggl(y-\log (\sigma )- \frac{\mu}{\sigma}+\log \cosh \biggl(
		\frac{e^{y}}{\sigma} \biggr) \biggr)&=\frac{\partial ^{2}}{\partial y^{2}}\log \cosh \biggl(
		\frac{e^{y}}{\sigma} \biggr)
		\\
		&= \frac{e^{y}   (\sigma \tanh   (\frac{e^{y}}{\sigma}  )+e^{y} \operatorname{sech}^{2}  (\frac{e^{y}}{\sigma}  )  )}{\sigma ^{2}},
	\end{align*}
	which is clearly positive.
\end{ex}

\section{\texorpdfstring{$ \mathrm{EMTP}_{2} $}{EMTP2} in graphical extremes}
\label{sec:graphicalEx}

The previous section introduced the notion of $ \mathrm{EMTP}_{2} $. In this
section we study $ \mathrm{EMTP}_{2} $ in the context of extremal graphical
models. We focus on two aspects that we find particularly important. We
first discuss the case of extremal tree models and their latent counterparts,
which provide another strong theoretical argument for the usefulness of
the $ \mathrm{EMTP}_{2} $ constraint. We then characterize extremal conditional
independence structures that may appear in $ \mathrm{EMTP}_{2} $ distributions.

\subsection{Extremal tree models}
\label{s:tree_models}

For any undirected tree $T=(V,E)$, a multivariate Pareto distribution
$ \mathbf{Y}$ that is Markov to $ T $ is called an extremal tree model
(\citet{EH2020}). Such models also arise as the limits of regularly varying
Markov trees (\citet{Segers2020}). Define a directed tree
$ T^{k}=(V,E^{k}) $ rooted in $ k $ by directing all edges in $ T $ away
from $ k $. By \citet[Proposition~1]{EV2020}, for any $k \in V$, the extremal
function $ \mathbf{W}^{k} $ has the stochastic representation
\begin{align}
	W_{i}^{k}&=\sum_{e\in \operatorname{ph}(ki;T^{k})}W_{e},
	\label{eq:tree_representation} 
\end{align}
where $ \operatorname{ph}(ki;T^{k}) $ is the set of directed edges on the path from
$ k $ to $ i $ in $ T^{k} $ and $ \{W_{e},  e\in E^{k}\} $ is a set of
independent random variables, where $W_{e}$ with $e = (i,j)$ has the distribution
of $W^{i}_{j}$, that is, the $j$th component of the $i$th extremal function
of $\mathbf{Y}$.

For a H\"{us}sler--Reiss tree model on the tree $ T $, it was shown in
\citet[Proposition~4]{EV2020} that the extremal variogram defined in \eqref{evario}
is a tree metric, that is,
\begin{equation*}
	\Gamma ^{(k)}_{ij}=\sum_{mn\in \operatorname{ph}(ij;T)}
	\Gamma ^{(k)}_{mn}.
\end{equation*}
As a consequence, the minimum spanning tree with weights
$ \Gamma ^{(k)}_{ij}>0 $ is unique and equals the underlying tree
$ T $ \citep[Corollary~1]{EV2020}. The link of this model class to Brownian
motion tree models is established in Proposition~\ref{prop:BMTM} in Appendix~\ref{app:Gamma}.

\begin{prop}
	\label{prop:trees_mtp2}
	Extremal tree models are $ \mathrm{EMTP}_{2} $ if and only if all bivariate
	margins are $ \mathrm{EMTP}_{2} $, that is, if all $ W_{e} $ in
	\eqref{eq:tree_representation} have log-concave densities. This implies
	that H\"usler--Reiss tree models are always $ \mathrm{EMTP}_{2} $.
\end{prop}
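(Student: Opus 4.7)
The plan is to prove the chain of equivalences (a) $\mathbf{Y}$ is $\mathrm{EMTP}_{2}$, (b) every bivariate margin of $\mathbf{Y}$ is $\mathrm{EMTP}_{2}$, and (c) every $W_{e}$ in \eqref{eq:tree_representation} has a log-concave density. The implication (a)$\Rightarrow$(b) is Proposition~\ref{prop:marginex}. For (b)$\Leftrightarrow$(c), I restrict \eqref{eq:tree_representation} to a pair $\{i,j\}$: the bivariate margin $(Y_i,Y_j)$ is itself a bivariate Pareto distribution whose extremal function is distributed as $\sum_{e\in \operatorname{ph}(ij;T^{i})} W_e$, so by Theorem~\ref{thm:bivariate_emtp2} this margin is $\mathrm{EMTP}_{2}$ iff this path sum is log-concave. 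The ``only if'' direction follows by specializing to edges $(i,j)\in E$, where the sum collapses to a single $W_e$; the ``if'' direction uses Pr\'ekopa's theorem that a convolution of log-concave densities is log-concave.

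The substantive direction is (c)$\Rightarrow$(a). By Theorem~\ref{t:mtp2} it suffices to show that $\mathbf{W}^{k}_{\setminus k}$ is strongly $\mathrm{MTP}_{2}$ for a single choice of $k\in V$. I root $T$ at $k$ and proceed by induction on the number of vertices. Let $c_1,\ldots,c_m$ be the children of $k$ in $T^k$ and let $T_j$ denote the subtree of $T^k$ rooted at $c_j$. By \eqref{eq:tree_representation}, the block $(W^{k}_{i})_{i\in T_j}$ equals $(X_0,\, \mathbf{X}^{(j)}+X_0\boldsymbol 1)$, where $X_0 := W_{(k,c_j)}$ and $\mathbf{X}^{(j)} := \bigl(\sum_{e\in \operatorname{ph}(c_j i;T_j)} W_e\bigr)_{i\in T_j\setminus\{c_j\}}$ is the analogous path-sum vector for the strictly smaller rooted tree $T_j$, independent of $X_0$. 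The univariate log-concave variable $X_0$ is strongly $\mathrm{MTP}_{2}$ by the characterization of univariate strong $\mathrm{MTP}_{2}$ recalled in Section~\ref{s:posdep}, while $\mathbf{X}^{(j)}$ is strongly $\mathrm{MTP}_{2}$ by the inductive hypothesis applied to $T_j$. Part~2 of Theorem~\ref{t:LLCetal} then yields strong $\mathrm{MTP}_{2}$ of $(W^{k}_{i})_{i\in T_j}$. Because distinct subtrees share no edges of $T$, the blocks are mutually independent, and a direct check of \eqref{eq:LLC} on a product density shows that an independent product of strongly $\mathrm{MTP}_{2}$ densities is again strongly $\mathrm{MTP}_{2}$; concatenating the $m$ blocks gives the required property of $\mathbf{W}^{k}_{\setminus k}$.

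The H\"usler--Reiss conclusion is then immediate: in that class each $W_e$ is Gaussian (see Section~\ref{sec:huesler_reiss}) and hence log-concave, so (c) holds. The delicate point I anticipate in the above plan is the applicability of Theorem~\ref{t:LLCetal} inside the induction: its statement restricts $X_0$ to have density supported on a half-line $[c,\infty)$, whereas a log-concave $W_{(k,c_j)}$ typically has full support $\mathbb{R}$ (as in the H\"usler--Reiss case). The main obstacle is therefore to verify that Theorem~\ref{t:LLCetal}, or an adaptation of its proof, extends to general log-concave $X_0$; failing that, one would instead verify \eqref{eq:LLC} directly on the explicit factorized tree density $\prod_{(i,j)\in T^k} f_{(i,j)}(w_j - w_i)$ by an edge-by-edge case analysis of the $\vee$/$\wedge$ operations, leveraging log-concavity of each edge factor together with the independence across edges.
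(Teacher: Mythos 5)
Your argument is essentially the same as the paper's. The paper's proof of Proposition~\ref{prop:trees_mtp2} is a thin wrapper around Proposition~\ref{prop:treecon} (the log-concave tree process result), and the proof of that proposition is precisely the recursive root-decomposition you carry out: apply Theorem~\ref{t:LLCetal}(2) at the root, split into the independent subtrees rooted at the children, recurse, and use that concatenating independent strongly $\mathrm{MTP}_{2}$ vectors preserves strong $\mathrm{MTP}_{2}$. Your (a)$\Leftrightarrow$(b)$\Leftrightarrow$(c) bookkeeping (using Proposition~\ref{prop:marginex} and Theorem~\ref{thm:bivariate_emtp2} together with closedness of log-concavity under convolution) spells out more explicitly the equivalence with bivariate margins that the statement asserts, whereas the paper only proves the equivalence with log-concavity of the edge variables and leaves the bivariate-margin formulation implicit.

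On the ``delicate point'' you flag: you are right that Theorem~\ref{t:LLCetal} as stated requires the density of $X_0$ to be supported on a half-line $[c,\infty)$, and that the edge variables $W_e$ in the tree recursion can have full support (they do in the H\"usler--Reiss case, being Gaussian). This is not specific to your argument --- the paper's own proof of Proposition~\ref{prop:treecon} applies Theorem~\ref{t:LLCetal} recursively with exactly the same unrestricted $X_k$, so the same issue is present there. The resolution is that the half-line restriction is not essential: an inspection of the proofs of Lemma~\ref{lem:LLC2} and Theorem~\ref{t:LLCetal} shows that all that is needed is that the support of $f_0$ be a (possibly unbounded) interval, so that $x_0 \vee y_0$ and $x_0 \wedge y_0$ remain in the support whenever $x_0,y_0$ do. A univariate log-concave density always has connected (interval) support, so this holds automatically for the $W_e$. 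Thus your concern is a legitimate observation about the precision of the statement of Theorem~\ref{t:LLCetal}, but it does not create a gap in either your proof or the paper's.
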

In comparison, Gaussian tree models are $ \mathrm{MTP}_{2} $ if and only if
their covariance is nonnegative \citep[Proposition~5.3]{LUZ2019}. This
is one way to illustrate why $ \mathrm{EMTP}_{2} $ constraints are more natural
for extreme data than $ \mathrm{MTP}_{2} $ constraints are in the classical
case.

A generalization of an extremal tree model is an extremal
\emph{latent} tree model. The latter is defined as a multivariate Pareto
distribution $\mathbf{Y}$ obtained as the margin
$\tilde{\mathbf{Y}}_{O}$ of a larger extremal tree model
$(\tilde{\mathbf{Y}}_{O}, \tilde{\mathbf{Y}}_{U})$, where
$\tilde{\mathbf{Y}}_{O}$ and $\tilde{\mathbf{Y}}_{U}$ correspond to the
observed and unobserved variables, respectively. Extremal latent tree models
have been used in \cite{asenova2021inference} for modeling floods on a
river network. By Proposition~\ref{prop:marginex} every margin of an
$ \mathrm{EMTP}_{2} $ distribution is $ \mathrm{EMTP}_{2} $. This implies that
every extremal latent tree model is $ \mathrm{EMTP}_{2} $.

We note that the family of latent extremal tree models is much larger than
the family of extremal tree models and contains an extremal version of
the widely used one-factor model; see \cite{zwiernik2018latent} for more
examples and basic overview of latent tree models.
\begin{ex}[H\"{u}sler--Reiss one-factor model]
	\label{exmp9}
	Define an extremal one-factor model as the margin of an extremal tree model
	over a tree with a single inner node and all other vertices connected to
	it. Here the margin is taken over the outer nodes. Consider a $d$-dimensional
	H\"{u}sler--Reiss vector $\mathbf{Y}$ with parameter matrix
	$\Gamma $ with the following form. For a vector
	$\boldsymbol a=(a_{1},\ldots ,a_{d})$ with strictly positive entries, suppose
	that $\Gamma _{ij}=a_{i}+a_{j}$ for all $i\neq j$, $i,j \in [d]$. This
	is an extremal latent tree model since $\mathbf{Y}$ is the margin of a
	$(d+1)$-dimensional H\"usler--Reiss tree model on the star tree, where
	the $d$ leaves correspond to the observed variables and the central node
	is the unobserved variable; this can be seen since the extended
	$\Gamma $ with $\Gamma _{i(d+1)} = a_{i}$ is a tree metric on the star
	tree (\citet{EV2020}, Proposition~4). Using the covariance mapping
	\eqref{eq:gamma2sigma}, we see that, for every $i,j,k \in [d]$ with
	$i,j\neq k$,
	\begin{equation*}
		\Sigma ^{(k)}_{ij} = 
		\begin{cases} a_{k}
			& \text{if } i\neq j,
			\\
			a_{i}+a_{k} & \text{if } i=j. \end{cases} 
	\end{equation*}
	Carefully applying the Sherman--Morrison formula
	(\citet{hornjohnson}, Section~0.7.4), we see that, for all $i\neq j$,
	\begin{equation*}
		\Theta _{ij} = - \frac{\prod_{l\neq i,j}a_{l}}{\sum_{k=1}^{d}  \prod_{l\neq k} a_{l}} < 0,
	\end{equation*}
	which reconfirms that H\"{u}sler--Reiss one-factor models are
	$ \mathrm{EMTP}_{2} $.
\end{ex}

\subsection{Axioms for conditional independence and faithfulness}
\label{sec4.2}

Conditional independence models can be discussed in a purely combinatorial
way. We follow the definitions in \citet[Section~5]{FLSUWZ2017}. Let
$ \langle A,B| C\rangle $ be a ternary relation encoding abstract independence
of $ A $ and $ B $ conditioning on $ C $, where $ A$, $B$, $C $ are disjoint
subsets of $ V $. Here and in the following, unions $ A\cup B $ of two
sets $ A$, $B$ are abbreviated to $ AB $. A~conditional independence model
$ \mathcal{I} $ is a set of such relations. $ \mathcal{I} $ is called a
graphoid if it satisfies the following axioms for disjoint
$ A,B,C,D \subset V $:
\begin{itemize}
	\item[1.]
	$ \langle A,B| C\rangle \in \mathcal{I}$~$\Leftrightarrow$
	$\langle B,A | C\rangle \in \mathcal{I}$   (symmetry),
	\item[2.]
	$ \langle A,BD | C\rangle \in \mathcal{I} $~$ \Rightarrow$
	$\langle A,B| C\rangle \in \mathcal{I} \wedge \langle A,D| C
	\rangle \in \mathcal{I} $   (decomposition),
	\item[3.]
	$ \langle A,BD| C\rangle \in \mathcal{I} $~$ \Rightarrow$
	$\langle A,B| CD\rangle \in \mathcal{I} \wedge \langle A,D| BC
	\rangle \in \mathcal{I} $   (weak union),
	\item[4.]
	$ \langle A,B| CD\rangle \in \mathcal{I} \wedge \langle A,D| C
	\rangle \in \mathcal{I} $~$ \Leftrightarrow $ $  \langle A,BD| C
	\rangle \in \mathcal{I} $   (contraction),
	\item[5.]
	$ \langle A, B| CD \rangle \in \mathcal{I}  \wedge   \langle A, C
	| BD\rangle \in \mathcal{I} $~$ \Rightarrow  $ $ \langle A, BC
	| D \rangle \in \mathcal{I} $   (intersection).
\end{itemize}

A stochastic conditional independence model on a set of distributions is
always a semigraphoid, that is, it satisfies axioms (1)--(4). If in addition
the distributions have positive densities, then it is a graphoid. In the
discussion of \cite{EH2020}, Steffen Lauritzen discusses that extremal
conditional independence models are also semigraphoids. Under the assumption
of a positive density $f_{\mathbf{Y}}$, the intersection axiom for extremal
conditional independence for multivariate Pareto distributions follows
from the fact that then, for any $k\in V$, $\mathbf{Y}^{k} $ satisfies
the intersection axiom for classical stochastic conditional independence
because its density is proportional to $f_{\mathbf{Y}}$; see
\citet[Section~1.1.5]{Pearl2009} or
\citet[Proposition~3.1]{Lauritzen96}.

For classical conditional independence, if the distributions are
$ \mathrm{MTP}_{2} $, then \cite{FLSUWZ2017} show that the following additional
axioms are satisfied:
\begin{itemize}
	\item[(6)]
	$ \langle A,B | C \rangle \in \mathcal{I} \wedge \langle A,D | C
	\rangle \in \mathcal{I} $~$\Rightarrow $ $ \langle A,BD | C
	\rangle \in \mathcal{I} $   (composition),
	\item[(7)]
	$ \langle i,j | C \rangle \in \mathcal{I} \wedge \langle i,j | lC
	\rangle \in \mathcal{I} $~$\Rightarrow $ $ \langle i,l | C
	\rangle \in \mathcal{I}\vee \langle j,l | C \rangle \in
	\mathcal{I} $   (singleton-transitivity),
	\item[(8)]
	$ \langle A,B | C \rangle \in \mathcal{I} \wedge D\subseteq V
	\setminus AB$~$  \Rightarrow$ $   \langle A,B | CD \rangle \in
	\mathcal{I} $   (upward-stability).
\end{itemize}
As a consequence, these axioms also hold for extremal conditional independence
when the multivariate Pareto distribution is $ \mathrm{EMTP}_{2} $. In summary,
we have the following theorem.
\begin{thm}
	\label{thm:us_st_comp}
	Extremal conditional independence for a multivariate Pareto distribution
	with positive density is a graphoid. If in addition the distribution is
	$ \mathrm{EMTP}_{2} $, then it is also upward-stable, singleton-transitive
	and compositional.
\end{thm}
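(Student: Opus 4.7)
The plan is to reduce every axiom for extremal conditional independence to the corresponding axiom for classical conditional independence, applied separately to each conditional random vector $\mathbf{Y}^k$, $k\in[d]$. By Definition~\ref{def:extremal_indep},
\[
\mathbf{Y}_A \perp_e \mathbf{Y}_B\mid \mathbf{Y}_C \quad\Longleftrightarrow\quad \text{for all } k\in[d]:\ \mathbf{Y}_A^k \perpp \mathbf{Y}_B^k\mid \mathbf{Y}_C^k,
\]
so extremal CI is a conjunction over $k$ of classical CI statements. Each axiom under consideration has the form ``(some CIs) $\Rightarrow$ (some CIs).'' Hence, if the premises hold as extremal CIs they hold for every $\mathbf{Y}^k$; applying the classical axiom to each $\mathbf{Y}^k$ yields the conclusion, and re-quantifying over $k$ returns an extremal CI. This quantifier-transfer will be the backbone of every step.

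For the semigraphoid part, axioms (1)--(4) are classically satisfied by the conditional independence model of any random vector, with no positivity or other hypothesis needed, so I would simply cite the classical fact and invoke the quantifier argument. For the intersection axiom (5), I would use the stochastic representation \eqref{Yk}: the law of $\mathbf{Y}^k$ has density proportional to $f_{\mathbf{Y}}$ on the product space $\mathcal L^k=\{y\in\mathcal L: y_k>0\}$. Since $f_{\mathbf{Y}}$ is positive on $\mathcal L$ by hypothesis, so is the density of $\mathbf{Y}^k$ on its product support. The classical intersection axiom (e.g.\ \citet[Proposition~3.1]{Lauritzen96}) therefore applies to each $\mathbf{Y}^k$, and the quantifier argument promotes it to extremal CI, giving the graphoid property.

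For the three additional axioms under the $\mathrm{EMTP}_2$ hypothesis, I would invoke Definition~\ref{def:emtp}: $\mathbf{Y}^k$ is $\mathrm{MTP}_2$ for every $k\in[d]$. By the results of \cite{FLSUWZ2017}, classical $\mathrm{MTP}_2$ densities satisfy composition, singleton-transitivity and upward-stability; these are, respectively, axioms (6)--(8). The same quantifier-over-$k$ argument then transfers them to extremal conditional independence, completing the theorem.

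The main subtlety I anticipate is bookkeeping for supports: while $\mathbf{Y}$ lives on the non-product space $\mathcal L$, the classical axioms---and in particular intersection, where positivity of the density is crucial---must be applied to $\mathbf{Y}^k$, whose support $\mathcal L^k$ is a product. This is precisely why extremal CI is formulated through the conditional vectors $\mathbf{Y}^k$ in the first place. Once this support issue is pinned down, everything else is a routine application of the quantifier-transfer principle and requires no further calculation.
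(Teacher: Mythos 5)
Your quantifier-transfer principle works cleanly for most of the axioms, and it is indeed the strategy the paper uses (the paper omits the proof outright, saying only that the statements follow from the corresponding classical ones for each $\mathbf{Y}^k$). However, your blanket claim that "applying the classical axiom to each $\mathbf{Y}^k$ yields the conclusion, and re-quantifying over $k$ returns an extremal CI" silently breaks down for singleton-transitivity, axiom (7). There the conclusion is a \emph{disjunction}: $\langle i,l\mid C\rangle\in\mathcal I\ \vee\ \langle j,l\mid C\rangle\in\mathcal I$. From the per-$k$ statement $\forall k\ (P_k\vee Q_k)$ you cannot conclude $(\forall k\ P_k)\vee(\forall k\ Q_k)$, which is what re-quantifying demands; for some $k$ the first disjunct might hold and for another $k$ the second. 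So the transfer as you state it does not deliver the extremal form of axiom (7).

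The gap is repairable, but it needs an ingredient you did not invoke: \citet[Proposition~1]{EH2020}, which shows that $\mathbf{Y}_A^{k}\perpp\mathbf{Y}_B^{k}\mid\mathbf{Y}_C^{k}$ for \emph{some} $k\in C$ already forces the extremal CI $\mathbf{Y}_A\perp_e\mathbf{Y}_B\mid\mathbf{Y}_C$. For singleton-transitivity with $C\neq\emptyset$, pick any $k_0\in C$; the per-$k_0$ disjunction gives either $Y_i^{k_0}\perpp Y_l^{k_0}\mid\mathbf{Y}_C^{k_0}$ or $Y_j^{k_0}\perpp Y_l^{k_0}\mid\mathbf{Y}_C^{k_0}$, and Proposition~1 then upgrades the chosen disjunct to the corresponding extremal CI, which is exactly the conclusion needed. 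The case $C=\emptyset$ is vacuous because extremal marginal independence is impossible for a positive-density multivariate Pareto vector. The remaining axioms --- symmetry, decomposition, weak union, contraction, intersection, composition, upward-stability --- all have conjunctive (or singleton) conclusions, so your quantifier transfer is sound there, and your handling of the positivity of the density of $\mathbf{Y}^k$ for the intersection axiom matches the paper's remark verbatim. In short: the approach and the routing through \cite{FLSUWZ2017} for (6)--(8) are the paper's, but you should flag axiom (7) separately and close it with the "some $k\in C$ suffices" reduction.
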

We omit the proof since the statements follows from the corresponding statements
for $\mathbf{Y}^{k}$, $k\in V$. For extremal conditional independence on
multivariate Pareto distributions with positive density, we note that the
following peculiarity arises. For instance, for $D=\emptyset $, the right-hand
side of Axiom (5) would lead to unconditional independence
$A \perp _{e} BC$, which is impossible, as discussed in Section~\ref{sec:ext_CI}.
This is not a contradiction to the validity of Axiom (5), since it can
be shown that in that case also the left-hand side can not arise.

\begin{remark}
	\label{rem:singletonCI}
	Similar to conditional independence, extremal conditional independence
	under $ \mathrm{EMTP}_{2} $ is equivalent to the respective collection of
	singleton conditional independences,
	\begin{align*}
		\mathbf{Y}_{A} \perp _{e} \mathbf{Y}_{B}|
		\mathbf{Y}_{C} \quad \Leftrightarrow \quad Y_{i} \perp
		_{e} Y_{j}| \mathbf{Y}_{C} \quad \forall i\in
		A,j \in B.
	\end{align*}
	This follows as in \citet[Corollary~1]{LS18}.
\end{remark}

Many constraint-based structure learning algorithms, like the PC algorithm
(\citet{spirtes2000causation}) that is used to learn the skeleton in directed
acyclic graphs, rely on the assumption that the dependence structure in
the data-generating distribution reflects faithfully the graph. A~distribution
in a graphical model over a graph $G$ is faithful if and only each conditional
independence corresponds exactly to graph separation. We define extremal
faithfulness analogously.

For an undirected graph $ G=(V,E) $, we can define an independence model
$ \mathcal{I}(G) $ through graph separation with respect to $ G $ by
\begin{equation*}
	\langle A,B| C \rangle \in \mathcal{I}(G) \quad\Longleftrightarrow \quad C
	\text{ separates } A \text{ from }B,
\end{equation*}
where the latter means that all paths on $G$ between $ A $ and $ B $ cross
$ C $. On the other hand, we can introduce an independence model
$ \mathcal{I}_{e}(\mathbb P_{\mathbf Y}) $ for a multivariate Pareto distributions
$ \mathbf{Y}$ through
\begin{equation*}
	\langle A,B| C\rangle \in \mathcal{I}_{e}(\mathbb P_{\mathbf Y})
	\quad \Longleftrightarrow \quad \mathbf{Y}_{A}\perp _{e}
	\mathbf{Y}_{B} | \mathbf{Y}_{C}.
\end{equation*}
We further define the extremal pairwise independence graph
$ G_{e}(\mathbb P_{\mathbf Y}) $ such that
\begin{equation*}
	(i,j)\in E \quad \Longleftrightarrow \quad \bigl\langle i,j| V\setminus \{i,j\}
	\bigr\rangle \in \mathcal{I}_{e}(\mathbb P_{\mathbf Y}).
\end{equation*}
A multivariate Pareto distribution $ \mathbb P_{\mathbf Y} $ is said to
be extremal faithful to a graph $G$, if
$ \mathcal{I}_{e}(\mathbb P_{\mathbf Y})=\mathcal{I}(G)$.

\begin{thm}
	\label{t:faithful}
	Let $\mathbf{Y}$ be a multivariate Pareto distribution with positive and
	continuous density. If $\mathbf{Y}$ is in addition
	$ \mathrm{EMTP}_{2} $, then
	$\mathcal I_{e}(\mathbb P_{\mathbf{Y}}) = \mathcal I(G_{e}(\mathbb P_{
		\mathbf Y}))$; that is, $\mathbb P_{\mathbf{Y}}$ is extremal faithful to
	its pairwise independence graph.
\end{thm}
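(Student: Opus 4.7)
The approach I would take is to mirror the classical faithfulness proof for $\mathrm{MTP}_2$ distributions from \cite{LS18} (see also \cite{FLSUWZ2017}), exploiting the fact that Theorem~\ref{thm:us_st_comp} already hands us exactly the axioms needed for a purely combinatorial argument. The plan is to prove the two inclusions $\mathcal{I}(G_e(\mathbb{P}_{\mathbf{Y}})) \subseteq \mathcal{I}_e(\mathbb{P}_{\mathbf{Y}})$ and $\mathcal{I}_e(\mathbb{P}_{\mathbf{Y}}) \subseteq \mathcal{I}(G_e(\mathbb{P}_{\mathbf{Y}}))$ separately, treating the relations as abstract ternary relations on the ground set $V$ and appealing only to the axioms symmetry, decomposition, weak union, contraction, intersection, composition, singleton-transitivity and upward-stability.

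For the inclusion $\mathcal{I}(G_e) \subseteq \mathcal{I}_e$, I would first observe that by definition of $G_e$, the pairwise Markov property with respect to $G_e$ holds, that is, $\langle i, j \mid V \setminus \{i,j\}\rangle \in \mathcal{I}_e$ whenever $(i,j) \notin E$. Since positivity of the density gives intersection and $\mathrm{EMTP}_2$ gives composition (and we also have the other graphoid axioms), the standard Pearl/Lauritzen graphoid argument lifts the pairwise Markov property to the global Markov property: for any disjoint $A, B, C$ with $C$ separating $A$ from $B$ in $G_e$, one may inductively split $A$ and $B$ into singletons via decomposition and weak union, then recombine using contraction and composition after applying pairwise statements along separating boundaries. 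This is carried out in detail in \citet[Section~3.2]{Lauritzen96}; the argument is axiomatic and transfers verbatim to $\mathcal{I}_e$.

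For the harder inclusion $\mathcal{I}_e \subseteq \mathcal{I}(G_e)$, by Remark~\ref{rem:singletonCI} it suffices to prove it for singleton statements $\langle i, j \mid C\rangle$. I would argue by contrapositive: suppose $C$ does not separate $i$ from $j$ in $G_e$, and fix a path $i = v_0, v_1, \ldots, v_n = j$ in $G_e$ avoiding $C$. For each edge $(v_\ell, v_{\ell+1}) \in E$, by definition of $G_e$ one has $\langle v_\ell, v_{\ell+1} \mid V \setminus \{v_\ell, v_{\ell+1}\}\rangle \notin \mathcal{I}_e$. Upward-stability then yields $\langle v_\ell, v_{\ell+1} \mid C \cup \{v_m : m \neq \ell, \ell+1\}\rangle \notin \mathcal{I}_e$ after suitably adjusting conditioning sets along the path, and singleton-transitivity lets me chain these nonindependences into $\langle i, j \mid C\rangle \notin \mathcal{I}_e$ by induction on path length. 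The induction step is the standard one: if $\langle i, v_{n-1} \mid C\rangle \notin \mathcal{I}_e$ and $\langle v_{n-1}, j \mid C\rangle \notin \mathcal{I}_e$, then assuming $\langle i, j \mid C\rangle \in \mathcal{I}_e$ together with singleton-transitivity applied at $v_{n-1}$ would force one of the former to be independence, a contradiction.

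The main obstacle I expect is the one the paper flagged just before Remark~\ref{rem:singletonCI}: extremal conditional independence never holds for the empty conditioning set when $\mathbf{Y}$ has a positive density, so any application of the axioms that would produce a statement $\langle A, B \mid \varnothing\rangle$ is vacuous and must be ruled out. Concretely, in the contrapositive argument I must ensure that every conditioning set encountered along the path, and every set produced by upward-stability or singleton-transitivity, remains nonempty; this is true in the classical MTP$_2$ proof merely for convenience but is genuinely required here. One clean way around this is to work inside $\mathbf{Y}^k$ for a fixed $k \in V$ and exploit the fact that $\mathbf{Y}^k$ has positive density on a product space with $k$ always implicitly in the conditioning set — effectively reducing the extremal faithfulness claim to the classical MTP$_2$ faithfulness for $\mathbf{Y}^k_{\setminus k}$ via Definition~\ref{def:extremal_indep} and the equivalence of the pairwise independence graphs induced by different choices of $k$ in the H\"usler--Reiss case (or more generally by \citet[Proposition~1]{EH2020}).
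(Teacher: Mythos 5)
Your proposal follows essentially the same two-step structure as the paper's proof in Appendix~\ref{pr:faithful}: the inclusion $\mathcal I(G_e)\subseteq\mathcal I_e$ via the pairwise-to-global Markov lift using positivity (intersection) and the remaining graphoid axioms, and the inclusion $\mathcal I_e\subseteq\mathcal I(G_e)$ by contrapositive along a path, chaining contrapositives of upward-stability and singleton-transitivity and closing with Remark~\ref{rem:singletonCI}.

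The one point where you deviate is the concern about empty conditioning sets and the proposed detour through $\mathbf{Y}^k$, which is unnecessary. The concern dissolves for two separate reasons. First, in the contrapositive direction the conditioning set $C$ is fixed throughout; upward-stability in contrapositive form only ever \emph{shrinks} a larger set $V\setminus\{v_\ell,v_{\ell+1}\}$ down to $C$ (which is contained in it because the path avoids $C$), and singleton-transitivity only adds a path vertex $t$ to form $tC\supseteq C$. So if $C\neq\varnothing$, no conditioning set in the chain is ever empty. Second, if $C=\varnothing$, both sides of the claimed equality are trivially empty on that stratum: since $\mathbf{Y}$ has a positive continuous density, $G_e(\mathbb P_{\mathbf Y})$ is connected (cf.\ the discussion following Example~\ref{ex:HRgraph}), so $\varnothing$ separates no pair in $G_e$, and likewise no unconditional extremal independence $\mathbf{Y}_A\perp_e\mathbf{Y}_B$ can hold. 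Hence the $C=\varnothing$ case requires no argument at all, and the purely axiomatic proof goes through without invoking $\mathbf{Y}^k$; the reduction to classical $\mathrm{MTP}_2$ faithfulness for $\mathbf{Y}^k_{\setminus k}$ would anyway be awkward since Definition~\ref{def:extremal_indep} (resp.\ \citet[Proposition~1]{EH2020}) ties the choice of $k$ to the conditioning set $C$, which varies across the statements to be verified.
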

The proof is similar to \citet[Theorem~6.1]{FLSUWZ2017} who show that
stochastic independence models that are $ \mathrm{MTP}_{2} $ and satisfy the
intersection axiom are always faithful to the corresponding pairwise independence
graph. It is available in Appendix~\ref{pr:faithful}.

\section{Learning totally positive H\"usler--Reiss distributions}
\label{s:estimation}

The work of \cite{SH2015} and \cite{LUZ2019} show that, in the Gaussian
case, the MLE under $ \mathrm{MTP}_{2} $ has many nice properties. For example,
the maximum likelihood estimator exists with probability 1 as long as the
sample size is at least two and the $ \mathrm{MTP}_{2} $ constraint works
as an implicit regularizer. In this section we study the estimation of
H\"usler--Reiss distributions under the $ \mathrm{EMTP}_{2} $ constraint replacing
the likelihood function with a surrogate likelihood.

\subsection{Surrogate likelihood and its dual}
\label{s:surrogate}

In order to use properties of Gaussian maximum likelihood theory, we apply
a transformation to a H\"usler--Reiss Pareto distribution
$\mathbf{Y}$. Recall that $\mathbf{Y}^{k}$ is defined as the conditioned
random vector $\mathbf{Y}| \{Y_{k} > 0\}$ and that from Section~\ref{sec:huesler_reiss}
we have for a H\"usler--Reiss distribution with parameter matrix
$\Gamma \in \mathcal C^{d}$
\begin{align}
	\label{Y_normal} \bigl(Y^{k}_{i} - Y^{k}_{k}
	\bigr)_{i\neq k} = \mathbf{W}^{k}_{
		\setminus k} \sim N\bigl(-
	\operatorname{diag}\bigl(\Sigma ^{(k)}\bigr)/2, \Sigma ^{(k)}
	\bigr). 
\end{align}
Consider a data matrix $y \in \mathbb R^{n \times d}$ of $n$ independent
observations of $\mathbf{Y}$ with $i$th row
$(y_{i1},\ldots ,y_{id})$. Let
$\mathcal I_{k} = \{i\in [n] : y_{ik} > 0\}$ be the index set of observations
where the $k$th coordinate exceeds zero. If $|\mathcal I_{k}|\ge 2$, for
any $i\in \mathcal I_{k}$, $ k\in [d] $, we define independent observations
$\boldsymbol w_{i}$ of $\mathbf{W}^{k}$ by
\begin{equation*}
	w_{ij} = y_{ij} - y_{ik}, \quad j=1,\ldots ,d,
\end{equation*}
and let the corresponding sample covariance matrix be
\begin{equation}
	\label{eq:Omegak} \Omega ^{(k)}=\frac{1}{ \llvert \mathcal{I}_{k} \rrvert }\sum
	_{i\in \mathcal I_{k}}( \boldsymbol w_{i}-\bar{\boldsymbol w}) (
	\boldsymbol w_{i}- \bar{\boldsymbol w})^{T}\quad \text{where
	} \bar{\boldsymbol w}= \frac{1}{ \llvert \mathcal I_{k} \rrvert }\sum_{i\in \mathcal I_{k}}
	\boldsymbol w_{i}. 
\end{equation}
Note that, by construction, $w_{ik}=0$ for all $i\in \mathcal I_{k}$, and
so the $k$th row/column of $\Omega ^{(k)}$ is zero. If
$|\mathcal I_{k}|<2$, set $\Omega ^{(k)}=0$. We obtain the empirical variogram
$ \bar{\Gamma}^{(k)} $ from $\Omega ^{(k)} $ via the inverse covariance
mapping
\begin{equation}
	\label{eq:invFarris} \bar{\Gamma}_{ij}^{(k)}= \Omega
	_{ii}^{(k)}+\Omega _{jj}^{(k)}-2 \Omega
	_{ij}^{(k)}\quad \text{for all }i,j. 
\end{equation}
Because the index set $\mathcal I_{k}$ depends on $k$, the estimator
$ \bar{\Gamma}^{(k)} $ also depends on $k$. In order to obtain an estimate
of $ \Gamma $ that is symmetric and uses all data, we define the combined
empirical variogram as
\begin{equation}
	\label{eq:GS} \overline\Gamma :=\frac{1}{d} \sum
	_{k=1}^{d}\bar{\Gamma}^{(k)};
\end{equation}
see also \citet[Corollary~2]{EV2020}. For each $k=1,\ldots ,d$, via the
covariance mapping \eqref{eq:gamma2sigma}, we obtain an empirical covariance
$ S^{(k)} $ from $\overline\Gamma $.

The inverse covariance matrix of $\mathbf{W}^{k}_{\setminus k}$ can be
estimated by maximizing the surrogate log-likelihood that takes the form
\begin{equation}
	\label{eq:surrogatelike} \ell \bigl(\Theta ^{(k)};S^{(k)}\bigr) :=\log
	\det \Theta ^{(k)}-\operatorname{tr}\bigl(S^{(k)} \Theta
	^{(k)}\bigr), 
\end{equation}
which is derived from \eqref{Y_normal} by dropping the likelihood contribution
of the mean vector $-\operatorname{diag}(\Sigma ^{(k)})/2$. Maximizing this would
result in an estimate $\widehat\Theta ^{(k)}$ of $\Theta ^{(k)}$ that is
close to the maximum likelihood estimator since the mean vector only contains
information on the diagonal of $\Sigma ^{(k)}$. Note that the function
$ \ell (\Theta ^{(k)};S^{(k)}) $ in \eqref{eq:surrogatelike} is directly
related to the log-determinantal Bregman divergence
(e.g., \citet{ravikumar2011high}), so its use can be justified outside
of the Gaussian setting.

A more elegant formulation of the surrogate log-likelihood that is independent
of $k$ is given next. For a square matrix $A$, denote by
$\operatorname{Det}(A)$ its pseudo-determinant, that is, the product of all nonzero
eigenvalues. For $\Theta \in \mathbb S^{d}$ with
$\Theta \boldsymbol 1=\boldsymbol 0$ and
$ Q_{ij}:=-\Theta _{ij}$, $i\neq j $, the weighted matrix-tree theorem
(\citet{duval2009simplicial}) yields for any $k\in [d]$ that
\begin{align}
	\label{eq:det_spanning_trees} \operatorname{Det}(\Theta ) &= d\cdot \det \bigl(\Theta
	^{(k)}\bigr) = d\cdot \sum_{T\in \mathcal T}\prod
	_{ij\in T} Q_{ij},
\end{align}
where $\mathcal T$ is the set of all spanning trees over the complete graph
with vertices $\{1,\ldots ,d\}$.

As in Appendix~\ref{app:Gamma}, we equip $\mathbb{S}^{d}_{0}$ with the
inner product
$\llangle A,B\rrangle   :=\sum_{i<j}A_{ij}B_{ij}$.

\begin{lemma}
	\label{lem:tolapl}
	The right-hand side of \eqref{eq:surrogatelike} can be rewritten in terms
	of $\Theta $ as
	\begin{equation}
		\label{eq:surrogatelike2} \ell (\Theta ;S)=\log\operatorname{Det} \Theta -\langle S,\Theta
		\rangle - \log (d), 
	\end{equation}
	where $S=\boldsymbol P(-\frac{1}{2}\overline{\Gamma}) \boldsymbol P$ with
	$\boldsymbol P$ defined in \eqref{eq:P1} or, equivalently, in terms of
	$Q\in \mathbb{S}^{d}_{0}$ as
	\begin{equation}
		\label{eq:likeinQ} \ell (Q;\overline{\Gamma}) = \log \biggl(\sum
		_{T\in \mathcal T}\prod_{ij
			\in T} Q_{ij}
		\biggr)-\llangle \overline{\Gamma},Q\rrangle . 
	\end{equation}
\end{lemma}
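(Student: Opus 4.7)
The plan is to show that both displays in the lemma coincide with the right-hand side of \eqref{eq:surrogatelike}. Accordingly, it suffices to verify two identities: (i) $\log\det\Theta^{(k)} = \log\operatorname{Det}\Theta - \log d$, and (ii) $\operatorname{tr}(S^{(k)}\Theta^{(k)}) = \langle S, \Theta\rangle$. Identity (i) is immediate from the weighted matrix-tree theorem recorded in \eqref{eq:det_spanning_trees}, which gives $\operatorname{Det}(\Theta) = d\cdot \det(\Theta^{(k)})$.

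For identity (ii), I would first exploit the facts that $\Theta\boldsymbol 1 = \boldsymbol 0$ (Proposition~\ref{prop:manu}) and that $\Theta$ is symmetric, which together yield $\boldsymbol P\Theta = \Theta\boldsymbol P = \Theta$. Cycling $\boldsymbol P$ through the trace then gives $\langle S,\Theta\rangle = \operatorname{tr}(\boldsymbol P(-\overline{\Gamma}/2)\boldsymbol P\,\Theta) = -\tfrac12\operatorname{tr}(\overline{\Gamma}\Theta) = -\sum_{i<j}\overline{\Gamma}_{ij}\Theta_{ij}$, where the last step uses that $\overline\Gamma$ has zero diagonal. On the $\Theta^{(k)}$ side, I expand $\operatorname{tr}(S^{(k)}\Theta^{(k)})$ using the covariance-mapping formula $S^{(k)}_{ij} = \tfrac12(\overline\Gamma_{ik}+\overline\Gamma_{jk}-\overline\Gamma_{ij})$ from \eqref{eq:gamma2sigma}, together with the row-sum identity $\sum_{j\neq k}\Theta^{(k)}_{ij} = -\Theta_{ik}$ for $i\neq k$, which itself follows from $\Theta\boldsymbol 1=\boldsymbol 0$ combined with the defining relation $\Theta_{ij}=\Theta^{(k)}_{ij}$ for $i,j\neq k$. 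Straightforward bookkeeping then combines the pair sums containing index $k$ with those avoiding $k$ to recover $-\sum_{i<j}\overline\Gamma_{ij}\Theta_{ij}$, matching the expression for $\langle S,\Theta\rangle$.

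The $Q$-form \eqref{eq:likeinQ} is then immediate from \eqref{eq:surrogatelike2}: applying \eqref{eq:det_spanning_trees} to rewrite $\log\operatorname{Det}\Theta = \log d + \log(\sum_{T\in\mathcal T}\prod_{ij\in T}Q_{ij})$ cancels the $-\log d$, while $\langle S,\Theta\rangle = -\sum_{i<j}\overline\Gamma_{ij}\Theta_{ij} = \sum_{i<j}\overline\Gamma_{ij}Q_{ij} = \llangle\overline\Gamma,Q\rrangle$ follows from $Q_{ij}=-\Theta_{ij}$ off the diagonal. The only mildly delicate step is the bookkeeping in~(ii); everything else amounts to direct substitution, so I do not anticipate a real obstacle.
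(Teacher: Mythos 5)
Your argument is correct, and it proves the same two identities that the paper proves: the $\log\det$ identity via the weighted matrix-tree theorem, and the trace/inner-product identity. The difference is in how identity (ii) is handled. The paper does not unpack $\operatorname{tr}(S^{(k)}\Theta^{(k)})$ componentwise; instead it invokes the machinery of Appendix~\ref{app:Gamma}, specifically Remark~\ref{rem:app}(4), which records once and for all that the covariance mapping $\sigma_{\boldsymbol b}$ and its adjoint preserve the relevant inner products, so $\langle S^{(k)},\Theta^{(k)}\rangle = \langle S,\Theta\rangle = \llangle\overline\Gamma,Q\rrangle$ follows in one line. You instead carry out the bookkeeping by hand: using $\boldsymbol P\Theta = \Theta\boldsymbol P = \Theta$ to reduce $\langle S,\Theta\rangle$ to $-\tfrac12\operatorname{tr}(\overline\Gamma\Theta)$, and expanding $\operatorname{tr}(S^{(k)}\Theta^{(k)})$ via \eqref{eq:gamma2sigma} together with the row-sum identity $\sum_{j\neq k}\Theta^{(k)}_{ij}=-\Theta_{ik}$. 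I verified the combinatorics: the two terms carrying the index $k$ collapse via the row-sum identity to $-\sum_{i\neq k}\overline\Gamma_{ik}\Theta_{ik}$, the third term contributes $-\sum_{i<j,\,i,j\neq k}\overline\Gamma_{ij}\Theta_{ij}$, and together these give $-\sum_{i<j}\overline\Gamma_{ij}\Theta_{ij}$, matching $\langle S,\Theta\rangle$. (One small clarification: passing from $-\tfrac12\operatorname{tr}(\overline\Gamma\Theta)$ to $-\sum_{i<j}\overline\Gamma_{ij}\Theta_{ij}$ uses both the zero diagonal and the symmetry of $\overline\Gamma$ and $\Theta$, not the zero diagonal alone.) The trade-off is the usual one: your direct computation is self-contained and elementary, while the paper's approach is shorter because it has already paid the cost of setting up the adjoint framework, which it reuses elsewhere (for example in the proof of Proposition~\ref{prop:Gamma2Q}).
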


Note that it follows from \eqref{eq:likeinQ} that a proportional representation
of the log-likelihood in terms of $\Theta $ and $\overline{\Gamma}$ is
given by \eqref{eq:logdet}.

In order to enforce the $ \mathrm{EMTP}_{2} $ constraint for the H\"usler--Reiss
distribution, we propose to solve a restricted optimization problem using
the characterization in Theorem~\ref{t:equivalences_precision_matrices_emtp2}.
Recall that $ \mathbb{U}^{d}_{+} \subset \mathbb{S}^{d} $ denotes the set
of all graph Laplacians for connected graphs with positive weights on each
edge. Slightly abusing notation, we also denote by $\mathbb{U}_{+}$ its
image in $\mathbb{S}_{0}$, that is, the points in the nonnegative orthant
of $\mathbb{S}_{0}$ whose support is a connected graph. Thus, for any fixed
$\overline \Gamma $ we consider the problem of maximizing
$\ell (Q;\overline\Gamma )$ in \eqref{eq:likeinQ} over
$\mathbb{U}_{+}$, that is,
\begin{equation}
	\label{eq:emtp2_opt_theta} \widehat{Q} :=\arg \max_{Q\in \mathbb U_{+}} \ell (Q;
	\overline\Gamma ). 
\end{equation}
This is a convex optimization problem because
$\ell (Q;\overline\Gamma )$ is a strictly concave function over the convex
set $\mathbb U_{+}$.

We call $\widehat Q$ a surrogate maximum likelihood estimator for
$ Q$ under $ \mathrm{EMTP}_{2} $. To obtain an estimator
$ \widehat{\Gamma} $ for the variogram under $ \mathrm{EMTP}_{2} $, we first
take $\widehat\Theta $, given by
$\widehat\Theta _{ij}=-\widehat Q_{ij}$, to define
$\widehat\Sigma =\widehat\Theta ^{+}$ and then map
\begin{equation*}
	\widehat \Gamma _{ij} = \widehat\Sigma _{ii}+\widehat\Sigma
	_{jj}-2 \widehat\Sigma _{ij},
\end{equation*}
as explained in Appendix~\ref{app:Gamma}. This is not the maximum likelihood
estimator of $\Gamma $ under the $ \mathrm{EMTP}_{2} $ constraint because
we have dropped the contribution of the mean vector as in \eqref{eq:surrogatelike}.
Nevertheless, $ \widehat{\Gamma} $ is a very natural estimator since it
has a simple interpretation in terms of the input matrix
$\overline \Gamma $; see Theorem~\ref{th:kktdd} below. Moreover, we show
in Proposition~\ref{t:consistent} that it is a consistent estimator.

To analyze this optimization problem in more detail, we study it from the
perspective of convex analysis. We first derive its dual problem.
\begin{prop}
	\label{prop:dual}
	The dual problem of \eqref{eq:emtp2_opt_theta} is
	\begin{equation}
		\label{eq:dual} \text{maximize } \log \det \left( 
		\begin{bmatrix} 0 &
			-\boldsymbol 1^{T}
			\\
			\boldsymbol 1 & -\frac {1}{2}\Gamma \end{bmatrix} 
		\right)+(d-1) \quad \text{subject to } \Gamma \in \mathcal{C}^{d} \text{
			and } \Gamma \leq \overline\Gamma . 
	\end{equation}
\end{prop}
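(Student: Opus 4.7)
The plan is to apply Lagrangian duality to \eqref{eq:emtp2_opt_theta} in the $\Theta$-parametrization, and then to rewrite the resulting dual objective via an explicit pseudo-determinant identity.

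By Lemma~\ref{lem:tolapl}, the primal \eqref{eq:emtp2_opt_theta} is equivalent to
\[
\min_{\Theta\succeq 0,\ \Theta\boldsymbol 1=0}\ -\log\operatorname{Det}\Theta+\langle S,\Theta\rangle+\log d \qquad\text{subject to}\qquad\Theta_{ij}\le 0\ \forall i\ne j,
\]
where $S=-\tfrac 12 P\overline\Gamma P$ and connectedness of the support is automatic at any finite minimizer. Dualize only the sign constraints with multipliers $\Lambda\in\mathbb{S}^{d}_{0}$ satisfying $\Lambda_{ij}\ge 0$; using $\sum_{i<j}\Lambda_{ij}\Theta_{ij}=\tfrac 12\langle\Lambda,\Theta\rangle$, the Lagrangian reads
\[
L(\Theta,\Lambda)=-\log\operatorname{Det}\Theta+\bigl\langle S+\tfrac 12\Lambda,\,\Theta\bigr\rangle+\log d.
\]
For fixed $\Lambda$ I minimize $L$ over $\{\Theta\succeq 0:\Theta\boldsymbol 1=0\}$. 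Since the gradient of $\log\operatorname{Det}$ at a rank-$(d-1)$ interior point is $\Theta^+$ and the normal space of $\{T:T\boldsymbol 1=0\}$ is spanned by matrices $v\boldsymbol 1^{T}+\boldsymbol 1 v^{T}$, stationarity yields $\Sigma:=\Theta^+=P\bigl(S+\tfrac 12\Lambda\bigr)P$; setting $\Gamma:=\overline\Gamma-\Lambda$ this simplifies to $\Sigma=-\tfrac 12 P\Gamma P$. The infimum is finite and attained exactly when $\Sigma$ is PSD of rank $d-1$, i.e.\ $\Gamma\in\mathcal{C}^{d}$, while $\Lambda_{ij}\ge 0$ off-diagonal is equivalent to $\Gamma\le\overline\Gamma$ — precisely the dual constraints. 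Using $\operatorname{Det}(\Sigma^+)=1/\operatorname{Det}(\Sigma)$, $\Sigma^+\boldsymbol 1=0$, and $\operatorname{tr}(\Sigma\Sigma^+)=d-1$, the dual function evaluates to $L^{\ast}(\Lambda)=\log\operatorname{Det}(\Sigma)+(d-1)+\log d$.

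The final step is the pseudo-determinant identity
\[
\det\begin{bmatrix}0 & -\boldsymbol 1^{T}\\ \boldsymbol 1 & -\tfrac 12\Gamma\end{bmatrix}=d\cdot\operatorname{Det}(\Sigma),
\]
which I verify by expressing the bordered matrix in the orthonormal basis $\{e_{0},\boldsymbol 1/\sqrt d,v_{1},\ldots,v_{d-1}\}$ of $\mathbb R^{d+1}$, where $v_{1},\ldots,v_{d-1}\perp\boldsymbol 1$ diagonalize $\Sigma$ with eigenvalues $\lambda_{1},\ldots,\lambda_{d-1}$. In this basis the $e_{0}$-row has the single non-zero entry $-\sqrt d$ (at position $\boldsymbol 1/\sqrt d$), and cofactor expansion along the $e_{0}$-row followed by expansion along the first column of the resulting minor (whose only non-zero entry is $\sqrt d$, with a diagonal $(d-1)\times(d-1)$ tail of eigenvalues $\lambda_{i}$) yields $\sqrt d\cdot\sqrt d\cdot\prod_{i}\lambda_{i}=d\operatorname{Det}(\Sigma)$. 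Substituting $\log\operatorname{Det}(\Sigma)=\log\det(\text{aug.})-\log d$ into $L^{\ast}$ cancels the $\log d$ terms and gives $L^{\ast}(\Lambda)=\log\det(\text{aug.})+(d-1)$, matching \eqref{eq:dual}. Strong duality holds by Slater's condition: the Laplacian of the complete graph with small uniform positive edge weights strictly satisfies $\Theta_{ij}<0$ for all $i\ne j$, so the primal feasible set has non-empty relative interior, and the objective is strictly concave. The main obstacle is the bordered-determinant identity: one has to manage the sign introduced by $-\boldsymbol 1^{T}$, the $\sqrt d$ factors coming from the basis change, and the definition of the pseudo-determinant so that they conspire to cancel exactly the $\log d$ constant; with the right basis this reduces to a short two-step cofactor expansion.
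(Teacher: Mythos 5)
Your proof is correct and follows the same high-level Lagrangian-duality route as the paper: dualize only the sign constraints with multipliers $\Lambda\ge 0$, recognize the optimal dual variable as $\Gamma=\overline\Gamma-\Lambda$ (so $\Lambda\ge 0$ becomes $\Gamma\le\overline\Gamma$), invoke Slater's condition, and rewrite the dual objective via a Cayley--Menger-type determinant formula. The two places you diverge from the paper's write-up both make the argument more self-contained. First, you work directly in the $\Theta$-parametrization and use the pseudo-determinant gradient $\nabla\log\operatorname{Det}\Theta=\Theta^{+}$ on the affine subspace $\{\Theta\boldsymbol 1=\boldsymbol 0\}$, whereas the paper works with $Q\in\mathbb{S}^{d}_{0}$ and appeals to Proposition~\ref{prop:Gamma2Q}; these are equivalent, and your version spells out the stationarity condition directly. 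Second, the paper cites the Cayley--Menger identity $\det\Sigma^{(k)}=\det\begin{bmatrix}0&-\boldsymbol 1^{T}\\\boldsymbol 1&-\tfrac12\Gamma\end{bmatrix}$, while you prove the equivalent statement $\det\begin{bmatrix}0&-\boldsymbol 1^{T}\\\boldsymbol 1&-\tfrac12\Gamma\end{bmatrix}=d\cdot\operatorname{Det}(\Sigma)$ from scratch by a change to an orthonormal basis adapted to $\Sigma$. That argument is correct: after the basis change the $e_{0}$-row has the single nonzero entry $-\sqrt d$ in the $\boldsymbol 1/\sqrt d$-slot, the corresponding minor is block upper triangular with first column $(\sqrt d,0,\dots,0)^{T}$ and diagonal tail $\lambda_{1},\dots,\lambda_{d-1}$ (the entries $(\boldsymbol 1/\sqrt d)^{T}(-\tfrac12\Gamma)v_{j}$, which are generally nonzero, sit strictly above the diagonal and so drop out), and the sign from the $(1,2)$-cofactor combines with the $-\sqrt d$ to give $+\sqrt d$, hence $d\prod_{i}\lambda_{i}$. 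Both routes lead to the same cancellation of the $\log d$ constant and to the dual form \eqref{eq:dual}.
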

\begin{proof}
	Let $\mathcal K^{d}$ be the set of all $Q\in \mathbb{S}^{d}_{0}$ such that
	the corresponding $\Gamma $ lies in $\mathcal C^{d}$. For given
	$\overline{\Gamma}$ we define the extended-real-valued function
	\begin{equation}
		\label{eq:extf} f(Q) = 
		\begin{cases} -\ell (Q; \overline{\Gamma}) &
			\text{if } Q\in \mathcal K^{d},
			\\
			+\infty & \text{otherwise}. \end{cases} 
	\end{equation}
	The problem in (\ref{eq:emtp2_opt_theta}) can be therefore reformulated
	as follows:
	\begin{equation}
		\label{eq:mtp2optproblem2} \text{minimize } f(Q)\quad \text{subject to } Q\geq 0.
	\end{equation}
	The Lagrangian for this problem is
	$f(Q)-\llangle \Lambda ,Q\rrangle  $, where
	$\Lambda \in \mathbb{S}^{d}_{0}$ and $\Lambda _{ij}\geq 0$ for all
	$1\leq i< j\leq d$ (Lagrange multipliers of the nonnegative constraints).
	Clearly,
	\begin{equation*}
		\sup_{\Lambda \geq 0} \bigl\{f(Q)-\llangle \Lambda ,Q \rrangle \bigr\} =
		\begin{cases} f(Q) & \text{if }Q\geq 0,
			\\
			+\infty & \text{otherwise}. \end{cases} 
	\end{equation*}
	This implies that Problem \eqref{eq:mtp2optproblem2} is equivalent to
	\begin{equation*}
		\inf_{Q} \sup_{\Lambda \geq 0} \bigl\{f(Q)-\llangle
		\Lambda ,Q\rrangle \bigr\},
	\end{equation*}
	where the infimum is unrestricted. By duality theory (Slater's conditions),
	we obtain the same value by swapping $\inf $ and $\sup $. We obtain the
	Lagrange dual function
	\begin{equation*}
		\inf_{Q} \bigl\{f(Q)-\llangle \Lambda ,Q\rrangle \bigr\}.
	\end{equation*}
	If the infimum exists, it is obtained at the unique
	$Q\in \mathcal K^{d}$ for which the gradient of
	$f(Q)-\llangle \Lambda ,Q\rrangle  $ vanishes. By Proposition~\ref{prop:Gamma2Q}
	\begin{equation*}
		\nabla _{Q} \log \biggl(\sum_{T\in \mathcal T}\prod
		_{ij\in T}Q_{ij}\biggr)= \Gamma ,
	\end{equation*}
	and so
	\begin{equation*}
		\nabla \bigl\{f(Q)-\llangle \Lambda ,Q\rrangle \bigr\} = -\Gamma +\overline{
			\Gamma}-\Lambda ,
	\end{equation*}
	showing that the optimal point must satisfy
	$\Gamma \leq \overline{\Gamma}$ and
	$\Lambda =\overline{\Gamma}-\Gamma $ and so optimizing the dual function
	is equivalent to optimizing a function of $\Gamma $ of the form
	\begin{equation}
		\label{eq:hdual} h(\Gamma ) = 
		\begin{cases} \log \det \bigl(\Sigma
			^{(k)}(\Gamma )\bigr)+(d-1) & \text{if }\Gamma \leq \overline{\Gamma},
			\Gamma \in \mathcal C^{d},
			\\
			-\infty & \text{otherwise}. \end{cases} 
	\end{equation}
	Finally, we use the Cayley--Menger formula that states that, for every
	$k=1,\ldots ,d$,
	\begin{align}
		\label{eq:menger} \det \bigl(\Sigma ^{(k)}\bigr)& = \det \left(
		\begin{bmatrix} 0 & -\boldsymbol 1^{T}
			\\
			\boldsymbol 1 & -\frac {1}{2}\Gamma \end{bmatrix} \right). 
	\end{align}
\end{proof}

The proof of the previous result and the KKT conditions imply the following
theorem.
\begin{thm}
	\label{th:kktdd}
	The point $(\widehat Q, \widehat \Gamma )$ is the unique optimal point
	of $f(Q)$ over $Q\in \mathbb U_{+}$ if and only if:
	\begin{itemize}
		\item [(i)] $\widehat Q_{ij}\geq 0$ for all $1\leq i<j\leq d$,
		\item [(ii)] $\overline{\Gamma}_{ij}\geq \widehat \Gamma _{ij}$ for all
		$1\leq i<j\leq d$,\vspace*{1pt}
		\item [(iii)]
		$(\overline{\Gamma}_{ij}- \widehat \Gamma _{ij})\widehat Q_{ij}=0$ for
		all $1\leq i<j\leq d$.
	\end{itemize}
\end{thm}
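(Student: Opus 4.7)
The plan is to apply the Karush--Kuhn--Tucker (KKT) conditions to the minimization problem $\min f(Q)$ subject to $Q\geq 0$ already set up in the proof of Proposition~\ref{prop:dual}, with $f$ the extended-real-valued function in \eqref{eq:extf}. Since the surrogate log-likelihood $\ell(\cdot;\overline\Gamma)$ is strictly concave on $\mathcal K^d$, $f$ is strictly convex on its effective domain, so a minimizer $\widehat Q$, if it exists, is automatically unique; this gives the ``unique'' in the statement.

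Next, I would introduce Lagrange multipliers $\Lambda\in\mathbb S^{d}_{0}$ with $\Lambda_{ij}\geq 0$ for the inequality constraints $Q_{ij}\geq 0$ and consider the Lagrangian $L(Q,\Lambda)=f(Q)-\llangle\Lambda,Q\rrangle$. Using the gradient formula recalled in the proof of Proposition~\ref{prop:dual}, namely $\nabla_Q\log\sum_{T\in\mathcal T}\prod_{ij\in T}Q_{ij}=\Gamma$ from Proposition~\ref{prop:Gamma2Q}, the stationarity condition $\nabla_Q L=0$ becomes $\Lambda=\overline\Gamma-\Gamma$. Matching the four standard KKT conditions to the three items of the theorem: primal feasibility $\widehat Q_{ij}\geq 0$ is exactly (i); dual feasibility $\Lambda_{ij}\geq 0$ combined with the stationarity identity yields $\widehat\Gamma_{ij}\leq \overline\Gamma_{ij}$ for all $i<j$, which is (ii); and complementary slackness $\Lambda_{ij}\widehat Q_{ij}=0$ combined with $\Lambda_{ij}=\overline\Gamma_{ij}-\widehat\Gamma_{ij}$ gives (iii). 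Sufficiency of KKT is automatic by convexity of $f$, while necessity follows once I have a constraint qualification.

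The main subtlety, and the step where I would be most careful, is ensuring that KKT is applicable. Slater's condition is easy: the complete-graph Laplacian parametrization, e.g.\ $Q_{ij}\equiv 1$ for $i\neq j$, lies strictly in $\mathcal K^d\cap\{Q>0\}$, since the associated $\Gamma$ is strictly conditionally negative definite. The more delicate point is that the gradient expression $\nabla_Q f=-\Gamma+\overline\Gamma$ requires $\widehat Q$ to sit in the interior of $\mathcal K^d$, where $f$ is smooth. This follows from the barrier-like behavior of $-\ell(Q;\overline\Gamma)=-\log\operatorname{Det}\Theta+\llangle\overline\Gamma,Q\rrangle$: as $Q$ approaches the boundary of $\mathcal K^d$ the support becomes disconnected or $\Gamma$ loses strict conditional negative definiteness, so $\operatorname{Det}\Theta\to 0$ and $-\log\operatorname{Det}\Theta\to+\infty$, ruling out boundary optima. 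Coercivity in the remaining directions (for large $Q_{ij}\geq 0$ the linear term $\llangle\overline\Gamma,Q\rrangle$ dominates, provided $\overline\Gamma_{ij}>0$ as is generically the case for the empirical variogram) then guarantees existence of an interior minimizer, at which the KKT system above is both necessary and sufficient, completing the characterization.
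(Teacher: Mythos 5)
Your proposal takes essentially the same approach as the paper: the paper's proof is a one-line reference to the Lagrangian setup in Proposition~\ref{prop:dual}, and you correctly spell out that (i) is primal feasibility, (ii) is dual feasibility via the stationarity identity $\Lambda=\overline\Gamma-\widehat\Gamma$, (iii) is complementary slackness, and that strict convexity of $f$ on $\mathcal K^d$ yields uniqueness while Slater's condition supplies the needed constraint qualification. Your additional remarks on the barrier behavior at the boundary of $\mathcal K^d$ and on coercivity are correct, though they pertain to existence of the optimum (handled separately in Theorem~\ref{th:gammapos}) rather than to the if-and-only-if characterization being proved here.
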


The condition in (iii) implies that the $ \mathrm{EMTP}_{2} $ estimator acts
as an implicit regularizer since some of the entries of $\widehat Q$ will
be set to zero. We, therefore, define the $ \mathrm{EMTP}_{2} $ graph
$\widehat G= (V, \widehat E)$ as the graph with edges
\begin{equation*}
	(i,j) \notin \widehat E \quad \Longleftrightarrow \quad \widehat
	Q_{ij} = 0,
\end{equation*}
which corresponds to the extremal pairwise independence graph of the H\"usler--Reiss
distribution with parameter matrix $\widehat \Gamma $.

We find as a simple corollary of Theorem~\ref{th:kktdd} that the
$ \mathrm{EMTP}_{2} $ estimator equals the surrogate maximum likelihood estimator
for the graphical model with respect to the estimated
$ \mathrm{EMTP}_{2} $ graph $\widehat G$.

\begin{cor}
	\label{cor5.4}
	Let $ \widehat{G}=(V,\widehat{E}) $ be the $ \mathrm{EMTP}_{2} $ graph corresponding
	to $ \widehat{Q} $. It follows that the surrogate maximum likelihood estimator
	of the extremal graphical model with respect to $ \widehat{G} $, that is,
	\begin{equation*}
		\widecheck{Q}=\argmax \ell (Q;\overline{\Gamma})\quad \text{ subject to }
		Q_{ij}=0 \text{ for all } (i,j) \notin \widehat{E}
	\end{equation*}
	equals the $ \mathrm{EMTP}_{2} $ estimator $ \widehat{Q} $.
\end{cor}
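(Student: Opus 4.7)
The plan is to verify that $\widehat{Q}$ already satisfies the stationarity conditions that define $\widecheck{Q}$, and then to invoke strict concavity of the surrogate log-likelihood to conclude that the two estimators coincide.

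First, I observe that $\widehat{Q}$ is feasible for the problem defining $\widecheck{Q}$: by the very definition of $\widehat{G}$ we have $\widehat{Q}_{ij}=0$ whenever $(i,j)\notin \widehat{E}$. Next, using the representation \eqref{eq:likeinQ} together with the gradient formula $\nabla_Q \log\bigl(\sum_{T\in\mathcal T}\prod_{ij\in T} Q_{ij}\bigr) = \Gamma$ recalled in the proof of Proposition~\ref{prop:dual}, I would note that Lagrangian stationarity for the restricted problem
\[
\max_{Q}\ \ell(Q;\overline{\Gamma})\quad \text{subject to}\quad Q_{ij}=0 \text{ for all } (i,j)\notin \widehat{E}
\]
reduces to the condition $\Gamma_{ij}=\overline{\Gamma}_{ij}$ on every edge $(i,j)\in \widehat{E}$; the entries on non-edges are pinned by the linear constraints and carry unrestricted Lagrange multipliers.

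I would then verify this stationarity for $\widehat{Q}$. For each edge $(i,j)\in \widehat{E}$ the defining property of $\widehat{G}$ gives $\widehat{Q}_{ij}>0$, and the complementary slackness condition (iii) in Theorem~\ref{th:kktdd} then forces $\widehat{\Gamma}_{ij}=\overline{\Gamma}_{ij}$. Combined with feasibility this shows $\widehat{Q}$ is a critical point of the restricted problem. Since $\ell(\cdot;\overline{\Gamma})$ is strictly concave on $\mathcal K^d$ and the feasible set is an affine subspace intersected with $\mathcal K^d$, this critical point is the unique maximizer, hence $\widecheck{Q}=\widehat{Q}$.

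The only mild subtlety is that the problem defining $\widecheck{Q}$ does \emph{not} impose nonnegativity, so one has to check that dropping those constraints does not disturb stationarity of $\widehat{Q}$; this is exactly what the complementary slackness built into Theorem~\ref{th:kktdd} delivers, since the multipliers of the nonnegativity constraints vanish on the support $\widehat{E}$. No additional estimates or separate arguments are required, so the proof will essentially be a short KKT comparison.
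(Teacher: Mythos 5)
Your proof is correct and follows essentially the same route as the paper: both arguments hinge on observing that the stationarity conditions of the $\widehat{G}$-restricted problem — namely $\widecheck{\Gamma}_{ij}=\overline{\Gamma}_{ij}$ on edges, $\widecheck{Q}_{ij}=0$ on non-edges, which together give $(\widecheck{\Gamma}_{ij}-\overline{\Gamma}_{ij})\widecheck{Q}_{ij}=0$ — coincide with what Theorem~\ref{th:kktdd}(iii) already guarantees for $\widehat{Q}$, and then invoking uniqueness. The only cosmetic difference is how uniqueness is closed off: you appeal directly to strict concavity of $\ell(\cdot;\overline{\Gamma})$ on the affine feasible slice, whereas the paper cites uniqueness of conditionally negative definite matrix completion from \cite{Hentschel2021}; both are valid and amount to the same thing in this setting. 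One small gloss worth flagging: the statement ``$(i,j)\in\widehat{E}$ gives $\widehat{Q}_{ij}>0$'' requires not just the definition of $\widehat{G}$ (which gives $\widehat{Q}_{ij}\neq 0$) but also the sign constraint $\widehat{Q}_{ij}\geq 0$ from Theorem~\ref{th:kktdd}(i); you do mention condition (i) implicitly later, but it is cleaner to cite it at this step.
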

\begin{proof}
	It holds from simple derivation and the graphical model constraints that
	\begin{align}
		(\widecheck{\Gamma}_{ij}-\overline{\Gamma}_{ij})
		\widecheck{Q}_{ij}=0 \label{eq:emtp2_graphical} 
	\end{align}
	for all $ 1\le i<j\le d $. As conditionally negative definite matrix completion
	is unique (\citet{Hentschel2021}) and \eqref{eq:emtp2_graphical} is identical
	to Theorem~\ref{th:kktdd}(iii), the corollary follows.
\end{proof}

\subsection{Existence of the optimum and its consistency}
\label{sec5.2}

Theorem~\ref{t:equivalences_precision_matrices_emtp2} and Lemma~\ref{lem:tolapl}
show that optimizing \eqref{eq:surrogatelike} with respect to all diagonally
dominant M-matrices $\Theta ^{(k)}$ is equivalent to optimizing
$-\log\operatorname{Det} \Theta +\langle S,\Theta \rangle $ over all Laplacian
matrices $\Theta $ of connected graphs, as described in
\eqref{eq:emtp2_opt_theta}. This is precisely the optimization problem
considered in equation (3) in \citet{ying2021minimax}. They show in Theorem~1
that the optimum in \eqref{eq:emtp2_opt_theta} exists almost surely. The
proof of this result in the supplement of \citet{ying2021minimax} actually
reveals a more detailed statement, which is useful for our purposes:
\begin{thm}
	\label{th:gammapos}
	The optimum of the problem \eqref{eq:emtp2_opt_theta} exists if and only
	if $\overline{\Gamma}_{ij}>0$ for all $1\leq i<j\leq d$.
\end{thm}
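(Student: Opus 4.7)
The plan is to analyze the objective $\ell(Q;\overline{\Gamma}) = \log\bigl(\sum_{T\in\mathcal{T}}\prod_{ij\in T}Q_{ij}\bigr) - \llangle \overline{\Gamma},Q\rrangle$ from \eqref{eq:likeinQ} as $Q$ varies over $\mathbb{U}_+$, and establish the two directions by separately showing unboundedness above (when some $\overline{\Gamma}_{ij}$ vanishes) and compactness of sublevel sets (when all $\overline{\Gamma}_{ij}>0$). Throughout, I use that $\overline{\Gamma}_{ij}\geq 0$ because $\overline{\Gamma}$ is an average of squared differences.

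For the only-if direction, suppose $\overline{\Gamma}_{i_0 j_0}=0$ for some pair $(i_0,j_0)$. Pick any spanning tree $T^\star$ of $K_d$ containing the edge $(i_0,j_0)$ and consider the family $Q^{(t)}\in\mathbb{U}_+$ with weight $t$ on $(i_0,j_0)$, weight $1$ on the remaining edges of $T^\star$, and weight $0$ elsewhere. Because the support of $Q^{(t)}$ is the connected tree $T^\star$, we have $\sum_{T}\prod_{ij\in T}Q^{(t)}_{ij}\geq \prod_{ij\in T^\star}Q^{(t)}_{ij}=t$, whereas $\llangle \overline{\Gamma},Q^{(t)}\rrangle =\overline{\Gamma}_{i_0j_0}\,t + \sum_{ij\in T^\star\setminus(i_0,j_0)}\overline{\Gamma}_{ij}$ is independent of $t$. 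Hence $\ell(Q^{(t)};\overline{\Gamma})\geq \log t - \text{const}\to +\infty$, and the supremum cannot be attained.

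For the if direction, suppose $\gamma:=\min_{i<j}\overline{\Gamma}_{ij}>0$. I would establish two quantitative controls on $\ell$. First, since every spanning-tree product contains exactly $d-1$ factors bounded by $\|Q\|_\infty$, and $|\mathcal{T}|=d^{d-2}$ by Cayley's formula, one has $\log\bigl(\sum_{T}\prod_{ij\in T}Q_{ij}\bigr)\leq (d-1)\log\|Q\|_\infty + (d-2)\log d$; meanwhile $\llangle \overline{\Gamma},Q\rrangle\geq \gamma\,\|Q\|_1$. Consequently $\ell(Q;\overline{\Gamma})\to -\infty$ as $\|Q\|\to\infty$ within $\mathbb{U}_+$, so all sublevel sets are bounded. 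Second, extend $\ell$ to the ambient closed cone $\overline{\mathbb{U}}:=\{Q\in\mathbb{S}^d_0: Q_{ij}\geq 0\}$ by declaring $\ell(Q;\overline{\Gamma})=-\infty$ whenever the support of $Q$ is a disconnected graph; at any such $Q^\star$ every spanning tree must cross the vanishing cut, so $\sum_{T}\prod Q^\star_{ij}=0$ and, by continuity of the polynomial, $\ell(Q_n;\overline{\Gamma})\to -\infty$ along any sequence $Q_n\to Q^\star$. Thus the extension is upper semicontinuous on $\overline{\mathbb{U}}$.

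Combining the two estimates, fix any $Q_0\in\mathbb{U}_+$ with finite $c:=\ell(Q_0;\overline{\Gamma})$; the sublevel set $\{Q\in\overline{\mathbb{U}}:\ell(Q;\overline{\Gamma})\geq c\}$ is bounded by the first estimate and closed by upper semicontinuity, hence compact. An upper semicontinuous function on a nonempty compact set attains its supremum, and this maximizer must lie in $\mathbb{U}_+$ since $\ell\equiv -\infty$ elsewhere on $\overline{\mathbb{U}}$; strict concavity of $\ell$ gives uniqueness. The main technical care needed is in the upper-semicontinuity step: one must distinguish the two kinds of boundary points of $\mathbb{U}_+$ relative to $\overline{\mathbb{U}}$, namely those whose support is still a connected graph (which lie in $\mathbb{U}_+$ itself and cause no issue) from those whose support is disconnected (where the log-determinantal term blows down), and verify that only the latter appear as limits of sequences with $\ell$ bounded below.
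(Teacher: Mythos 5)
Your argument is correct. Note that the paper does not actually supply its own proof of this theorem: it defers entirely to the supplement of \citet{ying2021minimax} (their Theorem~1 and its proof), observing that the argument there ``reveals'' the stated if-and-only-if characterization. Your proposal is thus a genuine self-contained alternative. It works directly in the primal $Q$-parameterization through the spanning-tree polynomial representation \eqref{eq:likeinQ}, whereas \citet{ying2021minimax} analyze the $\log\det$ of the Laplacian in the $\Theta$-coordinates. Your ``only if'' construction (concentrate growing mass on a spanning tree through the vanishing edge, so that the linear penalty along the growing direction is zero while the $\log$-term diverges) is exactly the right certificate of unboundedness, and your ``if'' direction correctly combines a coercivity bound, obtained from Cayley's formula $|\mathcal T|=d^{d-2}$ and the entrywise lower bound $\gamma\|Q\|_1$, with upper semicontinuity of $Q\mapsto\log(\sum_{T}\prod_{ij\in T}Q_{ij})$ on the closed nonnegative orthant, so that sublevel sets are compact and Weierstrass applies; uniqueness follows from the strict concavity already noted in the paper. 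The only point worth flagging is minor: you do not actually need the sign assumption $\overline\Gamma_{ij}\ge0$ anywhere in the ``only if'' direction (if $\overline\Gamma_{i_0j_0}<0$ the construction diverges even faster), though in the paper's setting $\overline\Gamma$ is an empirical variogram and is nonnegative automatically, so your remark is harmless.
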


Note that $\overline{\Gamma}_{ij}=0$ if and only if
$\bar\Gamma ^{(k)}_{ij}=0$ for all $k$; see (\ref{eq:GS}). Moreover,
$\bar\Gamma ^{(k)}_{ij}=0$ if and only if
$\Omega ^{(k)}_{ii}=\Omega ^{(k)}_{ij}=\Omega ^{(k)}_{jj}$. This happens
with probability zero with respect to the underlying sample, unless the
corresponding index set $\mathcal I_{k}$ satisfies
$|\mathcal I_{k}|< 2$. Thus, with probability one,
$\overline{\Gamma}_{ij}>0$, unless for each $k\in [d]$ the event
$\{Y_{k}>0\}$ is observed at most once in the sample.

We finish this section providing a consistency result that uses consistency
of $\overline{\Gamma}$ and the Berge's maximum theorem
(see \citet{berge}, Section VI.3).

\begin{prop}
	\label{prop:maxth}
	The function
	\begin{equation*}
		\overline{\Gamma} \mapsto \widehat \Gamma =\argmax _{\Gamma \in
			\mathcal{C}^{d}\cap \{\Gamma \leq \overline{\Gamma}\}} \log \det
		\left( 
		\begin{bmatrix} 0 & -\boldsymbol 1^{T}
			\\
			\boldsymbol 1 & -\frac {1}{2}\Gamma \end{bmatrix} 
		\right)
	\end{equation*}
	is a continuous function over all $\overline{\Gamma}$ such that
	$\overline{\Gamma}_{ij}>0$ for all $i\neq j$.
\end{prop}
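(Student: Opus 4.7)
The plan is to apply a variant of Berge's maximum theorem to the dual problem derived in Proposition~\ref{prop:dual}. Let $\Omega := \{\overline{\Gamma} \in \mathbb{S}^d_0 : \overline{\Gamma}_{ij} > 0 \text{ for all } i \neq j\}$ and let $\overline{\mathcal{C}^d}$ denote the closed cone of conditionally negative semidefinite matrices in $\mathbb{S}^d_0$. Define the constraint correspondence
\[
C(\overline{\Gamma}) := \overline{\mathcal{C}^d} \cap \{\Gamma \in \mathbb{S}^d_0 : \Gamma_{ij} \leq \overline{\Gamma}_{ij} \text{ for all } i \neq j\},
\]
and let
\[
\phi(\Gamma) := \log \det \begin{bmatrix} 0 & -\boldsymbol 1^T \\ \boldsymbol 1 & -\tfrac{1}{2}\Gamma \end{bmatrix},
\]
extended by $-\infty$ on $\overline{\mathcal{C}^d} \setminus \mathcal{C}^d$. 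By the Cayley--Menger identity \eqref{eq:menger}, $\phi$ agrees with $\log \det \Sigma^{(k)}(\Gamma)$ on $\mathcal{C}^d$, so it is continuous and strictly concave there and upper semicontinuous as an extended-real function on $\overline{\mathcal{C}^d}$.

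First I would verify that $C$ is nonempty, compact-valued, and continuous as a correspondence on $\Omega$. Applying the inequality $x^T\Gamma x \leq 0$ at $x = e_i - e_j$ forces $\Gamma_{ij} \geq 0$ for every $\Gamma \in \overline{\mathcal{C}^d}$; combined with the componentwise upper bound $\Gamma \leq \overline{\Gamma}$, this shows $C(\overline{\Gamma})$ is a compact subset of $\mathbb{S}^d_0$. Upper hemicontinuity is immediate from the closedness of the graph of $C$ in $\Omega \times \overline{\mathcal{C}^d}$. For lower hemicontinuity, given $\Gamma \in C(\overline{\Gamma})$ and $\overline{\Gamma}^n \to \overline{\Gamma}$ in $\Omega$, exploit that $\overline{\mathcal{C}^d}$ is a cone: set $\lambda_n := \min(\{1\} \cup \{\overline{\Gamma}^n_{ij}/\Gamma_{ij} : \Gamma_{ij} > 0\})$, so that $\lambda_n \Gamma \in C(\overline{\Gamma}^n)$ and $\lambda_n \to 1$ (since each ratio converges to $\overline{\Gamma}_{ij}/\Gamma_{ij} \geq 1$), whence $\lambda_n\Gamma \to \Gamma$.

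Uniqueness of the argmax follows from strict concavity of $\phi$ on $\mathcal{C}^d$, which is inherited from strict concavity of $\log \det$ on the positive definite cone via the linear isomorphism $\Gamma \mapsto \Sigma^{(k)}(\Gamma)$ from $\mathbb{S}^d_0$ onto $\mathbb{S}^{d-1}$. Attainment in $\mathcal{C}^d$ follows from Theorem~\ref{th:gammapos}: for any $\overline{\Gamma} \in \Omega$, scaling a fixed $\Gamma_0 \in \mathcal{C}^d$ by a sufficiently small $t > 0$ produces a point of $C(\overline{\Gamma}) \cap \mathcal{C}^d$ with $\phi > -\infty$, so there is a unique maximizer $\widehat{\Gamma}(\overline{\Gamma}) \in \mathcal{C}^d$.

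The main obstacle is that $\phi$ is only upper semicontinuous on $\overline{\mathcal{C}^d}$, so the classical form of Berge's theorem does not apply verbatim. I would instead argue via subsequential limits. Fix $\overline{\Gamma}^n \to \overline{\Gamma}$ in $\Omega$; the sequence $\widehat{\Gamma}^n := \widehat{\Gamma}(\overline{\Gamma}^n)$ is eventually contained in a fixed compact set. The value function $v(\overline{\Gamma}) := \max_{C(\overline{\Gamma})} \phi$ is continuous: lower semicontinuity is obtained by transporting $\widehat{\Gamma}(\overline{\Gamma})$ through LHC of $C$ and invoking continuity of $\phi$ on $\mathcal{C}^d$, while upper semicontinuity follows from USC of $\phi$ together with UHC of $C$. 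For any subsequential limit $\Gamma^*$ of $\widehat{\Gamma}^n$, the closed-graph property gives $\Gamma^* \in C(\overline{\Gamma})$, and USC of $\phi$ combined with continuity of $v$ forces $\phi(\Gamma^*) = v(\overline{\Gamma})$. By uniqueness $\Gamma^* = \widehat{\Gamma}(\overline{\Gamma})$, so the full sequence converges and $\widehat{\Gamma}$ is continuous on $\Omega$.
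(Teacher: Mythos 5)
Your proof is correct, and while it follows the same broad strategy as the paper (establish continuity of the constraint set $g(\overline{\Gamma})=\overline{\mathcal{C}^d}\cap\{\Gamma\le\overline{\Gamma}\}$ and invoke a maximum-theorem argument), the details differ in ways that make yours the more robust argument. The paper works in the Hausdorff metric on compact sets and asserts the projection identity $d_D(\Gamma)=\sqrt{\sum_{i<j}(\max\{(\Gamma-\overline{\Gamma})_{ij},0\})^2}$ for $\Gamma\in\overline{\mathcal{C}^d}$, which would hold only if the componentwise minimum $\Gamma\wedge\overline{\Gamma}$ always stayed in $\overline{\mathcal{C}^d}$; but $\overline{\mathcal{C}^d}$ is not closed under componentwise minimization — already for $d=3$ one can take $\Gamma=(\Gamma_{12},\Gamma_{13},\Gamma_{23})=(90,100,1)$ and $\overline{\Gamma}=(98,99,0.01)$, both conditionally negative definite, yet $\Gamma\wedge\overline{\Gamma}=(90,99,0.01)$ fails the requisite triangle inequality on square roots. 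So the paper's stated argument for continuity of $g$ has a gap (the conclusion is nonetheless true). Your scaling argument for lower hemicontinuity sidesteps this cleanly by exploiting that $\overline{\mathcal{C}^d}$ is a cone: $\lambda_n\Gamma$ with $\lambda_n\to 1$ stays in $\overline{\mathcal{C}^d}$ automatically and is pushed into $C(\overline{\Gamma}^n)$ by choosing $\lambda_n$ as the appropriate componentwise ratio. You also explicitly confront the point that the objective is only upper semicontinuous on $\overline{\mathcal{C}^d}$ (taking the value $-\infty$ on the boundary), and supply the missing LSC of the value function by transporting the interior maximizer through lower hemicontinuity — a step the paper's direct invocation of Berge's theorem leaves implicit. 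In short, your proof and the paper's aim for the same result via the same high-level plan, but your version repairs a flawed step and fills a gap that the paper does not address explicitly.
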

\begin{proof}
	Consider the function $f:\mathcal{C}^{d}\to \mathbb{R}$ given by
	\begin{equation*}
		f(\Gamma ):=\log \det \left( 
		\begin{bmatrix} 0 & -\boldsymbol
			1^{T}
			\\
			\boldsymbol 1 & -\frac {1}{2}\Gamma \end{bmatrix}
		\right).
	\end{equation*}
	This function is indeed well defined by the Cayley--Menger formula in
	\eqref{eq:menger} and Lemma~\ref{lem:pdC} in Appendix~\ref{app:Gamma}.
	If $\overline{\Gamma}_{ij}>0$ for all $i\neq j$, then the set of all
	$\Gamma \in \mathcal C^{d}$ satisfying
	$\Gamma \leq \overline{\Gamma}$ (for a fixed $\overline{\Gamma}$) is a
	bounded nonempty set. Since $\Gamma _{ij}>0$ for all $i<j$, by Proposition~\ref{th:gammapos}
	there is a unique point in this set that maximizes $f(\Gamma )$, which
	shows that the $\arg \max $ mapping in the statement is indeed a well-defined
	function. Moreover, by strict concavity of $f(\Gamma )$, the same holds
	if we maximize $f$ over the closure of
	$\mathcal C^{d}\cap \{\Gamma \leq \overline{\Gamma}\}$. This is a compact
	set, which we denote by
	$g(\overline{\Gamma}):=\overline{\mathcal C^{d}}\cap \{\Gamma \leq
	\overline{\Gamma}\}$, where $\overline{\mathcal C^{d}}$ is the closure
	of ${\mathcal C^{d}}$. Consider the set $\mathcal K$ of all compact subsets
	in $\mathbb{S}^{d}_{0}$. This set forms a metric space with the Hausdorff
	distance
	\begin{equation*}
		\mathbb D(C,D):= \max \Bigl\{\max_{\Gamma \in C} d_{D}(
		\Gamma ),\max_{
			\Gamma \in D} d_{C}(\Gamma ) \Bigr\},\quad C,D
		\in \mathcal K,
	\end{equation*}
	where $d_{C}(\Gamma )$ denotes the Euclidean distance of
	$\Gamma \in \mathbb{S}^{d}_{0}$ to the set $C\in \mathcal K$. The mapping
	$g:\mathbb{S}^{d}_{0}\to \mathcal K$, defined as above, is a mapping between
	two metric spaces. This map is continuous if and only if for every sequence
	if $\Gamma _{n}\to \overline{\Gamma}$, then
	$g(\Gamma _{n})\to g(\overline{\Gamma})$. Equivalently, we want to show
	that
	\begin{equation}
		\label{eq:convDD} \llVert \Gamma _{n}-\overline{\Gamma} \rrVert \to 0
		\quad \Longrightarrow \quad \mathbb D\bigl(g(\Gamma _{n}),g(\overline{
			\Gamma})\bigr)\to 0.
	\end{equation}
	Let $C_{n}=g(\Gamma _{n})$ and $D=g(\overline{\Gamma})$. Since
	$\overline{\Gamma}$ is an interior point of $\mathcal{C}^{d}$, we can assume
	that $\Gamma _{n}\in \mathcal{C}^{d}$ as well. But for every
	$\Gamma \in \overline{\mathcal{C}^{d}}$,
	\begin{equation*}
		d_{D}(\Gamma ) = \sqrt{\sum_{i<j} \bigl(
			\max \bigl\{(\Gamma - \overline{\Gamma})_{ij},0\bigr\}
			\bigr)^{2}}\leq \sqrt{\sum_{i<j}(
			\Gamma - \overline{\Gamma})_{ij}^{2}}= \llVert \Gamma -
		\overline{\Gamma} \rrVert .
	\end{equation*}
	The same argument shows that, for every $\Gamma \in C_{n}$, we have
	$d_{D}(\Gamma ) \leq  d_{D}(\Gamma _{n})$, and so
	\begin{equation*}
		\max_{\Gamma \in C_{n}} d_{D}(\Gamma ) = d_{D}(
		\Gamma _{n}) \leq \llVert \Gamma _{n}-\overline{\Gamma}
		\rrVert .
	\end{equation*}
	By symmetry we can also show that
	$\max_{\Gamma \in D} d_{C_{n}}(\Gamma )\leq \|\Gamma _{n}-
	\overline{\Gamma}\|$, which implies
	\begin{equation*}
		\mathbb D\bigl(g(\Gamma _{n}),g(\overline{\Gamma})\bigr) \leq
		\llVert \Gamma _{n}- \overline{\Gamma} \rrVert
	\end{equation*}
	and thus also \eqref{eq:convDD}. We have established continuity of
	$g$. By the maximum theorem in \citet[Section VI.3]{berge}, the function
	$\overline{\Gamma}\mapsto \argmax _{\Gamma \in g(\overline{\Gamma})}f(
	\Gamma )$ is also continuous.
\end{proof}
As a consequence, we establish the consistency of the estimator
$\widehat{\Theta}$ under $ \mathrm{EMTP}_{2} $.

\begin{thm}
	\label{t:consistent}
	Let $\mathbf{Y}$ be an $ \mathrm{EMTP}_{2} $ H\"usler--Reiss distribution
	with parameter matrix $\Gamma $. Let $\overline \Gamma $ be a consistent
	estimator of $\Gamma $ as the sample size $n \to \infty $. Then the
	$ \mathrm{EMTP}_{2} $ estimator $\widehat{\Gamma}$ based on
	$\overline \Gamma $ is consistent, that is, for any $\varepsilon >0$,
	\begin{equation*}
		\mathbb P \Bigl( \max_{i,j \in V} \llvert \widehat \Gamma
		_{ij} - \Gamma _{ij} \rrvert > \varepsilon \Bigr) \to 0,
		\quad n\to \infty .
	\end{equation*}
\end{thm}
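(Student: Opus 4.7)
The plan is to combine the continuity of the argmax map in Proposition~\ref{prop:maxth} with an identification $\widehat{\Gamma}(\Gamma) = \Gamma$, and then conclude via the continuous mapping theorem applied to the consistent sequence $\overline{\Gamma} \to \Gamma$.

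First, I would check that the true variogram lies in the interior of the valid input region so that Proposition~\ref{prop:maxth} applies at $\Gamma$. Since $\Gamma \in \mathcal{C}^d$ is strictly conditionally negative definite and has zero diagonal, taking $x = e_i - e_j$ for $i\neq j$ (which satisfies $x^T \boldsymbol 1 = 0$) gives $-2\Gamma_{ij} = x^T \Gamma x < 0$, hence $\Gamma_{ij} > 0$ for every $i\neq j$. Consistency of $\overline{\Gamma}$ then implies that the event $A_n := \{\overline{\Gamma}_{ij} > 0 \text{ for all } i\neq j\}$ satisfies $\mathbb P(A_n) \to 1$. On $A_n$ the estimator $\widehat{\Gamma}(\overline{\Gamma})$ is well-defined by Theorem~\ref{th:gammapos}, and the map $\overline{\Gamma}\mapsto \widehat{\Gamma}(\overline{\Gamma})$ is continuous at $\Gamma$ by Proposition~\ref{prop:maxth}.

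Second, I would verify that the estimator recovers the true value when fed the truth, i.e.\ $\widehat{\Gamma}(\Gamma) = \Gamma$. By the EMTP$_2$ assumption together with Theorem~\ref{t:equivalences_precision_matrices_emtp2}, the true H\"{u}sler--Reiss precision matrix $\Theta$ is a Laplacian of a connected graph with positive edge weights. Define the candidate point $Q^{\star}_{ij} := -\Theta_{ij}$ for $i\neq j$ and $\Gamma^{\star} := \Gamma$. Then at input $\overline{\Gamma} = \Gamma$, the three KKT conditions of Theorem~\ref{th:kktdd} hold trivially: (i) nonnegativity of $Q^{\star}$ is exactly the $\mathrm{EMTP}_2$ characterization, while (ii) and (iii) follow because $\overline{\Gamma} - \Gamma^{\star} = 0$. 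By the uniqueness clause of Theorem~\ref{th:kktdd}, this forces $\widehat{Q}(\Gamma) = Q^{\star}$ and $\widehat{\Gamma}(\Gamma) = \Gamma$.

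Finally, combining the two ingredients, the continuous mapping theorem for convergence in probability gives $\widehat{\Gamma}(\overline{\Gamma}) \to \widehat{\Gamma}(\Gamma) = \Gamma$ entrywise in probability, which is the stated consistency. The only mildly delicate point is that $\widehat{\Gamma}$ is defined only on inputs with strictly positive off-diagonal entries, but this is handled by working on the events $A_n$ whose probability tends to one; beyond this bookkeeping, the argument is essentially a soft continuity statement, so I do not anticipate a serious obstacle. The main conceptual content sits in the identification $\widehat{\Gamma}(\Gamma) = \Gamma$, which relies crucially on the EMTP$_2$ hypothesis to put the true $Q^{\star}$ inside the feasible cone $\mathbb{U}_+$.
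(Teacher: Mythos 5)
Your proof is correct and follows the same route as the paper: combine the continuity of the argmax map (Proposition~\ref{prop:maxth}) with the continuous mapping theorem. You are more careful than the published proof in one respect — you explicitly verify the fixed-point identity $\widehat{\Gamma}(\Gamma)=\Gamma$ by checking the KKT conditions of Theorem~\ref{th:kktdd} at the true parameter (which is where the $\mathrm{EMTP}_2$ hypothesis is actually used), a step the paper leaves implicit when it asserts convergence to the true $\Gamma$.
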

\begin{proof}
	Since $\widehat \Gamma $ is a continuous function of
	$\overline{\Gamma}$ by Proposition~\ref{prop:maxth}, it follows that
	$\widehat\Gamma $ converges in probability to the true $\Gamma $ by the
	continuous mapping theorem.
\end{proof}
\begin{remark}
	\label{rem5.8}
	\citet[Theorem~1]{EV2020} show that, under certain assumptions, the empirical
	variogram $\overline{\Gamma}$ is consistent, that is, it converges in probability
	to the true $\Gamma $; see also Section~\ref{sec:doa}.
\end{remark}

The previous theorem does not imply a consistent recovery of the graph,
and in fact, the $ \mathrm{EMTP}_{2} $ algorithm does not directly enforce
sparsity. Sparsity is, however, often induced indirectly by the KKT conditions.
While it is not expected that the graph structure is recovered in general,
one can show that the estimated $ \mathrm{EMTP}_{2} $ graph is with high probability
a super-graph of the true underlying structure. In applications this is
particularly useful in cases where the estimated $ \mathrm{EMTP}_{2} $ graph,
and, therefore, the true underlying graph, is very sparse; see Section~\ref{danube}
for an example.

\begin{thm}
	\label{t:supergraph}
	Let $\mathbf{Y}$ be an $ \mathrm{EMTP}_{2} $ H\"usler--Reiss distribution
	that is an extremal graphical model on its extremal pairwise independence
	graph $G = (V,E)$, that is, $\Theta _{ij} = 0$ if and only if
	$(i,j) \notin E$. Suppose that $\overline \Gamma $ is a consistent estimator
	of $\Gamma $ as the sample size $n \to \infty $, and let
	$\widehat{\Gamma}$ be the corresponding $ \mathrm{EMTP}_{2} $ estimator. Then
	the estimated $ \mathrm{EMTP}_{2} $ graph
	$\widehat G = (V, \widehat E)$ is asymptotically a super-graph of the true
	underlying graph $G$. More precisely,
	\begin{equation*}
		\mathbb P ( E \subseteq \widehat E ) \to 1, \quad n\to \infty .
	\end{equation*}
\end{thm}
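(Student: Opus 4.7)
The plan is to combine the consistency of $\widehat{\Gamma}$ from Theorem~\ref{t:consistent} with continuity of the map $\Gamma \mapsto \Theta$ to obtain entrywise convergence of $\widehat{\Theta}$ in probability, and then to exploit the fact that every true edge corresponds to a \emph{strict} sign of the corresponding off-diagonal entry. Concretely, since $\mathbf{Y}$ is $\mathrm{EMTP}_{2}$, Theorem~\ref{t:equivalences_precision_matrices_emtp2}(i) gives $\Theta_{ij}\le 0$ for $i\neq j$, and the graphical assumption $\Theta_{ij}=0\Leftrightarrow(i,j)\notin E$ upgrades this to $\Theta_{ij}<0$ for every $(i,j)\in E$. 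Since by construction $(i,j)\in \widehat{E}$ iff $\widehat{Q}_{ij}=-\widehat{\Theta}_{ij}>0$, it suffices to show that $\mathbb{P}(\widehat{\Theta}_{ij}<0)\to 1$ for each fixed $(i,j)\in E$ and then to apply a union bound over the finite set $E$.

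The pointwise consistency of $\widehat{\Theta}$ would follow from the continuous mapping theorem once we confirm that the map $\Gamma\mapsto\Theta=(\boldsymbol P(-\tfrac12\Gamma)\boldsymbol P)^{+}$ from Proposition~\ref{prop:manu} is continuous at the true $\Gamma$. The linear part $\Gamma\mapsto\boldsymbol P(-\tfrac12\Gamma)\boldsymbol P$ is trivially continuous, and on $\mathcal{C}^{d}$ its image consists of PSD matrices of rank exactly $d-1$. Since strict conditional negative definiteness is an open condition, the rank of $\boldsymbol P(-\tfrac12\Gamma)\boldsymbol P$ is locally constant equal to $d-1$ near the true $\Gamma$, and the Moore--Penrose pseudoinverse is continuous on any set of matrices of fixed rank. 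Combining this with $\widehat{\Gamma}\to\Gamma$ from Theorem~\ref{t:consistent} yields $\widehat{\Theta}_{ij}\to\Theta_{ij}$ in probability for each pair $(i,j)$.

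Putting things together: for each edge $(i,j)\in E$ we have $\Theta_{ij}<0$, and $\widehat{\Theta}_{ij}\xrightarrow{\mathbb P}\Theta_{ij}$ implies $\mathbb{P}(\widehat{\Theta}_{ij}<0)\to 1$. A union bound over the at most $\binom{d}{2}$ edges in $E$ then gives $\mathbb{P}(E\subseteq\widehat{E})\to 1$, which is the desired conclusion.

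The main delicate point I expect is the continuity step, since the Moore--Penrose pseudoinverse is only continuous on fixed-rank strata. Here, however, strict conditional negative definiteness of the true $\Gamma$ (guaranteed by $\Theta\in\mathbb U_{+}^{d}$ being the Laplacian of a \emph{connected} graph) places $\boldsymbol P(-\Gamma/2)\boldsymbol P$ in the open locus of rank-$(d-1)$ PSD matrices, and the continuity argument applies in the asymptotic regime of interest. An alternative route, avoiding pseudoinverse continuity directly, would be to argue through Proposition~\ref{prop:maxth} combined with the covariance mapping \eqref{eq:gamma2sigma} and the continuity of ordinary inversion on the full-rank cone, since the map $\Gamma\mapsto \Theta^{(k)}$ used in the definition of $\Theta$ factors through $\Sigma^{(k)}=((\Gamma_{ik}+\Gamma_{jk}-\Gamma_{ij})/2)_{i,j\neq k}$, which is positive definite on $\mathcal{C}^{d}$; this sidesteps the pseudoinverse entirely and gives the same conclusion.
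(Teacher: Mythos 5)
Your proposal is correct and follows essentially the same strategy as the paper's proof: combine consistency of $\widehat{\Gamma}$ (Theorem~\ref{t:consistent}) with the continuous mapping theorem to get entrywise consistency of the estimated precision matrix, observe that each true edge $(i,j)\in E$ has $\Theta_{ij}<0$ strictly (by the $\mathrm{EMTP}_2$ and graphical assumptions), and finish with a union bound. The only minor difference is that the paper's proof routes the continuity argument through the linear maps $\widehat\Gamma\mapsto\widehat\Sigma^{(k)}$ and ordinary matrix inversion on the positive definite cone rather than through the Moore--Penrose pseudoinverse; you correctly flag the rank-stratum subtlety in the pseudoinverse route, and your proposed alternative via $\Sigma^{(k)}$ is exactly what the paper does.
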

\begin{proof}
	By Theorem~\ref{th:kktdd} (i), it holds that $ \widehat{Q}\ge 0 $. It follows
	that
	\begin{align*}
		\mathbb{P} ( E \subseteq \widehat E )&=\mathbb{P} \bigl( \forall (i,j)\in E:
		\widehat{Q}_{ij}>0 \bigr)
		\\
		&\ge 1-\sum_{(i,j)\in E}\mathbb{P}(\widehat{Q}_{ij}=0).
	\end{align*}
	Because $ \widehat{\Gamma} $ is consistent by Theorem~\ref{t:consistent},
	it follows that $ \widehat{Q} $ is consistent by the continuous mapping
	theorem, as the maps from $ \widehat{\Gamma} $ to
	$ \hat{\Sigma}^{(k)} $ and $ \hat{\Theta}^{(k)} $ to $ \widehat{Q} $ are
	linear and matrix inversion is continuous. Hence, there exists some
	$\varepsilon >0$ with
	\begin{equation*}
		\mathbb{P}(\widehat{Q}_{ij}>\varepsilon )\rightarrow 1\quad
	\end{equation*}
	for all $ (i,j)\in E $. This implies that the probabilities
	$ \mathbb{P}(\widehat{Q}_{ij}=0) $ tend to zero, and consequently,
	$ \mathbb{P}  ( E \subseteq \widehat E   ) $ tends to one as
	$n\to \infty $.
\end{proof}

\begin{remark}
	\label{rem5.10}
	Even if the distribution of $\overline\Gamma $ is asymptotically normal,
	the distribution of $\widehat \Theta $ will be typically intractable. It
	will be equal to a mixture of projections of the Gaussian distribution
	on various faces of the polyhedral cone defined by nonnegativity of
	$Q$. Even if it was possible to understand this distribution, it would
	be still hard to handle, as the number of mixture components is exponential
	in $d$.
\end{remark}

\section{An optimization algorithm}
\label{s:algorithms}

Our aim in this section is to develop a numerical algorithm to optimize
the surrogate likelihood in \eqref{eq:surrogatelike2} in terms of
$\Theta $ (equiv. \eqref{eq:likeinQ} in terms of $Q$). A~natural first
idea is a projected coordinate descent algorithm, as both the gradient
of this function has a simple form and the projection on the set
$\Theta \leq 0$ is straightforward. This is precisely the algorithm proposed
in \cite{ying2021minimax}. We note, however, that ensuring that at each
iteration $\Theta $ is a Laplacian of a \emph{connected} graph is harder
and it occasionally leads to numerical issues.

In what follows, we develop a block coordinate descent algorithm that optimizes
the dual problem updating $\Gamma $ row by row. This algorithm carefully
exploits the structure of the problem and relies on quadratic programming.
Although in our setting $S$ that appears in \eqref{eq:surrogatelike2} is
a positive semidefinite matrix satisfying
$S\boldsymbol 1=\boldsymbol 0$, our algorithm takes as input
\emph{any} positive semidefinite matrix satisfying $S_{ii}>0$ for all
$i\in [d]$ and $S_{ij}<\sqrt{S_{ii}S_{jj}}$ for all $i\neq j$. We observe
that our algorithm is more stable than the projected gradient descent algorithm
in the case when $S$ is rank deficient.

\subsection{General description of the algorithm}
\label{sec6.1}

Our algorithm is a block coordinate descent algorithm that optimizes the
dual problem \eqref{eq:dual}. We refer to all
$\Gamma \in \mathcal{C}^{d}$ satisfying
$\Gamma \leq \overline{\Gamma}$ as the dually feasible points. The algorithm
starts at some given dually feasible point, and it updates the
$\Gamma $ matrix row by row. At each step the value of the function increases,
and the corresponding point is dually feasible. Updating a row requires
solving a quadratic problem. This is similar to the algorithms used for
the graphical LASSO (\citet{banerjee2008model,LZ2020}) but with important
twists.

Denote $A=-\frac{1}{2}\Gamma $, and assume $d\geq 3$. After suitably reordering
the rows/columns of $A$, for any $i=1,\ldots ,d$, we can rewrite the determinant
in \eqref{eq:dual} as
\begin{align}
	-\det 
	\begin{bmatrix} 0 &1 & \boldsymbol 1^{T}
		\\
		1 & 0 & A_{\setminus i,i}^{T}
		\\
		\boldsymbol 1 & A_{\setminus i,i} & A_{\setminus i,\setminus i} \end{bmatrix}.
	\label{eq:menger2} 
\end{align}
The goal in the dual problem \eqref{eq:dual} is to optimize this expression
subject to $\Gamma \leq \overline{\Gamma}$. Instead, we optimize this expression
only with respect to $\boldsymbol y=A_{\setminus i,i}$. This will lead
to a quadratic optimization problem that we can easily solve.

Let $B=(A_{\setminus i,\setminus i})^{-1}$. Since
$\Gamma \in \mathcal C^{d}$, also
$ \Gamma _{\setminus i,\setminus i}\in \mathcal C^{d-1} $ and, in particular,
$\boldsymbol 1^{T} \Gamma _{\setminus i,\setminus i}\boldsymbol 1>0$. In
consequence, by \citet[Lemma~3.2]{micchelli1986}
$\Gamma _{\setminus i,\setminus i}$ has $ d-2 $ negative eigenvalues and
one positive eigenvalue. Hence,
\begin{equation*}
	{\det (A_{\setminus i,\setminus i})} = \frac{1}{(-2)^{d-1}}{ \det (\Gamma
		_{\setminus i,\setminus i})}<0.
\end{equation*}
Using the standard Schur complement arguments, \eqref{eq:menger2} can be
written as
\begin{equation*}
	-\det (A_{\setminus i,\setminus i})\cdot \bigl(\boldsymbol y^{T} \bigl(
	\boldsymbol 1^{T}B\boldsymbol 1B-B\boldsymbol 1\boldsymbol
	1^{T} B \bigr)\boldsymbol y+2\boldsymbol 1^{T} B \boldsymbol
	y-1 \bigr),
\end{equation*}
which has to be maximized with respect to $\boldsymbol y$. Thus, equivalently,
to maximize the expression in \eqref{eq:menger2} with respect to
$\boldsymbol y$, we minimize the quadratic function
\begin{equation}
	\label{eq:quadr} \boldsymbol y^{T} \bigl(B\boldsymbol 1\boldsymbol
	1^{T} B-\boldsymbol 1^{T}B \boldsymbol 1B \bigr)\boldsymbol
	y-2\boldsymbol 1^{T} B \boldsymbol y,
\end{equation}
subject to
$\boldsymbol y\geq -\frac {1}{2}\overline{\Gamma}_{\setminus i,i}$. This
is a simple quadratic optimization problem. However, an important complication
comes from the fact that the corresponding quadratic form is not positive
definite (it contains the vector of ones in its kernel), and so many of
the popular quadratic programming algorithms cannot be used. In our calculations
we have used the \texttt{OSQP} package in \texttt{R} (\citet{osqp}).

In summary, our algorithm relies on a sequence of simple quadratic optimization
problems, and it is outlined below. An implementation of this algorithm
is available as the \texttt{emtp2} function of the R package
\texttt{graphicalExtremes} (\citet{graphicalExtremes}).

\begin{algorithm}[b]
	\SetAlgoLined
	\KwData{Conditionally negative definite $\overline{\Gamma}$.}
	\KwResult{A maximizer of (\ref{eq:dual}).}
	Initialize: $\Gamma =\Gamma ^{0}$ (a dually feasible point, see Section~\ref{sec:start})\;
	\While{there is no convergence}{
		\For{$i=1,\ldots ,d$.}{
			Update $\Gamma _{i,\setminus  i} \leftarrow -2\hat {\boldsymbol y}$, where $\hat{\boldsymbol y}$ is the minimizer of \eqref{eq:quadr} subject to $\boldsymbol y\geq -\frac {1}{2}\overline{\Gamma}_{\setminus i,i}$
	}}
	\caption{The block coordinate descent algorithm for H\"{u}sler--Reiss distributions under $ \mathrm{EMTP}_{2} $}
	\label{alg:HR}
\end{algorithm}

The fact that each iteration gives a dually feasible point will be now
proven formally.
\begin{prop}
	\label{prop:duallyfeas}
	Each iteration of Algorithm~\ref{alg:HR} is a dually feasible point.
\end{prop}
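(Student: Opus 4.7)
The proposition requires verifying two conditions on the updated iterate $\Gamma^{\mathrm{new}}$ produced by one inner loop of the algorithm: (a)~$\Gamma^{\mathrm{new}} \leq \overline{\Gamma}$ entrywise, and (b)~$\Gamma^{\mathrm{new}} \in \mathcal{C}^{d}$. Part~(a) is immediate: writing $\hat{\boldsymbol{y}}$ for the QP minimizer, the constraint $\hat{\boldsymbol{y}} \geq -\tfrac{1}{2}\overline{\Gamma}_{\setminus i,i}$ yields $\Gamma^{\mathrm{new}}_{i,\setminus i} = -2\hat{\boldsymbol{y}} \leq \overline{\Gamma}_{i,\setminus i}$, and all other entries are inherited unchanged from the previous (inductively dually feasible) iterate. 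The substance of the proof is part~(b), which by Lemma~\ref{lem:pdC} reduces to showing $\Sigma^{(i)}(\hat{\boldsymbol{y}}) \succ 0$, where $\Sigma^{(i)}(\boldsymbol{y}) = A_{\setminus i,\setminus i} - \boldsymbol{y}\boldsymbol{1}^{T} - \boldsymbol{1}\boldsymbol{y}^{T}$ is affine in $\boldsymbol{y}$.

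My plan for (b) is a sublevel-set connectedness argument. Writing $F(\boldsymbol{y}) = 1 - 2\boldsymbol{1}^{T}B\boldsymbol{y} + \boldsymbol{y}^{T}M\boldsymbol{y}$ for the objective in \eqref{eq:quadr} up to an additive constant (with $M = B\boldsymbol{1}\boldsymbol{1}^{T}B - (\boldsymbol{1}^{T}B\boldsymbol{1})B$), the matrix determinant lemma applied to the rank-two update in $\Sigma^{(i)}$ yields the identity $\det(\Sigma^{(i)}(\boldsymbol{y})) = \det(A_{\setminus i,\setminus i})\,F(\boldsymbol{y})$. Since $\det(A_{\setminus i,\setminus i}) < 0$, we get the sign relation $F(\boldsymbol{y}) < 0 \Longleftrightarrow \det(\Sigma^{(i)}(\boldsymbol{y})) > 0$. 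Let $\boldsymbol{y}^{0}$ be the value of $\boldsymbol{y}$ at the start of the iteration, so $\Sigma^{(i)}(\boldsymbol{y}^{0}) \succ 0$ and $F(\boldsymbol{y}^{0}) < 0$, and define
\begin{equation*}
\mathcal{S} = \bigl\{\boldsymbol{y} : F(\boldsymbol{y}) \leq F(\boldsymbol{y}^{0})\bigr\} \cap \bigl\{\boldsymbol{y} : \boldsymbol{y} \geq -\tfrac{1}{2}\overline{\Gamma}_{\setminus i,i}\bigr\}.
\end{equation*}
Once $F$ is shown to be convex, $\mathcal{S}$ is convex and hence path-connected, with $\det(\Sigma^{(i)}) > 0$ throughout. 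Since $\boldsymbol{y}^{0} \in \mathcal{S}$ satisfies $\Sigma^{(i)}(\boldsymbol{y}^{0}) \succ 0$ and the eigenvalues of $\Sigma^{(i)}(\boldsymbol{y})$ depend continuously on $\boldsymbol{y}$ and cannot change sign without $\det$ vanishing, one concludes $\Sigma^{(i)}(\boldsymbol{y}) \succ 0$ for every $\boldsymbol{y} \in \mathcal{S}$; in particular $\hat{\boldsymbol{y}} \in \mathcal{S}$ gives the desired conclusion.

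The hard step is the convexity of $F$, equivalently $M \succeq 0$, and I would establish this in two moves. First, show $s := \boldsymbol{1}^{T}B\boldsymbol{1} < 0$: the Cayley--Menger formula~\eqref{eq:menger} applied to the submatrix $\Gamma_{\setminus i,\setminus i} \in \mathcal{C}^{d-1}$ (which follows from $\Gamma \in \mathcal{C}^{d}$) shows that $\det\bigl[\begin{smallmatrix}0 & -\boldsymbol{1}^{T} \\ \boldsymbol{1} & A_{\setminus i,\setminus i}\end{smallmatrix}\bigr] > 0$; flipping the sign of the first row negates the determinant, and a Schur-complement evaluation of the resulting bordered determinant gives $-s\det(A_{\setminus i,\setminus i}) < 0$, which combined with $\det(A_{\setminus i,\setminus i}) < 0$ forces $s < 0$. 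Second, $s < 0$ makes $\boldsymbol{1}$ a ``timelike'' vector for the indefinite form $B$ (which inherits signature $(d-2,1)$ from $A_{\setminus i,\setminus i}$); the reverse Cauchy--Schwarz inequality in this Lorentzian setting yields $(\boldsymbol{v}^{T}B\boldsymbol{1})^{2} \geq s\,(\boldsymbol{v}^{T}B\boldsymbol{v})$ for every $\boldsymbol{v}$, which rearranges exactly to $\boldsymbol{v}^{T}M\boldsymbol{v} \geq 0$. As a by-product, $M\boldsymbol{1} = 0$ and $F(\boldsymbol{y}+t\boldsymbol{1}) = F(\boldsymbol{y}) - 2ts$ is strictly increasing in $t$ when $s<0$; combined with the lower-bound constraint, this ensures $F$ is coercive on the feasible polyhedron and the minimizer $\hat{\boldsymbol{y}}$ is well-defined.
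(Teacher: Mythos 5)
Your proof is correct, but it takes a genuinely different route from the paper's, so a comparison is worthwhile.

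The paper reduces dual feasibility to $\Sigma^{(k)} \succ 0$ for a \emph{fixed $k \neq i$} (where $i$ is the row being updated) and then applies Sylvester's criterion, ordering the rows of $\Sigma^{(k)}$ so that the $i$th comes last. Every leading principal minor except the full determinant is then a determinant of a submatrix $\Sigma^{(k)}_B$ with $B \subseteq [d]\setminus\{i,k\}$, which is untouched by the update and hence still positive by induction; the full determinant is exactly the quantity being maximized, so it cannot decrease. That argument is short, avoids any analysis of the quadratic form, and in fact works for \emph{any} update of row $i$ that does not decrease the Cayley--Menger determinant, whether or not the QP subproblem is convex. Your proof instead works with $\Sigma^{(i)}$ (the degenerate choice $k=i$), which forces you to track the rank-two perturbation $A_{\setminus i,\setminus i} - \boldsymbol y\boldsymbol 1^{T} - \boldsymbol 1\boldsymbol y^{T}$ explicitly. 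You express $\det(\Sigma^{(i)}(\boldsymbol y))$ via the matrix determinant lemma as $\det(A_{\setminus i,\setminus i})F(\boldsymbol y)$, pass to a sublevel set of $F$, and argue positive definiteness by eigenvalue continuity on a connected set where the determinant never vanishes. This is sound, but it hinges on convexity of $F$, i.e., $M = B\boldsymbol 1\boldsymbol 1^{T}B - (\boldsymbol 1^{T}B\boldsymbol 1)B \succeq 0$, which you establish via a Lorentzian reverse Cauchy--Schwarz argument after showing $\boldsymbol 1^{T}B\boldsymbol 1 < 0$.

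The detour has a genuine payoff: the paper asserts that \eqref{eq:quadr} is a ``simple quadratic optimization problem'' with a singular but (implicitly) positive semidefinite Hessian, yet never proves $M \succeq 0$. Your argument does prove it, together with $\ker M = \operatorname{span}\{\boldsymbol 1\}$ and coercivity of $F$ along $\boldsymbol 1$, which is exactly the justification needed for applying a convex QP solver such as OSQP and for the minimizer $\hat{\boldsymbol y}$ to be well-defined. So while the paper's Sylvester argument is the cleaner proof of Proposition~\ref{prop:duallyfeas} itself, your argument additionally fills a small gap in the algorithm's justification. A minor stylistic point: the step ``eigenvalues cannot change sign without $\det$ vanishing'' could be stated more carefully as ``on the connected sublevel set $\mathcal S$ the determinant never vanishes, so the number of negative eigenvalues of $\Sigma^{(i)}(\boldsymbol y)$ is locally constant, hence constant, hence zero''; your phrasing handles simultaneous sign changes correctly (two eigenvalues hitting zero still kills the determinant), but it is worth spelling out.
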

\begin{proof}
	Since the starting point $\Gamma ^{0}$ is an arbitrary dually feasible
	point, it is enough to show that updating its $i$th row/column gives a
	dually feasible point. The constraint
	$\Gamma \leq \overline{\Gamma}$ is embedded explicitly in the optimization
	problem, so it is clearly satisfied. To argue that
	$\Gamma \in \mathcal{C}^{d}$ (which is not explicitly imposed), note that
	$\Gamma $ is obtained by maximizing
	\begin{equation}
		\label{eq:maxdet} \det \left( 
		\begin{bmatrix} 0 & -\boldsymbol
			1^{T}
			\\
			\boldsymbol 1 & -\frac {1}{2}\Gamma \end{bmatrix} 
		\right),
	\end{equation}
	which by the Cayley--Menger formula in \eqref{eq:menger} is equal to the
	determinant of $\Sigma ^{(k)}$ for every $k\in [d]$. Suppose that the algorithm
	updates the $i$th row/column of $\Gamma $, and fix any $k\neq i$. By Lemma~\ref{lem:pdC},
	$\Gamma \in \mathcal{C}^{d}$ if and only if $\Sigma ^{(k)}$ is positive
	definite. Using Sylvester's criterion, equivalently,
	$\det (\Sigma ^{(k)}_{B})>0$ for every nonempty
	$B\subseteq [d]\setminus \{i,k\}$ and $B=[d]\setminus \{k\}$ (enough to
	check the leading principal minors when the rows of
	$\Sigma ^{(k)}_{B}$ are arranged so that the $i$th row/column comes last).
	By \eqref{eq:gamma2sigma}, $\Sigma ^{(k)}_{B}$ is an explicit linear function
	of $\Gamma _{B\cup \{k\}}$. Using the Cayley--Menger formula again, we
	get
	\begin{align}
		\label{eq:mengeraux} \det \bigl(\Sigma ^{(k)}_{B}\bigr)& = \det
		\left( 
		\begin{bmatrix} 0 & -\boldsymbol 1^{T}
			\\
			\boldsymbol 1 & -\frac {1}{2}\Gamma _{B\cup \{k\}} \end{bmatrix}
		\right). 
	\end{align}
	If $B\subseteq [d]\setminus \{i,k\}$, then the update of the algorithm
	does not affect this quantity, and so $\det (\Sigma ^{(k)}_{B})>0$ by the
	fact that the current estimate was dually feasible. If
	$B=[d]\setminus \{k\}$, then the right-hand side of
	\eqref{eq:mengeraux} becomes \eqref{eq:maxdet}. This quantity must then
	be strictly positive after the update because it is at least as big as
	for the current estimate, which was strictly positive.
\end{proof}

\subsection{Convergence criteria}
\label{sec6.2}

Recall that, by strong duality, we can guarantee that, at the optimal point
$(\Gamma ^{*},Q^{*})$, the value of the primal and the dual functions are
equal and for any other point the value of the dual problem is lower. Thus,
to obtain a convergence criterion it is natural to track the duality gap
\begin{align}
	\label{dual_gap} -\log \det \Theta ^{(k)}+\llangle \overline{\Gamma},Q
	\rrangle -\bigl(\log \det \Sigma ^{(k)}+(d-1)\bigr) = \llangle \overline{
		\Gamma},Q\rrangle -(d-1), 
\end{align}
which is guaranteed to be always nonnegative and zero precisely at the
optimal point. The algorithm may be stopped when the duality gap is lower
than some fixed threshold. Optimality of the obtained point can be verified
using the KKT conditions in Theorem~\ref{th:kktdd}. Note, however, that,
to compute the duality gap, the current estimate $\Gamma $ needs to be
mapped to $Q$. This operation involves pseudo-inversion, and so it may
be expensive in high-dimensional situations, as the computational complexity
of pseudo-inversion is cubic in dimension. In this case we can simply track
the absolute change between the updates of $\Gamma $, checking the duality
gap only in the end to decide if more iterations are needed.

\subsection{A starting point}
\label{sec:start}

For our coordinate descent algorithm to work, we require a feasible starting
point. By Proposition~\ref{prop:duallyfeas} every subsequent point in our
procedure will be dually feasible. Our construction relies on ideas that
were used in the context of Gaussian distributions. Let $S$ be a positive
semidefinite matrix. By Proposition~3.4 in \cite{LUZ2019}, as long as
$S_{ii}>0$ for all $i=1,\ldots ,d$ and $S_{ij}<\sqrt{S_{ii}S_{jj}}$ for
all $i\neq j$, there exists a positive definite matrix $Z$ such that
$Z\geq S$ and $Z$ coincides with $S$ on the diagonal. The construction
of such $Z$ links to single-linkage clustering and ultrametrics. Section~3
in \cite{LUZ2019} also describes an efficient method for computing
$Z$, which is implemented as function \texttt{Zmatrix} in the R package
\texttt{golazo} (\citet{golazo}). Let $\Gamma ^{Z}$ be obtained from
$Z$ via the inverse covariance mapping. Note that, by construction,
$\Gamma ^{Z}$ is strictly conditionally negative definite and
\begin{equation*}
	\Gamma ^{Z}_{ij}=Z_{ii}+Z_{jj}-2Z_{ij}=S_{ii}+S_{jj}-2Z_{ij}
	\leq S_{ii}+S_{jj}-2S_{ij}= \overline{
		\Gamma}_{ij}.
\end{equation*}
As a consequence, $\Gamma ^{Z}$ is a valid starting point for our block
coordinate descent algorithm.

\subsection{Performance}
\label{sec6.4}

In our setup the optimization of \eqref{eq:emtp2_opt_theta} arises naturally
as the $ \mathrm{EMTP}_{2} $ constraint maximization of the surrogate likelihood
of the H\"usler--Reiss distribution. The same optimization problem appears
in the literature on graph learning under Laplacian constraints
(\citet{EPO17}). While the optimization problem is the same, the way that
the input for the algorithm is obtained differs. In our case we estimate
the combined empirical variogram $\overline \Gamma $ in \eqref{eq:GS} from
samples of the H\"usler--Reiss distribution and derive the matrix
$S$ as in Lemma~\ref{lem:tolapl}. In the graph Laplacian learning literature,
typically, the matrix $S$ is estimated directly from Gaussian data.

We compare our block coordinate descent algorithm, described in Algorithm~\ref{alg:HR},
with existing methods for numerical optimization of
\eqref{eq:emtp2_opt_theta}. The first method by \cite{EPO17} is the combinatorial
graph Laplacian (CGL) algorithm. For the same problem, \cite{zha2019} propose
an alternating direction method of multipliers (ADMM) and a majorization-minimization
(MM) algorithm, whereas \cite{ying2021minimax} use an adaptive Laplacian
constrained precision matrix estimation (ALPE). For the CGL, ADMM and MM
algorithms we use the implementations in the R package
\texttt{spectralGraphTopology} (\citet{spectralGraphTopology}), and for the
ALPE method, we use the code from the R package \texttt{sparseGraph}
(\citet{sparseGraph}). Since the CGL algorithm did note converge in any of
our settings, we do not consider it further.

In order to compare the computation times of the different algorithms and
the corresponding precision of the numerical solution, we conduct the following
study. We first generate a random variogram matrix $\Gamma $ as the Euclidean
distance matrix of $d$ randomly sampled points from the $ (d-1) $-dimensional
unit sphere. For a given tolerance, we run each algorithm with input given
by this matrix $\Gamma $ (or the corresponding matrix $S$). In the first
version of this paper, we observed convergence problems for the ALPE and
ADMM algorithms in this setting. After contacting the authors of
\texttt{spectralGraphTopology} and \texttt{sparseGraph} and reporting our
observations, they kindly provided us with improved versions of their algorithms,
adapted to variograms sampled from the Euclidean distances on the
$ (d-1) $-dimensional ball. Since implementation of the tolerances of the
algorithms are not directly comparable, we repeat this procedure several
times with different variograms and different tolerances each time.

\begin{figure}
	\includegraphics[trim=0em 3em 0em 2em,scale=0.75]{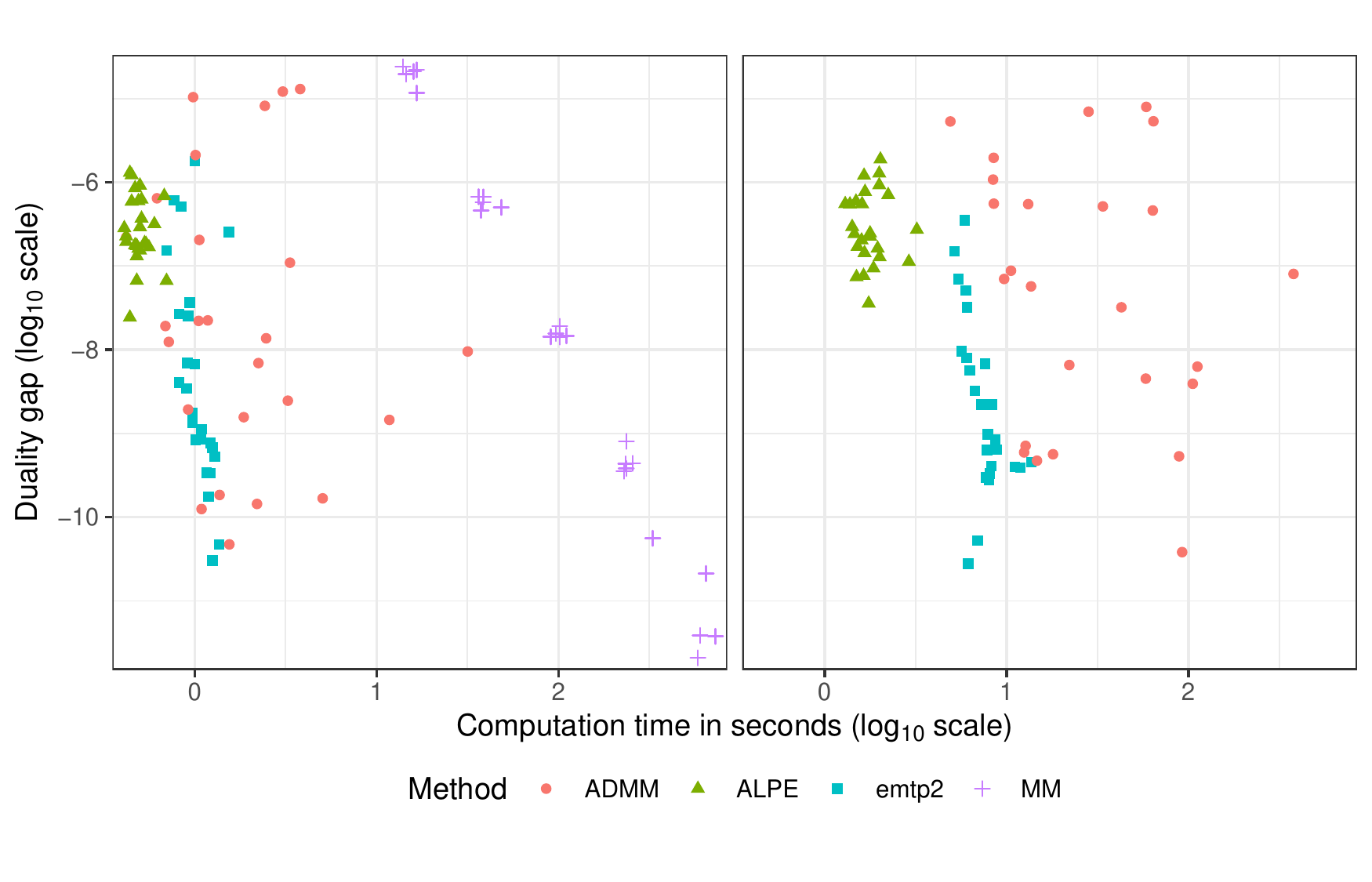}
	\caption{Comparison of the algorithms \texttt{emtp2}, ADMM, MM and ALPE for $ d=50 $ (left) and of the algorithms \texttt{emtp2}, ADMM and ALPE for $ d=100 $ (right) in terms of computation time and duality gap.}\label{fig:performance}
\end{figure}

In the left-hand side of Figure~\ref{fig:performance}, we show the results
for $d=50$ by plotting the duality gap in \eqref{dual_gap} and the corresponding
computation times of the different methods.

We first observe that the adapted ALPE algorithm converges fast, but the
duality gap remains between $ 10^{-6} $ and $ 10^{-8} $, even when we specify
a small tolerance. The ADMM, the MM algorithm and our \texttt{emtp2} algorithm
achieve similar levels of accuracy, but we see that our algorithm is faster
than the ADMM and much faster than the MM algorithm. The right-hand side
of Figure~\ref{fig:performance} shows the same simulation for dimension
$d=100$, where we had to exclude the MM algorithm because of its huge computation
times. Again, we observe that our \texttt{emtp2} algorithm is faster than
the ADMM algorithm and more accurate than the ALPE algorithm.

We further investigate in Table~\ref{fig:comp_times} the computation times
of our $ \mathrm{EMTP}_{2} $ algorithm for a range of dimensions $d$. We observe
that, even for higher dimension, the algorithm can be applied in a reasonable
time. This may be of interest in applications in high-dimensional statistics
where regularization is needed.

All computations in this section were made on a laptop with an Intel Core
i5 processor with $ 1.6\text{ GHz}$. We note that our vanilla implementation
could certainly be largely improved by more efficient programming.

\begin{table}
	\centering
	\begin{tabular}{rrrrr}
		\hline
		$d$ & 50 & 100 & 200 & 400 \\ 
		\hline
		computation time & 1.00 & 6.95 & 70.55 & 910.76 \\ 
		\hline
	\end{tabular}
	\caption{Average computation times (in seconds) of our \texttt{emtp2} algorithm from 10 simulations for a tolerance of $ 10^{-5}$ and different dimensions $d$.}\label{fig:comp_times}
\end{table}

\section{Application}
\label{s:application}

In this section we illustrate the effectiveness of our method by applying
it to the extremes of a data set from the Danube River Basin related to
flood risk assessment. We also discuss the preprocessing of the data prior
to applying our methodology.

\subsection{Data in the domain of attraction}
\label{sec:doa}

While in Section~\ref{s:surrogate} we assumed to have data points directly
from the H\"usler--Reiss distribution $\mathbf{Y}$, in practice, we usually
observe data from a nonextreme random vector $\tilde {\mathbf{X}}$ to which
we apply a preliminary normalization and thresholding step to select the
relevant extremes. Following the theory in Section~\ref{s:MPD_prop}, we
assume that $\tilde {\mathbf{X}}$ has continuous marginal distribution
functions $F_{j}$, $j\in [d]$ and define a normalized random vector
$\mathbf{X}$ with components
\begin{align}
	\label{mar_trans} X_{j} = - \log \bigl\{ 1 - F_{j}(\tilde
	X_{j}) \bigr\}, \quad j \in [d]
\end{align}
with standard exponential margins. We assume that it is multivariate regularly
varying and in the domain of attraction of $\mathbf{Y}$ in the sense of \eqref{MPD_conv}.
For a data matrix $\tilde x \in \mathbb R^{m\times d}$ containing
$m$ observations of $\tilde {\mathbf{X}}$ in the rows, we obtain a data
matrix $x \in \mathbb R^{m\times d}$ by applying the transformation \eqref{mar_trans},
with $F_{j}$ replaced by the empirical distribution functions
$\widehat F_{j}$, to the columns of the matrix $\tilde x$. The rows of
$x$, denoted by $\mathbf x_{i}$, $i\in [m]$, are approximate observations
of $\mathbf{X}$. In a second step, we define the exceedances over some
high threshold $u$ as all observations
\begin{equation*}
	\mathbf y_{i} = \mathbf x_{i} - u\boldsymbol 1 \quad
	\text{for all } i \in \mathcal I = \bigl\{ l \in [m] : \llVert \mathbf
	x_{l} \rrVert _{\infty }> u\bigr\},
\end{equation*}
where the number of exceedances $n= |\mathcal I|$ depends on the threshold
$u$. If $u$ is sufficiently large, by \eqref{MPD_conv} the vectors
$\mathbf y_{i}$, $i\in \mathcal I$, are approximate observations of
$\mathbf{Y}$. We may now follow the steps in Section~\ref{s:surrogate}
to compute the combined empirical variogram $\overline \Gamma $ based on
these data.

Under some regularity conditions, the approximations described above can
be made precise to show that the estimator $\overline \Gamma $ converges
to the true $\Gamma $ if the number of exceedances satisfies
$n \to \infty $ and $n/m \to 0$ (\citet{EV2020}, Theorem~1). The use of
empirical distribution functions for the normalization is standard in multivariate
extreme value theory when the focus is on the dependence structure
(e.g., \citet{ein2009, ein2016}). Similarly to Proposition~\ref{t:consistent},
it then directly follows from the continuity of the
$ \mathrm{EMTP}_{2} $ algorithm, proved in Proposition~\ref{prop:maxth}, that
the $ \mathrm{EMTP}_{2} $ estimator $\widehat \Gamma $ is also consistent
for $\overline \Gamma $ based on data in the domain of attraction of
$\mathbf{Y}$.

\subsection{Danube data}
\label{danube}

For an application that is relevant in terms of risk assessment, we consider
river discharge data from the Upper Danube Basin, which were originally
used in \citet{ADE2015}. The data set consists of daily measurements collected
at $ d=31 $ gauging stations over 50 years from 1960 to 2009 by the Bavarian
Environmental Agency (\href{http://www.gkd.bayern.de}{http://www.gkd.bayern.de}).
After declustering and selecting only the summer months,
\citet{ADE2015} obtain $ m=428 $ observations that are assumed independent.
The Danube data are available in the R package
\texttt{graphicalExtremes} and have been studied in a number of papers
with focus on the modeling of extremal dependence
(\citet{ADE2015,EH2020}) and detecting the extremal causal structure
(\citet{TBK2021,MCD2020,GMPE2021}). For more details on the data and the
preprocessing, we refer to \citet{ADE2015}. We normalize the data as described
in Section~\ref{sec:doa}, and, following \cite{EH2020}, we chose the
$ p=0.9 $ quantile of the marginal Pareto distribution as threshold
$u$, which results in a dataset of $ n=116$ observations.

We begin with an exploratory analysis of the data. From the empirical variogram
$\overline \Gamma $, we obtain an empirical estimate
$\overline \Theta $ of the precision matrix. The respective submatrices
for the stations $I=\{1,2,3,4,5\}$ are
\begin{align*}
	\overline{\Gamma}_{II}&=\begin{pmatrix} 0.00 & 0.53
		& 0.65 & 0.73 & 0.82
		\\
		0.53 & 0.00 & 0.09 & 0.11 & 0.18
		\\
		0.65 & 0.09 & 0.00 & 0.04 & 0.17
		\\
		0.73 & 0.11 & 0.04 & 0.00 & 0.15
		\\
		0.82 & 0.18 & 0.17 & 0.15 & 0.00 \end{pmatrix},\\
	\overline{\Theta}_{II}&=
	\begin{pmatrix} 27.24 & -4.65 & 1.69 & 1.01 & -6.32
		\\
		-4.65 & 50.91 & -18.23 & -8.49 & 0.91
		\\
		1.69 & -18.23 & 62.88 & -39.48 & -3.36
		\\
		1.01 & -8.49 & -39.48 & 61.91 & -19.37
		\\
		-6.32 & 0.91 & -3.36 & -19.37 & 78.67 \end{pmatrix}.
\end{align*}
Considering the full precision matrix, only 250 out of
$30\times 31 /2=465$ free parameters of $\overline \Theta $ are nonpositive,
which at first sight seems not to be in line with the assumption of
$ \mathrm{EMTP}_{2} $. However, in cases where sparsity is present in data,
the true underlying precision matrix $\Theta $ contains many zeros, and
the corresponding empirical estimates fluctuate around zero. Approximately
half of them would, therefore, be positive. If the underlying model is
$ \mathrm{EMTP}_{2} $, then the entries of $\Theta $ corresponding to edges
of the true graph, and likewise, their estimates would be negative. In
practice, we do not know the underlying graph, but in the case of the Danube
data, there is strong evidence that the true graph contains the flow connection
tree (e.g., \citet{EH2020}). In Figure~\ref{fig:boxplot} we, therefore,
show boxplots of the entries of the empirical precision matrix
$\overline \Theta $, grouped by edges that do (left) and do not (right)
belong to the flow connection tree. We can see a clear difference that
supports the intuition above for underlying $ \mathrm{EMTP}_{2} $ models.

\begin{figure}
	\centering
	\includegraphics[trim=0em 8em 0em 5em,scale=0.4]{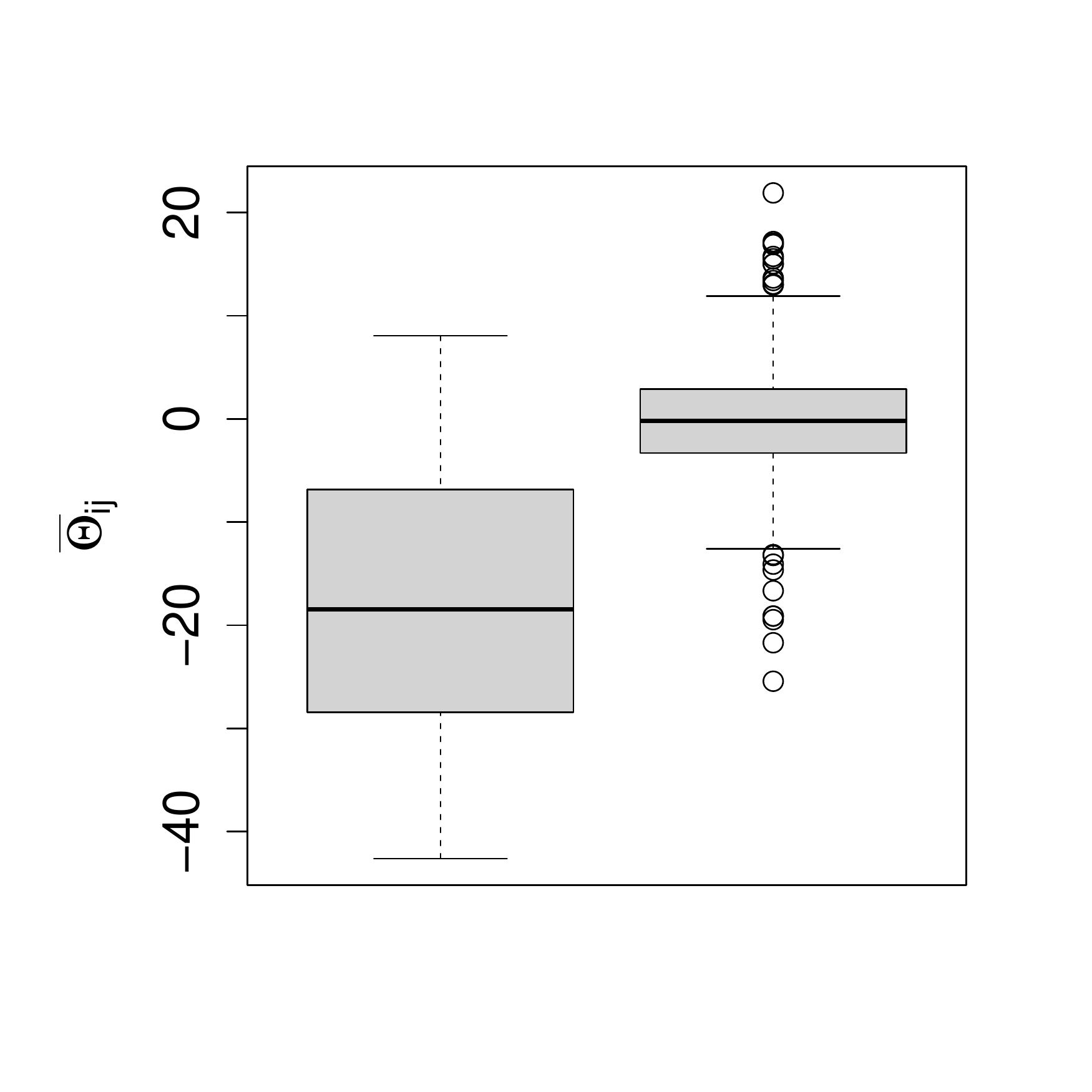}
	\caption{Boxplot for empirical estimates $ \overline{\Theta}_{ij} $ for edges in the flow graph (left) and the non-diagonal remaining entries (right).}\label{fig:boxplot}
\end{figure}

This intuitive reasoning suggests that positive dependence is present in
the Danube data. We, therefore, compute our H\"usler--Reiss estimator under
the $ \mathrm{EMTP}_{2} $ constraint and denote the resulting variogram and
precision matrices by $\widehat \Gamma $ and $\widehat \Theta $, respectively.
To illustrate the regularizing impact of our algorithm, we compare empirical
versions estimates above on the subset $I$ of stations to the corresponding
$ \mathrm{EMTP}_{2} $ estimates
\begin{align*}
	\widehat{\Gamma}_{II}&=\begin{pmatrix} 0.00 & 0.53 &
		\mathbf{0.57} & \mathbf{0.58} & \mathbf{0.60}
		\\
		0.53 & 0.00 & 0.09 & 0.11 & 0.18
		\\
		\mathbf{0.57} & 0.09 & 0.00 & 0.04 & \mathbf{0.17}
		\\
		\mathbf{0.58} & 0.11 & 0.04 & 0.00 & 0.15
		\\
		\mathbf{0.60} & 0.18 & \mathbf{0.17} & 0.15 & 0.00 \end{pmatrix},\\
	& \widehat{
		\Theta}_{II}=\begin{pmatrix} 7.24 & -0.77 & \mathbf{0.00} &
		\mathbf{0.00} & \mathbf{0.00}
		\\
		-0.77 & 14.16 & -8.79 & -0.53 & -2.06
		\\
		\mathbf{0.00} & -8.79 & 32.29 & -23.22 & \mathbf{0.00}
		\\
		\mathbf{0.00} & -0.53 & -23.22 & 29.20 & -3.77
		\\
		\mathbf{0.00} & -2.06 & \mathbf{0.00} & -3.77 & 38.52 \end{pmatrix}.
\end{align*}
In the matrix $ \widehat{\Gamma}_{II} $, we marked in bold the entries
that differ from the empirical version $\overline \Gamma $; note that
$ \widehat{\Gamma}_{35} $ begins to differ only in the third decimal. In
the submatrix of the precision matrix $ \widehat{\Theta}_{II} $, we marked
in bold the entries that have been set to zero by the
$ \mathrm{EMTP}_{2} $ constraint. We observe that these are in correspondence
and that in comparison with $ \overline{\Gamma}_{II} $, only four out
of 10 entries in $ \widehat{\Gamma}_{II} $ have changed. This is a consequence
of the fact that the $ \mathrm{EMTP}_{2} $ solution
$ (\widehat{\Gamma},\widehat{\Theta}) $ must satisfy the KKT conditions
in Theorem~\ref{th:kktdd}. In particular, Condition (iii) of this theorem
imposes zeros in $ \widehat{\Theta} $ exactly where
$ \widehat{\Gamma} $ differs from $ \overline{\Gamma} $. By
\eqref{eq:indep_from_theta} this implies that the H\"usler--Reiss distribution
is an extremal graphical model, as defined in \cite{EH2020}, demonstrating
how $ \mathrm{EMTP}_{2} $ enforces sparsity.
\begin{figure}
	\centering
	\includegraphics[trim=5em 8em 5em 5em,scale=0.49]{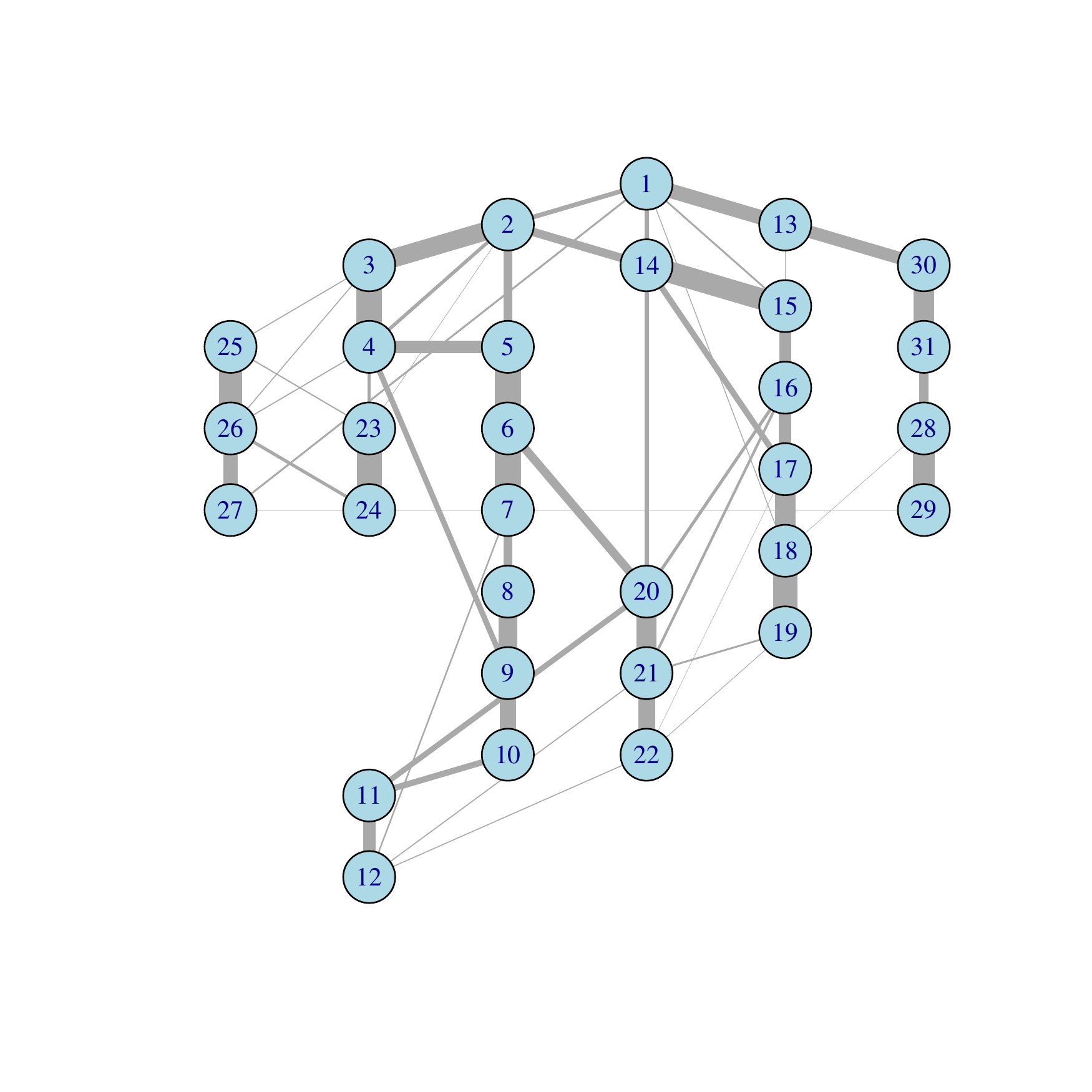}
	\includegraphics[trim=5em 8em 0em 5em,scale=0.49]{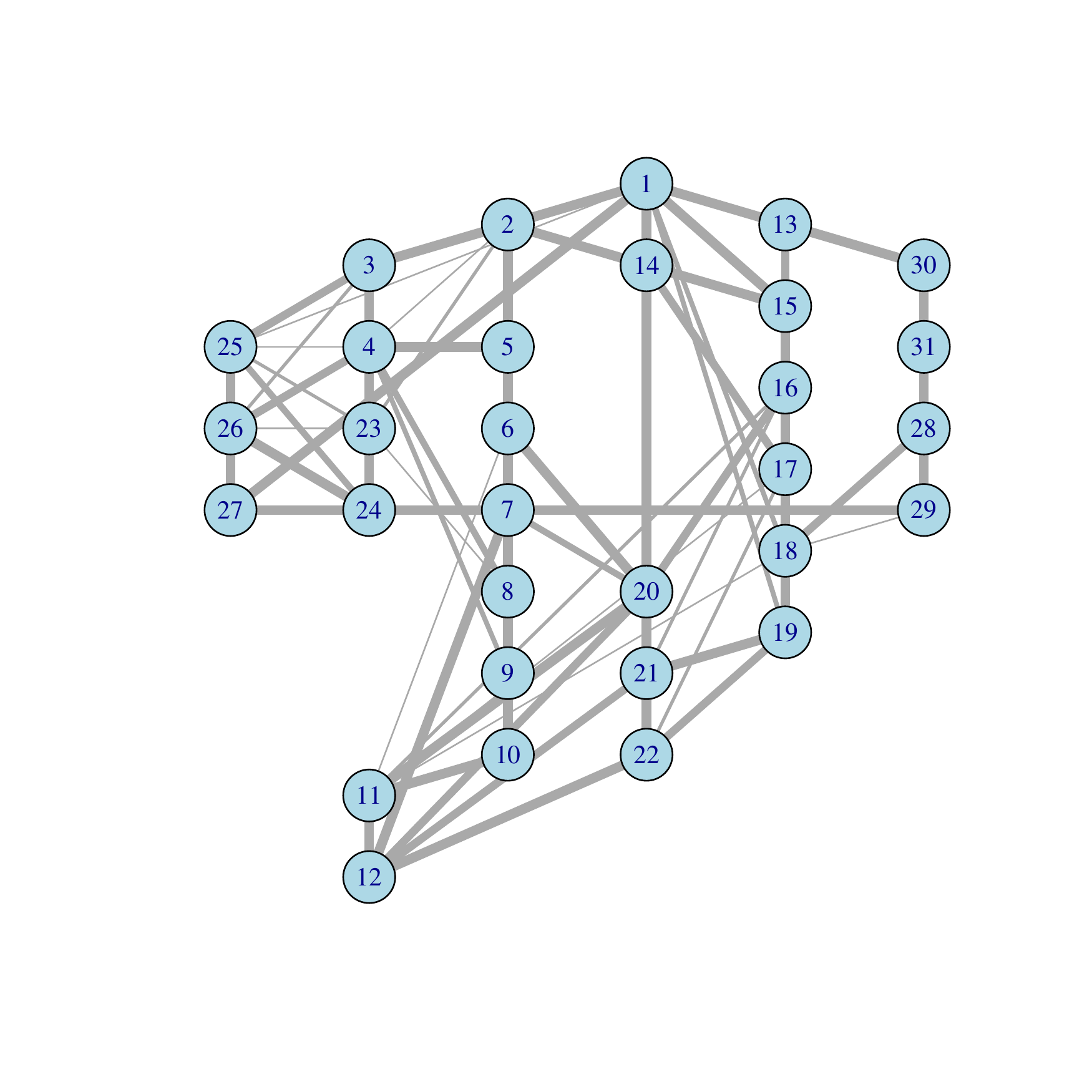}
	\caption{Left: Extremal graphical structure of the fitted \EMTPtwo H\"usler--Reiss distribution; edge thickness is proportional to $ \log(1 - \widehat{\Theta}_{ij}) $. Right: Summary of estimated graphical structures of the fitted \EMTPtwo H\"usler--Reiss distributions for different thresholds $ p\in\{0.7,0.75,0.8,0.85,0.9,0.95\} $;  edge thickness represents the proportion of occurrences of the edge among the estimated graphs.}\label{fig:danube_emtp2}
\end{figure}

The corresponding extremal graph $\widehat G = (V,\widehat E)$ is shown
in the left panel of Figure~\ref{fig:danube_emtp2}. Interestingly, the
$ \mathrm{EMTP}_{2} $ graph contains all physical flow connections, with the
exception of the edges $(25,4)$ and $(20,7)$; see also the geographical
map of the Upper Danube Basin in \citet[Figure~1]{ADE2015}. Most of the
additional connections resemble geographical proximity or similarity, which
may corresponds to positive extremal dependence between such nodes. Theorem~\ref{t:supergraph}
gives us a theoretical foundation to interpret the estimated graph. Indeed,
if the model is $ \mathrm{EMTP}_{2} $, which is a sensible assumption for
the data as argued above, then $\widehat G$ asymptotically contains all
edges that are present in the true underlying graph~$G$.\vspace*{1pt} This means that
if an edge is not present in $\widehat G$, then it cannot be present in
$G$. Since $\widehat G$ is very sparse on this data set, it gives us a
good estimate of the extremal graphical model. In particular, it shows
that many extremal conditional independences are present between locations
that are not neighbors in the flow connection tree.

In order to compare our method to existing approaches, we fit several different
H\"usler--Reiss models to the data, some with graphical structure, and
some without. The first naive approach is to use the combined extremal
variogram $\overline \Gamma $, which corresponds to a trivial, fully connected
graph. As a simple extremal graphical model based on domain knowledge,
we consider a H\"usler--Reiss distribution on the undirected tree given
by the physical flow connection of the river network. As an alternative
tree model, we fit the minimum spanning tree based on
$\overline \Gamma $, which is a consistent estimator of the extremal graph
structure if the true graph is a tree (\citet{EV2020}). As discussed in
\cite{EH2020}, a tree might be too restrictive, and following their methodology,
we fit a sequence of extremal block graph models and choose the best one
according to AIC. \cite{ADE2015} propose a model from spatial extreme value
statistics that heavily relies on domain knowledge of this data set, such
as catchment sizes and distances between stations. We fit their model in
our framework and remark that it has six parameters but corresponds to
a fully connected graph. For the sake of fair comparison, we do not use
censoring in any of the approaches here (cf., \citet{smi1997}). For the
$ \mathrm{EMTP}_{2} $ estimator, censoring could be achieved by using a censored
estimator of $\Gamma $ in the input of Algorithm~\ref{alg:HR}.

The results of the model fits can be found in Table~\ref{fig:danubeAICBIC}.
Among the graphical models, both in terms of AIC and BIC, we observe that
our $ \mathrm{EMTP}_{2} $ performs best. This is remarkable since our method
does not have any tuning parameters and the assumption of
$ \mathrm{EMTP}_{2} $ might seem restrictive. The good performance suggests
that the extremes of this data exhibit strong positive dependence, which
underlines the theoretical findings of this paper. The spatial model of
\cite{ADE2015} performs similarly to our $ \mathrm{EMTP}_{2} $ estimator in
terms of AIC and in terms of BIC, which penalizes the number of model parameters
more strongly; the spatial model is first. We note that this comparison
is flawed since, as opposed to the spatial model, our estimator is completely
data-driven and does not use any domain knowledge. It can, therefore, easily
be applied to general multivariate data where no information of the gauging
stations is available.

\begin{table}
	\centering
	\begin{tabular}{rrrrr}
		\hline
		& twice neg logLH & nb par & AIC & BIC \\ 
		\hline
		empirical variogram & 253.17 & 465 & 1183.17 & 2467.58 \\ 
		flow graph & 1447.85 & 30 & 1507.85 & 1590.72 \\ 
		MST & 1372.58 & 30 & 1432.58 & 1515.45 \\ 
		best block graph MST & 1246.11 & 42 & 1330.11 & 1446.12 \\ 
		Asadi et al. & 1090.35 & 6 & 1102.35 & 1118.92 \\ 
		\EMTPtwo estimator & 1017.00 & 67 & 1151.00 & 1336.07 \\ 
		\hline
	\end{tabular}
	\caption{Results for the different models fitted to the Danube river data set; columns show twice the negative log-likelihood, the number of model parameters and the AIC and BIC values, respectively.}\label{fig:danubeAICBIC}
\end{table}

For a sensitivity analysis with respect to the chosen threshold
$ p $, we study the estimated $ \mathrm{EMTP}_{2} $ graphs for
$ p\in \{0.7,0.75,0.8,0.85,0.9,0.95\} $. The right panel of Figure~\ref{fig:danube_emtp2}
shows a summary by a graph with edge width representing the proportion
of appearances of the edge among all graphs. We observe that most edges
appear in every graph so that the $ \mathrm{EMTP}_{2} $ graph seems to be
stable across different threshold choices.

To summarize, the $ \mathrm{EMTP}_{2} $ estimator allows for a competitive
fit without the choice of tuning parameters and without the need of domain
knowledge. In particular, for high-dimensional applications with potentially
small sample sizes, our estimator is guaranteed to exist and our algorithm
is computationally fast, even for large dimension.

\section{Discussion}
\label{sec:discussion}

In this paper we have studied the role of positive dependence in multivariate
extreme value theory. In particular, the property of
$ \mathrm{EMTP}_{2} $ appears naturally in many statistical models and can
be characterized by Laplacian precision matrices in the important case
of H\"usler--Reiss distributions.

We concentrate on the case of multivariate Pareto distributions, which
describe the multivariate tail under asymptotic dependence; see Section~\ref{s:MPD_prop}.
Our theoretical results rely on the fundamental Theorem~\ref{t:LLCetal}
on the positive dependence of convolutions of a random vectors. In this
paper we mainly used this result to link the probabilistic properties of
a multivariate Pareto distribution to those of its extremal functions;
see Theorem~\ref{t:mtp2}.

Since the assumptions of Theorem~\ref{t:LLCetal} are fairly general, it
can be applied to a much wider range of models of the form
\begin{align}
	\label{random_location} \widetilde{\mathbf Z} = \mathbf X + X_{0}\boldsymbol
	1, 
\end{align}
where $X_{0}$ is a general random variable, also called common factor,
and independent of this, $\mathbf{X}$ is a multivariate random vector.
Such models appear as factor models
(\citet{lawley1962factor,holland1986conditional}) or random location mixtures
in applied probability (\citet{Hashorva2012, kru2018}).

In the framework of extremes, such location mixtures have been proven to
produce versatile tail dependence structures, including both asymptotic
dependence and independence (e.g., \citet{eng2018a}). Intuitively, the
heavier the tail of the common factor $X_{0}$ relative to the tail heaviness
of the components of $\mathbf{X}$, the stronger the dependence of
$\widetilde{\mathbf Z}$ in the extremes.

A future research direction is to extend the theoretical analysis and statistical
methodology of our paper to models for asymptotic independence. Our Theorem~\ref{t:LLCetal}
can be applied to obtain first results. Indeed, as an example, if
$X_{0}$ has a light tail and $\mathbf{X}$ is multivariate Gaussian, then
$\widetilde{\mathbf Z}$ is asymptotically independent (\citet{kru2018}),
and the strength of residual dependence depends on the correlation matrix
of $\mathbf{X}$ (\citet{eng2018a}). If $\mathbf{X}$ is strongly
$ \mathrm{MTP}_{2} $, that is, its precision matrix is a diagonally dominant
M-matrix (see Example~\ref{ex:gauss}), then Theorem~\ref{t:LLCetal} implies
that $\widetilde{\mathbf Z}$ is $ \mathrm{MTP}_{2} $. This theoretical result
could be used to regularize such asymptotically independent models by enforcing
the $ \mathrm{MTP}_{2} $ constraint.

Another popular approach for asymptotic independence is the model of
\cite{HeffernanTawn2004}. Similar to the definition of
$\mathbf{Y}^{k}$ in \eqref{Yk}, this model specifies the multivariate distribution
conditional on one variable being extreme. In this case, even though the
corresponding model has a form similar to \eqref{random_location}, there
is dependence between $X_{0}$ and $\mathbf{X}$. A~different result is,
therefore, needed to characterize positive dependence in these models.

\begin{appendix}
	\section{The algebra of variogram matrices}
	\label{app:Gamma}
	
	\subsection{Algebraic structure}
	\label{secA.1}
	
	Let $\mathbb{S}^{d}$ be the space of real symmetric $d\times d$ matrices
	and $\mathbb{S}^{d}_{0}$ its subspace with zeros on the diagonal. We equip
	$\mathbb{S}^{d}$ with the standard trace inner product
	$\langle A,B\rangle =\operatorname{tr}(AB)=\sum_{i,j}A_{ij}B_{ij}$ and
	$\mathbb{S}^{d}_{0}$ with
	$\llangle A,B\rrangle  =\sum_{i<j}A_{ij}B_{ij}$. For
	$\boldsymbol b\in \mathbb{R}^{d}$ satisfying
	$\boldsymbol b^{T}\boldsymbol 1=1$, we define:
	\begin{itemize}
		\item [(i)] linear subspace of $\mathbb{S}^{d}$:
		$\mathbb U_{\boldsymbol b}=\{A\in \mathbb{S}^{d}:A \boldsymbol b=
		\boldsymbol 0\}$,
		\item [(ii)] projection on $\mathbb{R}^{d}/\boldsymbol 1$:
		$\boldsymbol P_{\boldsymbol b}=I_{d}-\boldsymbol 1\boldsymbol b^{T}$,
		\item [(iii)] linear map:
		$\sigma _{\boldsymbol b}: \mathbb{S}^{d}_{0}\to \mathbb U_{
			\boldsymbol b}$,
		$A\mapsto \boldsymbol P_{\boldsymbol b}(-\frac {A}{2})\boldsymbol P_{
			\boldsymbol b}^{T}$.
	\end{itemize}
	It is useful to note that, for any $\boldsymbol a$, $\boldsymbol b$ such that
	$\boldsymbol a^{T}\boldsymbol 1=\boldsymbol b^{T}\boldsymbol 1=1$,
	\begin{equation}
		\label{eq:Pab} \boldsymbol P_{\boldsymbol a}\boldsymbol P_{\boldsymbol b} =
		\boldsymbol P_{\boldsymbol a}. 
	\end{equation}
	
	The relevant cases for us are when
	$\boldsymbol b=\frac {1}{d}\boldsymbol 1$ and when
	$\boldsymbol b=\boldsymbol e_{k}$ is a canonical unit vector. If
	$\boldsymbol b=\frac {1}{d}\boldsymbol 1$, we omit the subscript writing
	$\mathbb U$ and $\boldsymbol P$. In the special case when
	$\boldsymbol b=\boldsymbol e_{k}$, we write $\boldsymbol P_{k}$ and
	$\mathbb U_{k}$. Note that $\boldsymbol P$ is symmetric and it represents
	the \emph{orthogonal} projection matrix on
	$\mathbb{R}^{d}/\boldsymbol 1$. Also, $\boldsymbol P_{k}$ has rows
	$\boldsymbol e_{i}-\boldsymbol e_{k}$, and in particular, the $k$th row
	is zero. We denote by
	$\overline {\boldsymbol P}_{k}\in \mathbb{R}^{(d-1)\times d}$ the matrix
	obtained from $\boldsymbol P_{k}$ by removing the $k$th row.
	
	Let $\mathbb{S}^{d}_{0}[\Gamma ]$, $\mathbb{S}^{d}_{0}[Q]$ be two copies
	of $\mathbb{S}^{d}_{0}$ with coordinates denoted by $\Gamma _{ij}$ and
	$Q_{ij}$, respectively. Similarly, we let
	$\mathbb U_{\boldsymbol b}[\Sigma ]$,
	$\mathbb U_{\boldsymbol b}[\Theta ]$ be two copies of
	$\mathbb U_{\boldsymbol b}$. Consider the following sequence of transformations:
	\begin{equation*}
		\mathbb{S}^{d}_{0}[\Gamma ]  \overset{\sigma
			_{\boldsymbol b}} {\longrightarrow} \mathbb U_{
			\boldsymbol b}[\Sigma ]
		\overset{\operatorname{ginv}} {\longrightarrow}\mathbb U_{\boldsymbol b}[
		\Theta ] \overset{\sigma _{\boldsymbol b}^{*}} {\longrightarrow}
		\mathbb{S}^{d}_{0}[Q],
	\end{equation*}
	where $\operatorname{ginv}$ stands for the generalized inverse
	$\Sigma \mapsto \Sigma ^{+}$, $\sigma _{\boldsymbol b}$ is a linear map
	defined by
	$\sigma _{\boldsymbol b}(\Gamma )=\boldsymbol P_{\boldsymbol b}(-
	\frac{1}{2}\Gamma )\boldsymbol P_{\boldsymbol b}^{T}$ and
	$\sigma _{\boldsymbol b}^{*}$ denotes the adjoint of the linear map
	$\sigma _{\boldsymbol b}$, that is, the unique transformation that satisfies
	\begin{equation}
		\label{eq:adjoint} \bigl\langle \sigma _{\boldsymbol b}(\Gamma ),\Theta \bigr
		\rangle = \bigllangle \Gamma ,\sigma _{\boldsymbol b}^{*}(\Theta )\bigrrangle
		\quad \text{for all }\Gamma \in \mathbb{S}^{d}_{0}, \Theta
		\in \mathbb U_{\boldsymbol b}. 
	\end{equation}
	\begin{remark}
		\label{rem:app}
		We note that:
		\begin{itemize}
			\item[1.] The map $\sigma _{\boldsymbol b}$ is invertible with the inverse
			$\gamma _{\boldsymbol b}:\mathbb U_{\boldsymbol b}\to \mathbb{S}^{d}_{0}$,
			given by
			\begin{equation*}
				\gamma _{\boldsymbol b}(\Sigma )=(\Sigma _{ii}+\Sigma
				_{jj}-2 \Sigma _{ij})_{i<j}.
			\end{equation*}
			\item[2.] Standard linear algebra gives that the adjoint of the inverse
			$\gamma _{\boldsymbol b}$ is equal to the inverse of the adjoint
			$\sigma _{\boldsymbol b}^{*}$.
			\item[3.] The generalized inverse is a well-defined automorphism on
			$\mathbb U_{\boldsymbol b}$.
			\item[4.] The inner products are preserved in the sense that, for every
			$\Gamma ,Q\in \mathbb{S}^{d}_{0}$ with
			$\Sigma =\sigma _{\boldsymbol b}(\Gamma )$ and
			$\Theta =\gamma ^{*}_{\boldsymbol b}(Q)$, we have that
			\begin{equation*}
				\langle \Sigma ,\Theta \rangle = \bigl\langle \sigma _{\boldsymbol b}( \Gamma ),
				\gamma _{\boldsymbol b}^{*}(Q)\bigr\rangle = \bigllangle \gamma
				_{\boldsymbol b}\bigl(\sigma _{\boldsymbol b}(\Gamma )\bigr),Q \bigrrangle =
				\llangle \Gamma ,Q\rrangle .
			\end{equation*}
		\end{itemize}
	\end{remark}
	
	The adjoint map can be easily computed, and its form is particularly simple
	in the special case when $\boldsymbol b=\frac{1}{d}\boldsymbol 1$ and when
	$\boldsymbol b=\boldsymbol e_{k}$. To avoid confusion, we introduce different
	notation for the coordinates of $\mathbb{S}^{d}_{0}$ and
	$\mathbb U_{\boldsymbol b}$ depending on a particular
	$\boldsymbol b$. We use:
	\begin{itemize}
		\item [(i)] $\Sigma $, $\Theta $ to denote coordinates in $\mathbb U$,
		\item [(ii)] $\widetilde\Sigma ^{(k)}$, $\widetilde\Theta ^{(k)}$ to denote
		coordinates in $\mathbb U_{k}$ and
		\item [(iii)] $\Sigma ^{(k)}$, $\Theta ^{(k)}$ to denote matrices in
		$\mathbb{S}^{d-1}$ obtained from
		$\widetilde\Sigma ^{(k)}$, $\widetilde\Theta ^{(k)}$ by removing the
		$k$th row/column.
	\end{itemize}
	
	\begin{lemma}
		\label{lem:QinTheta}
		The adjoint map
		$\sigma ^{*}:\mathbb U[{\Theta}]\to \mathbb{S}^{d}_{0}[{Q}]$ is given by
		$Q_{ij}=-\Theta _{ij}$ for all $1\leq i<j\leq d$. The adjoint map
		$\sigma ^{*}_{k}:\mathbb U_{k}[{\widetilde\Theta ^{(k)}}]\to
		\mathbb{S}^{d}_{0}[{Q}]$ is given by $Q_{ij}=-\Theta _{ij}^{(k)}$ for all
		$i,j\neq k$ and $Q_{ik}=\sum_{j\neq k} \Theta _{ij}^{(k)}$.
	\end{lemma}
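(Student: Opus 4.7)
The plan is to verify both formulas by evaluating the defining adjoint identity \eqref{eq:adjoint} in coordinates and reading off $\sigma_{\boldsymbol b}^{*}(\Theta)$ by matching the coefficient of each $\Gamma_{ij}$. Writing the left-hand side with the trace inner product and using the cyclic property,
\[
\langle \sigma_{\boldsymbol b}(\Gamma),\Theta\rangle = \tr\bigl(\boldsymbol P_{\boldsymbol b}(-\tfrac{1}{2}\Gamma)\boldsymbol P_{\boldsymbol b}^{T}\Theta\bigr) = -\tfrac{1}{2}\tr\bigl(\Gamma\,\boldsymbol P_{\boldsymbol b}^{T}\Theta \boldsymbol P_{\boldsymbol b}\bigr),
\]
the task reduces to computing $M_{\boldsymbol b}:=\boldsymbol P_{\boldsymbol b}^{T}\Theta \boldsymbol P_{\boldsymbol b}$ for the two relevant choices of $\boldsymbol b$, exploiting the constraint $\Theta\boldsymbol b=\boldsymbol 0$ that defines $\mathbb U_{\boldsymbol b}$, and comparing with $\llangle\Gamma,Q\rrangle=\sum_{i<j}\Gamma_{ij}Q_{ij}$.

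For the first statement ($\boldsymbol b=\tfrac{1}{d}\boldsymbol 1$) the simplification is immediate: $\Theta\in\mathbb U$ gives $\Theta\boldsymbol 1=\boldsymbol 0$, and since $\boldsymbol P$ is symmetric we have $\boldsymbol P\Theta=\Theta\boldsymbol P=\Theta$, hence $M=\Theta$. Using $\Gamma_{ii}=0$ and the symmetry of $\Gamma$ and $\Theta$,
\[
\langle \sigma(\Gamma),\Theta\rangle = -\tfrac{1}{2}\tr(\Gamma\Theta) = -\tfrac{1}{2}\sum_{i,j}\Gamma_{ij}\Theta_{ij} = -\sum_{i<j}\Gamma_{ij}\Theta_{ij},
\]
so $Q_{ij}=-\Theta_{ij}$ as claimed.

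For the second statement ($\boldsymbol b=\boldsymbol e_k$) the computation is slightly more involved because $\widetilde\Theta^{(k)}\in\mathbb U_{k}$ only forces the $k$-th row and column of $\widetilde\Theta^{(k)}$ to vanish, not $\widetilde\Theta^{(k)}\boldsymbol 1$. Setting $\mathbf v:=\widetilde\Theta^{(k)}\boldsymbol 1$, so that $v_k=0$ and $v_i=\sum_{j\neq k}\Theta_{ij}^{(k)}$ for $i\neq k$, and expanding with $\boldsymbol P_k^{T}=I-\boldsymbol e_k\boldsymbol 1^{T}$ yields
\[
M_k = \widetilde\Theta^{(k)} - \mathbf v\,\boldsymbol e_k^{T} - \boldsymbol e_k\,\mathbf v^{T} + (\mathbf v^{T}\boldsymbol 1)\,\boldsymbol e_k\boldsymbol e_k^{T}.
\]
The last rank-one term contributes nothing to $\tr(\Gamma M_k)$ because $\Gamma_{kk}=0$; the two cross terms jointly contribute $-2\sum_{i\neq k}\Gamma_{ik}v_i$ by symmetry of $\Gamma$; and $\tr(\Gamma\widetilde\Theta^{(k)})=2\sum_{i<j,\,i,j\neq k}\Gamma_{ij}\Theta_{ij}^{(k)}$, using that the $k$-th row and column of $\widetilde\Theta^{(k)}$ vanish. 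Multiplying by $-\tfrac{1}{2}$ and matching coefficients with $\llangle\Gamma,Q\rrangle$ yields $Q_{ij}=-\Theta_{ij}^{(k)}$ for $i,j\neq k$ and $Q_{ik}=v_i=\sum_{j\neq k}\Theta_{ij}^{(k)}$, as claimed. The only mildly tricky point is the bookkeeping of the two ``boundary'' rank-one contributions coming from the $k$-th row and column of $\boldsymbol P_k$ and verifying that the symmetrisation factor of two exactly cancels the $-\tfrac{1}{2}$ prefactor; once $M_k$ is written out, the rest is mechanical.
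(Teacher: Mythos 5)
Your proof is correct and follows essentially the same route as the paper: both verify the defining identity \eqref{eq:adjoint} by reducing it to the computation of $\boldsymbol P^{T}\Theta\boldsymbol P$ (respectively $\boldsymbol P_k^{T}\widetilde\Theta^{(k)}\boldsymbol P_k$) and reading off the coefficient of $\Gamma_{ij}$. The only cosmetic difference is that the paper evaluates the adjoint identity on the basis elements $B=E_{ij}+E_{ji}$ of $\mathbb{S}^d_0$ and simply asserts the entries of $\boldsymbol P_k^{T}\widetilde\Theta^{(k)}\boldsymbol P_k$, whereas you work with a general $\Gamma$, write out the rank-one expansion $M_k=\widetilde\Theta^{(k)}-\mathbf v\boldsymbol e_k^{T}-\boldsymbol e_k\mathbf v^{T}+(\boldsymbol 1^{T}\mathbf v)\boldsymbol e_k\boldsymbol e_k^{T}$ explicitly, and match coefficients; this supplies the computation that the paper leaves implicit in its final sentence.
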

	\begin{proof}
		The adjoint maps are defined by (\ref{eq:adjoint}). We will check this
		condition on the basis of $\mathbb{S}^{d}_{0}$ given by elements of the
		form $B=E_{ij}+E_{ji}$, where $E_{ij}$ denotes the elementary matrix with
		the $ij$th entry equal to one and zero otherwise. Let first
		$\boldsymbol b=\frac{1}{d}\boldsymbol 1$. The right-hand side of (\ref{eq:adjoint})
		becomes $(\sigma ^{*}(\Theta ))_{ij}$. The left-hand side is
		\begin{equation*}
			\biggl\langle \boldsymbol P \biggl(-\frac {B}{2}\biggr)\boldsymbol P,
			\Theta \biggr\rangle = -\frac {1}{2}\operatorname{tr}(B\boldsymbol P\Theta
			\boldsymbol P) = - \frac {1}{2}\operatorname{tr}(B\Theta ) = -\Theta
			_{ij},
		\end{equation*}
		where we used the fact that
		$\boldsymbol P \Theta \boldsymbol P=\Theta $ for all
		$\Theta \in \mathbb U$. The second part of the result follows similar calculations
		and the fact that
		$(\boldsymbol P_{k}^{T}\widetilde\Theta ^{(k)}\boldsymbol P_{k})_{ij}=
		\Theta _{ij}^{(k)}$ if $i,j\neq k$ and
		$(\boldsymbol P_{k}^{T}\widetilde\Theta ^{(k)}\boldsymbol P_{k})_{ik}=-
		\sum_{j\neq k}\Theta _{ij}^{(k)}$.
	\end{proof}
	
	In our paper we start with the variogram matrix
	$\Gamma \in \mathbb{S}^{d}_{0}$. The matrix $\Sigma ^{(k)}$ defined in
	\eqref{eq:gamma2sigma} is a $(d-1)\times (d-1)$ matrix obtained from
	$\sigma _{k}(\Gamma )\in \mathbb U_{k}$ by removing the $k$th row/column.
	The inverse of $\sigma _{k}$ expresses $\Gamma $ in terms of
	$\Sigma ^{(k)}$ as in \eqref{eq:sigma2gamma}. The matrix
	$\Theta =\Sigma ^{+}=(\boldsymbol P(-\frac{1}{2}\Gamma )\boldsymbol P)^{+}$
	is exactly the same matrix that appears in Proposition~\ref{prop:manu}.
	To easily translate between various equivalent representations of the variogram
	matrix $\Gamma $, we define $f_{k}:\mathbb U\to \mathbb U_{k}$ by
	$\Sigma \mapsto \boldsymbol P_{k}\Sigma \boldsymbol P_{k}^{T}$. The following
	results provides the complete picture of the situation.
	\begin{prop}
		\label{prop:cd}
		The adjoint $f^{*}_{k}:\mathbb U_{k}\to \mathbb U$ of $f_{k}$ is defined
		by $\Theta \mapsto \boldsymbol P_{k}^{T}\Theta \boldsymbol P_{k}$. Moreover,
		the following diagram commutes\footnote{By this we mean that composing maps
			along any two directed paths with the same beginning and end results in
			the same function.}
			\[\begin{tikzcd}
		& \S^{d-1}[\Sigma^{(k)}] \arrow[r,"{\rm inv}"] & \S^{d-1}[\Theta^{(k)}] \arrow[d,swap,"\pi^*_{k}"] &\\
		\S^d_0[\Gamma] \arrow[r,"\sigma_k"]  \arrow[dr,swap,"\sigma"] &\quad \mathbb U_k[\widetilde \Sigma^{(k)}]  \arrow[u,"\pi_k"] \arrow[r,"{\rm ginv}"]  &\quad \mathbb U_k[\widetilde \Theta^{(k)}]\hspace{-2mm} \arrow[d,swap,"f^*_k", shift right = 2.5mm]  \arrow[r,"\sigma^*_k"] &\quad \S^d_0[Q]\\
		&\quad \mathbb U[\Sigma]\hspace{-2mm} \arrow[r,"{\rm ginv}"] \arrow[u,swap,"f_k",shift left = 2.6mm] & \quad~~\mathbb U[\Theta]\hspace{2mm} \arrow[ru,swap,"\sigma^*"]& 
	\end{tikzcd},
	\]
		where $\pi _{k}$ drops the $k$th row/column of
		$\widetilde \Sigma ^{(k)}$ and its adjoint $\pi _{k}^{*}$ embeds
		$\Theta ^{(k)}$ in $\mathbb U_{k}$ by adding the zero row/column. All the
		maps, apart from the inversion on the top, are well defined everywhere.
		For the inversion we restrict the map to an open subset where
		$\Sigma ^{(k)}$ is invertible.
	\end{prop}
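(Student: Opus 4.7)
The proposition makes two claims: an explicit formula for the adjoint $f_k^*$, and the commutativity of the displayed diagram. My plan is to compute the adjoint directly from its defining relation, and then verify commutativity of each subdiagram in turn, invoking Proposition~\ref{prop:manu} to handle the one nontrivial step.

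For the formula for $f_k^*$, I would unfold the trace inner product: for any $\Sigma\in\mathbb{U}$ and $\Theta\in\mathbb{U}_k$,
\[
\langle f_k(\Sigma),\Theta\rangle = \operatorname{tr}(\boldsymbol{P}_k\Sigma\boldsymbol{P}_k^T\Theta) = \operatorname{tr}\bigl(\Sigma\,\boldsymbol{P}_k^T\Theta\boldsymbol{P}_k\bigr) = \bigl\langle \Sigma,\boldsymbol{P}_k^T\Theta\boldsymbol{P}_k\bigr\rangle
\]
by cyclicity of the trace, identifying $f_k^*(\Theta)=\boldsymbol{P}_k^T\Theta\boldsymbol{P}_k$. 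Since $\boldsymbol{P}_k\boldsymbol{1}=\boldsymbol{0}$, the image indeed lies in $\mathbb{U}$.

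For commutativity, I would verify the diagram face by face. The left triangle $\sigma_k=f_k\circ\sigma$ is a one-line consequence of \eqref{eq:Pab}, which gives $\boldsymbol{P}_k\boldsymbol{P}=\boldsymbol{P}_k$ and hence $\boldsymbol{P}\boldsymbol{P}_k^T=\boldsymbol{P}_k^T$, so that $f_k(\sigma(\Gamma))=\boldsymbol{P}_k\boldsymbol{P}(-\tfrac{1}{2}\Gamma)\boldsymbol{P}\boldsymbol{P}_k^T=\boldsymbol{P}_k(-\tfrac{1}{2}\Gamma)\boldsymbol{P}_k^T=\sigma_k(\Gamma)$. The right triangle $\sigma_k^*=\sigma^*\circ f_k^*$ is then immediate by taking adjoints of the left triangle. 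The top triangle, $\mathrm{ginv}|_{\mathbb{U}_k}=\pi_k^*\circ\mathrm{inv}\circ\pi_k$, follows from the structure of the Moore--Penrose pseudoinverse: any $\widetilde\Sigma^{(k)}\in\mathbb{U}_k$ has $\boldsymbol{e}_k$ in its kernel, hence so does its pseudoinverse, while on the orthogonal complement the pseudoinverse coincides with the usual inverse of the $(d-1)\times(d-1)$ block $\Sigma^{(k)}=\pi_k(\widetilde\Sigma^{(k)})$.

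The main obstacle is the central rectangle, namely $\mathrm{ginv}=f_k^*\circ\mathrm{ginv}\circ f_k$ on $\mathbb{U}$, and this is where I would invoke Proposition~\ref{prop:manu}. Starting with $\Sigma=\sigma(\Gamma)\in\mathbb{U}$, the left triangle gives $f_k(\Sigma)=\widetilde\Sigma^{(k)}$, and the top triangle gives $\widetilde\Theta^{(k)}:=\mathrm{ginv}(\widetilde\Sigma^{(k)})=\pi_k^*\bigl((\Sigma^{(k)})^{-1}\bigr)$. I would then compute the entries of $f_k^*(\widetilde\Theta^{(k)})=\boldsymbol{P}_k^T\widetilde\Theta^{(k)}\boldsymbol{P}_k$ directly using $(\boldsymbol{P}_k)_{si}=\delta_{si}-\delta_{ik}$: for $i,j\neq k$ the entry equals $\Theta^{(k)}_{ij}$, while the $k$th row is determined by the row-sum-zero constraint inherited from $\boldsymbol{P}_k\boldsymbol{1}=\boldsymbol{0}$. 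By Proposition~\ref{prop:manu}, the matrix $\Theta$ defined via \eqref{eq:Theta} satisfies $\Theta=\Sigma^+=\mathrm{ginv}(\Sigma)$ and $\Theta\boldsymbol{1}=\boldsymbol{0}$; its off-diagonal off-$k$ entries are $\Theta^{(k)}_{ij}$ by \eqref{eq:Theta}, and its $k$th row is fixed by the same row-sum-zero condition, so it matches $f_k^*(\widetilde\Theta^{(k)})$ on the nose. Well-definedness of all maps apart from $\mathrm{inv}$ is clear from the explicit formulas, and $\mathrm{inv}$ is naturally restricted to the open set where $\Sigma^{(k)}$ is invertible.
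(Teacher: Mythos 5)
Your proof is correct and follows the same strategy as the paper's: compute $f_k^*$ from the defining adjoint relation via cyclicity of the trace, then verify each face separately, with the two triangles obtained from \eqref{eq:Pab} and from taking adjoints, the top rectangle from the block structure of the pseudoinverse on $\mathbb U_k$, and the bottom rectangle from the entrywise identification of $\Theta=\Sigma^+$ (Proposition~\ref{prop:manu}) with $\boldsymbol P_k^T\widetilde\Theta^{(k)}\boldsymbol P_k$. You are somewhat more explicit than the paper, which compresses the two rectangles into one-line remarks.
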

	\begin{proof}
		To verify the formula for the adjoint $f^{*}_{k}$, we note that, by definition,
		it must satisfy
			\begin{equation*}
			\bigl\langle f_{k}(\Sigma ),\widetilde\Theta ^{(k)}\bigr
			\rangle =\bigl\langle \Sigma ,f^{*}_{k}\bigl(\widetilde\Theta
			^{(k)}\bigr)\bigr\rangle \quad \text{for all }\Sigma \in \mathbb U,
			\widetilde\Theta ^{(k)}\in \mathbb U_{k},
		\end{equation*}
		and the formula follows by basic properties of the matrix trace and the
		fact that
		$\boldsymbol P_{\boldsymbol b}^{T}\widetilde\Theta ^{(k)}
		\boldsymbol P_{\boldsymbol b}\in \mathbb U$.
		
		To verify that the diagram commutes, it is enough to check that that it
		commutes along two side triangles, that is, that
		$\sigma _{k}=f_{k}\sigma $ and
		$\sigma _{k}^{*}=\sigma ^{*}f_{k}^{*}$ and along two central rectangles.
		The bottom rectangle follows by the above calculations. The upper rectangle
		follows by how pseudoinverse works on the space $\mathbb U_{k}$. To see
		that $\sigma _{k}=f_{k}\sigma $, note that, by (\ref{eq:Pab}),
		\begin{equation*}
			f_{k}\bigl(\sigma (\Gamma )\bigr)=\boldsymbol P_{k}
			\boldsymbol P\biggl(- \frac {\Gamma}{2}\biggr)\boldsymbol P \boldsymbol
			P^{T}_{k}=\boldsymbol P_{k}\biggl(-
			\frac {\Gamma}{2}\biggr)\boldsymbol P_{k}^{T}=\sigma
			_{k}(\Gamma ).
		\end{equation*}
		To check that $\sigma _{k}^{*}=\sigma ^{*}f_{k}^{*}$, we use basic properties
		of the adjoint.\vadjust{\goodbreak}
	\end{proof}
	
	Proposition~\ref{prop:cd} gives us another way to verify formula
	\eqref{eq:Theta}.
	
	\begin{lemma}
		\label{lem:Theta}
		Fix $k\in \{1,\ldots ,d\}$ then the form of the mapping $f^{*}_{k}$ implies
		that
		\begin{equation*}
			\Theta _{ij}= 
			\begin{cases} \Theta ^{(k)}_{ij}
				& \text{if }i,j\neq k,
				\\[6pt]
				-\sum_{l\neq k}\Theta ^{(k)}_{il} &
				\text{if } i\neq k, j=k,
				\\[6pt]
				\sum_{i,j\neq k}\Theta ^{(k)}_{ij} &
				\text{if } i=j=k. \end{cases} 
		\end{equation*}
	\end{lemma}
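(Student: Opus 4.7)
The plan is to read the formula directly off the commutative diagram in Proposition~\ref{prop:cd}. Starting from $\Sigma^{(k)}\in\mathbb{S}^{d-1}$ and inverting, one gets $\Theta^{(k)}$; applying $\pi^{*}_{k}$ embeds this into $\mathbb U_{k}$ as $\widetilde\Theta^{(k)}$ by inserting a zero $k$th row and $k$th column, and finally $f_{k}^{*}$ carries it into $\mathbb U[\Theta]$. By the explicit formula for $f_k^*$ established in the proposition, this final step is simply
\[
\Theta \;=\; \boldsymbol P_{k}^{T}\,\widetilde\Theta^{(k)}\,\boldsymbol P_{k}.
\]

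Next I would plug in $\boldsymbol P_{k}=I_{d}-\boldsymbol 1\boldsymbol e_{k}^{T}$, so that $(\boldsymbol P_{k})_{ij}=\delta_{ij}-\delta_{jk}$, and expand the matrix product. Because the $k$th row and column of $\widetilde\Theta^{(k)}$ vanish, the double sum collapses to
\[
\Theta_{ij}\;=\;\sum_{l,m\neq k}(\delta_{il}-\delta_{ik})(\delta_{jm}-\delta_{jk})\,\Theta^{(k)}_{lm}.
\]

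From here the result is a three-case check according to whether $i$ and $j$ equal $k$. If $i,j\neq k$ then both $\delta_{ik}$ and $\delta_{jk}$ vanish and the sum reduces to $\Theta^{(k)}_{ij}$. If $i\neq k$ and $j=k$ then $\delta_{jk}=1$ so the second factor equals $-1$ for every $m\neq k$, giving $-\sum_{l\neq k}\Theta^{(k)}_{il}$. If $i=j=k$ then both factors equal $-1$ and the sum becomes $\sum_{l,m\neq k}\Theta^{(k)}_{lm}$. These match the three cases in the statement.

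There is really no hard step: the only thing to verify is that $f_{k}^{*}$ is used correctly (which is where one must be careful with the direction of the maps in the diagram) and that $\widetilde\Theta^{(k)}$ has the zero $k$th row/column coming from $\pi_k^*$. Everything else is a routine expansion.
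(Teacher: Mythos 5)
Your argument is correct and follows exactly the route the paper intends: read $\Theta = f_k^*(\widetilde\Theta^{(k)}) = \boldsymbol P_k^T \widetilde\Theta^{(k)} \boldsymbol P_k$ off the commutative diagram of Proposition~\ref{prop:cd}, use that $\widetilde\Theta^{(k)}$ is $\Theta^{(k)}$ zero-padded in the $k$th row/column, and expand entrywise via $(\boldsymbol P_k)_{ab} = \delta_{ab} - \delta_{kb}$. The three-case check matches the stated formula, so this is a complete proof by the same method the paper alludes to when it says ``the form of the mapping $f^*_k$ implies\dots''.
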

	
	By Lemma~\ref{lem:QinTheta}, $\Theta \in \mathbb U$ is a weighted Laplacian
	matrix with potentially negative weights $Q_{ij}$ for $i\neq j$. The weighted
	matrix-tree theorem used in \eqref{eq:det_spanning_trees} will be useful
	for the next result.
	
	\begin{prop}
		\label{prop:Gamma2Q}
		The mapping $\mathbb{S}^{d}_{0}[\Gamma ]\to \mathbb{S}^{d}_{0}[Q]$, given
		by $\Gamma \mapsto \sigma ^{*}_{k}((\sigma _{k}(\Gamma ))^{+})$, is compactly
		written as
		\begin{equation*}
			Q = \nabla \log\operatorname{Det}\bigl(\sigma _{k}(\Gamma )\bigr).
		\end{equation*}
		Similarly,
		\begin{equation*}
			\Gamma = \nabla \log\operatorname{Det}\bigl(\gamma ^{*}_{k}(Q)
			\bigr).
		\end{equation*}
	\end{prop}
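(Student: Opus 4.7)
The plan is to combine the standard gradient formula for $\log\det$ on positive definite matrices with the identification of the pseudo--determinant on $\mathbb U_k$ with an ordinary determinant on $\mathbb{S}^{d-1}$, and then use the adjoint relation \eqref{eq:adjoint}. All differentials will be computed with respect to the trace inner product on the relevant space, and the identifications in Proposition~\ref{prop:cd} will keep the bookkeeping clean.

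First, I would observe that, for $\widetilde\Sigma^{(k)}\in\mathbb U_k$, the $k$th row and column vanish, so by cofactor expansion
\[
\operatorname{Det}\bigl(\widetilde\Sigma^{(k)}\bigr)=\det\bigl(\Sigma^{(k)}\bigr),
\]
where $\Sigma^{(k)}=\pi_k(\widetilde\Sigma^{(k)})\in\mathbb{S}^{d-1}$. On the open set where $\Sigma^{(k)}$ is positive definite, the classical formula gives
\[
\nabla_{\Sigma^{(k)}}\log\det\bigl(\Sigma^{(k)}\bigr)=\bigl(\Sigma^{(k)}\bigr)^{-1}=\Theta^{(k)}.
\]
Lifting via the adjoint of $\pi_k$, which simply pads with a zero row and column, and using that pseudo--inversion acts as ordinary inversion after restriction to $\mathbb U_k$, one obtains
\[
\nabla_{\widetilde\Sigma^{(k)}}\log\operatorname{Det}\bigl(\widetilde\Sigma^{(k)}\bigr)=\pi_k^{*}\bigl(\Theta^{(k)}\bigr)=\bigl(\widetilde\Sigma^{(k)}\bigr)^{+},
\]
where the gradient is taken on $\mathbb U_k$ with its induced trace inner product. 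This is the on-subspace analogue of the usual log--det gradient, and verifying it is the one place where some care is required: one must check that the differential of $\log\operatorname{Det}$ along tangent directions that stay in $\mathbb U_k$ agrees with the matrix on the right, which follows from the block structure above.

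Next, I would compose with $\sigma_k$ via the chain rule. Writing $\widetilde\Sigma^{(k)}=\sigma_k(\Gamma)$ and taking an arbitrary variation $\delta\Gamma\in\mathbb{S}^d_0$,
\[
\mathrm{d}\log\operatorname{Det}\bigl(\sigma_k(\Gamma)\bigr)[\delta\Gamma]
=\bigl\langle \bigl(\sigma_k(\Gamma)\bigr)^{+},\sigma_k(\delta\Gamma)\bigr\rangle
=\bigllangle \sigma_k^{*}\!\bigl((\sigma_k(\Gamma))^{+}\bigr),\delta\Gamma\bigrrangle,
\]
by the defining property \eqref{eq:adjoint} of the adjoint. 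Since this holds for all $\delta\Gamma\in\mathbb{S}^d_0$, the gradient with respect to the inner product $\llangle\cdot,\cdot\rrangle$ on $\mathbb{S}^d_0$ is exactly $\sigma_k^{*}((\sigma_k(\Gamma))^{+})$, which equals $Q$ by the commutative diagram in Proposition~\ref{prop:cd}. This proves the first formula.

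The second formula is obtained by the same argument applied to the inverse chain. By Remark~\ref{rem:app}, $\gamma_k$ is the two--sided inverse of $\sigma_k$, and its adjoint $\gamma_k^{*}$ is the inverse of $\sigma_k^{*}$. Running the computation above with $(\sigma_k,\sigma_k^{*})$ replaced by $(\gamma_k^{*},\gamma_k)$ and using that pseudo--inversion on $\mathbb U_k$ is an involution and preserves the pairing (Remark~\ref{rem:app}(4)) yields $\Gamma=\nabla\log\operatorname{Det}(\gamma_k^{*}(Q))$. The only real obstacle in the whole argument is the first step—lifting the standard log--det gradient from $\mathbb{S}^{d-1}$ to the pseudo--determinant on $\mathbb U_k$—and this is resolved cleanly by the block structure of $\widetilde\Sigma^{(k)}$.
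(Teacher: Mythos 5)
Your proof is correct and follows the same basic route as the paper: establish that the gradient of $\log\operatorname{Det}$ on $\mathbb U_k$ is the pseudoinverse, then push the gradient through $\sigma_k$ via the adjoint relation \eqref{eq:adjoint}. The only difference is that the paper cites the pseudo-determinant gradient formula to \cite{holbrook2018differentiating} as a black box, whereas you derive it directly from the classical $\nabla\log\det = (\cdot)^{-1}$ on $\mathbb{S}^{d-1}$ via the cofactor/zero-row observation and the adjoint $\pi_k^*$; you also spell out the second formula explicitly (the paper leaves it implicit). This is a clean, self-contained alternative to the citation but not a genuinely different argument.
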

	\begin{proof}
		Let $\overline{\Gamma}$ be a given point in
		$\mathbb{S}^{d}_{0}[\Gamma ]$, and let
		$\widetilde S^{(k)}=\sigma _{k}(\overline{\Gamma})\in \mathbb U_{k}$. Similarly,
		let $Q^{S}=\sigma ^{*}_{k}((\sigma _{k}(\overline{\Gamma}))^{+})$. By
		\cite{holbrook2018differentiating},
		\begin{equation*}
			\nabla _{\Sigma }\bigl(\log\operatorname{Det}(\Sigma )-\bigl\langle \Sigma
			,\bigl({ \widetilde S}^{(k)}\bigr)^{+}\bigr\rangle
			\bigr)\big|_{\Sigma ={\widetilde S}^{(k)}}=0.
		\end{equation*}
		Using the fact that
		$\sigma _{k}:\mathbb{S}^{d}_{0}\to \mathbb U_{k}$ is an invertible linear
		mapping, we get that, equivalently,
		\begin{equation*}
			\nabla _{\Gamma }\bigl(\log\operatorname{Det}\bigl(\sigma _{k}(
			\Gamma )\bigr)-\bigl\langle \sigma _{k}( \Gamma ),\bigl({\widetilde
				S}^{(k)}\bigr)^{+}\bigr\rangle \bigr)\big|_{\Gamma =
				\overline{\Gamma}}=0.
		\end{equation*}
		Since
		$\langle \sigma (\Gamma ),({\widetilde S}^{(k)})^{+}\rangle =\llangle \Gamma ,\sigma ^{*}(({\widetilde S}^{(k)})^{+})\rrangle =\llangle \Gamma ,Q^{S}\rrangle  $, we obtain
		the desired formula.
	\end{proof}
	
	\subsection{Strictly conditionally negative matrices}
	\label{s:EDM}
	
	The variogram matrices are not only assumed to lie in
	$\mathbb{S}^{d}_{0}$ but they are also assumed to be strictly conditionally
	negative definite. We study this additional constraint a bit more in this
	section. Using the notation from Section~\ref{sec:huesler_reiss},
	$\Gamma \in \mathcal C^{d}\subset \mathbb{S}^{d}_{0}$. In this section
	we briefly list relevant results that follow from assuming this extra structure.
	
	\begin{remark}
		\label{rem:edm}
		If $\Gamma $ is a conditionally negative definite matrix, then, by the
		theorem of Schoenberg (\citet{gower,schoenberg}), equivalently, there exist
		vectors $\boldsymbol y_{1},\ldots ,\boldsymbol y_{d}$ in some Euclidean
		space $\mathbb{R}^{p}$ such that
		$\Gamma _{ij}=\|\boldsymbol y_{i}-\boldsymbol y_{j}\|^{2}$. We also call
		such $\Gamma $ a Euclidean distance matrix.
	\end{remark}
	
	\begin{lemma}
		\label{obs:positiveGam}
		If $\Gamma \in \mathcal C^{d}$, then $\Gamma _{ij}>0$ for all
		$i\neq j$.
	\end{lemma}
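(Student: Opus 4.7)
The plan is to test the defining inequality of $\mathcal{C}^d$ on a cleverly chosen vector in the hyperplane $\{\boldsymbol x : \boldsymbol x^T \boldsymbol 1 = 0\}$. Specifically, for any fixed $i \neq j$, I would take $\boldsymbol x = \boldsymbol e_i - \boldsymbol e_j$, which is nonzero and satisfies $\boldsymbol x^T \boldsymbol 1 = 0$, so it is a legitimate test vector for strict conditional negative definiteness.

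Next I would directly compute the quadratic form: since $\Gamma$ has zero diagonal,
\begin{equation*}
\boldsymbol x^T \Gamma \boldsymbol x \;=\; \Gamma_{ii} + \Gamma_{jj} - 2\Gamma_{ij} \;=\; -2\Gamma_{ij}.
\end{equation*}
By the defining property of $\mathcal{C}^d$, the left-hand side is strictly negative, which immediately forces $\Gamma_{ij} > 0$, as claimed.

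There is no real obstacle here; the only thing to be careful about is distinguishing the strict from the nonstrict version of conditional negativity, since the nonstrict version would yield only $\Gamma_{ij} \geq 0$. This is why the strict inequality in the definition of $\mathcal{C}^d$ (recorded just above the statement of the lemma) is essential. One could alternatively derive the same fact by invoking Schoenberg's theorem from Remark~\ref{rem:edm}, writing $\Gamma_{ij} = \|\boldsymbol y_i - \boldsymbol y_j\|^2$ and noting that strict conditional negative definiteness forces the points $\boldsymbol y_1,\ldots,\boldsymbol y_d$ to be pairwise distinct, but the direct computation above is cleaner.
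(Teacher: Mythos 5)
Your proof is correct and matches the paper's argument exactly: both take $\boldsymbol x = \boldsymbol e_i - \boldsymbol e_j$, compute $\boldsymbol x^T \Gamma \boldsymbol x = -2\Gamma_{ij}$ using the zero diagonal, and invoke strict conditional negative definiteness. Your additional remarks on the strict/nonstrict distinction and the alternative via Schoenberg's theorem are accurate but not needed.
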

	\begin{proof}
		Take $\boldsymbol x=\boldsymbol e_{i}-\boldsymbol e_{j}$. By definition,
		$\boldsymbol x^{T}\Gamma \boldsymbol x=-2\Gamma _{ij}$ must be strictly
		negative.
	\end{proof}
	\begin{lemma}
		\label{lem:pdC}
		The cone $\sigma _{\boldsymbol b}(\mathcal C^{d})$ is precisely the set
		of all positive semidefinite matrices in $\mathbb U_{\boldsymbol b}$ of
		the rank $d-1$.\vspace*{1pt} The mapping
		$\pi _{k}:\mathbb U_{k}\to \mathbb{S}^{d-1}$ maps
		$\sigma _{k}(\mathcal C^{d})$ to the positive definite cone
		$\mathbb{S}^{d-1}_{+}$.
	\end{lemma}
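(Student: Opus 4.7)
The plan is to verify both set equalities by combining the linear-isomorphism statement in Remark A.1 with a direct quadratic-form calculation that converts strict conditional negativity into positive semidefiniteness.

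For the first claim, I would begin with the forward inclusion. Given $\Gamma \in \mathcal{C}^d$, set $\Sigma = \sigma_{\boldsymbol b}(\Gamma)$. Using $\boldsymbol b^T \boldsymbol 1 = 1$ one checks $\boldsymbol P_{\boldsymbol b}^T \boldsymbol b = \boldsymbol b - \boldsymbol b(\boldsymbol 1^T \boldsymbol b) = \boldsymbol 0$, so $\Sigma \boldsymbol b = \boldsymbol 0$ and $\Sigma \in \mathbb U_{\boldsymbol b}$. For any $\boldsymbol x \in \mathbb R^d$, define $\boldsymbol y := \boldsymbol P_{\boldsymbol b}^T \boldsymbol x = \boldsymbol x - \boldsymbol b(\boldsymbol 1^T \boldsymbol x)$, which satisfies $\boldsymbol 1^T \boldsymbol y = (\boldsymbol 1^T \boldsymbol x)(1 - \boldsymbol 1^T \boldsymbol b) = 0$. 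Then
\begin{equation*}
\boldsymbol x^T \Sigma \boldsymbol x = -\tfrac{1}{2}\, \boldsymbol y^T \Gamma \boldsymbol y \geq 0,
\end{equation*}
by strict conditional negativity, with equality iff $\boldsymbol y = \boldsymbol 0$, i.e.\ iff $\boldsymbol x \in \mathbb R \boldsymbol b$. Hence $\Sigma$ is positive semidefinite with $\ker \Sigma = \mathbb R \boldsymbol b$, so $\operatorname{rank}(\Sigma) = d-1$.

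For the reverse inclusion, take $\Sigma \in \mathbb U_{\boldsymbol b}$ that is positive semidefinite of rank $d-1$, and define $\Gamma := \gamma_{\boldsymbol b}(\Sigma)$ via the formula from Remark~\ref{rem:app}(1); since $\sigma_{\boldsymbol b}$ is a linear isomorphism onto $\mathbb U_{\boldsymbol b}$, $\sigma_{\boldsymbol b}(\Gamma) = \Sigma$ automatically. To see $\Gamma \in \mathcal{C}^d$, for any $\boldsymbol x$ with $\boldsymbol 1^T \boldsymbol x = 0$ expand
\begin{equation*}
\boldsymbol x^T \Gamma \boldsymbol x \;=\; \sum_{i,j} x_i x_j (\Sigma_{ii} + \Sigma_{jj} - 2 \Sigma_{ij}) \;=\; -2\, \boldsymbol x^T \Sigma \boldsymbol x,
\end{equation*}
since the diagonal contributions vanish when $\sum_i x_i = 0$. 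Because $\ker \Sigma = \mathbb R \boldsymbol b$ and $\mathbb R \boldsymbol b \cap \{\boldsymbol 1^T \boldsymbol x = 0\} = \{\boldsymbol 0\}$ (as $\boldsymbol 1^T \boldsymbol b = 1$), the right-hand side is strictly negative for $\boldsymbol x \neq \boldsymbol 0$, confirming $\Gamma \in \mathcal{C}^d$.

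For the second assertion, specialize to $\boldsymbol b = \boldsymbol e_k$: any $\widetilde\Sigma^{(k)} \in \sigma_k(\mathcal{C}^d)$ satisfies $\widetilde\Sigma^{(k)} \boldsymbol e_k = \boldsymbol 0$, so its $k$th row and column vanish and $\pi_k$ simply extracts the remaining $(d-1)\times(d-1)$ block $\Sigma^{(k)}$ without losing information. A positive semidefinite matrix of rank $d-1$ with a zero row/column in position $k$ corresponds bijectively to a positive definite matrix in $\mathbb{S}^{d-1}$ after removing that row/column; this establishes $\pi_k(\sigma_k(\mathcal{C}^d)) = \mathbb{S}^{d-1}_+$. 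No step here is genuinely hard; the only place that requires care is keeping straight the identity $\boldsymbol P_{\boldsymbol b}^T \boldsymbol b = \boldsymbol 0$ (as opposed to the more familiar $\boldsymbol P_{\boldsymbol b}\boldsymbol 1 = \boldsymbol 0$), which is what makes $\boldsymbol b$, rather than $\boldsymbol 1$, the kernel direction of $\Sigma$.
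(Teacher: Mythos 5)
Your proof is correct and in fact more complete than the paper's own argument. The paper's proof only establishes the forward inclusion $\sigma_{\boldsymbol b}(\mathcal C^d)\subseteq\{\text{PSD rank }d-1\text{ in }\mathbb U_{\boldsymbol b}\}$, and it does so via a case split on whether $\boldsymbol b = \pm\frac{1}{d}\boldsymbol 1$, relating $\sigma_{\boldsymbol b}$ to $\sigma$ through Proposition~\ref{prop:cd} in the second case; the reverse inclusion and the $\pi_k$ statement are left implicit. You sidestep the case analysis entirely by evaluating $\boldsymbol x^T\Sigma\boldsymbol x = -\tfrac12\boldsymbol y^T\Gamma\boldsymbol y$ with $\boldsymbol y = \boldsymbol P_{\boldsymbol b}^T\boldsymbol x$ directly from the definition of $\sigma_{\boldsymbol b}$, and the identification of $\ker\Sigma=\mathbb R\boldsymbol b$ follows cleanly from $\boldsymbol y=\boldsymbol 0\Leftrightarrow\boldsymbol x\in\mathbb R\boldsymbol b$. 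Your reverse inclusion, expanding $\boldsymbol x^T\Gamma\boldsymbol x = -2\boldsymbol x^T\Sigma\boldsymbol x$ for $\boldsymbol 1^T\boldsymbol x=0$ by noticing the diagonal contributions cancel, is exactly the right computation and is the piece the paper silently omits. The $\pi_k$ argument — the $k$th row/column of a PSD rank-$(d-1)$ element of $\mathbb U_k$ is zero since $\boldsymbol e_k$ spans the kernel, so dropping it gives a positive definite $(d-1)\times(d-1)$ block, and conversely — is likewise a correct and necessary addition. No gaps.
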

	\begin{proof}
		Let $\Gamma \in \mathcal C^{d}$. Since
		$\sigma _{\boldsymbol b}(\Gamma )$ is symmetric, it has real eigenvalues,
		and the eigenvectors are mutually orthogonal. It is clear that
		$\boldsymbol b^{T} \sigma _{\boldsymbol b}(\Gamma )\boldsymbol b=0$, so
		$\boldsymbol b$ is an eigenvector with eigenvalue $0$. We will show that
		all the other eigenvalues must be strictly positive. If
		$\boldsymbol b=\pm \frac{1}{d}\boldsymbol 1$, then
		$\boldsymbol x\perp \boldsymbol b$, and
		$\boldsymbol x\neq \boldsymbol 0$ implies that
		\begin{equation*}
			\boldsymbol x^{T}\sigma (\Gamma ) \boldsymbol x=\boldsymbol
			x^{T} \biggl(- \frac{1}{2}\Gamma \biggr)\boldsymbol x>0
		\end{equation*}
		by the fact that $\Gamma $ is strictly conditionally negative definite.
		This implies that all the remaining eigenvalues of
		$\sigma (\Gamma )$ must be strictly positive. Suppose now that
		$\boldsymbol b\neq \pm \frac{1}{d}\boldsymbol 1$. By Proposition~\ref{prop:cd},
		$\sigma _{\boldsymbol b}(\Gamma )=\boldsymbol P_{\boldsymbol b}
		\sigma (\Gamma )\boldsymbol P_{\boldsymbol b}^{T}$. Since
		$\boldsymbol P_{\boldsymbol b}^{T}\boldsymbol x=\boldsymbol x-(
		\boldsymbol 1^{T}\boldsymbol x)\boldsymbol b$,
		$\boldsymbol P_{\boldsymbol b}^{T}\boldsymbol x\perp \boldsymbol 1$, and
		it follows that
		\begin{equation}
			\label{eq:auxxpx} \boldsymbol x^{T} \sigma _{\boldsymbol b}(\Gamma )
			\boldsymbol x=\bigl( \boldsymbol x-\bigl(\boldsymbol 1^{T}\boldsymbol x
			\bigr)\boldsymbol b\bigr)^{T} \sigma (\Gamma ) \bigl(\boldsymbol x-
			\bigl(\boldsymbol 1^{T}\boldsymbol x\bigr) \boldsymbol b\bigr)\geq 0
		\end{equation}
		with strict inequality if
		$\boldsymbol x-(\boldsymbol 1^{T}\boldsymbol x)\boldsymbol b\neq
		\boldsymbol 0$. However, $\boldsymbol x\perp \boldsymbol b$ and
		$\boldsymbol b^{T}\boldsymbol 1=1$ imply that $\boldsymbol x$ is not parallel
		to $\boldsymbol 1$, and so we must have strict positivity in
		\eqref{eq:auxxpx}.
	\end{proof}
	
	\subsection{Variograms in the positive case}
	\label{sec:povar}
	
	In our study of total positivity for extremes, we showed in Theorem~\ref{t:equivalences_precision_matrices_emtp2}
	that a particularly important case is when $\Theta _{ij}\leq 0$ for all
	$i\neq j$. This is the case when $\Theta $ corresponds to a Laplacian matrix
	on a connected graph with positive weights on each edge. In this case
	$Q_{ij}=-\Theta _{ij}\geq 0$ for all $i\neq j$.
	
	Recall from Remark~\ref{rem:edm} and Lemma~\ref{obs:positiveGam} that if
	$\Gamma \in \mathcal C^{d}$, then $\Gamma $ is always a distance matrix
	in the sense that $\sqrt{\Gamma _{ij}}$ are distances between a finite
	collection distinct points $\boldsymbol y_{i}$ for $i=1,\ldots ,d$ that
	lie in some Euclidean space $\mathbb{R}^{p}$. Let
	$U\in \mathbb{R}^{p\times d}$ be a matrix whose columns are
	$\boldsymbol y_{1},\ldots ,\boldsymbol y_{d}$. Note that translating all
	points $\boldsymbol y_{i}$ will not change mutual distances. We consider
	two important cases:
	
	Case~1: We translate the points by their average so that now
	$\sum_{i} \boldsymbol y_{i}=\boldsymbol 0$. In this case the Gram matrix
	$U^{T} U$ lies in $\mathbb U$, and in fact, it is equal to
	$\Sigma =\sigma (\Gamma )$.

	Case~2: We translate the points to move one of the points to the origin
	so that now $\boldsymbol y_{k}=\boldsymbol 0$ for some
	$k=1,\ldots ,d$. In this case $U^{T} U \in \mathbb U_{k} $, and in fact,
	it is equal to $\tilde\Sigma ^{(k)}=\sigma _{k}(\Gamma )$.

	In what follows, we outline some of interesting results of Miroslav Fiedler;
	see \cite{fiedler98} and also an excellent overview in
	\cite{devriendt2020effective}. Note that if
	$\Gamma \in \mathcal C^{d}$, then $\Sigma $ and
	$\tilde\Sigma ^{(k)}$ have rank $d-1$. It implies that we can assume
	$k=d-1$.
	\begin{lemma}
		\label{lemA.9}
		If $\Theta =\Sigma ^{+}$ is a Laplacian matrix of a weighted graph (with
		nonnegative weights), then $\Sigma =U^{T} U$, where
		$U\in \mathbb{R}^{(d-1)\times d}$ and the columns of $U$ are vertices of
		a simplex, whose polar is hyperacute.
	\end{lemma}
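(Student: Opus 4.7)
My plan is to construct the simplex via a rank factorization of $\Sigma$, identify the polar simplex as the solution of a linear system, and then compute the Gram matrix of its vertices to read off hyperacuteness from the sign pattern of $\Theta$. Since $\Sigma \in \mathbb U$ is positive semidefinite of rank $d-1$, I would first write $\Sigma = U^T U$ with $U \in \mathbb R^{(d-1)\times d}$ of full row rank. The identity $\Sigma \boldsymbol 1 = \boldsymbol 0$ forces $\|U\boldsymbol 1\|^2 = 0$, hence $U\boldsymbol 1 = \boldsymbol 0$, so the columns $\boldsymbol y_1,\dots,\boldsymbol y_d$ of $U$ have centroid at the origin. Because $U$ has rank $d-1$, the only linear dependency among the $\boldsymbol y_i$ is $\sum_i \boldsymbol y_i = \boldsymbol 0$, and consequently the $\boldsymbol y_i$ are affinely independent and form a $(d-1)$-simplex $S$ with the origin in its interior.

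Next I identify the polar simplex $S^\circ = \{x \in \mathbb R^{d-1} : \boldsymbol y_i^T x \leq 1 \text{ for all } i\}$. Its vertex $\boldsymbol v_k^*$ opposite the $k$th facet is the unique solution of $\boldsymbol y_i^T \boldsymbol v_k^* = 1$ for $i \neq k$; combined with $\sum_i \boldsymbol y_i = \boldsymbol 0$, this forces $\boldsymbol y_k^T \boldsymbol v_k^* = -(d-1)$. Assembling the polar vertices as the columns of $V^* \in \mathbb R^{(d-1)\times d}$, these equations read $U^T V^* = -d\, \boldsymbol P$. Using the identity $U\Theta U^T = I_{d-1}$ (immediate from the SVD of $U$, since $\Theta = \Sigma^+$) together with $\Theta \boldsymbol P = \Theta$ (which follows from $\Theta \boldsymbol 1 = \boldsymbol 0$), left-multiplying by $U\Theta$ yields $V^* = -d\, U\Theta$; note in particular that $V^* \boldsymbol 1 = -d\, U\Theta \boldsymbol 1 = \boldsymbol 0$, so $S^\circ$ is itself centered at the origin.

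The hyperacute conclusion then follows from a one-line Gram computation,
\[
(V^*)^T V^* \;=\; d^2\, \Theta\, U^T U\, \Theta \;=\; d^2\, \Theta \Sigma \Theta \;=\; d^2\, \Theta,
\]
where the last equality uses $\Sigma \Theta = \boldsymbol P$ and $\boldsymbol P\Theta = \Theta$. Since $\Theta$ is a Laplacian of a graph with nonnegative weights, $\Theta_{ij} \leq 0$ for $i \neq j$, so the Gram matrix of the polar vertices has nonpositive off-diagonal entries, which is precisely Fiedler's characterization of a hyperacute centered simplex as developed in \cite{fiedler98} and reviewed in \cite{devriendt2020effective}. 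I expect the main subtlety to lie not in the algebra — which really collapses to the single identity $U\Theta U^T = I_{d-1}$ together with the fact that $\boldsymbol P$, $\Sigma\Theta$ and $\Theta\Sigma$ all equal the orthogonal projection onto $\boldsymbol 1^\perp$ — but in carefully aligning this computation with Fiedler's conventions for the polar and for hyperacuteness, so that the sign pattern of $d^2\Theta$ is interpreted as the correct geometric property of $S^\circ$.
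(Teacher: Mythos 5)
The paper does not actually write out a proof of Lemma A.9; it simply points to Lemma~1 of \cite{devriendt2020effective}. Your derivation is therefore a genuinely new, self-contained argument, and its algebra is all correct: $U\boldsymbol 1 = \boldsymbol 0$ centers the columns, the SVD gives $U\Theta U^T = I_{d-1}$, $\Theta \boldsymbol P = \boldsymbol P\Theta = \Theta$ yields $V^* = -d\,U\Theta$, and $(V^*)^T V^* = d^2\,\Theta\Sigma\Theta = d^2\Theta$. This is an attractive way to make explicit how the sign pattern of $\Theta$ enters the geometry of the simplex, whereas the paper leaves the whole thing to a citation.

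The gap sits exactly in the interpretation step you flagged at the end, and it is not a cosmetic matter. The vectors $\boldsymbol v_k^*$ are the outward normals to the facets of $S$: the affine hull of the facet opposite $\boldsymbol y_k$ is $\{\boldsymbol x : \langle\boldsymbol v_k^*, \boldsymbol x\rangle = 1\}$, and $\langle\boldsymbol v_k^*, \boldsymbol y_k\rangle = -(d-1)<1$ shows that $\boldsymbol v_k^*$ points away from the interior of $S$. The dihedral angle of $S$ between the facets opposite $k$ and $l$ has cosine $-\langle\boldsymbol v_k^*,\boldsymbol v_l^*\rangle/(\|\boldsymbol v_k^*\|\,\|\boldsymbol v_l^*\|)$, so the nonpositivity of the off-diagonal entries of $(V^*)^T V^* = d^2\Theta$ says precisely that all dihedral angles of $S$ are at most $\pi/2$, i.e.\ that $S$ \emph{itself} is hyperacute. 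It says nothing directly about $S^\circ$, whose dihedral angles are controlled by the signs of $\Sigma_{ij}$, and those can be of either sign even when $\Theta$ is a Laplacian. A concrete check: a thin acute triangle centered at its centroid has a Laplacian $\Theta$, yet its polar is a long, very obtuse triangle. So the closing sentence of your argument, which reads the nonpositivity of $d^2\Theta$ as a geometric property of $S^\circ$, should instead conclude that $S$ is hyperacute; this also matches the cited Lemma~1 of \cite{devriendt2020effective}, which asserts that the centered simplex with vertex Gram matrix $Q^\dagger$ is hyperacute if and only if $Q$ is a Laplacian. The phrase ``whose polar is hyperacute'' in Lemma A.9 appears to be a slip in the paper's own statement; the intended claim, consistent with the reference it cites and with your computation, is that the simplex with vertices the columns of $U$ is itself hyperacute.
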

	For the proof, see Lemma~1 in \cite{devriendt2020effective}.
	
	As pointed out in Section~D of \cite{devriendt2020effective}, if
	$\Theta $ is a Laplacian matrix of a graph, then the entries of the corresponding
	matrix $\Gamma $ are the effective resistances, and $\Gamma $ is called
	the resistance matrix. The effective resistance allows the bijection between
	simplices, graphs and Laplacian matrices to be summarized beautifully by
	the following identity (see Theorem~2 in
	\cite{devriendt2020effective}).
	\begin{thm}[Fiedler's identity]
		\label{thmA.10}
		For a weighted graph with Laplacian $\Theta $ and resistance matrix
		$\Gamma $, the following identity holds:
		\begin{equation}
			\label{eq:fiedlerid} -\frac{1}{2} 
			\begin{bmatrix} 0 & \boldsymbol
				1^{T}
				\\
				\boldsymbol 1 & \Gamma \end{bmatrix} 
			= 
			\begin{bmatrix}
				4R^{2} & -2\boldsymbol r^{T}
				\\
				-2\boldsymbol r & \Theta \end{bmatrix} 
			^{-1}, 
		\end{equation}
		where
		$\boldsymbol r=\frac {1}{2}\Theta \xi +\frac {1}{d}\boldsymbol 1$ with
		$\xi =\operatorname{diag}(\Sigma )$, and
		$R=\sqrt{\frac {1}{2}\xi ^{T} (\boldsymbol r+\frac {1}{d}
			\boldsymbol 1)}$.
	\end{thm}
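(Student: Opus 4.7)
The plan is to verify the identity by direct block multiplication: writing $M$ for the right-hand matrix and $N$ for the left-hand matrix in \eqref{eq:fiedlerid}, I would show $MN = I_{d+1}$ by computing the four blocks separately. Three elementary ingredients drive the entire calculation: the Laplacian property $\Theta \boldsymbol 1 = \boldsymbol 0$ from Proposition~\ref{prop:manu}; the inverse covariance mapping $\Gamma = \xi \boldsymbol 1^T + \boldsymbol 1 \xi^T - 2\Sigma$ from Remark~\ref{rem:app}(1); and the fact that $\Theta\Sigma = \Sigma^+\Sigma = \boldsymbol P$, which holds because $\Sigma$ is positive semidefinite of rank $d-1$ with kernel spanned by $\boldsymbol 1$.

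The $(2,1)$ block is immediately $\boldsymbol 0$ from $\Theta\boldsymbol 1 = \boldsymbol 0$, and the $(1,1)$ block reduces to checking $\boldsymbol r^T \boldsymbol 1 = 1$, which follows by splitting $\boldsymbol r = \tfrac{1}{2}\Theta\xi + \tfrac{1}{d}\boldsymbol 1$ and using $\Theta\boldsymbol 1 = \boldsymbol 0$ again. The main computation I expect to carry out in detail is for the $(2,2)$ block: substituting the decomposition of $\Gamma$ together with $\Theta\Sigma = \boldsymbol P$ gives
\[
\Theta\Gamma \;=\; \Theta\xi\boldsymbol 1^T + (\Theta\boldsymbol 1)\xi^T - 2\Theta\Sigma \;=\; \Theta\xi\boldsymbol 1^T - 2\boldsymbol P \;=\; \Bigl(\Theta\xi + \tfrac{2}{d}\boldsymbol 1\Bigr)\boldsymbol 1^T - 2I_d \;=\; 2\boldsymbol r\boldsymbol 1^T - 2I_d.
\]
This is the conceptual crux: the definition of $\boldsymbol r$ has been engineered precisely so that $\Theta\Gamma + 2I_d$ becomes a rank-one outer product with $\boldsymbol 1^T$, making the $(2,2)$ block of $MN$ equal to $I_d$.

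The remaining step is the $(1,2)$ block, namely $\boldsymbol r^T\Gamma = 2R^2 \boldsymbol 1^T$. I would multiply the identity above on the left by $\xi^T$ and combine this with $\boldsymbol 1^T \Gamma = (\xi^T \boldsymbol 1)\boldsymbol 1^T + d\xi^T$, which itself uses $\boldsymbol 1^T \Sigma = \boldsymbol 0$. After the $\pm \xi^T$ terms cancel, one obtains
\[
\boldsymbol r^T\Gamma \;=\; \tfrac{1}{2}\xi^T\Theta\Gamma + \tfrac{1}{d}\boldsymbol 1^T\Gamma \;=\; \xi^T\Bigl(\boldsymbol r + \tfrac{1}{d}\boldsymbol 1\Bigr)\boldsymbol 1^T,
\]
and the scalar $\tfrac{1}{2}\xi^T(\boldsymbol r + \tfrac{1}{d}\boldsymbol 1)$ equals $R^2$ by the definition stated in the theorem, completing the verification. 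The only obstacle throughout is careful bookkeeping; once the three ingredients above are in place, no nontrivial matrix inversion or spectral analysis is required, so this is ultimately a confirmation that the formulas for $\boldsymbol r$ and $R$ are exactly the ones forced by bordering $\Theta$ with the all-ones vector and then inverting.
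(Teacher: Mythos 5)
Your proof is correct, and it is a genuine contribution: the paper does not prove Fiedler's identity but cites it from Theorem~2 of \citet{devriendt2020effective}, so there is no internal argument to compare against. Your direct block-multiplication approach is elementary and self-contained. The three ingredients you isolate ($\Theta\boldsymbol 1 = \boldsymbol 0$, $\Gamma = \xi\boldsymbol 1^T + \boldsymbol 1\xi^T - 2\Sigma$, and $\Theta\Sigma = \boldsymbol P$) are exactly the right ones, and the four block computations are all sound. In particular, the $(2,2)$ computation $\Theta\Gamma = 2\boldsymbol r\boldsymbol 1^T - 2I_d$ correctly captures the design of $\boldsymbol r$, and your derivation $\boldsymbol r^T\Gamma = \xi^T(\boldsymbol r + \tfrac{1}{d}\boldsymbol 1)\boldsymbol 1^T = 2R^2\boldsymbol 1^T$ for the $(1,2)$ block is clean. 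One point worth making explicit for a reader: $\Sigma^+\Sigma = \boldsymbol P$ requires $\Sigma$ to have rank $d-1$ with kernel exactly $\operatorname{span}(\boldsymbol 1)$, i.e.\ that the graph is connected; this is implicit in the appendix's setting $\Theta\in\mathbb U^d_+$ but should be stated when presenting the theorem in isolation, since otherwise the resistance matrix $\Gamma$ is not even finite. Also note that having shown $MN = I_{d+1}$, the equality $M^{-1} = N$ follows automatically because for square matrices a one-sided inverse is two-sided; you do not need to separately argue invertibility of the bordered Laplacian.
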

	
	As we noted above, $\sqrt{\Gamma _{ij}}$ are always distances in the sense
	that the map $(i, j) \mapsto \sqrt{\Gamma _{ij}}$ is a metric function.
	However, if $\Theta $ is a Laplacian matrix, then the entries of
	$\Gamma $ are effective resistances. By the next lemma we can conclude
	that, in this special case, the entries of $\Gamma $ form a metric (see
	\cite{klein1993resistance}).
	\begin{lemma}
		\label{lem:eff_res}
		If $\Theta _{ij}\leq 0$ for all $i\neq j$, then the effective resistance
		$(i, j) \mapsto \Gamma _{ij}$ is a metric function.
	\end{lemma}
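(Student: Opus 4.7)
The plan is to verify the three metric axioms for $(i,j)\mapsto \Gamma_{ij}$, where $\Sigma=\Theta^+$ and $\Gamma_{ij}=\Sigma_{ii}+\Sigma_{jj}-2\Sigma_{ij}$. Symmetry $\Gamma_{ij}=\Gamma_{ji}$ and $\Gamma_{ii}=0$ are immediate from the definition. Positivity $\Gamma_{ij}>0$ whenever $i\neq j$ follows at once: $\Gamma_{ij}=(\boldsymbol e_i-\boldsymbol e_j)^T\Sigma(\boldsymbol e_i-\boldsymbol e_j)\geq 0$ since $\Sigma$ is positive semidefinite, and the inequality is strict because $\Theta$ is the Laplacian of a \emph{connected} graph, so $\ker\Sigma=\ker\Theta=\mathrm{span}(\boldsymbol 1)$ does not contain $\boldsymbol e_i-\boldsymbol e_j$.

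The main work is the triangle inequality $\Gamma_{ij}\leq \Gamma_{ik}+\Gamma_{kj}$, which I would prove via the probabilistic interpretation of the effective resistance. Consider the reversible random walk on the weighted graph with edge weights $Q_{ij}=-\Theta_{ij}\geq 0$ and total weight $m=\tfrac{1}{2}\sum_{i\neq j}Q_{ij}$. The classical Chandra--Raghavan--Ruzzo--Smolensky--Tiwari identity asserts that the commute time satisfies
\begin{equation*}
C(i,j):=\mathbb E_i[T_j]+\mathbb E_j[T_i]=2m\cdot \Gamma_{ij},
\end{equation*}
where $T_v$ denotes the first hitting time of $v$. I would then derive the hitting-time bound $\mathbb E_i[T_j]\leq \mathbb E_i[T_k]+\mathbb E_k[T_j]$ from the strong Markov property, by splitting on $\{T_k\leq T_j\}$ versus $\{T_j<T_k\}$: on the first event $T_j-T_k$ is distributed as the hitting time of $j$ from $k$, and on the second $T_j\leq T_k$. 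Summing this bound with its analogue for $\mathbb E_j[T_i]$ yields $C(i,j)\leq C(i,k)+C(k,j)$, and dividing by $2m$ gives the desired $\Gamma_{ij}\leq \Gamma_{ik}+\Gamma_{kj}$.

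The main obstacle is the commute-time identity itself: it is classical (see \citet{klein1993resistance} or \citet{devriendt2020effective}), but a self-contained derivation would require setting up reversibility and the variational/harmonic interpretation of $\Sigma=\Theta^+$. In this appendix I would simply cite \citet{klein1993resistance} for both the identity and the triangle inequality, since the paper has already invoked the electrical-network viewpoint in the discussion preceding the lemma. An alternative purely algebraic route---showing directly that $(\boldsymbol e_i-\boldsymbol e_k)^T\Theta^+(\boldsymbol e_j-\boldsymbol e_k)\geq 0$ for all $i,j,k$---is appealing but seems harder to execute cleanly; note in particular that the Dirichlet-energy/Cauchy--Schwarz argument only yields that $\sqrt{\Gamma_{ij}}$ is a metric, which is the weaker statement already noted in the paragraph preceding Proposition~\ref{prop:extremal_association}.
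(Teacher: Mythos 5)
Your proposal is correct. The paper does not give a proof of this lemma at all; it simply cites \citet{klein1993resistance} in the sentence that precedes the lemma, and you would do the same. You go further by sketching the standard argument: symmetry and the zero diagonal are built in, positivity follows from $\Gamma_{ij}=(\boldsymbol e_i-\boldsymbol e_j)^T\Theta^+(\boldsymbol e_i-\boldsymbol e_j)$ together with $\ker\Theta^+=\operatorname{span}(\boldsymbol 1)$ (which is where connectivity enters), and the triangle inequality follows from the commute-time identity $C(i,j)=2m\,\Gamma_{ij}$ combined with the sub-additivity of expected hitting times $\mathbb E_i[T_j]\leq\mathbb E_i[T_k]+\mathbb E_k[T_j]$, which the strong Markov property gives since the first visit to $j$ cannot occur later than the first visit to $j$ after first visiting $k$. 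Your final remark is also apt: the Cauchy--Schwarz/Dirichlet-energy route only recovers that $\sqrt{\Gamma_{ij}}$ is a metric, which holds already for any $\Gamma\in\mathcal C^d$ and is the strictly weaker statement discussed before Proposition~\ref{prop:extremal_association}, so the probabilistic commute-time argument (or an equivalent Rayleigh-monotonicity argument) really is needed to get the stronger conclusion. In short, the approach matches the paper's (a citation), supplemented by a correct self-contained sketch.
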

	The implication in Proposition~\ref{prop:extremal_association} cannot be
	reversed. There are situations when $\Gamma $ is a metric, but
	$\Theta $ is not a Laplacian of a graph. In Proposition~\ref{prop:extremal_association}
	we discussed exact conditions when it happens together with a probabilistic
	interpretation in terms of the association of the extremal function.
	
	The fact that $(i,j)\mapsto \Gamma _{ij}$ is a metric does not give us
	a way to realize this metric as an Euclidean distance metric. The case
	we find particularly interesting is related with tree metrics (see, e.g.,
	\cite{semple2003phylogenetics}). Let $T$ be an undirected tree with
	$d$ leaves labeled with $[d]$. We say that
	$\Gamma \in \mathbb{S}^{d}_{0}$ forms a tree metric over $T$ if there exists
	edge length assignment $\theta _{uv}\geq 0$ for $uv\in T$ such that
	\begin{equation}
		\label{eq:treemet} \Gamma _{ij}=\sum_{uv\in \operatorname{ph}(i,j; T)}
		\theta _{uv}, 
	\end{equation}
	where $\operatorname{ph}(i,j; T)$ denotes the unique path between $i$ and $j$ in
	$T$.
	
	Let $T^{k}$ denote a rooted tree obtained from $T$ by directing all edges
	away from a leaf $k\in [d]$. Note that
	\begin{equation}
		\label{eq:treemet2} \widetilde \Sigma _{ij}^{(k)}=
		\frac{1}{2}(\Gamma _{ik}+\Gamma _{jk}- \Gamma
		_{ij})=\sum_{uv\in \operatorname{ph}(i\wedge j,k;T)}\theta _{uv},
	\end{equation}
	where $i\wedge j$ denotes the most recent common ancestor of $i$ and
	$j$ in the tree $T^{k}$. But this means that the entries of
	$\Sigma ^{(k)}$ lie in the Brownian motion tree model on the tree
	$T^{k}$; see \cite{SUZ20} for more details. We get the following result.
	\begin{prop}
		\label{prop:BMTM}
		The image under $\sigma _{k}$ of the set of all tree metrics over a given
		tree $T$ is equal to the set of covariance matrices of the Brownian motion
		tree model over the rooted tree $T^{k}$.
	\end{prop}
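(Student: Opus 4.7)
The plan is to show that, under the covariance map $\sigma_k$, the two natural parametrizations of these families---nonnegative edge weights $\{\theta_{uv}\}_{uv\in T}$ viewed as tree-metric lengths on the one hand, and as Brownian-increment variances on $T^k$ on the other---coincide term by term. Concretely, I would establish two inclusions and read off that $\sigma_k$ is a bijection between the two sets, with the same nonnegative parameter vector $(\theta_{uv})_{uv\in T}$ indexing corresponding elements on each side.

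For the forward direction, let $\Gamma$ be a tree metric on $T$ with edge weights $\theta_{uv}\geq 0$ as in \eqref{eq:treemet}. Equation \eqref{eq:treemet2} already gives $(\sigma_k(\Gamma))_{ij} = \sum_{uv\in \operatorname{ph}(i\wedge j,\,k;\,T)}\theta_{uv}$ for $i,j\neq k$, with zero $k$-th row and column. This is exactly the covariance formula for the Brownian motion tree model on $T^k$: assigning an independent Gaussian increment of variance $\theta_{uv}$ to each directed edge $uv\in T^k$ and defining the value at leaf $i$ as the sum of increments along $\operatorname{ph}(k,i;T)$, the covariance between leaves $i$ and $j$ equals the variance accumulated on their common initial segment $\operatorname{ph}(i\wedge j,\,k;\,T)$; the zero $k$-th row/column matches the root having value identically zero.

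For the reverse direction, take any BMT covariance matrix $\widetilde\Sigma^{(k)}\in\mathbb U_k$ on $T^k$ with nonnegative edge parameters $\theta_{uv}$ and apply the inverse map $\gamma_k$ of Remark~\ref{rem:app} to obtain $\Gamma_{ij} = \widetilde\Sigma^{(k)}_{ii}+\widetilde\Sigma^{(k)}_{jj}-2\widetilde\Sigma^{(k)}_{ij}$. Substituting the BMT formulas $\widetilde\Sigma^{(k)}_{ii}=\sum_{uv\in\operatorname{ph}(i,k;T)}\theta_{uv}$ and $\widetilde\Sigma^{(k)}_{ij}=\sum_{uv\in\operatorname{ph}(i\wedge j,\,k;T)}\theta_{uv}$, the common sub-path $\operatorname{ph}(i\wedge j,\,k;T)$ contributes once to each diagonal term and twice to the $2\widetilde\Sigma^{(k)}_{ij}$ term, so it cancels, leaving $\Gamma_{ij}=\sum_{uv\in\operatorname{ph}(i,j;T)}\theta_{uv}$. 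Hence $\Gamma$ is a tree metric on $T$ with the same edge weights, and we conclude that $\sigma_k$ restricts to a bijection between the two parameter sets.

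There is no real technical obstacle: the argument is essentially bookkeeping, and it relies on a single tree-combinatorial identity---that the symmetric difference of the paths $\operatorname{ph}(i,k;T)$ and $\operatorname{ph}(j,k;T)$ is exactly $\operatorname{ph}(i,j;T)$, since their intersection equals $\operatorname{ph}(i\wedge j,\,k;T)$. The content of the proposition is the observation that the parameters $\theta_{uv}$ play a dual role, as edge lengths for the tree metric and as Brownian increment variances for the BMT, and that the covariance mapping $\sigma_k$ (together with its inverse $\gamma_k$) is precisely the change of viewpoint that identifies these two parametrizations.
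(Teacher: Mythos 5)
Your proof is correct and follows essentially the same route as the paper: the paper's argument is precisely the computation in \eqref{eq:treemet2}, showing that $\sigma_k(\Gamma)_{ij}=\sum_{uv\in\operatorname{ph}(i\wedge j,k;T)}\theta_{uv}$, followed by the observation that this is the defining covariance formula for the Brownian motion tree model on $T^k$. You supplement this by explicitly verifying the reverse inclusion via $\gamma_k$, whereas the paper leaves that direction implicit in the fact that $\sigma_k$ is a linear bijection between $\mathbb{S}^d_0$ and $\mathbb{U}_k$; this is a matter of exposition, not of substance.
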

	
	We finish by noting that the observation that the square root of a tree
	metric has an Euclidean embedding (which is a side product of this analysis)
	has been important for understanding some algorithms in phylogenetics
	\cite{layer2017phylogenetic}.
	
	\section{Strong \texorpdfstring{$ \mathrm{MTP}_{2} $}{MTP2} distributions}
	\label{s:smtp2}
	
	In this section we collect some new results on strongly
	$ \mathrm{MTP}_{2} $ distributions that will be later used in Appendix~\ref{app:proofs}
	to prove properties of $ \mathrm{EMTP}_{2} $ distributions. Results of this
	section may be of independent interest. The following lemma offers a useful
	characterization of strong $ \mathrm{MTP}_{2} $ distributions, as defined
	in \eqref{eq:LLC}.
	\begin{lemma}
		\label{lem:LLC2}
		The function $f$ is strongly $ \mathrm{MTP}_{2} $ if and only if for all
		$\mathbf{x},\mathbf{y}\in \mathbb{R}^{d}$, and for all $s,t\geq c$ (for
		some $c\in \mathbb{R}$)
		\begin{align}
			f(\mathbf{x}-s\boldsymbol 1) f(\mathbf{y}-t\boldsymbol 1) \leq f\bigl(
			\mathbf{x}\vee \mathbf{y}-(s\vee t)\boldsymbol 1 \bigr) f\bigl(\mathbf{x} \wedge
			\mathbf{y}-(s\wedge t)\boldsymbol 1\bigr). \label{eq:LLC2} 
		\end{align}
	\end{lemma}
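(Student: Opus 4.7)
The plan is to prove both implications by a direct change of variables, showing that the defining strong $\mathrm{MTP}_2$ inequality \eqref{eq:LLC} and the proposed characterization \eqref{eq:LLC2} are essentially reformulations of each other after translating the two arguments along the diagonal $\boldsymbol 1$.

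For the forward direction, I would assume $f$ satisfies \eqref{eq:LLC}. By the symmetry of \eqref{eq:LLC2} under the simultaneous swap $(\mathbf{x},s)\leftrightarrow(\mathbf{y},t)$, I may assume $s\leq t$. Then I would apply \eqref{eq:LLC} to the pair $\mathbf{y}-t\boldsymbol 1$ and $\mathbf{x}-s\boldsymbol 1$ with the nonnegative scalar $\alpha=t-s$. A direct computation gives
\begin{align*}
(\mathbf{y}-t\boldsymbol 1)\vee\bigl((\mathbf{x}-s\boldsymbol 1)-(t-s)\boldsymbol 1\bigr) &= (\mathbf{x}\vee\mathbf{y})-t\boldsymbol 1, \\
\bigl((\mathbf{y}-t\boldsymbol 1)+(t-s)\boldsymbol 1\bigr)\wedge(\mathbf{x}-s\boldsymbol 1) &= (\mathbf{x}\wedge\mathbf{y})-s\boldsymbol 1,
\end{align*}
so substitution into \eqref{eq:LLC} yields exactly \eqref{eq:LLC2} with $s\vee t=t$ and $s\wedge t=s$. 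No restriction on $s,t$ is required here, so the forward direction actually holds for all $s,t\in\mathbb{R}$ and hence for any choice of $c$.

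For the backward direction, I would fix $\mathbf{x},\mathbf{y}\in\mathbb{R}^d$ and $\alpha\geq 0$ and apply \eqref{eq:LLC2} with $s=c$ and $t=c+\alpha$ (both $\geq c$), after substituting $\mathbf{y}+c\boldsymbol 1$ in place of $\mathbf{x}$ and $\mathbf{x}+(c+\alpha)\boldsymbol 1$ in place of $\mathbf{y}$. The left-hand side of \eqref{eq:LLC2} then simplifies to $f(\mathbf{y})\,f(\mathbf{x})$, while the right-hand side simplifies via
\begin{align*}
\bigl((\mathbf{y}+c\boldsymbol 1)\vee(\mathbf{x}+(c+\alpha)\boldsymbol 1)\bigr)-(c+\alpha)\boldsymbol 1 &= \mathbf{x}\vee(\mathbf{y}-\alpha\boldsymbol 1), \\
\bigl((\mathbf{y}+c\boldsymbol 1)\wedge(\mathbf{x}+(c+\alpha)\boldsymbol 1)\bigr)-c\boldsymbol 1 &= (\mathbf{x}+\alpha\boldsymbol 1)\wedge\mathbf{y},
\end{align*}
recovering precisely the strong $\mathrm{MTP}_2$ inequality \eqref{eq:LLC}.

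The whole argument reduces to these two elementary substitutions, each essentially inverse to the other. There is no genuine obstacle beyond careful bookkeeping of the shift parameters; the only subtlety is to choose the base shift by $c$ in the backward direction so that both new scalars lie above the threshold $c$ at which \eqref{eq:LLC2} is assumed to hold. The content of the lemma is thus that the strong $\mathrm{MTP}_2$ property is equivalent to log-supermodularity evaluated on pairs of diagonally shifted arguments $(\mathbf{x}-s\boldsymbol 1,\mathbf{y}-t\boldsymbol 1)$.
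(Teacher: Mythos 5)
Your proof is correct and follows essentially the same route as the paper's: both directions are proved by the same change of variables, translating the two arguments along the diagonal $\boldsymbol 1$ and appealing to the definitional inequality \eqref{eq:LLC} (forward) or to \eqref{eq:LLC2} with the base shifts $c$ and $c+\alpha$ (backward). The only differences are cosmetic relabelings of which argument occupies the $\mathbf{x}$-slot versus the $\mathbf{y}$-slot, plus your valid observation that the forward implication holds for all $s,t\in\mathbb{R}$ without the restriction $s,t\geq c$.
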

	\begin{proof}
		We first show that the strongly $ \mathrm{MTP}_{2} $ condition
		\eqref{eq:LLC} implies the alternative \eqref{eq:LLC2}. Suppose
		$t\geq s\geq c$. Let $\tilde{\mathbf{x}}=\mathbf{y}-t\boldsymbol 1$,
		$\tilde{\mathbf{y}}=\mathbf{x}-s\boldsymbol 1$ and
		$\alpha =t-s\geq 0$. We have
		$\tilde{\mathbf{x}}+\alpha \boldsymbol 1=\mathbf{y}-s\boldsymbol 1$ and
		$\tilde{\mathbf{y}}-\alpha \boldsymbol 1=\mathbf{x}-t\boldsymbol 1$. By
		\eqref{eq:LLC}
		\begin{align*}
			f(\mathbf{x}-s\boldsymbol 1)f(\mathbf{y}-t\boldsymbol 1)&=f( \tilde{
				\mathbf{x}})f(\tilde{\mathbf{y}})\leq f\bigl((\tilde{\mathbf{x}}+ \alpha
			\boldsymbol 1)\wedge \tilde{\mathbf{y}}\bigr)f\bigl(\tilde{\mathbf{x}} \vee (
			\tilde{\mathbf{y}}-\alpha \boldsymbol 1)\bigr)
			\\
			&=f(\mathbf{x}\wedge \mathbf{y}-s\boldsymbol 1)f(\mathbf{x}\vee \mathbf{y}-t
			\boldsymbol 1),
		\end{align*}
		which is exactly \eqref{eq:LLC2}. If $c\leq t<s$, then we proceed in exactly
		the same way taking $\tilde{\mathbf{x}}=\mathbf{x}-s\boldsymbol 1$,
		$\tilde{\mathbf{y}}=\mathbf{y}-t\boldsymbol 1$ and
		$\alpha =s-t\geq 0$. This proves one implication. The other implication
		is obtained by reversing this argument. Fix $\mathbf{x}$,
		$\mathbf{y}$ and $\alpha \geq 0$. Suppose that \eqref{eq:LLC2} holds, and
		take $\tilde{\mathbf{x}}=\mathbf{x}+s\boldsymbol 1$,
		$\tilde{\mathbf{y}}=\mathbf{y}+t\boldsymbol 1$, $t=c$, $s=c+\alpha $. Then
		by \eqref{eq:LLC2},
		\begin{align*}
			f(\mathbf{x})f(\mathbf{y})&=f(\tilde{\mathbf{x}}-s\boldsymbol 1)f( \tilde{
				\mathbf{y}}-t\boldsymbol 1)\leq f(\tilde{\mathbf{x}}\wedge \tilde{\mathbf{y}}-t
			\boldsymbol 1)f(\tilde{\mathbf{x}}\vee \tilde{\mathbf{y}}-s\boldsymbol 1)
			\\
			&=f\bigl((\mathbf{x}+\alpha \boldsymbol 1)\wedge \mathbf{y}\bigr)f\bigl(
			\mathbf{x} \vee (\mathbf{y}-\alpha \boldsymbol 1)\bigr),
		\end{align*}
		which is exactly \eqref{eq:LLC}.
	\end{proof}
	
	We are now ready to prove Theorem~\ref{t:LLCetal}.
	\begin{proof}[Proof of Theorem~\ref{t:LLCetal}]
		Observe that the density $f$ of $\mathbf{Z}$ satisfies
		\begin{equation}
			\label{eq:jointyy0} f(z_{0},\mathbf{z})=f_{0}(z_{0})f_{\mathbf{X}}(
			\mathbf{z}-z_{0} \boldsymbol 1). 
		\end{equation}
		Proof of statement (1): Using \eqref{eq:jointyy0}, the
		$ \mathrm{MTP}_{2} $ constraint on $f$ is equivalent to
		\begin{equation}
			\label{eq:auxmtp} f_{\mathbf{X}}\bigl(\mathbf{x}\vee \mathbf{y}-(x_{0}
			\vee y_{0}) \boldsymbol 1\bigr)f_{\mathbf{X}}\bigl(\mathbf{x}\wedge
			\mathbf{y}-(x_{0} \wedge y_{0})\boldsymbol 1\bigr) \geq
			f_{\mathbf{X}}(\mathbf{x}-x_{0} \boldsymbol 1)f_{\mathbf{X}}(
			\mathbf{y}-y_{0}\boldsymbol 1) 
		\end{equation}
		for all $(x_{0},\mathbf{x})$, $(y_{0},\mathbf{y})$, where we used the fact
		that
		$f_{0}(x_{0})f_{0}(y_{0})=f_{0}(x_{0}\vee y_{0})f_{0}(x_{0}\wedge y_{0})$
		(with both sides nonzero). But this condition is exactly equivalent to
		$f_{\mathbf{X}}$ being strongly $ \mathrm{MTP}_{2} $ by Lemma~\ref{lem:LLC2}.
		
		Proof of statement (2): We first show the left implication via the equivalent
		characterization of strong $ \mathrm{MTP}_{2} $ from Lemma~\ref{lem:LLC2}.
		Denoting $\bar{\mathbf{x}}=(x_{0},\mathbf{x})$ and
		$\bar{\mathbf{y}}=(y_{0},\mathbf{y})$, we want to show that, for every
		$s\leq t$, it holds that
		\begin{equation*}
			f(\bar{\mathbf{x}}-s\boldsymbol 1)f(\bar{\mathbf{y}}-t\boldsymbol 1) \leq f(
			\bar{\mathbf{x}}\wedge \bar{\mathbf{y}}-s\boldsymbol 1)f( \bar{\mathbf{x}}\vee
			\bar{\mathbf{y}}-t\boldsymbol 1),
		\end{equation*}
		where $f$ as in \eqref{eq:jointyy0}. The left-hand side of this inequality
		is
		\begin{equation*}
			L:= f_{0}(x_{0}-s)f_{0}(y_{0}-t)f_{X}(
			\mathbf{x}-x_{0}\boldsymbol 1)f_{X}( \mathbf{y}-y_{0}
			\boldsymbol 1),
		\end{equation*}
		and the right-hand side is
		\begin{equation*}
			R:=f_{0}(x_{0}\wedge y_{0}-s)f_{0}(x_{0}
			\vee y_{0}-t)f_{X}\bigl(\mathbf{x} \wedge
			\mathbf{y}-(x_{0}\wedge y_{0})\boldsymbol 1
			\bigr)f_{X}\bigl(\mathbf{x} \vee \mathbf{y}-(x_{0}\vee
			y_{0})\boldsymbol 1\bigr).
		\end{equation*}
		In the proof of statement (1), we established that the strongly
		$ \mathrm{MTP}_{2} $ property of $\mathbf{X}$ gives that $\mathbf{Z}$ is
		$ \mathrm{MTP}_{2} $. In other words, the inequality \eqref{eq:auxmtp} holds.
		Given this inequality, to show $L\leq R$ it is certainly enough to show
		that
		\begin{equation*}
			f_{0}(x_{0}-s)f_{0}(y_{0}-t)\leq
			f_{0}(x_{0}\wedge y_{0}-s)f_{0}(x_{0}
			\vee y_{0}-t),
		\end{equation*}
		which holds if $f_{0}$ is strongly $ \mathrm{MTP}_{2} $ by Lemma~\ref{lem:LLC2}.
		For the other direction, note that by statement (1), we have that
		$ \mathbf{X}$ is strongly $ \mathrm{MTP}_{2} $ as $ \mathbf{Z}$ is
		$ \mathrm{MTP}_{2} $. To conclude that $X_{0}$ is strongly
		$ \mathrm{MTP}_{2} $, we will show that strongly $ \mathrm{MTP}_{2} $ distributions
		are closed under marginalization. This will conclude the proof of the second
		statement, as $X_{0}$ is a component of $\mathbf{Z}$.

		Strongly $ \mathrm{MTP}_{2} $ distributions are closed under taking margins:
		Suppose that the random vector $\mathbf{X}=(X_{1},\ldots ,X_{d})$ is strongly
		$ \mathrm{MTP}_{2} $. We will show that $(X_{1},\ldots ,X_{d-1})$ is strongly
		$ \mathrm{MTP}_{2} $. By statement (1) $\mathbf{X}$ is strongly
		$ \mathrm{MTP}_{2} $ if and only if for every $X_{0}$ independent of
		$\mathbf{X}$ and supported on $[c,\infty )$ for some fixed
		$c\in \mathbb{R}$, the vector
		$(X_{0},X_{0}+X_{1},\ldots ,X_{0}+X_{d})$ is $ \mathrm{MTP}_{2} $. By the
		closure property of the $ \mathrm{MTP}_{2} $ distributions, the vector
		$(X_{0},X_{0}+X_{1},\ldots ,X_{0}+X_{d-1})$ is also
		$ \mathrm{MTP}_{2} $ for every such $X_{0}$. Again, using statement (1), this
		is equivalent to $(X_{1},\ldots ,X_{d-1})$ being strongly
		$ \mathrm{MTP}_{2} $. The same argument applies to any other margin.
	\end{proof}
	
	In the proof of Theorem~\ref{t:LLCetal}, we showed that also strong
	$ \mathrm{MTP}_{2} $ is closed under taking margins.
	\begin{prop}
		\label{prop:LLCmargin}
		If $\mathbf{X}$ is strongly $ \mathrm{MTP}_{2} $, then every margin of
		$\mathbf{X}$ is strongly $ \mathrm{MTP}_{2} $.
	\end{prop}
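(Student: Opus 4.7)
The plan is to combine statement (1) of Theorem~\ref{t:LLCetal} with the well-known fact (stated earlier in Section~\ref{s:posdep}) that the ordinary $\mathrm{MTP}_2$ property is closed under taking margins. In fact, the argument is already hinted at in the proof of Theorem~\ref{t:LLCetal}, where this marginalization closure was used to finish statement~(2); I would make that standalone argument explicit.

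Concretely, suppose $\mathbf{X}=(X_1,\ldots,X_d)$ is strongly $\mathrm{MTP}_2$. By symmetry and permutation of coordinates, it suffices to show that the margin $\mathbf{X}'=(X_1,\ldots,X_{d-1})$ is strongly $\mathrm{MTP}_2$. By Theorem~\ref{t:LLCetal}(1), strong $\mathrm{MTP}_2$ of $\mathbf{X}'$ is equivalent to $\mathbf{Z}'=(X_0, X_0+X_1,\ldots,X_0+X_{d-1})$ being $\mathrm{MTP}_2$ for some (equivalently, every) random variable $X_0$ independent of $\mathbf{X}'$ with density supported on a half-line $[c,\infty)$.

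So fix such an $X_0$ taken independent of the full vector $\mathbf{X}$. By the ``only if'' direction of Theorem~\ref{t:LLCetal}(1) applied to $\mathbf{X}$, the $(d+1)$-dimensional vector $\mathbf{Z}=(X_0, X_0+X_1,\ldots,X_0+X_d)$ is $\mathrm{MTP}_2$. Since $\mathrm{MTP}_2$ is preserved under taking margins, dropping the last coordinate yields that $\mathbf{Z}'=(X_0, X_0+X_1,\ldots, X_0+X_{d-1})$ is $\mathrm{MTP}_2$. Applying the ``if'' direction of Theorem~\ref{t:LLCetal}(1) to $\mathbf{X}'$ (noting $X_0$ is independent of $\mathbf{X}'$ since it is independent of $\mathbf{X}$) then gives that $\mathbf{X}'$ is strongly $\mathrm{MTP}_2$, as required.

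There is no real obstacle here: once Theorem~\ref{t:LLCetal}(1) is in hand, marginalization closure of strong $\mathrm{MTP}_2$ is just a transfer through the auxiliary variable $X_0$, using the known marginalization closure of ordinary $\mathrm{MTP}_2$. The only small care needed is to point out that independence of $X_0$ from $\mathbf{X}$ implies independence of $X_0$ from any subvector $\mathbf{X}'$, so that Theorem~\ref{t:LLCetal}(1) applies on both sides.
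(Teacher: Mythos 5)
Your proof is correct and is essentially identical to the paper's own argument: the paper establishes marginal closure of strong $\mathrm{MTP}_2$ inside the proof of Theorem~\ref{t:LLCetal} by the same transfer through the auxiliary variable $X_0$ via statement (1) and the marginal closure of ordinary $\mathrm{MTP}_2$, and then states Proposition~\ref{prop:LLCmargin} as having already been shown there. Nothing further to add.
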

	
	\begin{ex}
		\label{exmp10}
		It is useful to see Theorem~\ref{t:LLCetal} in action in the context of
		Gaussian distributions. If $\Sigma $ is the covariance matrix of
		$\mathbf{X}$ and $v$ is the variance of $X_{0}$, then the covariance of
		$\mathbf{Z}$ has the block form
		\begin{equation*}
			\begin{bmatrix} v & v\boldsymbol 1^{T}
				\\
				v\boldsymbol 1 & \Sigma +v\boldsymbol 1\boldsymbol 1^{T} \end{bmatrix}
		\end{equation*}
		with the inverse
		\begin{equation*}
			\begin{bmatrix} v^{-1}+\boldsymbol 1^{T}\Sigma
				^{-1}\boldsymbol 1 & -\boldsymbol 1^{T} \Sigma ^{-1}
				\\
				-\Sigma ^{-1}\boldsymbol 1 & \Sigma ^{-1} \end{bmatrix}
			.
		\end{equation*}
		It is then clear that this is an M-matrix (equiv. $\mathbf{Y}$ is
		$\mathrm{MTP}_{2}$) if and only if $\Sigma ^{-1}$ is an M-matrix with
		$\boldsymbol 1^{T}\Sigma ^{-1}\geq 0$ (equiv. $\mathbf{X}$ is strongly
		$\mathrm{MTP}_{2}$), which is precisely part 1 of the theorem. For the second
		part, we note that the last matrix above has row sums
		$(v^{-1},0,\ldots ,0)$. If $\mathbf{X}$ is strongly
		$ \mathrm{MTP}_{2} $, then it also forms an M-matrix, and so
		$\mathbf{Z}$ is strongly $ \mathrm{MTP}_{2} $.
	\end{ex}
	
	Another important property that we mentioned in Section~\ref{s:posdep}
	is that univariate distributions are always $ \mathrm{MTP}_{2} $. This result
	is not true for strong $ \mathrm{MTP}_{2} $. A~random vector
	$\mathbf{X}$ is log-concave if its density is log-concave.
	
	\begin{prop}
		\label{p:univ}
		A univariate distribution with density $f:\mathbb{R}\to \mathbb{R}$ is
		strongly $ \mathrm{MTP}_{2} $ if and only if it is log-concave. A~random vector
		$\mathbf{X}=(X_{1},\ldots ,X_{d})$ with independent components is strongly
		$ \mathrm{MTP}_{2} $ if and only if each $X_{i}$ is log-concave.
	\end{prop}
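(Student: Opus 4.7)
The plan is to first establish the univariate case and then lift it to the independent multivariate case using the product form of the density.

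For the univariate claim, I first simplify the defining inequality \eqref{eq:LLC}. In dimension one, for $x,y\in\mathbb R$ and $\alpha\ge 0$, a short case analysis shows that the inequality is automatic when $x>y-\alpha$ and reduces, in the remaining case $x\le y-\alpha$, to
\begin{equation*}
f(x+\alpha)f(y-\alpha)\;\ge\;f(x)f(y).
\end{equation*}
Setting $u=x+\alpha$ and $v=y-\alpha$, this is equivalent to: for every $x\le y$ and every pair $u,v\in[x,y]$ with $u+v=x+y$,
\begin{equation*}
f(u)f(v)\;\ge\;f(x)f(y).
\end{equation*}
Parametrizing $u=(1-t)x+ty$, $v=tx+(1-t)y$ for $t\in[0,1]$, this reads
\begin{equation*}
\log f\bigl((1-t)x+ty\bigr)+\log f\bigl(tx+(1-t)y\bigr)\;\ge\;\log f(x)+\log f(y).
\end{equation*}

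For the direction log-concave $\Rightarrow$ strongly $\mathrm{MTP}_2$, I simply add Jensen's inequality applied at $t$ and at $1-t$:
\begin{equation*}
\log f\bigl((1-t)x+ty\bigr)\ge(1-t)\log f(x)+t\log f(y)
\end{equation*}
plus the symmetric inequality, yielding the above display directly. For the converse, specializing to $t=1/2$ gives $f((x+y)/2)^{2}\ge f(x)f(y)$, which is midpoint log-concavity of $f$. Since a density is Lebesgue measurable, the standard Sierpi\'nski-type theorem (a measurable midpoint-concave function is concave) upgrades midpoint log-concavity of $f$ to full log-concavity of $\log f$ on the convex hull of the support; outside the support $f=0$ and the claim is trivial.

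For the multivariate statement with independent components, write $f_{\mathbf X}(\mathbf x)=\prod_{i=1}^{d}f_{i}(x_{i})$. Since $\mathbf x\vee\mathbf y$ and $\mathbf x\wedge\mathbf y$ and the $\alpha$-shifts act componentwise, the inequality \eqref{eq:LLC} for $f_{\mathbf X}$ factors into the product over $i\in[d]$ of the univariate inequalities for $f_{i}$. Thus if each $f_{i}$ is log-concave, then by the univariate case each coordinate factor satisfies its own strong $\mathrm{MTP}_{2}$ inequality, and multiplying these termwise gives the joint inequality. Conversely, if $\mathbf X$ is strongly $\mathrm{MTP}_{2}$, then Proposition~\ref{prop:LLCmargin} shows that each marginal $X_{i}$ is strongly $\mathrm{MTP}_{2}$, and the univariate equivalence just proved forces $f_{i}$ to be log-concave.

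The only slightly delicate point is the passage from midpoint log-concavity to log-concavity, which is why measurability of $f$ has to be invoked explicitly; all other steps are direct manipulations of \eqref{eq:LLC} and the factorization of the density.
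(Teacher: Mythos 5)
Your proof is correct and follows essentially the same route as the paper: reduce \eqref{eq:LLC} in dimension one to the non-trivial case $x+\alpha\le y$, reparametrize via a convex-combination parameter, observe that log-concavity gives the inequality directly, and obtain the converse by specializing to the midpoint and invoking the measurable-midpoint-convexity theorem. Your parametrization $u=(1-t)x+ty$, $v=tx+(1-t)y$ is a cleaner version of the paper's $\alpha=\lambda(y-x)$, and you are more explicit than the paper about why measurability of the density is what licenses the upgrade from midpoint log-concavity to full log-concavity (the paper asserts this equivalence without comment). The one substantive difference is in the converse of the multivariate claim: you quote Proposition~\ref{prop:LLCmargin} (closure of strong $\mathrm{MTP}_2$ under marginalization, which the paper derives from the heavier Theorem~\ref{t:LLCetal}), whereas the paper's own proof is more elementary and self-contained---it sets $F(\mathbf{x})=\sum_i F_i(x_i)$ and, for a fixed coordinate $k$, chooses the remaining coordinates so that $x_i>y_i$ for $i\neq k$; each such summand then equals zero (since $x_i\vee(y_i-\alpha)=x_i$ and $(x_i+\alpha)\wedge y_i=y_i$), forcing the $k$-th summand alone to be nonnegative. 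Both routes are valid and non-circular; the paper's is shorter and avoids appealing to a global closure result, while yours reuses machinery already in place. Two small points worth tightening: what you call Jensen's inequality is just the definition of concavity of $\log f$, and the extended-real-valued nature of $\log f$ (it equals $-\infty$ off the support) deserves a sentence---you need that the set $\{f>0\}$ is midpoint-closed and of positive measure, so that Sierpi\'nski's theorem applies on its convex hull, which is implicitly what you are doing.
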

	\begin{proof}
		Note that \eqref{eq:LLC} with $d=1$ becomes nontrivial, only if
		$0\leq \alpha \leq y-x$ in which case if gives
		$f(y-\alpha )f(x+\alpha )-f(x)f(y)\geq 0$. Let
		$F:\mathbb{R}\to \mathbb{R}\cup \{-\infty \}$ be defined by
		$F(x)=-\log f(x)$. Denoting $\alpha =\lambda (y-x)$ for
		$\lambda \in [0,1]$, we get
		\begin{align*}
			& \bigl(F\bigl((1-\lambda )x+\lambda y\bigr)-(1-\lambda )F(x)-\lambda F(y)
			\bigr)
			\\
			&\quad {}+ \bigl(F\bigl(\lambda x+(1-\lambda ) y\bigr)-\lambda F(x)-(1-\lambda )
			F(y) \bigr)\leq 0.
		\end{align*}
		Taking $\lambda =\frac{1}{2}$, we conclude midpoint convexity of $F$, which
		is equivalent to concavity as $x<y$ are arbitrary. On the other hand, convexity
		of $F$ trivially implies the above inequality, which is equivalent to the
		strongly $ \mathrm{MTP}_{2} $ inequality.
		
		For the second statement, let
		$F(\mathbf{x})=-\log f_{X}(\mathbf{x})$ so that
		$F(\mathbf{x})=\sum_{i} F_{i}(x_{i})$, where $F_{i}=-\log f_{i}$. We want
		to show that for each $\mathbf{x},\mathbf{y}\in \mathbb{R}^{d}$,
		$\alpha \geq 0$
		\begin{equation*}
			\sum_{i} \bigl(F_{i}(x_{i})+F_{i}(y_{i})-F_{i}
			\bigl(x_{i}\vee (y_{i}- \alpha )\bigr)-F_{i}
			\bigl((x_{i}+\alpha )\wedge y_{i}\bigr) \bigr)\geq 0
		\end{equation*}
		if and only if each summand is nonnegative. The left implication is obvious.
		But the right implication is also clear using the insights of the proof
		of the univariate case. Simply take $\mathbf{x}$, $\mathbf{y}$ such that
		$x_{i}>y_{i}$ for all $i\neq k$. The corresponding summands are zero and
		so necessarily
		$F_{k}(x_{k})+F_{k}(y_{k})-F_{k}(x_{k}\vee (y_{k}-\alpha ))-F_{i}((x_{k}+
		\alpha )\wedge y_{k})\geq 0$.
	\end{proof}
	
	\subsection{Log-concave tree processes}
	\label{secB.1}
	
	Theorem~\ref{t:LLCetal} and Proposition~\ref{p:univ} give a natural way
	to construct multivariate strongly $ \mathrm{MTP}_{2} $ distributions with
	log-concave distributions. If $X_{1},\ldots ,X_{d}$ are univariate log-concave
	and independent, then
	\begin{equation}
		\label{eq:simplepath} (X_{1},X_{1}+X_{2},X_{1}+X_{2}+X_{3},
		\ldots ,X_{1}+\cdots +X_{d}) 
	\end{equation}
	is strongly $ \mathrm{MTP}_{2} $ and log-concave. We now provide a generalization
	of this construction.
	
	Let $T=(V,E)$ be an undirected tree with vertex set
	$V=\{1,\ldots ,d\}$. A~rooted tree $T^{k}$ is a tree obtained from
	$T$ by choosing a vertex $k\in V$, called the root, and directing all edges
	away from $k$. For any two nodes $i$, $j$ in an undirected tree $ T $, we
	denote by $\operatorname{ph}(ij;T)$ the set of edges on the (unique) path between
	$i$ and $j$ in this tree. Equivalently, for a rooted tree $ T^{k} $, let
	$\operatorname{ph}(ij;T^{k})$ be the set of directed edges on the (unique) path
	from $ i $ to $ j $ in $ T^{k} $.
	\begin{defi}
		\label{defn3}
		For a given rooted tree $T^{k}$ with vertices $V=\{1,\ldots ,d\}$, let
		$X_{1},\ldots ,X_{d}$ be a collection of independent random variables.
		Let $\mathbf{Z}=(Z_{1},\ldots ,Z_{d})$ be defined by
		\begin{equation}
			\label{eq:sumpath} Z_{i} = X_{k} +\sum
			_{uv\in \operatorname{ph}(ki;T^{k})} X_{v}, 
		\end{equation}
		where $uv$ denotes a directed edge $u\to v$. Then we say that
		$\mathbf{Z}$ follows an additive process on $T^{k}$. If all $X_{i}$ are
		log-concave, then we call such process a log-concave process on a tree.
	\end{defi}
	For example, if $d=3$, the vector in \eqref{eq:simplepath} forms an additive
	process on the tree $1\to 2\to 3$. Rerooting this tree at $2$ results in
	an additive process $(X_{1}+X_{2},X_{2},X_{2}+X_{3})$.
	
	\begin{prop}
		\label{prop:treecon}
		If $\mathbf{Z}$ follows a log-concave tree process, then it has a strongly
		$ \mathrm{MTP}_{2} $ and log-concave distribution.
	\end{prop}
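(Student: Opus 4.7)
The plan is to prove log-concavity and strong $\mathrm{MTP}_2$ simultaneously by induction on $d=|V|$. The base case $d=1$ is immediate: $\mathbf{Z}=(X_k)$ is a single log-concave variable, which by Proposition~\ref{p:univ} is strongly $\mathrm{MTP}_2$.

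For the inductive step I would decompose the process at the root. Let $c_1,\ldots ,c_m$ be the children of $k$ in $T^k$ and let $T_j^{c_j}$ denote the subtree rooted at $c_j$, with vertex set $V_j$. For every $i\in V_j$ the path from $k$ to $i$ begins with the edge $k\to c_j$ and continues inside $T_j^{c_j}$, so
\begin{equation*}
Z_i = X_k + Z_i^{(j)},\qquad Z_i^{(j)}:=X_{c_j}+\sum_{uv\in \operatorname{ph}(c_j i;\,T_j^{c_j})} X_v .
\end{equation*}
Each $\mathbf{Z}^{(j)}=(Z_i^{(j)})_{i\in V_j}$ is itself a log-concave tree process on $T_j^{c_j}$, using the independent variables $\{X_v:v\in V_j\}$, and by the inductive hypothesis has a log-concave, strongly $\mathrm{MTP}_2$ density.

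Since the subtrees are vertex-disjoint, the vectors $\mathbf{Z}^{(1)},\ldots ,\mathbf{Z}^{(m)}$ depend on disjoint collections of the $X_v$'s, hence are mutually independent and independent of $X_k$. Their concatenation $\mathbf{Z}'$ is therefore log-concave (a product of log-concave densities is log-concave) and strongly $\mathrm{MTP}_2$: applied to $f_{\mathbf{Z}'}=\prod _j f_{\mathbf{Z}^{(j)}}$ the inequality \eqref{eq:LLC} splits blockwise into the same inequality on each component, which holds by the inductive hypothesis. Writing $\mathbf{Z}=(X_k,\ \mathbf{Z}'+X_k\boldsymbol 1)$ we land exactly in the setting of Theorem~\ref{t:LLCetal} with $X_0=X_k$ and $\mathbf X =\mathbf{Z}'$. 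Since $X_k$ is univariate log-concave, it is strongly $\mathrm{MTP}_2$ by Proposition~\ref{p:univ}; part~(2) of Theorem~\ref{t:LLCetal} then gives that $\mathbf{Z}$ is strongly $\mathrm{MTP}_2$. Log-concavity of $\mathbf{Z}$ follows separately, since its density is, up to a constant Jacobian, $f_{X_k}(z_k)\,f_{\mathbf{Z}'}(\mathbf z_{\setminus k}-z_k\boldsymbol 1)$, a product of log-concave densities precomposed with an affine map.

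The main technical concern I anticipate is the hypothesis in Theorem~\ref{t:LLCetal} that the density of $X_0$ be supported on a half-line $[c,\infty)$, whereas a log-concave $X_k$ may have support equal to $\mathbb R$. Inspecting Lemma~\ref{lem:LLC2} and the proof of Theorem~\ref{t:LLCetal}, however, the constant $c$ only delimits the shifts $s,t$ over which \eqref{eq:LLC2} must be verified, and the argument goes through verbatim with $c=-\infty$. This is thus a minor bookkeeping issue rather than a conceptual obstacle, and the induction closes.
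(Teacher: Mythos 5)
Your proof is correct and takes essentially the same approach as the paper's: both reduce to the root via Theorem~\ref{t:LLCetal}, split into independent subvectors indexed by the children of the root, recurse, and use closure of strong $\mathrm{MTP}_2$ under concatenation of independent pieces. You are also right that the $[c,\infty)$ support hypothesis in Theorem~\ref{t:LLCetal} is not an obstruction in the direction used here, and your explicit treatment of the log-concavity claim and of the formal induction is a welcome addition over the paper's terser sketch.
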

	\begin{proof}
		Since $X_{k}$ is log-concave, then by Theorem~\ref{t:LLCetal} the vector
		$ \mathbf{Z}$ is strongly $ \mathrm{MTP}_{2} $ as long as
		$(Z_{i}-Z_{k})_{i\neq k}$ is strongly $ \mathrm{MTP}_{2} $. This vector can
		be split into independent components indexed by the children of $k$ in
		$T^{k}$. In each of the components, we apply the same argument recursively.
		The fact that concatenating independent strongly $ \mathrm{MTP}_{2} $ vectors
		gives a strongly $ \mathrm{MTP}_{2} $ vector is clear.
	\end{proof}
	A special case of the construction in Proposition~\ref{prop:treecon} is
	when $\mathbf{X}$ is independent zero-mean Gaussian. In this case the set
	of marginal distributions over the leaves of $T$ is called the Brownian
	motion tree model (\citet{felsenstein1973maximum}); see, for example, Section~2
	in \cite{SUZ20} for the structural equation representation, as in
	\eqref{eq:sumpath}. Since the distribution of $\mathbf{Z}$ is strongly
	$ \mathrm{MTP}_{2} $, this property is preserved in the margin by Proposition~\ref{prop:LLCmargin}.
	We recover a well-known fact that the inverse covariance matrix in a Brownian
	motion tree model is always a diagonally dominant M-matrix; see, for example,
	\cite{dellacherie2014inverse}, where the covariance matrices in the Brownian
	motion tree model are called simply tree matrices. In extreme value theory,
	we encounter such construction in the context of the extremal tree models
	(see Section~\ref{s:tree_models}).
	
	\section{Auxiliary results and proofs}
	\label{app:proofs}
	
	\subsection{The exponent measure \texorpdfstring{$\Lambda$}{Lambda}}
	\label{exp_measure}
	
	We use here the notation of Section~\ref{s:MPD_prop}.
	
	In order to describe the extremal dependence structure, the assumption
	of multivariate regular variation is widely used (\citet{res2008}). Formally,
	it is equivalent to the existence of the limit
	\begin{align}
		\label{mrv} \lim_{u\to \infty} u\bigl[1-\mathbb{P}(\mathbf{X}\le
		\mathbf{z}+u \boldsymbol 1)\bigr]=\Lambda (\mathbf{z}) 
	\end{align}
	for all
	$\mathbf z \in \mathcal E=[-\infty ,\infty )^{d}\setminus \{(-\infty ,
	\ldots ,-\infty )\}$. The exponent measure $ \Lambda $ is a Radon measure
	on $ \mathcal{E} $, and $\Lambda (\mathbf z)$ is the short-hand notation
	for
	$\Lambda (\mathcal E \setminus [(-\infty ,\ldots ,-\infty ),
	\mathbf{z}])$. The fact that $\Lambda $ arises as a limit in \eqref{mrv}
	implies a homogeneity property
	$\Lambda (\mathbf{z}+t\boldsymbol 1) = t^{-1} \Lambda (\mathbf{z})$, for
	any $t > 0$. If we assume that $\Lambda $ possesses a positive Lebesgue
	density $\lambda $, then it satisfies
	$\lambda (\mathbf{y}+t\boldsymbol 1)=t^{-1}\lambda (\mathbf{y}) $ for any
	$t>0$ and $ \mathbf{y}\in \mathbb{R}^{d}$. The $I$th marginal
	$\lambda _{I}$ of $ \lambda $ is defined for any nonempty
	$ I\subset [d]:=\{1,\ldots ,d\} $, as usual by integrating out all components
	in $[d] \setminus I$.
	
	The relation of the exponent measure to the multivariate Pareto distribution
	$\mathbf{Y}$ is the following:
	\begin{align}
		\label{MPD_conv_2} \mathbb{P}(\mathbf{Y}\le \mathbf{z})= \frac{\Lambda (\mathbf{z}\wedge \boldsymbol 0)-\Lambda (\mathbf{z})}{\Lambda (\boldsymbol 0)}, \quad
		\mathbf{z}\in \mathcal L. 
	\end{align}
	From this it follows that the density $f_{\mathbf{Y}}$ of
	$\mathbf{Y}$ satisfies
	$f_{\mathbf{Y}}(\mathbf{y})=\lambda (\mathbf{y})/\Lambda (
	\boldsymbol 0)$. Similarly, the random vector $ \mathbf{Y}^{k} $ has Lebesgue
	density $\lambda $ supported on the product space $ \mathcal{L}^{k}$.
	
	\subsection{Proof of Theorem~\protect\ref{t:mtp2}}
	\label{sec100.2}
	
	We first prove the last statement that the condition holds for one
	$k$ if and only if it holds for all $k$. Note that $ \mathbf{Y}^{k}$ has
	density $\lambda (\mathbf{y}) $ and is supported on the product space
	$ \mathcal{L}^{k} $. Assume that $ \mathbf{Y}^{k} $ is
	$ \mathrm{MTP}_{2} $, that is, that $ \lambda (\mathbf{y}) $ satisfies
	\eqref{eq:log-supermodular} on $ \mathcal{L}^{k} $. For any
	$ k' \in [d] $, there exists some $ t>0 $ such that, for any
	$ \mathbf{x},\mathbf{y}\in \mathcal{L}^{k'} $, it holds that
	$ \mathbf{x}+t\boldsymbol 1, \mathbf{y}+t\boldsymbol 1, (\mathbf{x}
	\wedge \mathbf{y})+t\boldsymbol 1, (\mathbf{x}\vee \mathbf{y})+t
	\boldsymbol 1 \in \mathcal{L}^{k} $. Hence, it follows from the homogeneity
	of $ \lambda $ that
	\begin{align*}
		\lambda (\mathbf{x}\vee \mathbf{y}) \lambda (\mathbf{x}\wedge \mathbf{y})&=
		t^{2} \lambda \bigl((\mathbf{x}\vee \mathbf{y})+t \boldsymbol 1 \bigr)
		\lambda \bigl((\mathbf{x}\wedge \mathbf{y})+t\boldsymbol 1\bigr)
		\\
		&\ge t^{2}\lambda (\mathbf{x}+t\boldsymbol 1) \lambda (\mathbf{y}+t
		\boldsymbol 1)
		\\
		&= \lambda (\mathbf{x}) \lambda (\mathbf{y}).
	\end{align*}
	
	Now, to prove the first statement, we see that it is enough to check this
	condition on one $k$, so without loss of generality take $k=d$. Using
	\eqref{Yk} we have $ Y^{d}_{d}=E$ and $ Y^{d}_{i}=E+W^{d}_{i}$ for
	$i=1,\ldots ,d-1$. Since $E$ is exponentially distributed, we can use Theorem~\ref{t:LLCetal}
	to conclude that $\boldsymbol Y^{d}$ is $ \mathrm{MTP}_{2} $ if and only if
	the vector $\mathbf{W}^{d}_{\setminus d}$ is strongly
	$ \mathrm{MTP}_{2} $.
	
	\subsection{Proof of Proposition~\protect\ref{prop:marginex}}
	\label{sec100.3}
	
	Let $ \mathbf{Y}$ be $ \mathrm{EMTP}_{2} $. This means that for all
	$ k\in V $, the vector $ \mathbf{Y}^{k} $ is $ \mathrm{MTP}_{2} $. Let
	$ \mathbf{Y}_{I} $ be the marginal of $ \mathbf{Y}$ for some
	$ I\subset V $. It holds that
	\begin{equation*}
		\mathbf{Y}_{I} | Y_{k}>0\stackrel{d} {=}\bigl(
		\mathbf{Y}^{k}\bigr)_{I} \quad \text{for all } k\in I;
	\end{equation*}
	see \cite{EH2020}. As $ \mathrm{MTP}_{2} $ is closed under taking margins,
	the proposition follows.
	
	\subsection{Proof of Theorem~\protect\ref{t:equivalences_precision_matrices_emtp2}}
	\label{sec100.4}
	
	By Theorem~\ref{t:mtp2}, $\mathbf{Y}$ is $ \mathrm{EMTP}_{2} $ if and only
	if each $ \mathbf{W}^{k}$ is strongly $ \mathrm{MTP}_{2} $. By Example~\ref{ex:gauss},
	equivalently, each $\Theta ^{(k)}$ is a diagonally dominant M-matrix. This
	establishes that $\mathbf{Y}$ being $ \mathrm{EMTP}_{2} $ is equivalent to
	(ii).
	
	On the other hand, $\Theta $ being a Laplacian of a connected graph with
	positive edge weights is equivalent with the seemingly simpler condition
	(i). The fact that $\Theta \in \mathbb U^{d}_{+}$ implies (i) is clear.
	For the other direction, note that, by Proposition~\ref{prop:manu}, the
	row sums of $\Theta $ are zero and $\Theta _{ij}\leq 0$ for all
	$i\neq j$. It follows that $\Theta $ is a Laplacian matrix of a graph weighted
	with $Q_{ij}=-\Theta _{ij}\geq 0$. Since $\Gamma $ is strictly conditionally
	negative definite, again by Proposition~\ref{prop:manu},
	$\operatorname{rank}(\Theta )=d-1$. It then follows thaat
	$\det (\Theta ^{(k)})>0$. By \eqref{eq:det_spanning_trees} we conclude
	that at least one of the tree terms is strictly positive, proving that
	the underlying graph is connected, that is,
	$\Theta \in \mathbb{U}^{d}_{+}$.
	
	The proof will be then concluded if we establish equivalence between conditions
	(i)--(iv). Note first that all these conditions hold simultaneously for
	all bivariate H\"usler--Reiss distributions; compare Example~\ref{ex:bivariate_HR_mtp2}.
	Thus, we assume $d\geq 3$. In all cases we heavily rely on Lemma~\ref{lem:Theta}.
	In particular, if (i) holds, then the formula
	$\Theta ^{(k)}_{ij}=\Theta _{ij}$ for $i,j\neq k$ implies that
	$\Theta ^{(k)}$ is an M-matrix and the formula
	$\sum_{l\neq k}\Theta ^{(k)}_{il}=-\Theta _{ik}$ implies that each
	$\Theta ^{(k)}$ is also diagonally dominant. In other words, (i) implies
	all (ii), (iii) and (iv).
	
	Since (iv) is weaker than (ii), it remains to show that both (iii) and
	(iv) imply (i).
	
	(iii) $\Rightarrow $ (i): To show that $\Theta _{ij}\leq 0$, we take any
	$k\neq i,j$ (there will be at least one as $d\geq 3$) and use again the
	formula $\Theta _{ij}=\Theta ^{(k)}_{ij}\leq 0$.
	
	(iv) $\Rightarrow $ (i): Suppose $\Theta ^{(k)}$ is a diagonally dominant
	M-matrix. By the same argument as above, this is enough to conclude
	$\Theta _{ij}\leq 0$ for all $i,j\neq k$. Similarly, as above,
	$\Theta _{ik}=-\sum_{l\neq k}\Theta ^{(k)}_{il}\leq 0$ because
	$\Theta ^{(k)}$ is diagonally dominant.
	
	\subsection{Proof of Proposition~\protect\ref{prop:extremal_association}}
	\label{sec100.5}
	
	For a H\"usler--Reiss random vector $ \mathbf{Y}$, it holds that
	$ \mathbf{W}^{k} $ is associated if and only if $ \Sigma ^{(k)} $ is nonnegative.
	Since $\mathbf{W}^{k}_{\setminus k}$ is Gaussian with covariance matrix
	$\Sigma ^{(k)}$, this is equivalent with $\Sigma ^{(k)}\geq 0$ by
	\cite{Pitt1982}. Now, the result follows from the fact that
	\begin{align*}
		\Sigma _{ij}^{(k)}\ge 0 &\quad \Longleftrightarrow\quad \Sigma
		_{ii}^{(k)}+ \Sigma _{jj}^{(k)} \ge \Sigma
		_{ii}^{(k)}+\Sigma _{jj}^{(k)}-2\Sigma
		_{ij}^{(k)} \quad \Longleftrightarrow \quad \Gamma
		_{ik}+\Gamma _{jk}\ge \Gamma _{ij}
	\end{align*}
	for all $ i,j,k\in [d] $.
	
	\subsection{Proof of Proposition~\protect\ref{prop:indep_gen}}
	\label{sec100.6}
	
	By Theorem~\ref{t:mtp2}, $\mathbf{Y}$ is $ \mathrm{EMTP}_{2} $ if and only
	if $\mathbf{W}^{d}_{\setminus d}=(U_{i}-U_{d})_{i\neq d}$ is strongly
	$ \mathrm{MTP}_{2} $. Let $X_{0}=-U_{d}$ and $X_{i}=U_{i}$ for
	$i=1,\ldots ,d-1$. Denoting $\mathbf{X}=(X_{1},\ldots ,X_{d-1})$, the fact
	that $\mathbf{X}$ is strongly $ \mathrm{MTP}_{2} $ follows from Proposition~\ref{p:univ},
	as the distribution of each $X_{i}$ is log-concave. By assumption the distribution
	of $-X_{0}$ is also log-concave, and so the distribution of $X_{0}$ must
	be log-concave. It follows from Theorem~\ref{t:LLCetal} that the vector
	$(X_{0},X_{0}+X_{1},\ldots ,X_{0}+X_{d-1})$ is strongly
	$ \mathrm{MTP}_{2} $. Since the strong $ \mathrm{MTP}_{2} $ property is closed
	under taking margins (Proposition~\ref{prop:LLCmargin}), we conclude that
	\begin{equation*}
		(X_{0}+X_{1},\ldots ,X_{0}+X_{d-1}) = (
		U_{1}- U_{d},\ldots , U_{d-1}- U_{d})
	\end{equation*}
	is strongly $ \mathrm{MTP}_{2} $, proving that $\mathbf{Y}$ is
	$ \mathrm{EMTP}_{2} $.
	
	\subsection{Proof of Theorem~\protect\ref{thm:bivariate_emtp2}}
	\label{sec100.7}
	
	By Theorem~\ref{t:mtp2}, $ \mathrm{EMTP}_{2} $ is equivalent to
	$ W^{1}_{2}$ being strongly $ \mathrm{MTP}_{2} $. But this is a univariate
	random variable, so equivalently, its density must be log-concave by Proposition~\ref{p:univ}.
	
	\subsection{Proof of Proposition~\protect\ref{prop:trees_mtp2}}
	\label{sec100.8}
	
	By Theorem~\ref{t:mtp2}, we need to check whether
	$ \mathbf{W}^{k}_{\setminus k} $ is strongly $ \mathrm{MTP}_{2} $. By
	\eqref{eq:tree_representation} it follows that
	\begin{equation*}
		Y^{k}_{i}=E + \sum_{e\in \operatorname{ph}(ki;T^{k})}
		W_{e},
	\end{equation*}
	where $ W_{e},  e\in T^{k} $ are independent. As $E$ is log-concave, by
	Proposition~\ref{prop:treecon} we have that $\mathbf{Y}^{k}$ is strongly
	$ \mathrm{MTP}_{2} $ if and only if $ W_{e} $ is log-concave for each
	$ e\in E $. This is equivalent to $ \mathbf{W}^{k}_{\setminus k} $ being
	strongly $ \mathrm{MTP}_{2} $ by Theorem~\ref{t:LLCetal}(2). From Example~\ref{ex:bivariate_HR_mtp2}
	it then follows that H\"usler--Reiss tree models are always
	$ \mathrm{EMTP}_{2} $.
	
	\subsection{Proof of Theorem~\protect\ref{t:faithful}}
	\label{pr:faithful}
	
	Since by definition $ \mathbf{Y}\sim \mathbb P_{\mathbf Y} $ satisfies
	the pairwise Markov property with respect to its pairwise independence
	graph $ G_{e}(\mathbb P_{\mathbf Y}) $ and since it has a positive and
	continuous density, it also satisfies the global Markov property. Indeed,
	since the density of $\mathbf{Y}^{k}$ is proportional to the density of
	$\mathbf{Y}$, it is also positive and continuous.
	\citet[Chapter~3]{Lauritzen96} shows the equivalence of the pairwise
	and the global Markov property in this case for $\mathbf{Y}^{k}$, and this
	then propagates to the corresponding extremal conditional independence
	properties by Definition~\ref{def:extremal_indep}. Therefore,
	$ \mathbf{Y}$ satisfies the global Markov property with respect to
	$ G_{e}(\mathbb P_{\mathbf Y}) $.
	
	Assume disjoint $ A,B,C\subset V $ such that $ C $ does not separate
	$ A $ from $ B $ in $ G_{e}(\mathbb P_{\mathbf Y}) $. We need to show that
	that
	$ \mathbf{Y}_{A} \not \perp _{e} \mathbf{Y}_{B} | \mathbf{Y}_{C} $ to
	conclude that $ \mathbb P_{\mathbf Y} $ is extremal faithful to
	$ G_{e}(\mathbb P_{\mathbf Y}) $. To see this, let
	$ (k,l)\in E(G_{e}(\mathbb P_{\mathbf Y})) $ with $ k,l\notin C $, which
	means that
	$ Y_{k} \not \perp _{e} Y_{l} | \mathbf{Y}_{V\setminus kl} $. As
	$\mathbf{Y}$ is $ \mathrm{EMTP}_{2} $, upward-stability (see Theorem~\ref{thm:us_st_comp})
	implies that $ Y_{k} \not \perp _{e} Y_{l} | \mathbf{Y}_{C} $. As
	$ C $ does not separate $ A $ from $ B $, there is a path in
	$ G_{e}(\mathbb P_{\mathbf Y}) $ from some $ i\in A $ to some
	$ j\in B $ that does not intersect $ C $. It holds that
	$ Y_{s} \not \perp _{e} Y_{t} | \mathbf{Y}_{C} $ for any edge
	$ (s,t) $ on the path from $ i $ to $ j $. As $ \mathbf{Y}$ is
	$ \mathrm{EMTP}_{2} $, it satisfies singleton-transitivity (see Theorem~\ref{thm:us_st_comp})
	such that for edges $(s,t)$, $(t,u)$ on the path from $i$ to $j$ it follows
	that
	$ Y_{s} \not \perp _{e} Y_{u} | \mathbf{Y}_{C} \vee Y_{s} \not
	\perp _{e} Y_{u} | \mathbf{Y}_{tC} $. Now, using again upward stability,
	we obtain that $ Y_{s} \not \perp _{e} Y_{u} | \mathbf{Y}_{tC} $ implies
	$ Y_{s} \not \perp _{e} Y_{u} | \mathbf{Y}_{C} $. This gives that
	$ Y_{i} \not \perp _{e} Y_{j} | \mathbf{Y}_{C} $. With Remark~\ref{rem:singletonCI}
	the theorem follows.
	
	\subsection{Proof of Lemma~\protect\ref{lem:tolapl}}
	\label{sec100.10}
	
	We use the notation of Appendix~\ref{app:Gamma}, where
	$\tilde S^{(k)}$ is the embedding of $S^{(k)}\in \mathbb{S}^{d-1}$ in
	$\mathbb{S}^{d}$ by adding to it a zero row/column (as the $k$th row/column).
	Similarly, $\tilde\Theta ^{(k)}$ is the pseudoinverse of
	$\tilde\Sigma ^{(k)}$. Note that
	$\langle S^{(k)},\Theta ^{(k)}\rangle =\langle \tilde S^{(k)},
	\tilde \Theta ^{(k)}\rangle $, which is equal to
	$\llangle \overline{\Gamma},Q\rrangle  $ simply because
	both are equal to
	$\llangle \overline{\Gamma},Q\rrangle  $ by Remark~\ref{rem:app}(4).
	This gives
	\begin{equation*}
		\bigl\langle S^{(k)},\Theta ^{(k)}\bigr\rangle =\langle S,\Theta
		\rangle = \llangle \overline \Gamma ,Q\rrangle .
	\end{equation*}
	Moreover, by \eqref{eq:det_spanning_trees}
	\begin{equation*}
		\log \det \Theta ^{(k)} = \log \biggl(\sum_{T\in \mathcal T}
		\prod_{ij
			\in T} Q_{ij}\biggr) = \log
		\operatorname{Det} \Theta -\log (d).
	\end{equation*}
	This proves both \eqref{eq:surrogatelike2} and \eqref{eq:likeinQ}.
\end{appendix}

\subsection*{Acknowledgments}
	The authors would like to thank the two anonymous referees, an Associate
	Editor and the Editor for their constructive comments that strongly improved
	the quality of this paper.

	Sebastian Engelke and Frank R\"ottger were supported by the Swiss National
	Science Foundation (Grant 186858). Piotr Zwiernik acknowledges the support of the Natural Sciences and Engineering Research Council of Canada (NSERC), grant RGPIN-2023-03481.

\bibliographystyle{Chicago}
\bibliography{bibliography.bib}

\begin{thebibliography}{}

\bibitem[\protect\citeauthoryear{Agrawal, Roy, and Uhler}{Agrawal
  et~al.}{2020}]{ARU20}
Agrawal, R., U.~Roy, and C.~Uhler (2020).
\newblock {Covariance matrix estimation under total positivity for portfolio
  selection}.
\newblock {\em Journal of Financial Econometrics\/}.

\bibitem[\protect\citeauthoryear{Asadi, Davison, and Engelke}{Asadi
  et~al.}{2015}]{ADE2015}
Asadi, P., A.~C. Davison, and S.~Engelke (2015).
\newblock Extremes on river networks.
\newblock {\em Ann. Appl. Stat.\/}~{\em 9\/}(4), 2023--2050.

\bibitem[\protect\citeauthoryear{Asenova, Mazo, and Segers}{Asenova
  et~al.}{2021}]{asenova2021inference}
Asenova, S., G.~Mazo, and J.~Segers (2021).
\newblock Inference on extremal dependence in the domain of attraction of a
  structured {H}\"{u}sler-{R}eiss distribution motivated by a {M}arkov tree
  with latent variables.
\newblock {\em Extremes\/}~{\em 24\/}(3), 461--500.

\bibitem[\protect\citeauthoryear{Banerjee, El~Ghaoui, and d'Aspremont}{Banerjee
  et~al.}{2008}]{banerjee2008model}
Banerjee, O., L.~El~Ghaoui, and A.~d'Aspremont (2008).
\newblock Model selection through sparse maximum likelihood estimation for
  multivariate {G}aussian or binary data.
\newblock {\em J. Mach. Learn. Res.\/}~{\em 9}, 485--516.

\bibitem[\protect\citeauthoryear{Berge}{Berge}{1997}]{berge}
Berge, C. (1997).
\newblock {\em Topological spaces}.
\newblock Dover Publications, Inc., Mineola, NY.
\newblock Including a treatment of multi-valued functions, vector spaces and
  convexity, Translated from the French original by E. M. Patterson, Reprint of
  the 1963 translation.

\bibitem[\protect\citeauthoryear{Coles, Heffernan, and Tawn}{Coles
  et~al.}{1999}]{col1999}
Coles, S., J.~Heffernan, and J.~Tawn (1999).
\newblock Dependence measures for extreme value analyses.
\newblock {\em Extremes\/}~{\em 2\/}(4), 339--365.

\bibitem[\protect\citeauthoryear{Coles and Tawn}{Coles and Tawn}{1991}]{CT1991}
Coles, S.~G. and J.~A. Tawn (1991).
\newblock Modelling extreme multivariate events.
\newblock {\em J. Roy. Statist. Soc. Ser. B\/}~{\em 53\/}(2), 377--392.

\bibitem[\protect\citeauthoryear{Cooley, Naveau, and Poncet}{Cooley
  et~al.}{2006}]{coo2006}
Cooley, D., P.~Naveau, and P.~Poncet (2006).
\newblock {Variograms for spatial max-stable random fields}.
\newblock In P.~Bertail, P.~Soulier, and P.~Doukhan (Eds.), {\em Dependence in
  Probability and Statistics}, Volume 187 of {\em Lecture Notes in Statistics},
  Chapter~17, pp.\  373--390. New York: Springer.

\bibitem[\protect\citeauthoryear{Cooley and Thibaud}{Cooley and
  Thibaud}{2019}]{coo2018}
Cooley, D. and E.~Thibaud (2019).
\newblock Decompositions of dependence for high-dimensional extremes.
\newblock {\em Biometrika\/}~{\em 106\/}(3), 587--604.

\bibitem[\protect\citeauthoryear{de~Haan and Ferreira}{de~Haan and
  Ferreira}{2006}]{deh2006a}
de~Haan, L. and A.~Ferreira (2006).
\newblock {\em Extreme Value Theory}.
\newblock New York: Springer.

\bibitem[\protect\citeauthoryear{de~Haan and Resnick}{de~Haan and
  Resnick}{1977}]{deh1977}
de~Haan, L. and S.~I. Resnick (1977).
\newblock Limit theory for multivariate sample extremes.
\newblock {\em Z. Wahrscheinlichkeitstheorie und Verw. Gebiete\/}~{\em
  40\/}(4), 317--337.

\bibitem[\protect\citeauthoryear{Dellacherie, Martinez, and
  San~Martin}{Dellacherie et~al.}{2014}]{dellacherie2014inverse}
Dellacherie, C., S.~Martinez, and J.~San~Martin (2014).
\newblock {\em Inverse {M}-matrices and ultrametric matrices}, Volume 2118.
\newblock Springer.

\bibitem[\protect\citeauthoryear{Devriendt}{Devriendt}{2022}]{devriendt2020effective}
Devriendt, K. (2022).
\newblock Effective resistance is more than distance: {L}aplacians, simplices
  and the {S}chur complement.
\newblock {\em Linear Algebra Appl.\/}~{\em 639}, 24--49.

\bibitem[\protect\citeauthoryear{Deza and Laurent}{Deza and
  Laurent}{1997}]{deza1997geometry}
Deza, M.~M. and M.~Laurent (1997).
\newblock {\em Geometry of cuts and metrics}, Volume~15 of {\em Algorithms and
  Combinatorics}.
\newblock Springer-Verlag, Berlin.

\bibitem[\protect\citeauthoryear{Dombry, Engelke, and Oesting}{Dombry
  et~al.}{2016}]{dom2016}
Dombry, C., S.~Engelke, and M.~Oesting (2016).
\newblock Exact simulation of max-stable processes.
\newblock {\em Biometrika\/}~{\em 103\/}(2), 303--317.

\bibitem[\protect\citeauthoryear{Dombry and Eyi-Minko}{Dombry and
  Eyi-Minko}{2013}]{dom2013a}
Dombry, C. and F.~Eyi-Minko (2013).
\newblock Regular conditional distributions of continuous max-infinitely
  divisible random fields.
\newblock {\em Electron. J. Probab.\/}~{\em 18}, no. 7, 21.

\bibitem[\protect\citeauthoryear{Drees and Sabourin}{Drees and
  Sabourin}{2021}]{dre2021}
Drees, H. and A.~Sabourin (2021).
\newblock Principal component analysis for multivariate extremes.
\newblock {\em Electron. J. Stat.\/}~{\em 15\/}(1), 908--943.

\bibitem[\protect\citeauthoryear{Duval, Klivans, and Martin}{Duval
  et~al.}{2009}]{duval2009simplicial}
Duval, A.~M., C.~J. Klivans, and J.~L. Martin (2009).
\newblock Simplicial matrix-tree theorems.
\newblock {\em Trans. Amer. Math. Soc.\/}~{\em 361\/}(11), 6073--6114.

\bibitem[\protect\citeauthoryear{Egilmez, Pavez, and Ortega}{Egilmez
  et~al.}{2017}]{EPO17}
Egilmez, H.~E., E.~Pavez, and A.~Ortega (2017).
\newblock Graph learning from data under {L}aplacian and structural
  constraints.
\newblock {\em IEEE Journal of Selected Topics in Signal Processing\/}~{\em
  11\/}(6), 825--841.

\bibitem[\protect\citeauthoryear{Einmahl, Kiriliouk, Krajina, and
  Segers}{Einmahl et~al.}{2016}]{ein2016}
Einmahl, J. H.~J., A.~Kiriliouk, A.~Krajina, and J.~Segers (2016).
\newblock An {$M$}-estimator of spatial tail dependence.
\newblock {\em Journal of the Royal Statistical Society. Series B. Statistical
  Methodology\/}~{\em 78\/}(1), 275--298.

\bibitem[\protect\citeauthoryear{Einmahl and Segers}{Einmahl and
  Segers}{2009}]{ein2009}
Einmahl, J. H.~J. and J.~Segers (2009).
\newblock Maximum empirical likelihood estimation of the spectral measure of an
  extreme-value distribution.
\newblock {\em The Annals of Statistics\/}~{\em 37}, 2953--2989.

\bibitem[\protect\citeauthoryear{Embrechts, Kl\"{u}ppelberg, and
  Mikosch}{Embrechts et~al.}{1997}]{emb1997}
Embrechts, P., C.~Kl\"{u}ppelberg, and T.~Mikosch (1997).
\newblock {\em Modelling Extremal Events: for Insurance and Finance}.
\newblock London: Springer.

\bibitem[\protect\citeauthoryear{Engelke and Hitz}{Engelke and
  Hitz}{2020}]{EH2020}
Engelke, S. and A.~S. Hitz (2020).
\newblock Graphical models for extremes.
\newblock {\em J. R. Stat. Soc. Ser. B. Stat. Methodol.\/}~{\em 82\/}(4),
  871--932.
\newblock With discussions.

\bibitem[\protect\citeauthoryear{Engelke, Hitz, Gnecco, and Hentschel}{Engelke
  et~al.}{2022}]{graphicalExtremes}
Engelke, S., A.~S. Hitz, N.~Gnecco, and M.~Hentschel (2022).
\newblock {\em graphicalExtremes: Statistical Methodology for Graphical Extreme
  Value Models}.
\newblock R package version 0.2.0.

\bibitem[\protect\citeauthoryear{Engelke and Ivanovs}{Engelke and
  Ivanovs}{2021}]{eng2021}
Engelke, S. and J.~Ivanovs (2021).
\newblock Sparse structures for multivariate extremes.
\newblock {\em Annu. Rev. Stat. Appl.\/}~{\em 8}, 241--270.

\bibitem[\protect\citeauthoryear{Engelke, Malinowski, Kabluchko, and
  Schlather}{Engelke et~al.}{2015}]{eng2014}
Engelke, S., A.~Malinowski, Z.~Kabluchko, and M.~Schlather (2015).
\newblock Estimation of {H}\"{u}sler-{R}eiss distributions and
  {B}rown-{R}esnick processes.
\newblock {\em J. R. Stat. Soc. Ser. B. Stat. Methodol.\/}~{\em 77\/}(1),
  239--265.

\bibitem[\protect\citeauthoryear{Engelke, Opitz, and Wadsworth}{Engelke
  et~al.}{2019}]{eng2018a}
Engelke, S., T.~Opitz, and J.~Wadsworth (2019).
\newblock Extremal dependence of random scale constructions.
\newblock {\em Extremes\/}~{\em 22\/}(4), 623--666.

\bibitem[\protect\citeauthoryear{Engelke and Volgushev}{Engelke and
  Volgushev}{2022}]{EV2020}
Engelke, S. and S.~Volgushev (2022).
\newblock Structure learning for extremal tree models.
\newblock {\em J. R. Stat. Soc. Ser. B. Stat. Methodol.\/}~{\em 84\/}(5),
  2055--2087.

\bibitem[\protect\citeauthoryear{Esary, Proschan, and Walkup}{Esary
  et~al.}{1967}]{esary1967association}
Esary, J.~D., F.~Proschan, and D.~W. Walkup (1967).
\newblock Association of random variables, with applications.
\newblock {\em Ann. Math. Statist.\/}~{\em 38}, 1466--1474.

\bibitem[\protect\citeauthoryear{Fallat, Lauritzen, Sadeghi, Uhler, Wermuth,
  and Zwiernik}{Fallat et~al.}{2017}]{FLSUWZ2017}
Fallat, S., S.~Lauritzen, K.~Sadeghi, C.~Uhler, N.~Wermuth, and P.~Zwiernik
  (2017).
\newblock Total positivity in {M}arkov structures.
\newblock {\em The Annals of Statistics\/}~{\em 45\/}(3), 1152--1184.

\bibitem[\protect\citeauthoryear{Felsenstein}{Felsenstein}{1973}]{felsenstein1973maximum}
Felsenstein, J. (1973).
\newblock Maximum-likelihood estimation of evolutionary trees from continuous
  characters.
\newblock {\em American journal of human genetics\/}~{\em 25\/}(5), 471.

\bibitem[\protect\citeauthoryear{Fiedler}{Fiedler}{1998}]{fiedler98}
Fiedler, M. (1998).
\newblock Some characterizations of symmetric inverse {$M$}-matrices.
\newblock In {\em Proceedings of the {S}ixth {C}onference of the
  {I}nternational {L}inear {A}lgebra {S}ociety ({C}hemnitz, 1996)}, Volume
  275/276, pp.\  179--187.

\bibitem[\protect\citeauthoryear{Fomichov and Ivanovs}{Fomichov and
  Ivanovs}{2023}]{for2020}
Fomichov, V. and J.~Ivanovs (2023).
\newblock Spherical clustering in detection of groups of concomitant extremes.
\newblock {\em Biometrika\/}~{\em 110\/}(1), 135--153.

\bibitem[\protect\citeauthoryear{Fortuin, Kasteleyn, and Ginibre}{Fortuin
  et~al.}{1971}]{FKG71}
Fortuin, C.~M., P.~W. Kasteleyn, and J.~Ginibre (1971).
\newblock Correlation inequalities on some partially ordered sets.
\newblock {\em Communications in Mathematical Physics\/}~{\em 22}, 89--103.

\bibitem[\protect\citeauthoryear{Gnecco, Meinshausen, Peters, and
  Engelke}{Gnecco et~al.}{2021}]{GMPE2021}
Gnecco, N., N.~Meinshausen, J.~Peters, and S.~Engelke (2021).
\newblock Causal discovery in heavy-tailed models.
\newblock {\em Ann. Statist.\/}~{\em 49\/}(3), 1755--1778.

\bibitem[\protect\citeauthoryear{Gower}{Gower}{1985}]{gower}
Gower, J.~C. (1985).
\newblock Properties of {E}uclidean and non-{E}uclidean distance matrices.
\newblock {\em Linear Algebra and its Applications\/}~{\em 67}, 81--97.

\bibitem[\protect\citeauthoryear{Hashorva}{Hashorva}{2012}]{Hashorva2012}
Hashorva, E. (2012).
\newblock Exact tail asymptotics in bivariate scale mixture models.
\newblock {\em Extremes\/}~{\em 15\/}(1), 109--128.

\bibitem[\protect\citeauthoryear{Heffernan and Tawn}{Heffernan and
  Tawn}{2004}]{HeffernanTawn2004}
Heffernan, J.~E. and J.~A. Tawn (2004).
\newblock A conditional approach for multivariate extreme values.
\newblock {\em J. R. Stat. Soc. Ser. B Stat. Methodol.\/}~{\em 66\/}(3),
  497--546.
\newblock With discussions and reply by the authors.

\bibitem[\protect\citeauthoryear{Hentschel, Engelke, and Segers}{Hentschel
  et~al.}{2022}]{Hentschel2021}
Hentschel, M., S.~Engelke, and J.~Segers (2022).
\newblock Statistical inference for {H}\"usler--{R}eiss graphical models
  through matrix completions.
\newblock {\em arXiv preprint arXiv:2210.14292\/}.

\bibitem[\protect\citeauthoryear{Holbrook}{Holbrook}{2018}]{holbrook2018differentiating}
Holbrook, A. (2018).
\newblock Differentiating the pseudo determinant.
\newblock {\em Linear Algebra Appl.\/}~{\em 548}, 293--304.

\bibitem[\protect\citeauthoryear{Holland and Rosenbaum}{Holland and
  Rosenbaum}{1986}]{holland1986conditional}
Holland, P.~W. and P.~R. Rosenbaum (1986).
\newblock Conditional association and unidimensionality in monotone latent
  variable models.
\newblock {\em Ann. Statist.\/}~{\em 14\/}(4), 1523--1543.

\bibitem[\protect\citeauthoryear{Horn and Johnson}{Horn and
  Johnson}{2013}]{hornjohnson}
Horn, R.~A. and C.~R. Johnson (2013).
\newblock {\em Matrix analysis\/} (Second ed.).
\newblock Cambridge University Press, Cambridge.

\bibitem[\protect\citeauthoryear{H\"{u}sler and Reiss}{H\"{u}sler and
  Reiss}{1989}]{HR1989}
H\"{u}sler, J. and R.-D. Reiss (1989).
\newblock Maxima of normal random vectors: between independence and complete
  dependence.
\newblock {\em Statist. Probab. Lett.\/}~{\em 7\/}(4), 283--286.

\bibitem[\protect\citeauthoryear{Karlin and Rinott}{Karlin and
  Rinott}{1980}]{KR1980}
Karlin, S. and Y.~Rinott (1980).
\newblock Classes of orderings of measures and related correlation
  inequalities. {I}. {M}ultivariate totally positive distributions.
\newblock {\em Journal of Multivariate Analysis\/}~{\em 10\/}(4), 467--498.

\bibitem[\protect\citeauthoryear{Klein and Randi\'{c}}{Klein and
  Randi\'{c}}{1993}]{klein1993resistance}
Klein, D.~J. and M.~Randi\'{c} (1993).
\newblock Resistance distance.
\newblock {\em J. Math. Chem.\/}~{\em 12\/}(1-4), 81--95.
\newblock Applied graph theory and discrete mathematics in chemistry
  (Saskatoon, SK, 1991).

\bibitem[\protect\citeauthoryear{Krijnen}{Krijnen}{2004}]{krijnen2004positive}
Krijnen, W.~P. (2004).
\newblock Positive loadings and factor correlations from positive covariance
  matrices.
\newblock {\em Psychometrika\/}~{\em 69\/}(4), 655--660.

\bibitem[\protect\citeauthoryear{Krupskii, Huser, and Genton}{Krupskii
  et~al.}{2018}]{kru2018}
Krupskii, P., R.~Huser, and M.~G. Genton (2018).
\newblock Factor copula models for replicated spatial data.
\newblock {\em J. Amer. Statist. Assoc.\/}~{\em 113\/}(521), 467--479.

\bibitem[\protect\citeauthoryear{Larsson and Resnick}{Larsson and
  Resnick}{2012}]{lar2012}
Larsson, M. and S.~I. Resnick (2012).
\newblock Extremal dependence measure and extremogram: the regularly varying
  case.
\newblock {\em Extremes\/}~{\em 15\/}(2), 231--256.

\bibitem[\protect\citeauthoryear{Lauritzen and Sadeghi}{Lauritzen and
  Sadeghi}{2018}]{LS18}
Lauritzen, S. and K.~Sadeghi (2018).
\newblock Unifying {M}arkov properties for graphical models.
\newblock {\em Ann. Statist.\/}~{\em 46\/}(5), 2251--2278.

\bibitem[\protect\citeauthoryear{Lauritzen, Uhler, and Zwiernik}{Lauritzen
  et~al.}{2019}]{LUZ2019}
Lauritzen, S., C.~Uhler, and P.~Zwiernik (2019).
\newblock Maximum likelihood estimation in {G}aussian models under total
  positivity.
\newblock {\em Ann. Statist.\/}~{\em 47\/}(4), 1835--1863.

\bibitem[\protect\citeauthoryear{Lauritzen, Uhler, and Zwiernik}{Lauritzen
  et~al.}{2021}]{LUZ2020}
Lauritzen, S., C.~Uhler, and P.~Zwiernik (2021).
\newblock Total positivity in exponential families with application to binary
  variables.
\newblock {\em Ann. Statist.\/}~{\em 49\/}(3), 1436--1459.

\bibitem[\protect\citeauthoryear{Lauritzen and Zwiernik}{Lauritzen and
  Zwiernik}{2020}]{golazo}
Lauritzen, S. and P.~Zwiernik (2020).
\newblock {\em {GOLAZO}: Flexible regularised likelihood estimation using the
  {GOLAZO} approach}.
\newblock Available from \texttt{https://github.com/pzwiernik/golazo}.

\bibitem[\protect\citeauthoryear{Lauritzen and Zwiernik}{Lauritzen and
  Zwiernik}{2022}]{LZ2020}
Lauritzen, S. and P.~Zwiernik (2022).
\newblock Locally associated graphical models and mixed convex exponential
  families.
\newblock {\em Ann. Statist.\/}~{\em 50\/}(5), 3009--3038.

\bibitem[\protect\citeauthoryear{Lauritzen}{Lauritzen}{1996}]{Lauritzen96}
Lauritzen, S.~L. (1996).
\newblock {\em Graphical models}, Volume~17 of {\em Oxford Statistical Science
  Series}.
\newblock The Clarendon Press, Oxford University Press, New York.
\newblock Oxford Science Publications.

\bibitem[\protect\citeauthoryear{Lawley and Maxwell}{Lawley and
  Maxwell}{1962}]{lawley1962factor}
Lawley, D.~N. and A.~E. Maxwell (1962).
\newblock Factor analysis as a statistical method.
\newblock {\em Journal of the Royal Statistical Society. Series D (The
  Statistician)\/}~{\em 12\/}(3), 209--229.

\bibitem[\protect\citeauthoryear{Layer and Rhodes}{Layer and
  Rhodes}{2017}]{layer2017phylogenetic}
Layer, M. and J.~A. Rhodes (2017).
\newblock Phylogenetic trees and {E}uclidean embeddings.
\newblock {\em J. Math. Biol.\/}~{\em 74\/}(1-2), 99--111.

\bibitem[\protect\citeauthoryear{Liu and Kozubowski}{Liu and
  Kozubowski}{2015}]{LK2015}
Liu, Y. and T.~J. Kozubowski (2015).
\newblock A folded {L}aplace distribution.
\newblock {\em Journal of Statistical Distributions and Applications\/}~{\em
  2\/}(1), 1--17.

\bibitem[\protect\citeauthoryear{Marshall and Olkin}{Marshall and
  Olkin}{1983}]{MO1983}
Marshall, A.~W. and I.~Olkin (1983).
\newblock Domains of attraction of multivariate extreme value distributions.
\newblock {\em Ann. Probab.\/}~{\em 11\/}(1), 168--177.

\bibitem[\protect\citeauthoryear{Mhalla, Chavez-Demoulin, and Dupuis}{Mhalla
  et~al.}{2020}]{MCD2020}
Mhalla, L., V.~Chavez-Demoulin, and D.~J. Dupuis (2020).
\newblock Causal mechanism of extreme river discharges in the upper {D}anube
  basin network.
\newblock {\em J. R. Stat. Soc. Ser. C. Appl. Stat.\/}~{\em 69\/}(4), 741--764.

\bibitem[\protect\citeauthoryear{Micchelli}{Micchelli}{1986}]{micchelli1986}
Micchelli, C.~A. (1986).
\newblock Interpolation of scattered data: distance matrices and conditionally
  positive definite functions.
\newblock {\em Constr. Approx.\/}~{\em 2\/}(1), 11--22.

\bibitem[\protect\citeauthoryear{Murota}{Murota}{2009}]{murota2009recent}
Murota, K. (2009).
\newblock Recent developments in discrete convex analysis.
\newblock In {\em Research trends in combinatorial optimization}, pp.\
  219--260. Springer.

\bibitem[\protect\citeauthoryear{Newman}{Newman}{1983}]{newman1983general}
Newman, C.~M. (1983).
\newblock A general central limit theorem for {FKG} systems.
\newblock {\em Comm. Math. Phys.\/}~{\em 91\/}(1), 75--80.

\bibitem[\protect\citeauthoryear{Newman}{Newman}{1984}]{newman1984asymptotic}
Newman, C.~M. (1984).
\newblock Asymptotic independence and limit theorems for positively and
  negatively dependent random variables.
\newblock ~{\em 5}, 127--140.

\bibitem[\protect\citeauthoryear{Papastathopoulos and
  Strokorb}{Papastathopoulos and Strokorb}{2016}]{pap2016}
Papastathopoulos, I. and K.~Strokorb (2016).
\newblock Conditional independence among max-stable laws.
\newblock {\em Statist. Probab. Lett.\/}~{\em 108}, 9--15.

\bibitem[\protect\citeauthoryear{Pearl}{Pearl}{2009}]{Pearl2009}
Pearl, J. (2009).
\newblock {\em Causality\/} (Second ed.).
\newblock Cambridge University Press, Cambridge.
\newblock Models, reasoning, and inference.

\bibitem[\protect\citeauthoryear{Pitt}{Pitt}{1982}]{Pitt1982}
Pitt, L.~D. (1982).
\newblock Positively correlated normal variables are associated.
\newblock {\em Ann. Probab.\/}~{\em 10\/}(2), 496--499.

\bibitem[\protect\citeauthoryear{Ravikumar, Wainwright, Raskutti, and
  Yu}{Ravikumar et~al.}{2011}]{ravikumar2011high}
Ravikumar, P., M.~J. Wainwright, G.~Raskutti, and B.~Yu (2011).
\newblock High-dimensional covariance estimation by minimizing
  {$\ell_1$}-penalized log-determinant divergence.
\newblock {\em Electron. J. Stat.\/}~{\em 5}, 935--980.

\bibitem[\protect\citeauthoryear{Resnick}{Resnick}{2008}]{res2008}
Resnick, S.~I. (2008).
\newblock {\em Extreme Values, Regular Variation and Point Processes}.
\newblock New York: Springer.

\bibitem[\protect\citeauthoryear{Robeva, Sturmfels, Tran, and Uhler}{Robeva
  et~al.}{2021}]{RSTU20}
Robeva, E., B.~Sturmfels, N.~Tran, and C.~Uhler (2021).
\newblock Maximum likelihood estimation for totally positive log-concave
  densities.
\newblock {\em Scand. J. Stat.\/}~{\em 48\/}(3), 817--844.

\bibitem[\protect\citeauthoryear{Rootz\'{e}n and Tajvidi}{Rootz\'{e}n and
  Tajvidi}{2006}]{roo2006}
Rootz\'{e}n, H. and N.~Tajvidi (2006).
\newblock Multivariate generalized {P}areto distributions.
\newblock {\em Bernoulli\/}~{\em 12\/}(5), 917--930.

\bibitem[\protect\citeauthoryear{Rossell and Zwiernik}{Rossell and
  Zwiernik}{2021}]{RZ20}
Rossell, D. and P.~Zwiernik (2021).
\newblock Dependence in elliptical partial correlation graphs.
\newblock {\em Electron. J. Stat.\/}~{\em 15\/}(2), 4236--4263.

\bibitem[\protect\citeauthoryear{Schlather and Tawn}{Schlather and
  Tawn}{2003}]{sch2003}
Schlather, M. and J.~A. Tawn (2003).
\newblock A dependence measure for multivariate and spatial extreme values:
  properties and inference.
\newblock {\em Biometrika\/}~{\em 90\/}(1), 139--156.

\bibitem[\protect\citeauthoryear{Schoenberg}{Schoenberg}{1935}]{schoenberg}
Schoenberg, I.~J. (1935).
\newblock Remarks to {M}aurice {F}r\'{e}chet's article ``{S}ur la
  d\'{e}finition axiomatique d'une classe d'espace distanci\'{e}s
  vectoriellement applicable sur l'espace de {H}ilbert'' [{MR}1503246].
\newblock {\em Annals of Mathematics. Second Series\/}~{\em 36\/}(3), 724--732.

\bibitem[\protect\citeauthoryear{Segers}{Segers}{2020}]{Segers2020}
Segers, J. (2020).
\newblock One- versus multi-component regular variation and extremes of
  {M}arkov trees.
\newblock {\em Adv. in Appl. Probab.\/}~{\em 52\/}(3), 855--878.

\bibitem[\protect\citeauthoryear{Semple, Steel, et~al.}{Semple
  et~al.}{2003}]{semple2003phylogenetics}
Semple, C., M.~Steel, et~al. (2003).
\newblock {\em Phylogenetics}, Volume~24.
\newblock Oxford University Press on Demand.

\bibitem[\protect\citeauthoryear{Slawski and Hein}{Slawski and
  Hein}{2015}]{SH2015}
Slawski, M. and M.~Hein (2015).
\newblock Estimation of positive definite {$M$}-matrices and structure learning
  for attractive {G}aussian {M}arkov random fields.
\newblock {\em Linear Algebra Appl.\/}~{\em 473}, 145--179.

\bibitem[\protect\citeauthoryear{Smith, Tawn, and Coles}{Smith
  et~al.}{1997}]{smi1997}
Smith, R.~L., J.~A. Tawn, and S.~G. Coles (1997).
\newblock Markov chain models for threshold exceedances.
\newblock {\em Biometrika\/}~{\em 84\/}(2), 249--268.

\bibitem[\protect\citeauthoryear{Spirtes, Glymour, Scheines, and
  Heckerman}{Spirtes et~al.}{2000}]{spirtes2000causation}
Spirtes, P., C.~N. Glymour, R.~Scheines, and D.~Heckerman (2000).
\newblock {\em Causation, prediction, and search}.
\newblock MIT press.

\bibitem[\protect\citeauthoryear{Steel}{Steel}{2016}]{steel2016phylogeny}
Steel, M. (2016).
\newblock {\em Phylogeny: discrete and random processes in evolution}.
\newblock SIAM.

\bibitem[\protect\citeauthoryear{Stellato, Banjac, Goulart, Bemporad, and
  Boyd}{Stellato et~al.}{2020}]{osqp}
Stellato, B., G.~Banjac, P.~Goulart, A.~Bemporad, and S.~Boyd (2020).
\newblock {OSQP}: an operator splitting solver for quadratic programs.
\newblock {\em Mathematical Programming Computation\/}~{\em 12\/}(4), 637--672.

\bibitem[\protect\citeauthoryear{Sturmfels, Uhler, and Zwiernik}{Sturmfels
  et~al.}{2020}]{SUZ20}
Sturmfels, B., C.~Uhler, and P.~Zwiernik (2020).
\newblock Brownian motion tree models are toric.
\newblock {\em Kybernetika (Prague)\/}~{\em 56\/}(6), 1154--1175.

\bibitem[\protect\citeauthoryear{Tawn}{Tawn}{1990}]{taw1990}
Tawn, J.~A. (1990).
\newblock Modelling multivariate extreme value distributions.
\newblock {\em Biometrika\/}~{\em 77}, 245--253.

\bibitem[\protect\citeauthoryear{Tran, Buck, and Kl\"{u}ppelberg}{Tran
  et~al.}{2021}]{TBK2021}
Tran, N.~M., J.~Buck, and C.~Kl\"{u}ppelberg (2021).
\newblock Estimating a latent tree for extremes.
\newblock {\em arXiv preprint arXiv:2102.06197\/}.

\bibitem[\protect\citeauthoryear{Vinicius and Palomar}{Vinicius and
  Palomar}{2019}]{spectralGraphTopology}
Vinicius, Z. and D.~Palomar (2019).
\newblock {\em spectral{G}raph{T}opology: Learning Graphs from Data via
  Spectral Constraints}.
\newblock Available from
  \texttt{https://cran.r-project.org/package=spectralGraphTopology}.

\bibitem[\protect\citeauthoryear{Vinicius, Ying, and Palomar}{Vinicius
  et~al.}{2021}]{sparseGraph}
Vinicius, Z., J.~Ying, and D.~Palomar (2021).
\newblock {\em sparseGraph: Estimating Graphs with Nonconvex, Sparse Promoting
  Regularizations}.
\newblock Available from \texttt{https://github.com/mirca/sparseGraph/}.

\bibitem[\protect\citeauthoryear{Wadsworth and Tawn}{Wadsworth and
  Tawn}{2012}]{wadsworth2012dependence}
Wadsworth, J.~L. and J.~A. Tawn (2012).
\newblock Dependence modelling for spatial extremes.
\newblock {\em Biometrika\/}~{\em 99\/}(2), 253--272.

\bibitem[\protect\citeauthoryear{Wainwright and Jordan}{Wainwright and
  Jordan}{2008}]{wainwright2008graphical}
Wainwright, M.~J. and M.~I. Jordan (2008).
\newblock Graphical models, exponential families, and variational inference.
\newblock {\em Foundations and Trends in Machine Learning\/}~{\em 1}, 1--305.

\bibitem[\protect\citeauthoryear{Wang, Roy, and Uhler}{Wang
  et~al.}{2020}]{WRU2020}
Wang, Y., U.~Roy, and C.~Uhler (2020).
\newblock Learning high-dimensional {G}aussian graphical models under total
  positivity without adjustment of tuning parameters.
\newblock In S.~Chiappa and R.~Calandra (Eds.), {\em Proceedings of the Twenty
  Third International Conference on Artificial Intelligence and Statistics},
  Volume 108 of {\em Proceedings of Machine Learning Research}, pp.\
  2698--2708. PMLR.

\bibitem[\protect\citeauthoryear{Ying, Cardoso, and Palomar}{Ying
  et~al.}{2021}]{ying2021minimax}
Ying, J., J.~M. Cardoso, and D.~Palomar (2021).
\newblock Minimax estimation of {L}aplacian constrained precision matrices.
\newblock In {\em International Conference on Artificial Intelligence and
  Statistics}, pp.\  3736--3744. PMLR.

\bibitem[\protect\citeauthoryear{Zhao, Wang, Kumar, and Palomar}{Zhao
  et~al.}{2019}]{zha2019}
Zhao, L., Y.~Wang, S.~Kumar, and D.~Palomar (2019).
\newblock Optimization algorithms for graph {L}aplacian estimation via {ADMM}
  and {MM}.
\newblock {\em IEEE Transactions on Signal Processing\/}~{\em 67\/}(16),
  4231--4244.

\bibitem[\protect\citeauthoryear{Zwiernik}{Zwiernik}{2018}]{zwiernik2018latent}
Zwiernik, P. (2018).
\newblock Latent tree models.
\newblock In {\em Handbook of Graphical Models}, pp.\  265--288. CRC Press.

\end{thebibliography}
\end{document}